\documentclass{amsart}
\usepackage{amsmath}
\usepackage{amsfonts}
\usepackage{amssymb}
\usepackage[T1]{fontenc}

\usepackage{xcolor}
\usepackage{amsthm}
\usepackage{amssymb}
\usepackage{amsfonts}
\usepackage{mathtools}
\usepackage{ esint }
\usepackage{latexsym}
\usepackage{graphicx}
\usepackage{mathabx} 
\usepackage{cleveref}
\usepackage[utf8]{inputenc}

\crefformat{section}{\S#2#1#3} 
\crefformat{subsection}{\S#2#1#3}
\crefformat{subsubsection}{\S#2#1#3}

\usepackage{cite}

\usepackage[normalem]{ulem}

\newtheorem{theorem}{Theorem}[section]

\newtheorem{corollary}[theorem]{Corollary}
\newtheorem{example}[theorem]{Example}
\newtheorem{proposition}[theorem]{Proposition}
\newtheorem{lemma}[theorem]{Lemma}

\theoremstyle{definition}
\newtheorem{definition}[theorem]{Definition}

\theoremstyle{remark}
\newtheorem{remark}[theorem]{Remark}

\newcommand{\con}{\operatorname{Cond}}

\newcommand{\R}{\mathbb{R}}

\newcommand{\Rd}{\mathbb{R}^d}

\newcommand{\Z}{\mathbb{Z}}

\newcommand{\T}{\mathbb{T}}
\newcommand{\TT}{\mathbb{T}^2}

\newcommand{\C}{\mathbb{C}}

\newcommand{\N}{\mathbb{N}}

\newcommand{\Hi}{\mathcal{H}}

\DeclareMathOperator\supp{supp}
\newcommand\norm[1]{\left\lVert#1\right\rVert}
\newcommand\abs[1]{\left\lvert#1\right\rvert}

\newcommand{\spp}{\textbf{supp }}

\newcommand{\Cn}{\text{Cond}}
\newcommand{\U}{\mathbb{U}}
\renewcommand{\P}{\mathcal{P}}
\renewcommand{\L}{\sout{L}}

\allowdisplaybreaks

\title[Strichartz Estimates and GWP for Periodic Quintic NLS in 1D]{Strichartz Estimates and Small-Mass Global Well-posedness for the periodic quintic NLS in 1D}
\author{Nikolaos Skouloudis} 
\address{LIDS, Massachusetts Institute of Technology, Cambridge, MA 02139-4307, USA}
\email{nskoulou@mit.edu}

\author{Jiahui Yu}
\address{Department of Mathematics, Massachusetts Institute of Technology, Cambridge, MA 02142-4307, USA}
\email{jiahu878@mit.edu}

\begin{document}

\begin{abstract}
We consider the periodic quintic nonlinear Schrödinger and prove small-mass global well-posedness in $H^s(\T)$ for $s>0$. The proof relies on a new derivative-loss-free $L^6_{t,x}$ Strichartz estimate which is established using the high-low method, an asymmetric superlevel set estimate and a new refined broad-narrow argument. Although our $L^6_{t,x}$ Strichartz estimate is not sharp, being valid on slightly shorter time scales than the optimal logarithmic scale, combining it with the $I$-method enables the extension of local solutions to arbitrary times.
\end{abstract}
\maketitle
\section{Introduction}
We consider the quintic nonlinear Schrödinger equation (NLS) on the unit circle $\T=\R/\left(2\pi \Z \right)$,
\begin{equation}
    \label{eq:NLS}
    \begin{cases}
       i \partial_tu +\Delta u=\mu \abs{u}^4u, \\
       u(x,0)=u_0(x) \in H^{s}\left(\T\right),
    \end{cases}
    \tag{NLS}
\end{equation}
where $\mu = 1$ corresponds to the defocusing case and $\mu = -1$ to the focusing case. For smooth solutions \(u\), the mass \(M(u(t))\) and energy \(E(u(t))\) are conserved along the flow of \eqref{eq:NLS}, where
\begin{subequations}
\begin{align}
 \label{def: mass} 
    M\left(u(t)\right) &:= \norm{u(t)}_{L_x^2(\T)},\tag{mass} \\
 \label{def: energy}
    E(u(t)) &:= \frac{1}{2}\norm{\partial_x u(t)}_{L_x^2(\T)}^2 + \frac{\mu}{6} \norm{u(t)}^6_{L_x^6(\T)}.   \tag{energy}  
\end{align}
\end{subequations}
Moreover, \eqref{eq:NLS} enjoys a scaling symmetry which is critical with respect to the mass. Specifically, for any $\lambda>0$, we map a solution of \eqref{eq:NLS} to another solution, $u \mapsto u^\lambda$ via the transformation
\begin{equation*}
    u^\lambda(x,t):= \frac{1}{\lambda^\frac{1}{2}} u\left(\frac{x}{\lambda}, \frac{t}{\lambda^{2}}\right),
\end{equation*}
and stress that $u^\lambda$ is defined on $\T_\lambda \times \R$ where $\T_\lambda := \R /(2\pi\lambda\Z)$. We say that \eqref{eq:NLS} is mass-critical as the rescaling leaves the mass invariant.

To formulate our main results, we recall the standard notion of well-posedness. The initial value problem \eqref{eq:NLS} is \textit{locally well-posed} if, for any $u_0 \in H^s(\T) $, there exists a time $T>0$, an open ball $B$ in $H^s(\T)$ containing $u_0$, and a closed subspace, $X$, of $C^0([0,T] \rightarrow H^s(\T))$, such that for each $u_0 \in B$ there exists a unique $u \in X$ satisfying
\begin{equation*}
    u(t)=S(t)u_0+i \mu  \int_0^t S(t-t') \abs{u(t')}^4u(t') dt',
\end{equation*}
where $S(t):= e^{it \Delta} $ is the Schrödinger group. Moreover, we require the map $u_0 \mapsto u$ to be uniformly continuous as a map from $B$ (endowed with the $H^s(\T)$ topology) to X (endowed with the $C^0([0,T] \rightarrow H^s(\T))$ topology). If we can take $T>0$ to be arbitrarily large, we say that \eqref{eq:NLS} is \textit{globally well-posed}. 

The main objective of this paper is to establish global well-posedness of the Cauchy problem \eqref{eq:NLS} for $s>0$. Our approach is based on establishing a derivative-loss-free $L^6_{t,x}$ Strichartz estimate and combining this estimate with the $I$-method. We begin by stating the Strichartz estimate.

\begin{theorem}
\label{theorem:L^6 Strichartz}
There exists a constant $C>1$ such that for any $N \geq 1$ and any $\phi \in L^2(\T)$ with $\supp \hat{\phi} \subset [-N, N]$, the following estimate holds
\[
\norm{S(t) \phi}_{L^6_{t,x}\left( \left[0, \left(\log N \right)^{-C}\right] \times \T \right)} \leq C\norm{\phi}_2.
\]
\end{theorem}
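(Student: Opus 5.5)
We will prove the estimate by an induction-on-scales argument in the spirit of the high-low method of Guth--Maldague--Wang. It is carried out at the level of superlevel sets of $|S(t)\phi|$ on the short time interval $[0,(\log N)^{-C}]$.

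\emph{Step 1: reduction.} Normalize $\norm{\phi}_2=1$. It suffices to bound $|U_\alpha|\alpha^6\lesssim 1$ for $U_\alpha=\{(x,t)\in \T\times[0,(\log N)^{-C}] : |S(t)\phi|>\alpha\}$. We may restrict to dyadic $\alpha$ in the range $N^{1/2-\epsilon'}\lesssim \alpha\lesssim N^{1/2}$: small $\alpha$ is handled by the trivial $L^4$ Strichartz bound (Zygmund/Bourgain $L^4_{t,x}$ is loss-free), and $\alpha\gtrsim N^{1/2}$ is impossible by Cauchy--Schwarz. Summing dyadic levels then costs only a $\log N$ factor. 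That factor must be absorbed by a gain from the short time interval, so we aim for $|U_\alpha|\alpha^6\lesssim (\log N)^{-1-\delta}$ times a fixed constant, using the time scale $(\log N)^{-C}$.

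\emph{Step 2: broad--narrow decomposition.} Partition $[-N,N]$ into intervals $\theta$ of length $K$ and write $S(t)\phi=\sum_\theta f_\theta$. At each point, either a few $\theta$ dominate (narrow) or two $K$-separated caps each contribute $\gtrsim \alpha/K$ (broad). The narrow part is treated by parabolic rescaling: each $f_\theta$ rescales to a frequency-$N/K$ problem on a longer torus and a shorter time interval. Induction on $N$ then closes, provided the constant loss per step is $1+O(1/\log N)$-type. This is why we need a refined narrow argument that only loses when there is genuine concentration.

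\emph{Step 3: the broad part, via high-low.} For the bilinear term $|f_{\theta_1}f_{\theta_2}|$ we run the high-low decomposition of the square functions $g_k=\sum_{\tau \text{ at scale } k}|f_\tau|^2$ across the intermediate scales. The low part is controlled by the next coarser square function through locally constant properties. The high part is controlled by $L^2$ orthogonality, since its Fourier support lies in an annulus where the curvature of the parabola gives almost disjointness. On $\T$ the time window is short, so only scales with frequency separation at least $(\log N)^{C}$ see genuine periodicity. We use the time localization to make the number of scales and the overlap counting lose at most $(\log N)^{O(1)}$, and that loss is beaten by the $(\log N)^{-C}$ time length for $C$ large. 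The asymmetric superlevel set estimate enters here: we bound $|U_\alpha\cap\{\text{broad}\}|$ by $\alpha^{-4}\int g_{k}^{2}$ at the scale where highs dominate, combined with $\norm{f}_\infty\le N^{1/2}$, to get the extra $\alpha^{-2}$ needed for $L^6$.

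\emph{Main obstacle.} The hardest part is closing the induction without any $N^\epsilon$ loss. Standard decoupling and broad--narrow arguments each lose a constant per scale, and with $\log N$ scales this compounds to $N^\epsilon$. We would try to take $K$ depending on $\alpha$ and $N$, of size around $(\log N)^{C'}$, so the number of induction steps is $O(\log N/\log\log N)$. We would then show that each narrow step loses a factor of only $1+(\log N)^{-2}$, exploiting that in the narrow case the mass is concentrated in few caps and the $\ell^2$ sum of the rescaled norms is strictly subadditive. Whether the bookkeeping yields exactly a power of $\log N$ is the step we are least sure of. If it fails, the fallback is to use a larger power $C$.
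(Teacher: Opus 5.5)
Your proposal has a genuine gap, and it is already visible in Step 1.

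\textbf{Step 1.} The $L^4$ estimate gives only $|U_\alpha|\alpha^6\le\alpha^2\norm{S(t)\phi}_{L^4}^4\lesssim\alpha^2$. So it disposes of $\alpha\lesssim 1$, not of the whole range $\alpha\lesssim N^{1/2-\epsilon'}$. The intermediate levels are exactly where rational times $t\approx 2\pi a/q$ create peaks of height about $(N/q)^{1/2}$.

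More seriously, your per-level target $|U_\alpha|\alpha^6\lesssim(\log N)^{-1-\delta}$ is false at the top level. For $\phi=N^{-1/2}\sum_{|k|\le N}e^{ikx}$ one has $|S(t)\phi|\gtrsim N^{1/2}$ on $\{|x|\lesssim N^{-1},\,|t|\lesssim N^{-2}\}$. Hence the level $\alpha\sim N^{1/2}$ contributes $\gtrsim 1$ on every time interval containing $[0,cN^{-2}]$, however short. A uniform gain from the short time interval, summed over the dyadic levels, therefore cannot be the mechanism. Near the peak the time restriction buys nothing. There you need a bound with \emph{no} $\log N$ loss, which decays away from the top level so that the dyadic sum converges.

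\textbf{Steps 2--3.} This is exactly where your argument breaks down.
\begin{itemize}
\item The $(\log N)^{O(1)}$ loss of the high--low argument cannot be ``beaten by the $(\log N)^{-C}$ time length'' at high levels.
\item Passing from $\alpha^{-4}$ to $\alpha^{-6}$ via $\norm{f}_\infty\le N^{1/2}$ costs a factor $N/\alpha^2$, which is a power of $N$ in your range.
\item Your induction needs a per-step narrow loss of size $1+O(\log\log N/\log N)$, and you do not supply one. With a constant loss per step over $\log N/\log\log N$ steps you only recover Bourgain's $\exp(c\log N/\log\log N)$, and enlarging $C$ does not help.
\end{itemize}

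\textbf{What the paper does instead.} It does not induct on scales at all. After rescaling ($R/N^2$ plays the role of the time length), it pigeonholes to the case where each $f_\gamma$ has amplitude $\sim 1$ or vanishes, with $\lambda$ active caps, so $\lambda$ is the maximal height. It then splits the levels at $a=\lambda(\log N)^{-100}$.

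Below the split, the Guth--Maldague superlevel bound gives $|U_a|a^6\lesssim(\log N)^{c_0}\max(a^2/\lambda^2,R/N^2)N^2R\lambda^3$. Only in this regime does the short time beat the polylogarithmic loss.

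Above the split, it proves the lossless bound $|U_a|a^6\lesssim\log(\lambda/a)^{O(1)}\frac{a}{\lambda}N^2R\lambda^3$. This uses three ingredients missing from your plan:
\begin{enumerate}
\item a broad estimate in which, after pigeonholing $\lambda(\theta)$, only $O(\log(\lambda/a))$ high--low scales contribute;
\item an asymmetric superlevel estimate for caps much shorter than their separation, which gains a factor $\max(l(\Sigma_2)/l,\sqrt{R}/N)$;
\item a refined broad--narrow argument run over all scales simultaneously.
\end{enumerate}
In the refined argument, at most $\lambda/a'$ scales can be broad at threshold $a'$. Points that are broad at two scales more than $h\approx\log(\lambda/a)$ apart form a negligible set, by the asymmetric estimate. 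Every other point of $U_a$ is broad at threshold $a/(100h)$ at some scale, and only $O(h\lambda/a)$ scales have this set nonempty. Your ``refined narrow argument that only loses when there is genuine concentration'' is precisely the step these ingredients supply.
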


Theorem~\ref{theorem:L^6 Strichartz} is proved via a lossless small-cap decoupling estimate. A key ingredient in the argument is an estimate on the size of the intersection of superlevel sets of functions whose Fourier supports are contained in caps $\Sigma_1$ and $\Sigma_2$. In the regime where the length of $\Sigma_1$ is much smaller than the distance between $\Sigma_1$ and $\Sigma_2$, we establish an asymmetric superlevel set estimate which yields an additional gain reflecting this scale separation (see Lemma \ref{asym} for details). This gain is used to eliminate logarithmic losses in the decoupling argument. As a further consequence of this analysis, we recover a refined trilinear Strichartz estimate due to McConnell\cite[Proposition 5.10]{McConnell_Mass_critical_d_1}, up to an arbitrarily small polynomial loss in the frequency scale. We refer the reader to Appendix~\ref{sec:appendix A} for a proof using our method.

The derivative-loss-free estimate in Theorem~\ref{theorem:L^6 Strichartz} provides the key input for the nonlinear analysis. Combining this estimate with the $I$-method, we prove the following small-mass global well-posedness result for \eqref{eq:NLS}.

\begin{theorem}
\label{theorem:main_gwp_theorem}
There exists a constant $\delta>0$ such that for any $s>0$ and any initial data $u_0 \in H^s(\T)$ satisfying $\norm{u_0}_2 \leq \delta$, the Cauchy problem (\ref{eq:NLS}) is globally well-posed.
\end{theorem}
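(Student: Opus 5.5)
The plan is to run the $I$-method, with Theorem~\ref{theorem:L^6 Strichartz} as the linear input. Fix $s>0$, a large target time $T$, and a large parameter $N$. Let $I=I_N$ be the Fourier multiplier with symbol $m(\xi)=1$ for $|\xi|\le N$ and $m(\xi)=(N/|\xi|)^{1-s}$ for $|\xi|\ge 2N$. Then $Iu_0\in H^1$ and $E(Iu_0)\lesssim N^{2(1-s)}\norm{u_0}_{H^s}^2$. The first step is to rescale. Using the scaling $u^\lambda$ (posing the problem on $\T_\lambda$), we choose $\lambda\sim N^{(1-s)/s}$ so that $E(Iu_0^\lambda)\le 1/2$. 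Mass is unchanged by the scaling, so it stays $\le\delta$.

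Since $\delta$ is small, Gagliardo–Nirenberg gives $\norm{u}_6^6\lesssim \norm{u}_2^4\norm{\partial_x u}_2^2$. Hence in the focusing case $E(Iu)$ still controls $\norm{\partial_x Iu}_2^2$, and the modified energy stays coercive as long as it remains $\le 1$.

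The second step is a modified local well-posedness statement on intervals of length $\tau=(\log N)^{-C'}$, measured in the rescaled variables. Theorem~\ref{theorem:L^6 Strichartz} is stated on $\T$, so we must transfer it. We will use it through the corresponding $X^{0,b}$ / $U^2$-type embedding, $\norm{P_{\le M}u}_{L^6_{t,x}([0,(\log M)^{-C}]\times\T)}\lesssim\norm{u}_{X^{0,1/2+}}$. We then decompose dyadically and apply it to the frequency-localized pieces of $\partial_x Iu$ and $Iu$. The mass is small, so the contraction closes in $X^{1,1/2+}$ for $Iu$ on an interval whose length depends only on $N$ through $(\log N)^{-C}$, and not on the size of the data. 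The derivative-loss-free nature of the estimate is what makes this possible: no power of $N$ is lost.

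The third step is the almost-conservation law. We compute $\frac{d}{dt}E(Iu)$, which is a multilinear expression in which $1-\frac{m(\xi_1)}{m(\xi_2)\cdots}$-type symbols appear. These symbols vanish unless some frequency is $\gtrsim N$. We then bound the increment over one local interval by $N^{-\alpha}$ for some $\alpha>0$. To do this we use the trilinear refinement recovered in Appendix~\ref{sec:appendix A}, which exploits the separation between a high frequency $\gtrsim N$ and lower ones, together with the $L^6$ estimate for the remaining factors. The most delicate point is the term with six factors, where at least two must be at frequency $\gtrsim N$. Losses of size $N^{\epsilon}$ are acceptable there, provided the symbol estimate gives $N^{-1+}$.

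The fourth step is the iteration, which I expect to be the main obstacle. To reach rescaled time $\lambda^2T\sim N^{2(1-s)/s}T$, we need about $\lambda^2T(\log N)^{C'}$ steps. Each step increases the energy by at most $N^{-\alpha}$, and we need the total increase to stay $\lesssim 1$, that is $N^{2(1-s)/s}(\log N)^{C'}N^{-\alpha}\ll 1$. For small $s$ this requires the decay $\alpha$ to beat $2(1-s)/s$, which fails for a fixed $\alpha$. The fix I would try first is to exploit the mass-criticality. The local time can be chosen by rescaling at the frequency scale of $u$ instead, so that the number of steps is controlled by scale-invariant quantities. Concretely, this means working on $\T$ without the $\lambda$-rescaling, taking intervals $(\log N)^{-C}$ long, and bounding the increment per interval by $N^{-\alpha}E(Iu)$. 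Then $E(Iu(t))$ grows by at most a factor $\exp(T(\log N)^{C}N^{-\alpha})$, which is $O(1)$ once $N$ is large depending on $T$. In this version smallness of the mass replaces the usual smallness of the energy, and only polylogarithmically many steps per unit time are needed. This is exactly where Theorem~\ref{theorem:L^6 Strichartz} pays off. The bound $\norm{u(t)}_{H^s}\lesssim N^{1-s}\cdot O(1)$ for $t\le T$ then gives global well-posedness, since $T$ was arbitrary.
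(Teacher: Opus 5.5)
Your overall scheme (rescale to $\T_\lambda$, a local theory for the $I$-system driven by the $L^6$ estimate, almost conservation of $E(Iu)$ with an $N^{-1+}$ increment, then iteration) is the paper's. However, the obstacle you find in Step 4 comes from a scaling error in Step 2, and the unrescaled fix you adopt is asserted rather than proved.

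On $\T_\lambda$, the estimate of Theorem~\ref{theorem:L^6 Strichartz} does not hold on rescaled intervals of length $(\log N)^{-C}$. It holds on intervals of length $\lambda^2(\log \lambda N)^{-C}$, because frequencies $\le N$ on $\T_\lambda$ correspond to frequencies $\le\lambda N$ on $\T$ and time dilates by $\lambda^2$ (Lemmas~\ref{Lemma: Conversion_between_T_and_T_lambda} and~\ref{lemma_Strichartz_Linear}). Hence the local time in Lemma~\ref{lemma: lwp_ILNS} is $T_0=\lambda^2\log(\lambda N)^{-c}$. Reaching rescaled time $\lambda^2T$ then takes only $T\log(\lambda N)^{c}$ steps, which is polylogarithmic in $N$ since $\log(\lambda N)=O_s(\log N)$ for $N$ large. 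With the per-step increment $N^{-1+}\delta^6$ from Lemma~\ref{lemma: growth E1}, the total growth is $\lesssim T\log(\lambda N)^{c}N^{-1+}\delta^6\ll\delta^2$ for $N$ large; the paper even allows $n\ll N^{1-}$ steps, i.e.\ original time of order $N^{1-}$. So there is no race between $\alpha$ and $2(1-s)/s$, and the rescaled argument closes as it stands. One correction is needed there: normalize $E(Iu_0^\lambda)\lesssim\delta^2$ rather than $\le 1/2$ (enlarge $\lambda$ by a $\delta$-dependent factor). The quintic contraction of Lemma~\ref{lemma:multilinear_estimates_lwp} needs all of $\norm{Iu}_{Y^1}$ to be small, not only the mass.

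The unrescaled fix needs two things: (i) the contraction closes on $[0,(\log N)^{-C}]$ for data with $\norm{Iu_0}_{H^1}\sim N^{1-s}$ using only smallness of the mass, and (ii) the increment per interval is $\lesssim N^{-\alpha}E(Iu)$. Neither follows from the multilinear bounds you sketch, which put $Y^1$ norms on every factor. The local estimate is then $\prod_j\norm{Iu_j}_{Y^1}$, which does not close for large $\norm{Iu_0}_{H^1}$. The increment bound becomes $N^{-1+}(\norm{Iu}_{Y^1}^6+\norm{Iu}_{Y^1}^{10})\sim N^{-1+}(E^3+E^5)$, far larger than $E$ when $E\sim N^{2(1-s)}$ and $s$ is small.

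Bounds linear in $E$ would require mixed-norm estimates with mass norms on all but one or two factors. The losses $(1+\log M/\log N)^{C/6}$ of Lemma~\ref{lemma:Linear Y^0 Strichartz} at high frequencies would have to be absorbed by the $I$-weight. The relative gain would then be about $N^{-1}E^{1/2}\sim N^{-s}$, which is small only because of the particular size of $E$.

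Carried out correctly, this is just the rescaled argument written in original variables. Under the scaling, the cutoff $N$ on $\T_\lambda$ becomes $\lambda N$ on $\T$, and $\norm{Iu^\lambda}_{Y^1(\T_\lambda)}$ becomes comparable to $\norm{Iu}_{Y^0}+\lambda^{-1}\norm{\partial_x Iu}_{Y^0}$. Your step $(\log N)^{-C}$ on $\T$, with your $N$ playing the role of $\lambda N$, is exactly $\lambda^2(\log\lambda N)^{-C}$ on $\T_\lambda$. The simplest repair is therefore to keep the rescaling with the correct local time.

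Finally, Appendix~\ref{sec:appendix A} is not needed. The paper uses only the $L^6$ estimate and the elementary trilinear bound of Lemma~\ref{lemma: Trilinear Y^0 Strichartz}, and the factor $N^{-1+}$ comes from the symbol bounds on $M_6$ and $M_{10}$.
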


\subsection{Context and previous work}

In his seminal work \cite{Bourgain_lwp}, Bourgain established local well-posedness of \eqref{eq:NLS} for $s > 0$. This result is sharp (up to endpoint) in the sense that the data-to-solution map fails to be uniformly continuous for $s<0$ \cite{Burq_illposedness_T}. A key component of Bourgain's \cite{Bourgain_lwp} proof was the following Strichartz estimate: there exists a constant $c>0$ such that for all $\phi \in L^2(\T)$ satisfying $\supp \widehat{\phi} \subset \left[-N,N\right]$, the following estimate holds
\begin{equation}
\norm{S(t) \phi }_{L^6_{t,x}\left(\T \times \T\right)} \leq C_N \norm{\phi}_2, \text{ where } C_N:= c \exp\left(c \frac{\log N}{\log \log N}\right). 
\label{eq: bourgain Strichartz}
\end{equation}
This Strichartz estimate has since played a central role in the analysis of the nonlinear Schrödinger equation on the torus and has served as a starting point for a number of refinements and applications. We briefly review these developments, focusing first on improvements to \eqref{eq: bourgain Strichartz} and the methods underlying their proofs, and then on applications to global well-posedness.

We begin with a discussion of previous improvements to \eqref{eq: bourgain Strichartz}. While the estimate \eqref{eq: bourgain Strichartz} was sufficient to establish local well-posedness for $s>0$, it is not expected to be sharp in terms of the growth of $C_N$. Indeed, by considering $\phi = \sum_{|k| \leq N} e^{ikx}$, it was conjectured in \cite{Bourgain_lwp,Kishimoto_sharp_CN} that the derivative loss could be reduced to $C_N \sim (\log N)^{1/6}$. Despite significant effort, this conjecture remains open. Partial progress has been achieved through refinements of the decoupling techniques underlying the $\ell^2$ decoupling theorem of Bourgain and Demeter \cite{Bourgain_Demeter_decoupling}. For instance, Guth--Maldague--Wang \cite{GuthMaldagueWang_Improved_Dec_Parabola} applied the high-low method introduced by Guth--Solomon--Wang \cite{Guth_Solomon_Wang} to obtain bounds of the form $C_N \sim (\log N)^c$ for some absolute constant $c \gg 1$. This was subsequently improved by Guo--Li--Yung \cite{GuoLiYung_Strichartz}, who established an asymmetric bilinear decoupling inequality to show that $C_N \lesssim_\epsilon (\log N)^{2+\epsilon}$ for every $\epsilon>0$.

An alternative approach to obtaining estimates of the form of \eqref{eq: bourgain Strichartz} is to establish derivative-loss-free Strichartz estimates on shorter, frequency-dependent time intervals. More precisely, one seeks the largest time $T>0$ such that there exists a constant $c>0$ with the property that, for all $\phi \in L^2(\T)$ satisfying $\supp \hat{\phi} \subset [-N,N]$, the estimate \begin{equation} 
\|S(t)\phi\|_{L^6_{t,x}([0,T]\times\T)} \leq c \|\phi\|_2 
\label{eq:derivative_free_L6} 
\end{equation} holds. If one could establish \eqref{eq:derivative_free_L6} with $T=(\log N)^{-1}$, then by decomposing $[0,2\pi]$ into intervals of length $(\log N)^{-1}$ one would immediately obtain \eqref{eq: bourgain Strichartz} with $C_N\sim (\log N)^{1/6}$. In this direction, Burq--Gérard--Tzvetkov \cite{Burq_Gerard_Tzvetkov} proved \eqref{eq:derivative_free_L6} with $T = N^{-1}$. Their result in fact extends to arbitrary compact Riemannian manifolds in higher dimensions, for the same time scale. In the periodic setting, Herr and Kwak \cite{Herr_Kwak_2024_mass_critical} obtained the $L^4_{t,x}$ analogue of \eqref{eq:derivative_free_L6} on the two-dimensional square torus
\begin{equation}
\norm{S(t) \phi }_{L^4_{t,x}\left( \left[0, \frac{1}{\log N}\right] \times \TT\right)} \leq c \norm{\phi}_2,
\label{eq: Herr_Kwaq Strichartz}
\end{equation}
by using the Szemerédi--Trotter theorem to count rectangles with vertices in a finite set in the plane. This in turn yields a sharp Strichartz estimate for the higher-dimensional $L^4_{t,x}$ analogue of \eqref{eq: bourgain Strichartz} and also plays an important role in their proof of small-mass global well-posedness. Returning to the one-dimensional periodic setting, the best known result for the maximal time for which \eqref{eq:derivative_free_L6} holds is $T = N^{-131/208}$, due to McConnell \cite{McConnell_Mass_critical_d_1}, who used number theoretic methods to count the number of lattice points in thin annuli. 

In this paper, we prove that \eqref{eq:derivative_free_L6} holds with $T = (\log N)^{-10000}$. While we make no attempt to obtain a sharp exponent, this time scale already suffices for our purposes of establishing small-mass global well-posedness. We also note that our result does not improve the best known bound \cite{GuoLiYung_Strichartz} for the constant $C_N$ in \eqref{eq: bourgain Strichartz}. Our approach is based on small-cap decoupling, in contrast with the number-theoretic methods of McConnell \cite{McConnell_Mass_critical_d_1} and the incidence-geometric approach of Herr and Kwak \cite{Herr_Kwak_2024_mass_critical}. Small-cap decoupling may be viewed as a refinement of the classical $\ell^2$ decoupling theory adapted to the study of Strichartz estimates on time intervals shorter than a single period. The subject was initiated by Demeter--Guth--Wang \cite{DGW} and further developed in the two-dimensional setting by Fu--Guth--Maldague \cite{FGM} and Guth--Maldague \cite{amplitude} using the high--low method.

We now turn to the global well-posedness problem for \eqref{eq:NLS}. On the real line, Dodson \cite{Dodson_JEDP_2011} proved global well-posedness of \eqref{eq:NLS} in $H^s(\mathbb{R})$ for all $s \ge 0$, by establishing that solutions scatter in $L^2(\mathbb{R})$. In the periodic setting, as scattering is not expected to hold due to the lack of dispersion \cite{CKSTT2010_energy_transfer}, a different approach is required. In this context, derivative-loss-free Strichartz estimates have played a central role in recent advances toward establishing global well-posedness for mass-critical nonlinear Schr\"odinger equations.

In a breakthrough result, Herr and Kwak \cite{Herr_Kwak_2024_mass_critical, Herr_Kwak_2025_mass_critical_large_mass} established global well-posedness for the mass-critical NLS on $H^s(\TT)$ for $s>0$. Their approach proceeds in two stages: first \cite{Herr_Kwak_2024_mass_critical} they proved small-mass global well-posedness using the derivative-free Strichartz estimate \eqref{eq: Herr_Kwaq Strichartz} and subsequently \cite{Herr_Kwak_2025_mass_critical_large_mass} they removed the small assumption and proved that their result is sharp in the sense that the data-to-solution map fails to be uniformly continuous for initial data in $L^2(\TT)$. The goal of the present paper is to establish the one-dimensional analogue of the small-mass global well-posedness result proved in \cite{Herr_Kwak_2024_mass_critical}.

We now briefly recall the argument in \cite{Herr_Kwak_2024_mass_critical}. Under a suitable smallness assumption on the mass, the derivative-free Strichartz estimate \eqref{eq: Herr_Kwaq Strichartz} yields local well-posedness in the complete metric space
\[
X_N:=\left\{ u\in C^0\left( J_N \rightarrow H^s(\TT) \right) \cap Y^s_{J_N}: \norm{u}_{Z_N} \lesssim 1 \right\},
\]
where $N \gg 1$ is sufficiently large constant, $J_N:=\left[0,\frac{1}{\log N} \right]$ is the existence time interval, the space $Y^s_{J_N}$ is the time-restricted $Y^s$ space (see \cref{sec:4.1} for details) and
\[
\norm{u}_{Z_N}^2:= \norm{u}_{Y^0_{J_N}}^2+ N^{-2s} \norm{u}_{Y^s_{J_N}}^2.
\]
As the length of the existence time interval scales with $\frac{1}{\log N}$, local solutions may be continued for arbitrary time intervals, thereby establishing global well-posedness. In fact, Herr and Kwak \cite{Herr_Kwak_2024_mass_critical} proved that for every $k \in \N$, local solutions can be extended from $[0,T_k]$ to $[0,T_{k+1}]$ where 
\[ T_k := \sum_{j=0}^{k-1} \frac{1}{2 \log\!\bigl(K^j N_0\bigr)}, \]
with fixed constants $K \gg 1$ and $N_0 \gg_s 1$. This procedure can be iterated for arbitrarily large time intervals since $\lim_{k \rightarrow \infty} T_k = \infty$. Crucially, this conclusion relies on the fact that the length of each extension step scales like $\frac{1}{\log N}$. If instead the length of existence interval scaled with $\frac{1}{(\log N)^{c}}$ for any $c>1$, the above summation would converge and the iteration scheme would fail. Therefore, we stress that this argument would not work if \eqref{eq: Herr_Kwaq Strichartz} were valid for shorter time intervals of length $\frac{1}{\left(\log N\right)^c}$ for any $c>1$.

An alternative approach to extending local solutions to global ones is based on controlling the growth of the $H^s$ norm of smooth solutions to \eqref{eq:NLS} via the $I$-method. Originally introduced in \cite{I-method_1,I-method_2}, the $I$-method has become a canonical tool in obtaining global well-posedness for infinite energy initial data. This method relies on constructing families of modified Hamiltonians which are almost conserved and has been successfully applied to a range of dispersive equations, including the nonlinear Schrödinger equations \cite{mass_critical_gwp_staffilani_tzirakis, Schippa_mass_critical, LiWuXu_Mass_critical_d_1,fractional_global_wellposedness_T}, the derivative NLS \cite{I-method_2}, and the KdV equation \cite{KdV}, both in periodic and Euclidean settings.

In this direction, the $I$-method has been combined with refined bilinear and trilinear estimates together with \eqref{eq:derivative_free_L6} to establish global well-posedness for \eqref{eq:NLS} for low-regularity initial data. In fact, Bourgain \cite{Bourgain_trilinear} developed an approach combining the $I$-method with normal form reductions to prove global well-posedness below the energy space. Subsequent developments of this framework led to progressively lower regularity thresholds. Specifically, De Silva~\emph{et al.}~\cite{mass_critical_gwp_staffilani_tzirakis, Staffalani_mass_critical_errata} established a refined bilinear Strichartz estimate which yielded global well-posedness for $s > \tfrac{4}{9}$, while Li~\emph{et al.}~\cite{LiWuXu_Mass_critical_d_1} further lowered the regularity threshold to $s > \tfrac{2}{5}$ via a resonant decomposition of the modified energy.

More recently, derivative-free Strichartz estimates have been incorporated into the $I$-method framework to obtain further improvements in the regularity threshold for global well-posedness under an additional small-mass assumption. For example, Schippa \cite{Schippa_mass_critical} combined the derivative-free Strichartz estimate of Burq--Gérard--Tzvetkov \cite{BurqGerardTzvetkov_Strichartz}, valid up to time $T=N^{-1}$, with the refined bilinear Strichartz of De Silva~\emph{et al.}~\cite{mass_critical_gwp_staffilani_tzirakis} to prove global well-posedness for $s > \tfrac{1}{3}$. McConnell \cite{McConnell_Mass_critical_d_1} further improved this threshold to $s > \tfrac{131}{624}$ by establishing \eqref{eq:derivative_free_L6} with $T= N^{-\frac{131}{208}}$ and by proving the following refined asymmetric Strichartz estimate
\begin{equation}
\norm{S(t)\phi_1 (S(t)\phi_2)^2}_{L^2_{t,x}([0,T]\times\T)}^2 \lesssim \left(T^{1/2}+\frac{N_2}{N_1}\right) \norm{\phi_1}_2^2 \norm{\phi_2}_2^4,
\label{eq: strichartz_asym}
\end{equation}
for all $\phi_1, \phi_2 \in L^2(\T)$ satisfying $\supp \widehat{\phi}_j \subset [N_j, 2N_j]$ for $j=1,2$ with $N_1 \gg N_2$. This estimate should be viewed as a refinement of \eqref{eq:derivative_free_L6}, reflecting an additional gain arising from the frequency separation between the interacting functions. In this paper we are able to recover \eqref{eq: strichartz_asym} up to a loss of $N_2^\epsilon$, where $\epsilon>0$ is arbitrarily small, as a consequence of the decoupling argument used in the proof of Theorem~\ref{theorem:L^6 Strichartz}. We include a proof in Appendix~\ref{sec:appendix A} for completeness. While \eqref{eq: strichartz_asym} does not enter into our proof of small-mass global well-posedness, it is natural to ask whether estimates of this type could help remove the small-mass assumption in the defocusing case. We leave this as an open problem for future work.

To summarize, in this paper we follow a similar approach to ~\cite{Schippa_mass_critical,McConnell_Mass_critical_d_1} to obtain small-mass global well-posedness of \eqref{eq:NLS} in $H^s(\T)$ for $s>0$. Our main contributions are twofold: (i) we prove \eqref{eq:derivative_free_L6} for longer times \(T = \frac{1}{(\log N)^c} \) for some \(c \gg 1\) and (ii) we do not rely on resonant decompositions of the modified energy or on refined bilinear/trilinear Strichartz estimates — our proof depends solely on bounding the first modified energy.
    
\subsection{Proof outline}
Theorem \ref{theorem:L^6 Strichartz} follows from a lossless small-cap decoupling estimate for functions whose Fourier support lies in an $R^{-1}$ neighborhood of the parabola. Roughly speaking, one decomposes the Fourier support into caps $\gamma$ of length $1/N$ and seeks to establish an $L^6$ decoupling estimate without any logarithmic loss. Accordingly, let
\[
f=\sum_\gamma f_\gamma
\]
be such a decomposition where each $f_\gamma$ has essentially constant amplitude on $B_R$. We refer the reader to \cref{sec:2} for the precise geometric setup and notation. We prove the following lossless small-cap decoupling result
\begin{equation}
\label{eq: main_L6_dec_intro}
    \|f\|_{L^6(B_R)}^6
\lesssim
N^2R
\left(\sum_\gamma a_\gamma^2\right)^3,
\end{equation}
where $\sqrt{R}(\log N)^{10000}\leq N\leq R$ and $a_\gamma$ denotes the amplitude of $f_\gamma$ on $B_R$ (see Theorem \ref{main} for the precise formulation).

The proof of \eqref{eq: main_L6_dec_intro} begins with a standard pigeonholing argument which reduces matters to the case where $a_\gamma \sim 1$ or $a_\gamma=0$ for each $\gamma$. The main step is to estimate the size of the superlevel sets
\[
U_a(f):=\{x\in B_R : \frac{a}{2}\leq \abs{f(x)}\leq a\}.
\]
More precisely, we prove that there exist a constant $C>1$ such that for all $R/N^2 \lesssim (\log N)^{-2C}$, one has
\begin{equation}
\label{eq: main decoupling estimate on Ua}
|U_a|a^6
\lesssim
\max\left(
\log\left(\frac{\lambda}{a}\right)^{O(1)}
\frac{a}{\lambda},
(\log N)^{-C}
\right)
N^2R\lambda^3,
\end{equation}
where $\lambda$ denotes number of caps $\gamma$ such that $\abs{f_\gamma}\sim 1$. The estimate in \eqref{eq: main_L6_dec_intro} then follows by summing over dyadic values of $a$.

To establish \eqref{eq: main decoupling estimate on Ua}, we mainly follow the proof of \cite[Theorem 4]{amplitude}. As a corollary to their theorem, we see that up to a $\log(N)^{O(1)}$ loss, the only possible sharp contribution to $\|f\|^6_{L^6}$ comes from $|U_\lambda|\lambda^6$, i.e., the highest possible peaks. In \cite{amplitude}, the result was first proved for the broad case and a broad--narrow argument was then used to obtain the general case. To improve their argument and remove the $\log N$ loss, we need to overcome two obstacles: (i) replace the $\log N$ loss in the broad estimate by a $\log \lambda/a$ loss and (ii) remove any potential $\log N$ losses coming from the broad--narrow argument. 

The broad estimate applies when the caps $\Sigma_1$ and $\Sigma_2$ are comparable in size and separated by the same scale. Following the approach of Guth and Maldague \cite{amplitude}, we first establish a broad estimate using the high--low method and then combine it with a bilinear restriction estimate. By the high--low method, we mean an argument based on distinguishing whether the functions are dominated by high or low frequencies. In practice, this involves a dyadic decomposition in the frequency domain and treating the resulting pieces differently: the low-frequency part is controlled in $L^\infty$, while the high-frequency contributions are handled using almost orthogonality. To overcome the first obstacle and replace the $\log N$ loss arising at each scale by the smaller factor $\log \frac{\lambda}{a}$, we carefully restrict the number of high--low scales that contribute to the superlevel set estimate.

To overcome the second obstacle, we develop a refined broad--narrow argument. The broad--narrow decomposition has become a canonical tool in harmonic analysis and appears, for instance, in the proof of the $\ell^2$ decoupling theorem \cite{Bourgain_Demeter_decoupling} and in \cite{amplitude}. However, a direct implementation of the standard broad--narrow argument from Guth and Maldague \cite{amplitude} would still incur a $\log N$ loss, due to the presence of $\log N$ distinct broad and narrow scales. Our refinement avoids this loss by exploiting the fact that contributions coming from sufficiently separated broad--narrow scales interact only weakly. More precisely, we prove an asymmetric superlevel estimate that deals with intersections of superlevel sets associated with functions $f_1$ and $f_2$ whose respective Fourier supports are contained in caps $\Sigma_1$ and $\Sigma_2$ such that the length of the shorter cap is much shorter than the distance between them. Roughly speaking, for caps $\Sigma_1$ and $\Sigma_2$ that are at least $l$ separated and for $a,b$ in the appropriate ranges (see Lemma \ref{asym} for a precise statement), one has
\begin{equation}
\label{eq: asym_superlevel}
    \abs{U_{a}(f_{1}) \cap U_{b}(f_{2})} \lessapprox \max\left(\frac{l(\Sigma_2)}{l}, \frac{\sqrt{R}}{N}\right)\frac{N^2R\lambda(\Sigma_1)\lambda(\Sigma_2)^2}{a^2b^4},
\end{equation}

where $l(\Sigma_j)$ denotes the length of the cap $\Sigma_j$, $\lambda(\Sigma_j)$ is the number of caps $\gamma \subset \Sigma_j$ such that $\abs{f_\gamma} \sim 1$ and $\lessapprox$ should be understood as bound up to logarithmic losses. The additional gain $\max\left(\frac{l(\Sigma_2)}{l}, \frac{\sqrt{R}}{N}\right)$ is precisely what allows us to control interactions between distant broad--narrow scales and thereby eliminate the logarithmic losses. The asymmetric estimate is established using a high--low analysis similar to that employed in the proof of the broad estimate. In addition to its role in the proof of Theorem~\ref{theorem:L^6 Strichartz}, \eqref{eq: asym_superlevel} may have further applications beyond the present work. For instance, it is a key input in recovering the asymmetric Strichartz estimate in Appendix~\ref{sec:appendix A}.

We now turn to the proof of Theorem \ref{theorem:main_gwp_theorem}. The proof of Theorem~\ref{theorem:main_gwp_theorem} is based on the $I$-method and broadly follows the approach developed in \cite{mass_critical_gwp_staffilani_tzirakis,Schippa_mass_critical,McConnell_Mass_critical_d_1}. As we are assuming that the mass is small, it suffices to consider the defocusing case ($\mu=1$) by the Gagliardo-Nirenberg \cite{Nirenberg} inequality. Next, by exploiting the rescaling symmetry of the equation, we analyze solutions $u^\lambda$ on the space-time domain $\T_\lambda \times \R$. We introduce a Fourier multiplier $I$ which acts as the identity on low frequencies $\abs{k} \lesssim N$ and damps high frequencies $\abs{k}\gg N$ at the rate $\left(\abs{k}/N \right)^{s-1}$, where $N$ is a large parameter to be determined. We define the $I$-system by applying the $I$-operator to \eqref{eq:NLS} and the modified energy $E^1(u) := E(Iu)$ is then naturally interpreted as the energy of the $I$-system. Under a small mass assumption, we establish a local existence result for the $I$-system for time intervals of length $\lambda^2 \log(\lambda N)^{-c}$ where $c>0$ is a fixed constant. The main challenge is to extend this local result to arbitrary times. This is achieved by proving that $E^1(u(t))$ is "almost conserved", in the sense that the time-averaged derivative of $E^1$ exhibits mild growth. This slow growth of the modified energy allows us to iterate the local existence argument and propagate the $H^s$-bound for $u^\lambda$ over arbitrarily long times. Undoing the rescaling then yields polynomial-in-time bounds for the original solution $u$, which establishes global well-posedness.

\subsection{Organization of the paper.}
Our paper is organized as follows: in \cref{sec:2} we introduce the geometric setup for decoupling, in \cref{sec:3} we prove Theorem \ref{theorem:L^6 Strichartz} using the high-low method and a refined broad-narrow argument, in \cref{sec:4} we transfer our Strichartz estimates to $Y^s$ spaces and in \cref{sec:5} we give the proof of Theorem \ref{theorem:main_gwp_theorem} using the $I$-method. In Appendix~\ref{sec:appendix A} we give an alternative proof of the asymmetric Strichartz estimate first established in \cite{McConnell_Mass_critical_d_1}.

\subsection{Notation}
We  use $A \lesssim B$ to indicate an estimate of the form $A \leq CB$ for some constant $C>0$. When we want to stress that the constant depends on a parameter, say $p$, we write  $A \lesssim_p B$. If  both $A \lesssim B$ and $B \lesssim A$ hold, we write $A \sim B$. Furthermore, $A \ll B$ signifies that $A \leq cB$ for some small constant $0<c<1$ and we use $a\pm$ to denote $a \pm \epsilon$ where $0<\epsilon \ll 1$. When we say that a property $Q(A,B)$ holds whenever $A \ll B$, we mean that that there exist a small enough absolute constant $0<c<1$ such that $Q(A,B)$ holds whenever $A \leq cB$. For any measurable $A \subset \Rd$, we denote its characteristic function by $\chi_A$ and its Lebesgue measure by $\abs{A}$. Finally, $\mathbb{N}$ denotes the set of natural numbers and we write $\mathbb{N}_0:=\mathbb{N} \cup \{0\}$ for the set of non-negative integers.

\section{Geometric Setup and Preliminary Lemmas for Decoupling}\label{sec:2}

Let's first go through some definitions and standard results in decoupling. 
Let 
\[
\P := \{(x,x^2) : x \in [0,1]\}
\]
denote the truncated parabola, and let $\Gamma:=N_{1/R}(\P)$ be its $1/R$-neighborhood. 
We write $B_r(x)$ for the ball of radius $r$ centered at $x \in \R^2$, and $B_r$ for the ball of radius $r$ centered at the origin. 
We now record some basic notations.

\begin{definition}
A \emph{cap} $\tau$ of length $l(\tau)$ centered at $c(\tau)$ is defined as
\[
\tau := N_{1/R}(\P) \cap B_{l(\tau)}(c(\tau)).
\]
The \emph{direction} $d(\tau)$ of $\tau$ is the direction of the normal to $\P$ at $c(\tau)$.
\end{definition}

\begin{definition}
We say that a set $S$ has \emph{rectangular shape} if there exist rectangles. 
\[
R_1 \subseteq S \subseteq R_2
\]
such that $|R_2| \leq 100\,|R_1|$ where $|R|$ denotes the area of the rectangle $R$. 

Given a rectangular shape $S$, we say that a function $W$ \emph{decays rapidly off $S$} if
\[
W(x) \lesssim \left(\frac{1}{1+\operatorname{dist}(x,S)}\right)^{100}.
\]

The \emph{dual set} $S^*$ of $S$ is defined by
\[
S^* := \{x \in \R^2 : |x \cdot y| \leq 1 \;\; \text{for all } y \in S \}.
\]

We use $W_S$ to denote an ($L^1$-normalized) \emph{weight function} on $S$, i.e.\ a nonnegative function such that
\begin{enumerate}
    \item $W_S \sim \tfrac{1}{|S|}$ on $S$,
    \item $\int W_S \sim 1$,
    \item $W_S$ decays rapidly off $S$,
    \item $\widecheck{W_S}$ and $\widehat{W_S}$ are supported in $S^*$.
\end{enumerate}
By $\|f\|_{\L^p(W_S)}$, we mean $\left(\int |f|^p W_S\right)^{1/p}$. 
\end{definition}
By the argument in section 2.1 in \cite{amplitude}, given any set $S$ with rectangular shape, there exists a weight function $W_S$ on $S$. 
\begin{lemma}(Locally Constant)\label{locally constant}
    Let $S$ be a rectangular shape and suppose that $\spp \hat{g} \subset S^*$, then $g$ is locally constant on $S$. In other words, for any translation $S'$ of $S$, 
    $$\|g\|_{L^\infty (S')} \leq \|g\|_{\L^1 (W_{S'})}$$
    for some weight function $W_{S'}$ on $S'$. 
\end{lemma}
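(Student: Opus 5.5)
The idea is the standard reproducing-kernel argument. Since $\supp \hat g \subset S^*$, we multiply $\hat g$ by a bump that equals $1$ on $S^*$ and whose inverse Fourier transform is an $L^1$-normalized, rapidly decaying function adapted to $S$. We then dominate $|g|$ pointwise by a convolution with a weight that is essentially constant on translates of $S$.

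\textbf{Step 1: the kernel.} Choose a smooth $\psi_S$ with $\psi_S \equiv 1$ on $S^*$, supported in $2S^*$. Let $\rho_S:=\widecheck{\psi_S}$. By the rectangular-shape assumption, after the affine change of variables mapping the inner rectangle $R_1$ of $S$ to a unit square (with the bounded eccentricity factor $|R_2|\le 100|R_1|$), $\rho_S$ is a rescaled Schwartz function. Thus
\[
|\rho_S(x)| \lesssim \frac{1}{|S|}\Bigl(1+\operatorname{dist}(x,S)\Bigr)^{-200},
\]
with distance measured in the coordinates adapted to $S$. Since $\hat g=\psi_S\hat g$, we have $g=g*\rho_S$, so
\[
|g(x)|\le \int |g(y)|\,|\rho_S(x-y)|\,dy .
\]

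\textbf{Step 2: uniformity over $x\in S'$.} Fix a translate $S'=S+x_0$ and $x\in S'$. Because $S-S\subset$ a bounded dilate of $S$ (using $R_1\subset S\subset R_2$ and the comparability of $R_1$ and $R_2$), for $y$ with $x-y$ at distance $d$ from $S$, the point $x_0-y$ is at distance $\ge d-O(\operatorname{diam}$ in $S$-units$)$. Therefore
\[
\sup_{x\in S'} |\rho_S(x-y)| \lesssim \frac{1}{|S|}\Bigl(1+\operatorname{dist}(y,S')\Bigr)^{-200}=:\widetilde W(y).
\]
This gives $\|g\|_{L^\infty(S')}\lesssim \int |g|\widetilde W$.

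\textbf{Step 3: matching the definition of a weight function.} The function $\widetilde W$ is $\sim 1/|S|$ on $S'$, has integral $\sim 1$, and decays rapidly off $S'$. It need not satisfy the Fourier-support condition (4). We fix this by invoking the construction from Section 2.1 of \cite{amplitude}, already quoted before the lemma: there is a weight function $W_{S'}$ with Fourier support in $S^*$ and $W_{S'}\gtrsim \widetilde W$ pointwise. The usual construction takes $|\widecheck{\phi}|^2$ for a bump $\phi$ supported in $\tfrac12 S^*$, modulated and translated to $S'$. Its decay can be taken as slow as $(1+\operatorname{dist})^{-100}$ while staying positive and comparable to $1/|S|$ on $S'$. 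If a pointwise domination by a single such weight fails, average finitely many translates or use a positive sum $\sum_k c_k W_{S+v_k}$ over lattice translates, which keeps the Fourier support in $S^*$.

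The implicit constant from Steps 1–2 can then be absorbed into the normalization of $W_{S'}$, since property (1) only requires $W_{S'}\sim 1/|S|$. This yields $\|g\|_{L^\infty(S')}\le \|g\|_{\L^1(W_{S'})}$.

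\textbf{Main obstacle.} The main obstacle is Step 3: producing a weight that simultaneously dominates the tail of the kernel and has Fourier transform supported in $S^*$. Positivity and compact Fourier support compete, so decay is only polynomial. That is exactly why the definition demands only $100$th-power decay, while the kernel decays faster.
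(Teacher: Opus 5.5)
Your proposal is correct and follows the paper's route: Steps 1--2 are exactly the paper's argument, which writes $g=g*\check{\eta}_{S^*}$ and takes $W_{S'}(y)\sim\max_{x\in S'}|\check{\eta}_{S^*}(x-y)|$. Your Step 3 goes further than the paper, which never checks the Fourier-support property (4) for this sup-weight. The version of Step 3 that actually works is the weighted lattice sum $\sum_v (1+|v|)^{-150}|\check{\phi}(\cdot-x_0-v)|^2$: translation preserves Fourier support, and only such an infinite weighted sum, not finitely many translates, can dominate the polynomial tail.
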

\begin{proof}
There exists a function $\eta_{S^*}$ such that $\eta_{S^*} \sim 1$ on $S^*$ and is supported on $cS^*$ and $\check{\eta_{S^*}}\sim \frac{1}{|S|}$ on $S$ and decays rapidly off $S$. Thus,
$g(x)=g*\check{\eta_{S^*}}$. 
    Therefore, for any $x \in S'$, we have 
    \begin{eqnarray*}
    |g(x)|
       &=&  |g *\check{\eta}_{S^*}(x)|\\
       &\leq& \int |g(x-y)||\check{\eta}_{S^*}(y)|  dy\\
    &\leq& \int |g|W_{S'}
    \end{eqnarray*}
We put $W_{S'}(y):\sim \max_{x \in S'}|\check{\eta}_{S^*}(x-y)|$ to be the weight function. Note that,  $W_{S'} \sim \frac{1}{|S|}$ on $S+S'\sim S'$ and decays rapidly off $S+S'$.   
\end{proof}
 
\begin{definition}
    Given a function $f$ with $\spp \hat{f} \subseteq N_{1/R}(\P^1)$, we define $f_\tau$, the Fourier restriction of $f$ to $\tau$, to be 
    $$f_\tau: = f* \check{\chi}_\tau.$$
\end{definition}
\begin{definition}
    Let $N\gg 1$ and $R\gg 1$ such that 
    $N \leq R< N^2$. 
    We say that a function $f$ with $\spp \hat{f} \subset \N_{1/R}(\P)$ satisfies $\Cn$ if   $f=\sum_{\gamma} f_\gamma$ such that each $f_\gamma$ has fourier support on a cap $\gamma$ of length $1/N$ and for each $\gamma$ there exists a real number $a_\gamma$ such that $\frac{a_\gamma}{100} \leq|f_\gamma| \leq a_\gamma$ on $B_R$ and $f_\gamma$ decays rapidly off $B_R$. We say that $f$ satisfies $\Cn_0$ if for all $\gamma$ we have either $a_\gamma=1$ or $a_{\gamma}=0$. 
\end{definition}

We shall first prove our main result for the special case in which $f$ satisfies $\Cn_0$, and then extend it to the general case. 
Let $f$ satisfies $\Cn_0$. 
Define $l_k=3^{-k}$. Then, $l_{k-1}/l_k=3$ and 
$$R^{-1/2}=l_M <l_{M-1} <\cdots < l_1 <l_0=1.$$ Let $c$ be the smallest number such that $l_c> N/R$. 
For $1 \leq k \leq M-1$, We cut $\P$ into caps $\tau_k$ of length $l_k$ and for $c \leq k \leq M-1$, we cut $\P$ into caps $\omega_k$ of length $l_k^{-1}R^{-1}$. Also, we cut $\P$ into caps $\theta$ of length $R^{-1/2}$ and into caps $\gamma$ of length $1/N$. 
Given any cap $\tau$, we define 
\begin{align*}
    \lambda(\tau)&:=\#\{\gamma: \gamma \subset \tau, |f_\gamma|\sim 1\}, \\
    \lambda(l_k)&:=\max_{\tau_k}\#\{\gamma \subset \tau_k: |f_\gamma|\sim 1 \text{ on } B_R\}, \\
\lambda(l_k^{-1}R^{-1})&:=\max_{\omega_k}\#\{\gamma \subset \omega_k: |f_\gamma|\sim 1 \text{ on } B_R\}
\end{align*}

Also, given a cap $\theta$, we define 
$$\lambda(\theta)=\#\{\gamma \subset \theta: |f_\gamma|\sim 1 \text{ on }B_R\}.$$

For any $k$ and any $\tau_k \in \P$, we tile $B_R$ by $l_kR \times R$ tubes, $U_{\tau_k}$, whose long axis is in the direction $d(\tau_k)$. We call this set of tubes $\U_{\tau_k}$. For $U_{\tau_k} \in \U_{\tau_k}$, let $W_{U_k}$ be a weight function on $U_{\tau_k}$.

Let $\{\eta_k\}_{0 \leq k \leq M} \cup \{\eta_{l_k}\}_{0 \leq k \leq M} $ be a smooth partition of unity on $B_1$ such that 
\begin{enumerate}
\item $$\spp \eta_{l_k} \subseteq \left\{x: \frac{l_{k}}{2} \leq |x| \leq 2l_{k-1}\right\} \text{ for } 1  \leq k \leq M$$
    \item $$\spp \eta_k \subseteq \left\{x: \frac{R^{-1}l_{k-1}^{-1}}{2} \leq |x| \leq 2R^{-1}l_{k}^{-1}\right\} \text{ for } 1  \leq k \leq M$$
and 
$\spp \eta_0 \subseteq \left\{x:  |x| \leq 2R^{-1}\right\}.$
\item $\check{\eta}_k(x)\sim R^{-2}l_k^{-2}$ on $B_{Rl_{k}}$ and decays rapidly off $B_{Rl_{k-1}}$ and $\check{\eta}_{l_k}(x)\sim_K l_k^{2}$ on $B_{l_{k-1}^{-1}}$ and decays rapidly off $B_{l_{k}^{-1}}$
\item $\int |\check{\eta}_k| \sim 1$ and $\int |\check{\eta}_{l_k}| \sim 1$
\end{enumerate}

In this paper, we shall look at the blurred version of $f$ at many different scales.
For $1\leq k \leq M$, define $g_k=\sum_{\tau_k} |f_{\tau_k}|^2$ and define $g=\sum_{\theta}|f_\theta|^2$ and $g_{\tau}=\sum_{\theta \subset \tau}|f_\theta|^2$. Then, $g_k$ is basically $|f|^2$ blurred at scale $l_k^{-1}$, $g$ is $f$ blurred at scale $R^{1/2}$ and $g_\tau$ is $|f_\tau|^2$ blurred at scale $R^{1/2}$. Moreover, we can view $g$ as the sum of the characteristic functions of a bunch of $R \times R^{1/2}$ tubes. 

Then, intuitively $|U_{\tau_k}|\|f_{\tau_k}\|_{\L^2(W_{U_k})}^2\sim |U_{\tau_k}|\|g_{\tau_k}\|_{\L^1(W_{U_k})}$ counts the number of $R\times R^{1/2}$ tubes contained in $U_{\tau_k}$. 
Now, let's prove some properties of $\|f\|^2_{\L^2(W_{U_{\tau_k})}}$.

\begin{lemma}(Local Orthogonality)\label{localorth}
$$\|g_{\tau_k}\|_{\L^1(W_{U_{\tau_k})}}\sim \|f_{\tau_k}\|_{\L^2(W_{U_k})}^2 \lesssim \begin{cases}
\lambda(l_k)\lambda(l_k^{-1}R^{-1}) \text{ if } k \geq c\\
    \lambda(l_k)\text{ if } k < c
\end{cases}
$$    
\end{lemma}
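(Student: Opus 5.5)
The plan is to prove the lemma by $L^2$ orthogonality against the weight $W_{U_{\tau_k}}$, using property (4) of weight functions: $\widehat{W_{U_{\tau_k}}}$ is supported in the dual set $U_{\tau_k}^*$.

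\emph{Step 1: which cross terms survive.} Since $U_{\tau_k}$ is an $l_kR\times R$ tube with long axis along $d(\tau_k)$, its dual $U_{\tau_k}^*$ is essentially a rectangle of dimensions $(l_kR)^{-1}\times R^{-1}$. The short side $R^{-1}$ points along $d(\tau_k)$, and the long side $(l_kR)^{-1}$ points along the tangent to $\P$ at $c(\tau_k)$. Take any partition of $\tau_k$ into subcaps $\sigma$, and expand
\[
\int |f_{\tau_k}|^2 W_{U_{\tau_k}}=\sum_{\sigma,\sigma'}\int f_\sigma\overline{f_{\sigma'}}\,W_{U_{\tau_k}}.
\]
By Plancherel, each cross term vanishes unless $(\sigma-\sigma')\cap U_{\tau_k}^*\neq\emptyset$. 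For two points of $\tau_k$ at tangential distance $s$, the difference vector has tangential component $\sim s$. Its normal component is $\lesssim l_k s+R^{-1}$, because the normals vary by at most $\sim l_k$ across $\tau_k$. Hence if $\sigma,\sigma'$ are separated by more than $C(l_kR)^{-1}$, the difference lies outside $U_{\tau_k}^*$ and the cross term vanishes. So if all subcaps $\sigma$ have length $\gtrsim (l_kR)^{-1}$, only $O(1)$ neighbours interact, and Cauchy--Schwarz gives
\[
\|f_{\tau_k}\|_{\L^2(W_{U_k})}^2\sim\sum_{\sigma\subset\tau_k}\|f_\sigma\|^2_{\L^2(W_{U_k})}.
\]

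\emph{Step 2: the comparison with $g_{\tau_k}$.} Apply Step 1 with $\sigma=\theta$, which is admissible because $R^{-1/2}\ge (l_kR)^{-1}$ whenever $l_k\ge R^{-1/2}$. This gives $\|f_{\tau_k}\|^2_{\L^2(W_{U_k})}\sim\sum_{\theta\subset\tau_k}\int|f_\theta|^2W_{U_k}=\|g_{\tau_k}\|_{\L^1(W_{U_k})}$. The upper bound is immediate. The lower bound "$\gtrsim$" is the delicate point, since neighbouring cross terms need not be nonnegative. I would handle it by the standard device of comparing both sides to a slightly enlarged weight. Alternatively, one can first pass to a subfamily of $\theta$'s pairwise separated by more than $C(l_kR)^{-1}$, which splits $\tau_k$ into $O(1)$ subfamilies, and apply exact orthogonality to each.

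\emph{Step 3: the counting bound.} For $k\ge c$ we have $l_k\le N/R$, so the caps $\omega$ of length $l_k^{-1}R^{-1}$ satisfy $l(\omega)\ge 1/N$, and Step 1 applies with $\sigma=\omega$. Each $f_\omega$ is a sum of at most $\lambda(\omega)$ nonzero pieces $f_\gamma$. Condition $\Cn_0$ gives $|f_\gamma|\lesssim 1$ on $B_R$ with rapid decay off it, so the triangle inequality yields $|f_\omega|\lesssim\lambda(\omega)$ pointwise. Since $\int W_{U_k}\sim 1$,
\[
\|f_{\tau_k}\|^2_{\L^2(W_{U_k})}\lesssim\sum_{\omega\subset\tau_k}\lambda(\omega)^2\le\lambda(l_k^{-1}R^{-1})\sum_{\omega\subset\tau_k}\lambda(\omega)\le\lambda(l_k^{-1}R^{-1})\,\lambda(l_k).
\]
For $k<c$, each $\gamma$ has length $1/N\ge(l_kR)^{-1}$, so Step 1 applies directly with $\sigma=\gamma$. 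This gives $\|f_{\tau_k}\|^2\lesssim\sum_{\gamma\subset\tau_k}\|f_\gamma\|^2_{\L^2(W_{U_k})}\lesssim\lambda(\tau_k)\le\lambda(l_k)$.

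The main obstacle is the geometric verification in Step 1: one must check that curvature across $\tau_k$ never pushes $\sigma-\sigma'$ into $U_{\tau_k}^*$ for well-separated $\sigma,\sigma'$. The key inequality is that the normal deviation $l_ks$ stays below $R^{-1}$ exactly on the range $s\lesssim(l_kR)^{-1}$ where the tangential condition already allows interaction. A secondary technical issue is the two-sided "$\sim$" in Step 2.
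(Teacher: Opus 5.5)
Your Steps 1 and 3 are sound and essentially match the paper's orthogonality-plus-counting argument. The gap is in how you organize them: you never actually bound $\sum_{\theta\subset\tau_k}\int|f_\theta|^2W_{U_{\tau_k}}$, which is the quantity the paper's proof establishes and the one used later. Step 3 only bounds $\int|f_{\tau_k}|^2W_{U_{\tau_k}}$. To transfer that bound to $g_{\tau_k}$ you need the direction $\sum_\theta\int|f_\theta|^2W\lesssim\int|f_{\tau_k}|^2W$ of Step 2, and you leave it open. (Cauchy--Schwarz in Step 1 also gives only $\lesssim$, not $\sim$.)

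Neither of your suggested fixes supplies this direction. Splitting the $\theta$'s into $O(1)$ well-separated subfamilies $F$ gives exact identities inside each $F$. However, the cross terms $\int f_F\overline{f_{F'}}\,W_{U_{\tau_k}}$ between different subfamilies, i.e.\ between adjacent $\theta$'s, remain and can be negative. Enlarging the weight only ever produces upper bounds.

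In fact the reverse inequality is false under $\mathrm{Cond}_0$. Let the only nonzero pieces be $f_{\gamma_1}=e^{2\pi i\xi_1\cdot x}\psi$ and $f_{\gamma_2}=-e^{2\pi i\xi_2\cdot x}\psi$, where:
\begin{itemize}
\item $\gamma_1\subset\theta_1$ and $\gamma_2\subset\theta_2$ sit on either side of the common endpoint of two adjacent $\theta$'s in $\tau_k$;
\item $|\xi_1-\xi_2|\sim R^{-1}$;
\item $\psi\sim 1$ on $B_R$, with $\widehat\psi$ supported in a small multiple of $B_{R^{-1}}$.
\end{itemize}
Then $|f_{\tau_k}|^2=4|\psi|^2\sin^2(\pi(\xi_1-\xi_2)\cdot x)$. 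The phase $(\xi_1-\xi_2)\cdot x$ varies by only $O(l_k)$ across a tube $U_{\tau_k}$, because its short side is essentially parallel to $\xi_1-\xi_2$. On the tube through the origin this gives $\int|f_{\tau_k}|^2W_{U_{\tau_k}}=O(l_k^2)$, while $\sum_\theta\int|f_\theta|^2W_{U_{\tau_k}}\sim 1$. So only the one-sided comparison $\int|f_{\tau_k}|^2W\lesssim\sum_\theta\int|f_\theta|^2W$ holds, and the paper's proof of this lemma establishes only the bound on $g_{\tau_k}$.

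The repair is the paper's route: run your Step 1 inside each $\theta$ rather than on $f_{\tau_k}$.
\begin{itemize}
\item Since $\theta\subset\tau_k$, your geometry applies verbatim when $\theta$ is partitioned into caps $\omega$ of length $(l_kR)^{-1}$ (for $k\ge c$; these fit inside $\theta$ because $l_k\ge R^{-1/2}$), or into the $\gamma$'s (for $k<c$).
\item This gives $\int|f_\theta|^2W_{U_{\tau_k}}\lesssim\sum_{\omega\subset\theta}\int|f_\omega|^2W_{U_{\tau_k}}$.
\item Summing over $\theta\subset\tau_k$ and applying your Step 3 counting bounds $\sum_\theta\int|f_\theta|^2W_{U_{\tau_k}}$ directly by $\lambda(l_k)\lambda(l_k^{-1}R^{-1})$, respectively $\lambda(l_k)$.
\item The same bound for $\int|f_{\tau_k}|^2W_{U_{\tau_k}}$ then follows from the valid upper half of your Step 2.
\end{itemize}
Incidentally, the vanishing of cross terms in Step 1 uses only the tangential component of $\sigma-\sigma'$. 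The curvature check you single out as the main obstacle is not needed.
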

\begin{proof}
By Plancherel's theorem, we have
    \begin{eqnarray*}
        \int |f_\theta|^2 W_{U_\tau}  &=& \int \hat{f_\theta}(\xi) \hat{\bar{f_\theta}}*\hat{W}_{U_{\tau_k}}(\xi) \, d\xi\\
        &=&\sum_{\omega \subset \theta} \sum_{\omega' \subset \theta} \int \hat{f_\omega}(\xi) \hat{\bar{f_{\omega'}}}*\hat{W}_{U_{\tau_k}}(\xi) \, d\xi\\
    \end{eqnarray*}
    Note that the summand from $(\omega, \omega')$ is non-zero only if $\omega \cap (-\omega' +U_{\tau_k}) \not=\emptyset$. Thus, we need $|c(\omega)-c(\omega')| \leq R^{-1}l_k^{-1}$. In this case, we say $\omega \sim \omega'$. As $l(\omega )\geq R^{-1}l_k^{-1}$ for each $\omega$ there is $\sim 1$ many $\omega'$ such that $\omega \sim \omega'$. Thus, 
    \begin{eqnarray*}
        \int |f_\theta|^2 W_{U_\tau} 
        &=&\sum_{\omega \subset \theta} \sum_{\omega' \sim \omega} \int \hat{f_\omega}(\xi) \hat{\bar{f_{\omega'}}}*\hat{W}_{U_{\tau_k}}(\xi) \, d\xi\\
        &=&\sum_{\omega \subset \theta} \sum_{\omega' \sim \omega} \int f_\omega\bar{f_{\omega'}}W_{U_{\tau_k}} \text{ by Plancherel }\\
        &\lesssim &\sum_{\omega \subset \theta} \int |f_\omega|^2W_{U_{\tau_k}} \text{ by Cauchy-Schwartz}\\
    \end{eqnarray*}

    If $k \leq c$, then $1/N \geq l_k^{-1}R^{-1}$, so 
    $$\int |f_\theta|^2 W_{U_\tau} \lesssim \sum_{\gamma \subset \theta} \int |f_\gamma|^2W_{U_{\tau_k}} \lesssim (\# \gamma \subseteq \theta) |U_{\tau_k}|.$$
    Thus, 
    $$\|g_{\tau_k}\|_{\L^1(W_{U_{\tau_k}})} \lesssim (\# \gamma \subseteq \tau_K) \sim \lambda(l_k).$$
    If $k > c$, then 
    $$\int |f_\theta|^2 W_{U_\tau} \lesssim \sum_{\omega \subset \theta} \int |f_\omega|^2W_{U_{\tau_k}} $$
    where $\omega$ are caps of length $R^{-1}l_k^{-1}$. 
    Note that $\|f_\omega\|_\infty \leq (\# \gamma \subseteq \omega) \sim \lambda(R^{-1}l_k^{-1})$.
    Thus, 
    $$\sum_{\omega \subseteq \theta} \int |f_\omega|^2W_{U_{\tau_k}} \sim (\# \omega \subseteq \theta) (\# \gamma \subseteq \omega )^2|U_{\tau_k}|.$$
    Therefore, 
    \begin{equation*}
     \|g_{\tau_k}\|_{\L^1(W_{U_{\tau_k}})} \lesssim (\# \gamma \subseteq \tau)(\# \gamma \subseteq \omega) \sim \lambda(l_k)\lambda(l_k^{-1}R^{-1}). \qedhere    
    \end{equation*}

\end{proof}

Next, we prove a slightly generalized version of the bilinear restriction. 
\begin{lemma}(Bilinear Restriction)\label{bilresg}
Let $T$ be a $L_1$ by $L_2$ rectangle. Suppose that $d(\tau_1)$ is perpendicular to the side of length $L_1$ and $d(\tau_1)$ is perpendicular to the side of length $L_2$. Let $\omega_1$ be caps of length $\geq L_1^{-1}$ and $\omega_2$ be caps of length $\geq L_2^{-1}$
     Then, 
    $$\int_{T}|f_{\tau_1}|^2|f_{\tau_2}|^2 \lesssim \int \left(\sum_{\omega_1 \subset \tau_1} |f_{\omega_1}|^2\right) \left(\sum_{\omega_2 \subset \tau_2} |f_{\omega_2}|^2\right)W_T.$$
\end{lemma}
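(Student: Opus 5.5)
We will prove the estimate by localizing, expanding $f_{\tau_1}f_{\tau_2}$ into pieces with smaller Fourier support, and using Plancherel to show that only near-diagonal pairs survive; this is the bilinear analogue of the argument in Lemma~\ref{localorth}. We read the hypothesis as saying that $d(\tau_1)$ is perpendicular to the side of length $L_1$ and $d(\tau_2)$ is perpendicular to the side of length $L_2$. We also assume, as in the applications, that $\tau_1$ and $\tau_2$ are transverse, i.e.\ separated by a distance comparable to their lengths. Then $T^*$ is essentially an $L_1^{-1}\times L_2^{-1}$ rectangle whose side of length $L_1^{-1}$ is parallel to the direction $d(\tau_1)$ and whose side of length $L_2^{-1}$ is parallel to $d(\tau_2)$.

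\textbf{Step 1 (localization).} Bound $\chi_T \lesssim |T|\,W_T$ and write
\[
\int_T |f_{\tau_1}|^2|f_{\tau_2}|^2 \lesssim |T| \int |f_{\tau_1}f_{\tau_2}|^2 W_T .
\]
Here $W_T$ is the weight from Section~\ref{sec:2}, with $\widehat{W_T}$ supported in $T^*$. (The normalization of $W_T$ should be matched with the one implicit in the statement, so the factor $|T|$ will be absorbed.)

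\textbf{Step 2 (expansion and Plancherel).} Decompose $f_{\tau_j}=\sum_{\omega_j\subset\tau_j} f_{\omega_j}$, so that $f_{\tau_1}f_{\tau_2}=\sum_{\omega_1,\omega_2} f_{\omega_1}f_{\omega_2}$. Expanding the square gives
\[
\int |f_{\tau_1}f_{\tau_2}|^2 W_T=\sum_{\omega_1,\omega_2,\omega_1',\omega_2'} \int \widehat{f_{\omega_1}f_{\omega_2}}\;\overline{\widehat{f_{\omega_1'}f_{\omega_2'}}} * \widehat{W_T}.
\]
A term is nonzero only if $(\omega_1+\omega_2)\cap(\omega_1'+\omega_2'+T^*)\neq\emptyset$.

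\textbf{Step 3 (the geometric claim).} This is the main obstacle. We must show that, given $(\omega_1,\omega_2)$, there are only $O(1)$ quadruples $(\omega_1',\omega_2')$ satisfying the nonvanishing condition. The plan is as follows.
\begin{itemize}
\item Decompose the displacement $c(\omega_1)-c(\omega_1')$ along the tangent direction of $\tau_1$. By transversality, this tangent direction is nearly parallel to the normal $d(\tau_2)$, and conversely the tangent direction of $\tau_2$ is nearly parallel to $d(\tau_1)$.
\item Suppose $\xi_1+\xi_2-\xi_1'-\xi_2'\in T^*$ with $\xi_j,\xi_j'$ on the parabola. Projecting onto the two transverse directions should give $|\xi_1-\xi_1'|\lesssim L_1^{-1}$ and $|\xi_2-\xi_2'|\lesssim L_2^{-1}$.
\item The hypotheses $l(\omega_1)\geq L_1^{-1}$ and $l(\omega_2)\geq L_2^{-1}$ then leave only $O(1)$ neighbours $\omega_j'\sim\omega_j$.
\end{itemize}
To verify the projection step, I would first try the standard linearization: the parabola's curvature makes $\xi\mapsto\xi+\eta$ on $\tau_1\times\tau_2$ a diffeomorphism with Jacobian comparable to the separation. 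One must check that this holds uniformly at the $1/R$-neighbourhood thickness, and that the error terms stay within $T^*$. This is where the care is needed.

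\textbf{Step 4 (conclusion).} With the near-diagonal structure in hand, reverse Plancherel and apply Cauchy--Schwarz, exactly as in Lemma~\ref{localorth}. This gives
\[
\int |f_{\tau_1}f_{\tau_2}|^2W_T \lesssim \sum_{\omega_1,\omega_2}\int |f_{\omega_1}|^2|f_{\omega_2}|^2 W_T = \int \Big(\sum_{\omega_1}|f_{\omega_1}|^2\Big)\Big(\sum_{\omega_2}|f_{\omega_2}|^2\Big)W_T,
\]
which, combined with Step 1, is the claim.
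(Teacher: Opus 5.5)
Your architecture is the paper's: a cutoff adapted to $T$ with Fourier support in $T^*$, the quadrilinear expansion, the support condition $(\omega_1+\omega_2)\cap(\omega_1'+\omega_2'+T^*)\neq\emptyset$, an $O(1)$ near-diagonal count, and Cauchy--Schwarz against a nonnegative weight. The factor $|T|$ in Step~1 is harmless, because the $W_T$ in this lemma is really the $L^\infty$-normalized cutoff $\eta_T$ (compare $\int\eta_T\sim|T|$ in Proposition~\ref{bires}).

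The gap is Step~3, which is the entire content of the lemma. You leave it as a plan, and the geometry in that plan is wrong. Transversality does not make the tangent $t(\tau_1)$ nearly parallel to $d(\tau_2)$. On $\P$ no two normals are close to orthogonal, since $(-2c_1,1)\cdot(-2c_2,1)=1+4c_1c_2\geq 1$. All transversality gives is $|t(\tau_1)\cdot d(\tau_2)|\sim|c_1-c_2|$. So the orientation of $T$ matters, and yours does not work. Suppose you use the $T^*$ you write down (side $L_1^{-1}$ along $d(\tau_1)$, side $L_2^{-1}$ along $d(\tau_2)$), or put the $L_1$-side of $T$ along $t(\tau_1)$. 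Then $x=P(\xi_1)+P(\xi_2)-P(\xi_1')-P(\xi_2')\in T^*$ only yields $|\xi_j-\xi_j'|\lesssim\max(L_1^{-1},L_2^{-1})$. The reason is that $d(\tau_1)\cdot d(\tau_2)\sim 1$ (respectively $t(\tau_1)\cdot t(\tau_2)\sim1$) mixes the two widths. In the regime $L_1\gg L_2$ needed for Lemma~\ref{asym2} ($L_1=R^{1/2}l(\Sigma_2)^{-1}$, $L_2=R^{1/2}$), this leaves $\sim L_1/L_2$ choices of $\omega_1'$. Nor is this only a defect of the count: with the long side of $T$ along $t(\tau_1)$, the inequality itself loses a factor $\sim L_1/L_2$. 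This already happens when $\tau_2$ is a single $\omega_2$-cap and $f_{\tau_1},f_{\tau_2}$ are sums of exponentials at spacing $L_1^{-1}$.

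What is needed is $T$ comparable to the parallelogram with its $L_1$-edges parallel to $d(\tau_2)$ and its $L_2$-edges parallel to $d(\tau_1)$. This is how $T$ arises in Lemma~\ref{asym2}: it is the locally constant rectangle of $g_{\Sigma_2}$, long along $d(\Sigma_2)$. Then $T^*\approx\{|x\cdot d(\tau_2)|\lesssim L_1^{-1},\ |x\cdot d(\tau_1)|\lesssim L_2^{-1}\}$. Write $\Delta_j=\xi_j-\xi_j'$ and let $c_j$ be the centers. The exact identity $P(\xi)-P(\xi')=(\xi-\xi')(1,\xi+\xi')$ gives
\[
x\cdot(-2c_2,1)=\Delta_1(\xi_1+\xi_1'-2c_2)+\Delta_2(\xi_2+\xi_2'-2c_2),\qquad x\cdot(-2c_1,1)=\Delta_1(\xi_1+\xi_1'-2c_1)+\Delta_2(\xi_2+\xi_2'-2c_1).
\]
This is a $2\times 2$ system with diagonal entries $\sim1$ and off-diagonal entries $O(l(\tau_2))$ and $O(l(\tau_1))$. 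Its determinant is $2(c_2-c_1)(\xi_1+\xi_1'-\xi_2-\xi_2')$, of size $\sim 1$. Cramer's rule gives $|\Delta_1|\lesssim L_1^{-1}+l(\tau_2)L_2^{-1}$ and $|\Delta_2|\lesssim L_2^{-1}+l(\tau_1)L_1^{-1}$. Hence there are $O(1)$ choices of $(\omega_1',\omega_2')$ provided $l(\tau_2)\lesssim L_2/L_1$ and $l(\tau_1)\lesssim L_1/L_2$. This condition is missing from the statement but holds in both applications: $L_1=L_2$ in Proposition~\ref{highdom}, and $L_2/L_1=l(\Sigma_2)$ in Lemma~\ref{asym2}.

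The paper does the same elimination directly in coordinates. It uses $\xi^2-\xi'^2=(\xi+\xi')(\xi-\xi')$ and $|\xi_1+\xi_1'-\xi_2-\xi_2'|\gtrsim 1$ to bound $|\xi_2-\xi_2'|$ first, giving $O(1)$ choices of $\omega_2'$. It then counts $\omega_1'$ given $(\omega_1,\omega_2,\omega_2')$. With this in place, your Step~4 goes through as written.
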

\begin{proof}
    The proof is essentially a slight variation of the bilinear restriction proof. Let $\eta_T$ be a smooth cut-off function on $T$ that has Fourier support in $T^*$. Then, 
    \begin{eqnarray*}
       && \int_{T} |f_{\tau_1}|^2 |f_{\tau_2}|^2 \\
       & \lesssim & \int |f_{\tau_1}|^2 |f_{\tau_2}|^2 \eta_T\\
       & = & \int |\hat{f}_{\tau_1}|^2 |\hat{f}_{\tau_2}|^2 \eta_T \text{ by Plancherel }\\
       & \leq & \sum_{\omega_1} \sum_{\omega_1'} \sum_{\omega_2} \sum_{\omega_2'} \int \hat{f}_{\omega_1}*\hat{\bar{f}}_{\omega_1'}(\xi) \hat{f}_{\omega_2}\ast \hat{\bar{f}}_{\omega_2'}\ast \hat{\eta}_T(\xi)
    \end{eqnarray*}
    Note that $\hat{f}_{\omega_1}*\hat{\bar{f}}_{\omega_1'}$ has support in $\omega_1-\omega_1'$ and $\hat{f}_{\omega_2}*\hat{\bar{f}}_{\omega_2'}*\hat{\eta}_T$ has support in $\omega_2-\omega_2'+T$. 
Thus, in order for $$\int \hat{f}_{\omega_1}*\hat{\bar{f}}_{\omega_1'}(\xi) \hat{f}_{\omega_2}\ast \hat{\bar{f}}_{\omega_2'}\ast \hat{\eta}_T(\xi)\not=0,$$ we need 
$$(\omega_1-\omega_1') \cap (\omega_2-\omega_2'+T)\not=\emptyset.$$

Let $(\xi_1,\xi_1^2),(\xi_1',\xi_1'^2), (\xi_2,\xi_2^2), (\xi_2',\xi_2'^2)$ be the center of $\omega_1, \omega_1', \omega_2, \omega_2'$. 
Note that $\omega_1-\omega_1'$ is a $R^{-1/2}L_2 \times R^{-1}$ tube centering at $(\xi_1-\xi_1', \xi_1^2-\xi_1'^2)$ and $\omega_2-\omega_2'+S$ is a $R^{-1/2}L_2 \times R^{-1/2}$ tube centering at $(\xi_2-\xi_2', \xi_2^2-\xi_2'^2)$ 
Then, 
we need 
$$|(\xi_1-\xi_1')-(\xi_2-\xi_2')| \leq R^{-1/2}L_2$$
 $$|(\xi_1^2-\xi_1'^2)-(\xi_2^2-\xi_2'^2)| \leq R^{-1/2}L_1.$$

Thus, 
\begin{eqnarray*}
    R^{-1/2}L_1 &\geq& |(\xi_1^2-\xi_1'^2)-(\xi_2^2-\xi_2'^2)| \\
    &=&|(\xi_1+\xi_1'-\xi_2-\xi_2')(\xi_2-\xi_2')+(\xi_1+\xi_1')((\xi_1-\xi_1')-(\xi_2-\xi_2'))| \\
    &\geq&|(\xi_1+\xi_1'-\xi_2-\xi_2')(\xi_2-\xi_2')|-|(\xi_1+\xi_1')((\xi_1-\xi_1')-(\xi_2-\xi_2'))| \\
    &\geq&|(\xi_1+\xi_1'-\xi_2-\xi_2')(\xi_2-\xi_2')|-C R^{-1/2}L_2 \\
\end{eqnarray*}
Also, note that $(\xi_1+\xi_1'-\xi_2-\xi_2') \gtrsim 1$ as $\Gamma_1$ and $\Gamma_2$ are $\sim 1$ distance apart. Thus, we have $|\xi_2-\xi_2'| \lesssim R^{-1/2}L_1$. In this case, we say that $\omega_2' \in G(\omega_2)$. If we fix $\omega_1,\omega_2, \omega_2'
$, as $|(\xi_1-\xi_1')-(\xi_2-\xi_2')| \leq R^{-1/2}L_2$, there is $\lesssim 1$ choice for $\omega_1'$. In this case, we call $\omega_1' \in G(\omega_1,\omega_2, \omega_2')$. 
Thus, 
\begin{align*}
   & \int_{T} |f_1|^2 |f_2|^2 \\
   & \leq  \sum_{\omega_2} \sum_{\omega_2' \in G(\omega_2)}\sum_{\omega_1} \sum_{\omega_1' \in G(\omega_1,\omega_2, \omega_2')}  \int \hat{f}_{\omega_1}*\hat{\bar{f}}_{\omega_1'}(\xi) \hat{f}_{\omega_2}\ast \hat{\bar{f}}_{\omega_2'}\ast \hat{\eta}_T(\xi)\\
   & =  \sum_{\omega_2} \sum_{\omega_2' \in G(\omega_2)}\sum_{\omega_1} \sum_{\omega_1' \in G(\omega_1,\omega_2, \omega_2')}  \int f_{\omega_1} \bar{f}_{\omega_1'} f_{\omega_2} \bar{f}_{\omega_2'} \eta_T \text{ by Plancherel}\\
   & =  \sum_{\omega_2} \sum_{\omega_2' \in G(\omega_2)}\sum_{\omega_1} \sum_{\omega_1' \in G(\omega_1,\omega_2, \omega_2')}  \int  (|f_{\omega_1}|^2+ |{f}_{\omega_1'}|^2) f_{\omega_2} \bar{f}_{\omega_2'} \eta_T\\
    & =  \sum_{\omega_2} \sum_{\omega_2' \in G(\omega_2)} \int_S (\sum_{\omega_1} |f_{\omega_1}|^2) f_{\omega_2} \bar{f}_{\omega_2'} \eta_T \text{ as }|G(\omega_1,\omega_2, \omega_2')| \lesssim 1 \\
    & =  \int  (\sum_{\omega_1} |f_{\omega_1}|^2) (\sum_{\omega_2} |f_{\omega_2}|^2)\eta_T  \text{ as }|G(\omega_2)| \lesssim 1 \qedhere 
\end{align*}
\end{proof}
Next, we recall some high-low lemmas from \cite{amplitude} that will be useful in our proof. 

\begin{lemma}\label{break}
    $$\sum_{\tau' \subset \tau} |f_{\tau'}|^2*\check{\eta}_{\leq r} \lesssim \sum_{\tau''\subset \tau} |f_{\tau''}|^2 *\check{\eta}_{\leq r}$$
    and 
    $$\sum_{\tau' \subset \tau} |f_{\tau'}|^2*\check{\eta}_{\sim r} \lesssim \sum_{\tau''\subset \tau} |f_{\tau''}|^2 *\check{\eta}_{\sim r}$$
   whenever $r\leq l(\tau'')\leq l(\tau')$. 
\end{lemma}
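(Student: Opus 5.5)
The plan is to expand each coarse square function in terms of the finer caps, and then use the Fourier support of the cutoff to discard all interactions between far-apart fine caps. Fix $\tau$ and scales $r\le l(\tau'')\le l(\tau')$. For each $\tau'\subset\tau$ I write $f_{\tau'}=\sum_{\tau''\subset\tau'}f_{\tau''}$, so that
\[
|f_{\tau'}|^2*\check{\eta}_{\le r}=\sum_{\tau''_1,\tau''_2\subset\tau'}\bigl(f_{\tau''_1}\overline{f_{\tau''_2}}\bigr)*\check{\eta}_{\le r}.
\]
The Fourier transform of $f_{\tau''_1}\overline{f_{\tau''_2}}$ is supported in $\tau''_1-\tau''_2$. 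Convolving with $\check{\eta}_{\le r}$ multiplies this transform by $\eta_{\le r}$, which is supported in a ball of radius $\lesssim r$. So a term survives only if $(\tau''_1-\tau''_2)\cap B_{Cr}\neq\emptyset$.

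The first main step is a counting claim: for each $\tau''_1$ there are only $O(1)$ caps $\tau''_2$ with a surviving term. The first coordinate of any point of $\tau''_1-\tau''_2$ is at least $|c(\tau''_1)-c(\tau''_2)|-l(\tau'')$ in absolute value. Hence a surviving pair must satisfy
\[
|c(\tau''_1)-c(\tau''_2)|\lesssim r+l(\tau'')\lesssim l(\tau''),
\]
where the last step uses $r\le l(\tau'')$. Since the caps $\tau''$ of a fixed length have bounded overlap, only $O(1)$ of them lie within this distance of a given $\tau''_1$; call this relation $\tau''_1\sim\tau''_2$.

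The second step bounds the surviving terms pointwise. Each surviving term is estimated by
\[
\bigl|(f_{\tau''_1}\overline{f_{\tau''_2}})*\check{\eta}_{\le r}\bigr|\le \tfrac12\bigl(|f_{\tau''_1}|^2+|f_{\tau''_2}|^2\bigr)*|\check{\eta}_{\le r}|.
\]
Summing over the $O(1)$ neighbours of each cap, and then over $\tau'\subset\tau$ (the $\tau''$ partition each $\tau'$, hence all of $\tau$), gives the right-hand side $\sum_{\tau''\subset\tau}|f_{\tau''}|^2*|\check\eta_{\le r}|$. The same argument applies verbatim to $\check{\eta}_{\sim r}$, since $\eta_{\sim r}$ is also supported in $B_{2r}$.

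The main obstacle is that $\check{\eta}_{\le r}$ is not a positive kernel, so the Cauchy--Schwarz step produces $|\check\eta_{\le r}|$ rather than $\check\eta_{\le r}$ itself. I would handle this as in \cite{amplitude}. There $|\check{\eta}_{\le r}|$ is comparable to an $L^1$-normalized weight adapted to $B_{1/r}$, decaying rapidly off it, and the inequality is read in that sense, i.e.\ as a bound up to replacing the kernel by a comparable weight. This is harmless in all later uses, since these quantities are only ever integrated against weights or bounded in $L^\infty$ via Lemma~\ref{locally constant}. Alternatively, one can choose $\eta$ so that $\check\eta$ is nonnegative, for instance as the square of the inverse Fourier transform of a real even bump with half the support, which gives the literal statement.
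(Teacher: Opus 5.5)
Your argument is the paper's: expand the square functions over pairs of finer caps, use that $\eta_{\le r}$ and $\eta_{\sim r}$ are supported in $B_{Cr}$ together with $r\le l(\tau'')$ to keep only $O(1)$ neighbouring pairs, and conclude by Cauchy--Schwarz (the paper first reduces to a single coarse cap and then sums, which is the same thing). You are right that the Cauchy--Schwarz step actually produces $|\check\eta|$ on the right, which the paper passes over, but your nonnegative-kernel alternative cannot give the literal second inequality: $\eta_{\sim r}$ vanishes near the origin, so $\int\check\eta_{\sim r}=\eta_{\sim r}(0)=0$ and $\check\eta_{\sim r}$ must change sign (likewise, a nonnegative $\check\eta_{\le r}$ is incompatible with $\eta_{\le r}\equiv 1$ near $0$, which the high--low splitting uses), so the weighted reading with $|\check\eta|$ is the one to keep.
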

\begin{proof}
 It is enough to prove that 
 $|f_\tau|^2*\check{\eta}_{\leq r} \lesssim \sum_{\tau'\subset \tau} |f_{\tau'}|^2 *\check{\eta}_{\leq r}$ whenever $r<l(\tau')<l(\tau)$.
 Note, we can write 
$$|f_{\tau}|^2*\check{\eta}_{\leq r}=\sum_{\tau_1'} \sum_{\tau_2'} (f_{\tau_1'}\overline{f_{\tau_2'}})*\check{\eta}_{\leq r}$$
 Note that $(f_{\tau_1'}\overline{f_{\tau_2'}})*\check{\eta}_{\leq r}\equiv 0$ if $(\tau_1' -\tau_2') \cap B(0,r)=\emptyset$. Thus, for each $\tau_1'$ there are only finitely many $\tau_2'$ such that $(f_{\tau_1'}\overline{f_{\tau_2'}})*\check{\eta}_{\leq r} \neq 0$. Thus, by Cauchy-Schwartz, 
$$|f_{\tau}|^2*\check{\eta}_{\leq r}\lesssim \sum_{\tau'\subset \tau}  |f_{\tau'}|^2 *\check{\eta}_{\leq r}.$$
The proof for the $\check{\eta}_{\sim r}$ case is basically the same. 
\end{proof}
The following lemma essentially states that the low-frequency components have small $L^\infty$ norms. 
 \begin{lemma}\label{Low}(Low Lemma)
        Let $R^{-1} \leq r \leq R^{-1/2}$. 
      $$|g_{\tau}*\check{\eta}_{\leq r}|\lesssim \begin{cases}
            \lambda(\tau) \text{ if }r\leq 1/N\\
            \lambda(\tau)rN \text{ if }r>1/N
        \end{cases}$$      
    \end{lemma}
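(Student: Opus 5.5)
The plan is to bound the low-frequency part of $g_\tau$ pointwise. We reduce to the functions $f_\gamma$, where $|f_\gamma|\le 1$, and count how many caps can interact under a frequency cutoff of radius $r$.

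First, rewrite $g_\tau$ at a scale adapted to $r$. By definition $g_\tau=\sum_{\theta\subset\tau}|f_\theta|^2$ with $l(\theta)=R^{-1/2}\ge r$. We then use Lemma~\ref{break} to pass from $\theta$ to smaller caps $\tau''$ of length $\max(r,1/N)$. This gives
\[
g_\tau*\check\eta_{\le r}\lesssim \sum_{\tau''\subset\tau}|f_{\tau''}|^2*\check\eta_{\le r}.
\]
The lemma applies because $r\le \max(r,1/N)\le R^{-1/2}$; here we need $N\ge R^{1/2}$, which holds in the regime of interest. We also use that $\check\eta_{\le r}$ is (up to harmless modification) nonnegative with $\int|\check\eta_{\le r}|\lesssim 1$, since it is a sum of the normalized $\check\eta_k$. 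Consequently
\[
\bigl|\,|f_{\tau''}|^2*\check\eta_{\le r}\bigr|\le \|f_{\tau''}\|_\infty^2 .
\]

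\textbf{Case $r\le 1/N$.} Take $\tau''=\gamma$. Under $\Cn_0$ we have $|f_\gamma|\le 1$ on $B_R$ and rapid decay off $B_R$, so $\|f_\gamma\|_\infty\lesssim 1$. Only the caps $\gamma\subset\tau$ with $a_\gamma=1$ contribute, and there are $\lambda(\tau)$ of them. This gives
\[
|g_\tau*\check\eta_{\le r}|\lesssim\lambda(\tau).
\]

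\textbf{Case $r>1/N$.} Take $\tau''$ of length $r$. Each such cap contains at most $\sim rN$ caps $\gamma$, and $\|f_{\tau''}\|_\infty\le \#\{\gamma\subset\tau'': a_\gamma=1\}\le \min(rN,\lambda(\tau''))$. Hence
\[
\sum_{\tau''\subset\tau}\|f_{\tau''}\|_\infty^2\le \max_{\tau''}\|f_{\tau''}\|_\infty\cdot\sum_{\tau''}\lambda(\tau'')\lesssim rN\,\lambda(\tau).
\]

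The main point to check is the reduction step. Lemma~\ref{break} is stated for the cutoff $\check\eta_{\le r}$, and it requires that $\check\eta_{\le r}$ kills all cross terms $f_{\tau_1''}\overline{f_{\tau_2''}}$ except those between $O(1)$-adjacent caps. I expect this to hold, since the Fourier support of such a cross term lies in $\tau_1''-\tau_2''$, which avoids $B_r$ unless the two caps are within distance $\sim r$ of each other.

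A minor subtlety is that $\check\eta_{\le r}$ may not be literally nonnegative. This is handled by bounding with $|\check\eta_{\le r}|$, using $\int|\check\eta_{\le r}|\lesssim 1$ from property (4) of the partition, summed over $O(1)$ effective pieces or dominated by a single weight.
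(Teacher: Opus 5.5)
Your argument is correct, but it reaches the bound by a somewhat different route than the paper. The paper works one $\theta$ at a time. It notes that $|f_\theta|^2*\check\eta_{\le r}$ is locally constant on $R\times 1/r$ tubes $U$ parallel to $\theta$. It then bounds the supremum by the largest normalized average of $|f_\theta|^2$ over such a tube, and controls that average by local orthogonality (as in Lemma~\ref{localorth} with $l_kR=1/r$). This gives $\lesssim\lambda_0$ or $\lesssim\lambda_0 rN$ per $\theta$, which it multiplies by the number $\lambda(\tau)/\lambda_0$ of caps $\theta$; that count relies on the pigeonholing $\lambda(\theta)\sim\lambda_0$.

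You instead let the cutoff $\eta_{\le r}$ kill the cross terms directly, applying Lemma~\ref{break} to refine $\theta$ into caps of length $\max(r,1/N)$. After that you only use $\|f_{\tau''}\|_\infty\lesssim\lambda(\tau'')\lesssim rN$ and $\|\check\eta_{\le r}\|_1\lesssim 1$. Both arguments rest on the same orthogonality mechanism: cross terms between caps separated by more than $r$ vanish at frequency scale $\le r$. Your version is shorter and skips the tube geometry and the locally constant step. It also needs no $\lambda_0$-pigeonholing, since you sum $\lambda(\tau'')$ directly. The paper's version has one advantage: it exhibits the low part as essentially a sum of tube indicators weighted by the wave-envelope averages of $|f_\tau|^2$, and that description is reused in the proof of Proposition~\ref{boundhigh}.

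One correction to your justification. The bound $\int|\check\eta_{\le r}|\lesssim 1$ does not follow from $\eta_{\le r}$ being a sum of the individually normalized pieces $\eta_k$. That reasoning only gives $\lesssim\log(rR)$, which is exactly the kind of logarithmic loss the paper must avoid. The bound holds because the partition can be built by telescoping, so that $\eta_{\le r}$ is a single bump adapted to $B_{2r}$. Then $\check\eta_{\le r}$ is an $L^1$-normalized weight at scale $1/r$, which is what the paper implicitly uses when it writes $\check\eta_{\le r}\sim r^2$ on $B_{1/r}$ with rapid decay.
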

    \begin{proof}
    Note that $\check{\eta}_{\leq r}(x) \sim r^2$ on a ball of radius $1/r$.
    Note that $||f_\theta|^2*\check{\eta}_{\leq r}|$ are locally contained on $R \times 1/r$ tubes $U$. 
    Thus, 
    $$\||f_\theta|^2 * \check{\eta}_{\leq r}(x)\|_{\infty}\sim \max_{U}\||f_\theta|^2 * \check{\eta}_{\leq r}(x)\|_{L^1(U)} \lesssim \begin{cases}
            \lambda_0 \text{ if }r\leq 1/N\\
            \lambda_0rN \text{ if }r>1/N
        \end{cases}.$$
    Note that the number of $\theta$'s is $\lambda(\tau)/\lambda_0$ so that 
    \begin{equation*}
    |g*\check{\eta}_{\leq r}|\lesssim \begin{cases}
            \lambda(\tau) \text{ if }r\leq 1/N,\\
            \lambda(\tau)rN \text{ if }r>1/N.
        \end{cases}    \qedhere 
    \end{equation*}
    \end{proof}
The following lemma exploits the transversality for the high-frequency part. 
\begin{lemma}(high-low)\label{hl}
   Let $\tau_0$ be a cap. Let $R^{-1}<s<R^{-1/2}$. Then, 
    $$\int |g_{\tau_0}*\check{\eta}_{\sim s} |^2 \lesssim \sum_{\substack{\tau\subset \tau_0\\ l(\tau)=(sR)^{-1}}}\sum_{U \in \U_\tau}|U|\|f_\tau\|^4_{\L^2(U)}.$$
\end{lemma}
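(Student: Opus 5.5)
The plan is to follow the standard high--low argument of Guth--Maldague. By Plancherel,
\[
\int |g_{\tau_0}*\check{\eta}_{\sim s}|^2=\int |\widehat{g_{\tau_0}}|^2|\eta_{\sim s}|^2 .
\]
Here $\widehat{g_{\tau_0}}=\sum_{\theta\subset\tau_0}\widehat{|f_\theta|^2}$, and each $\widehat{|f_\theta|^2}$ is supported in $\theta-\theta$, which is essentially an $R^{-1/2}\times R^{-1}$ rectangle through the origin in direction tangent to $\P$ at $c(\theta)$. On the annulus $|\xi|\sim s$ with $R^{-1}<s<R^{-1/2}$, the sets $(\theta-\theta)\cap\{|\xi|\sim s\}$ for different $\theta$ overlap only when the directions of $\theta$ differ by $\lesssim (sR)^{-1}$. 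To see this, note that a rectangle $R^{-1/2}\times R^{-1}$ meets the annulus of radius $s$ in a piece of angular width $\sim R^{-1}/s$.

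First, I group the $\theta\subset\tau_0$ into caps $\tau\subset\tau_0$ of length $(sR)^{-1}$ and set $g_\tau=\sum_{\theta\subset\tau}|f_\theta|^2$. Then on $\{|\xi|\sim s\}$ the supports of $\widehat{g_\tau}$ are finitely overlapping, since each lies in a sector of angle $\sim (sR)^{-1}$ around the direction of $\tau$. Cauchy--Schwarz on this bounded overlap and Plancherel give
\[
\int|g_{\tau_0}*\check\eta_{\sim s}|^2\lesssim\sum_{\tau}\int|g_\tau*\check\eta_{\sim s}|^2 .
\]

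Second, I show that each $g_\tau*\check\eta_{\sim s}$ has Fourier support in $(\tau-\tau)\cap\{|\xi|\lesssim s\}$. This set is contained in a rectangle of dimensions $\sim s\times (sR)^{-1}s$, roughly $s\times R^{-1}$, whose dual is a $1/s\times R$ rectangle, that is, an $l(\tau)R\times R$ tube $U\in\U_\tau$. By the locally constant lemma (Lemma \ref{locally constant}), $|g_\tau*\check\eta_{\sim s}|$ is then essentially constant on each $U\in\U_\tau$, with $\|g_\tau*\check\eta_{\sim s}\|_{L^\infty(U)}\lesssim \|g_\tau\|_{\L^1(W_U)}$, using $\int|\check\eta_{\sim s}|\lesssim 1$ and the decay of $\check\eta_{\sim s}$ at scale $1/s\le$ the short side of $U$. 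Summing over the tiling gives
\[
\int|g_\tau*\check\eta_{\sim s}|^2\lesssim\sum_{U\in\U_\tau}|U|\,\|g_\tau\|_{\L^1(W_U)}^2 .
\]

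Finally, by Lemma \ref{localorth} (or directly the local orthogonality computation inside its proof), $\|g_\tau\|_{\L^1(W_U)}\sim\|f_\tau\|^2_{\L^2(W_U)}$, because the $\theta\subset\tau$ are orthogonal on $U$ up to finitely many neighbors; this uses $l(\theta)=R^{-1/2}\ge R^{-1}l(\tau)^{-1}=s$. This yields the claimed bound $\sum_\tau\sum_U|U|\|f_\tau\|^4_{\L^2(U)}$, with the $\L^2(U)$ norm understood as weighted.

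I expect the main obstacle to be the first step, the precise overlap claim. It requires checking that $\theta-\theta$ for $\theta$ at angular separation $\gg (sR)^{-1}$ are disjoint on $|\xi|\sim s$. This is a short computation with the tangent directions $(1,2\xi_\theta)$: two such lines at angle $\alpha$ are separated by $\sim\alpha s$ at radius $s$, which exceeds the thickness $R^{-1}$ exactly when $\alpha\gg (sR)^{-1}$.
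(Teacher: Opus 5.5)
Your route is the paper's route. You group the $\theta$'s into caps $\tau$ of length $(sR)^{-1}$, use that the $\widehat{g_\tau}$ are finitely overlapping on $\{|\xi|\sim s\}$, apply the locally constant lemma on the $l(\tau)R\times R$ tubes $U\in\U_\tau$, and finish with local orthogonality. Your first three steps are correct, and in places better justified than in the paper. Writing $G_\tau:=g_\tau*\check\eta_{\sim s}$, you use vanishing of the inner products $\int G_\tau\overline{G_{\tau'}}$ rather than of $\int|G_\tau||G_{\tau'}|$. You also check that $(\tau-\tau)\cap B_s$ lies in an $s\times R^{-1}$ rectangle, and you make $\|G_\tau\|_{\L^1(W_U)}\lesssim\|g_\tau\|_{\L^1(W_U)}$ explicit.

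The problem is the last step. You need $\|g_\tau\|_{\L^1(W_U)}\lesssim\|f_\tau\|^2_{\L^2(W_U)}$, but the reason you give proves the opposite inequality. Write $\int|f_\tau|^2W_U=\sum_\theta\sum_{\theta'\sim\theta}\int f_\theta\overline{f_{\theta'}}W_U$; only neighbours survive because $l(\theta)\ge s$. Cauchy--Schwarz then gives $\|f_\tau\|^2_{\L^2(W_U)}\lesssim\|g_\tau\|_{\L^1(W_U)}$. The surviving cross terms between adjacent caps can be negative. In general the reverse bound fails on individual tubes: for example, $f_\tau$ can be small on $U$ while the sharp restrictions $f_\theta,f_{\theta'}$ of two adjacent caps have slowly decaying tails there that cancel in the sum.

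So what you have actually proved is
\[
\int|g_{\tau_0}*\check\eta_{\sim s}|^2\lesssim\sum_{\substack{\tau\subset\tau_0\\ l(\tau)=(sR)^{-1}}}\sum_{U\in\U_\tau}|U|\,\|g_\tau\|^2_{\L^1(W_U)},
\]
which is the square-function (wave envelope) form of Guth--Maldague. The paper's proof has exactly the same gap. Its final ``$\sim$ by local orthogonality'' rests on the ``$\sim$'' asserted in Lemma~\ref{localorth}, whose proof only establishes upper bounds for $\|g_{\tau_k}\|_{\L^1(W_U)}$.

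The clean fix is to state the lemma with $\|g_\tau\|^2_{\L^1(W_U)}$ on the right-hand side. This is all that Proposition~\ref{boundhigh} and the proofs of Lemmas~\ref{br} and~\ref{asym2} use. The bounds of Lemma~\ref{localorth} are proved for $\|g_\tau\|_{\L^1(W_U)}$, and $\sum_U|U|\,\|g_\tau\|_{\L^1(W_U)}\lesssim\int g_\tau=\sum_{\theta\subset\tau}\|f_\theta\|_2^2$.
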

\begin{proof}
    Define $g_{\tau}=\sum_{\theta \subset \tau} |f_\theta|^2$. Then, $g_{\tau_0}=\sum_{\tau\subset \tau_0}g_\tau$. Note that the support of $\hat{g_\tau}$ is contained in $\cup_{\theta \subset \tau} \theta-\theta$ which is a  $R^{-1/2} \times R^{-1/2}l(\tau)$ rectangle in the same direction as $\tau$. Note that unless $dist(\tau,\tau')\lesssim l(\tau)$, 
    $$\hat{g}_\tau \cap \hat{g}_{\tau'} \cap \{\xi: |\xi|>s\}=\emptyset$$
    so $\int |g_\tau * \check{\eta}_{\sim s}||g_{\tau'} * \check{\eta}_{\sim s}|=0$. Thus, 
    \begin{align*}
    \int_{B_R} |g*\check{\eta}_{\sim s} |^2 &=\int_{B_R} |\sum_{\tau} g_{\tau}*\check{\eta}_{\sim s} |^2 \\
    &\lesssim  \sum_{\tau}\int_{B_R}| g_{\tau}*\check{\eta}_{\sim s} |^2 \\
    &\lesssim  \sum_{\tau}\sum_{U \subset \U_{\tau}}\int_{U}| g_{\tau}*\check{\eta}_{\sim s} |^2 \\
    &\lesssim  \sum_{\tau}\sum_{U \in \U_{\tau}}|U|\|g_\tau*\check{\eta}_{\sim s}\|^2_{\L^2(U)}\\
    &\lesssim  \sum_{\tau}\sum_{U \in \U_{\tau}}|U|\|g_\tau*\check{\eta}_{\sim s}\|^2_{\L^1(W_U)} \text{ as }g_\tau*\check{\eta}_{\sim s} \text{ is locally constant on }U\\
    &\sim  \sum_{\tau}\sum_{U \in \U_{\tau}}|U|\|f_\tau\|^4_{\L^2(W_U)} \text{ by local orthogonality} \qedhere
    \end{align*}
\end{proof}

\section{Proof of Theorem \ref{theorem:L^6 Strichartz}}\label{sec:3}
In this section, we prove Theorem~\ref{theorem:L^6 Strichartz}. By standard reductions, it suffices to establish the following lossless small-cap decoupling estimate.

\begin{theorem}[Lossless small-cap decoupling]
\label{main}
Suppose that $\sqrt{R}(\log N)^{10000} \leq N \leq R$ and Let $f$ be a function with $f=\sum_\gamma f_\gamma$ with $\alpha_\gamma/100 \leq |f_\gamma| \leq \alpha_\gamma$ on $B_R$ for some $\alpha_\gamma \in \R$. Then, 
    $$\|f\|_{L^6(B_R)}\lesssim \left(\sum_{\gamma} \|f_\gamma\|^2_{L^6(B_R)}\right)^{1/2}\left(\frac{N^2}{R}\right)^{1/6}.$$
\end{theorem}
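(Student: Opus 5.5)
We prove Theorem~\ref{main} by first treating functions satisfying $\Cn_0$ and then reducing the general case to it by dyadic pigeonholing of the amplitudes $\alpha_\gamma$.

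\textbf{Reduction to $\Cn_0$.} Normalize $\max_\gamma \alpha_\gamma = 1$ and split the caps into classes
\[
\Lambda_j=\{\gamma:\ \alpha_\gamma\in(2^{-j-1},2^{-j}]\}.
\]
Classes with $2^{-j}\le N^{-10}$ contribute trivially by the triangle inequality. We cannot afford a loss of $\log N$ from the number of classes. Instead we write $f=\sum_j f^{(j)}$ and use the triangle inequality in $L^6$ together with the $\Cn_0$ estimate applied to each rescaled $2^{j}f^{(j)}$. Each class then satisfies $\Cn_0$ up to harmless constants, since $|f_\gamma|\sim 1$ on $B_R$. The resulting sum $\sum_j 2^{-j}(\#\Lambda_j)^{1/2}$ must be compared with $(\sum_j 4^{-j}\#\Lambda_j)^{1/2}$, which fails by Cauchy--Schwarz only in a bad balanced regime. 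To handle this, we run the level-set argument below for the full $f$ directly, with $\lambda$ replaced by the weighted count $\sum_\gamma\alpha_\gamma^2$. The main estimate is insensitive to this replacement, because Lemma~\ref{localorth}, Lemma~\ref{Low} and Lemma~\ref{hl} only use $L^2$ and $L^\infty$ sizes of the $f_\gamma$.

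\textbf{Main estimate under $\Cn_0$.} Since $\|f_\gamma\|_{L^6(B_R)}^2\sim R^{2/3}$, the theorem reduces to
\[
\|f\|_6^6\lesssim N^2R\lambda^3, \qquad \lambda=\#\{\gamma: a_\gamma=1\}.
\]
Decompose dyadically $\|f\|_6^6\sim\sum_a |U_a|a^6$ with $a\le\lambda$, and prove \eqref{eq: main decoupling estimate on Ua}:
\[
|U_a|a^6\lesssim \max\Big(\log(\lambda/a)^{O(1)}\frac a\lambda,\ (\log N)^{-C}\Big)N^2R\lambda^3 .
\]
Summing over $a$, the first term sums geometrically in $a$. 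The second term, over $O(\log N)$ dyadic values, gives $(\log N)^{1-C}$. Hence the total is $O(N^2R\lambda^3)$. The hypothesis $N\ge\sqrt R(\log N)^{10000}$ makes $R/N^2\le(\log N)^{-20000}$, which is what produces the $(\log N)^{-C}$ factor.

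\textbf{Broad case.} For the superlevel set estimate, we run a broad--narrow decomposition over the scales $l_k$. At a point $x\in U_a$, either $|f(x)|$ is dominated by a single $f_{\tau_k}$ at the finest scale, or there is a scale $k$ and two caps $\tau_k,\tau_k'$ inside a common parent, separated by $\sim l_k$, with $|f_{\tau_k}(x)|,|f_{\tau'_k}(x)|\gtrsim a/(\text{const})$. In the broad case at scale $k$, we:
\begin{enumerate}
\item rescale parabolically;
\item apply Lemma~\ref{bilresg} on tubes $U_{\tau_k}$;
\item run the high--low iteration via Lemmas~\ref{break}, \ref{Low} and \ref{hl}.
\end{enumerate}
The low part $g*\check\eta_{\le r}$ is bounded in $L^\infty$ by $\lambda rN$ from Lemma~\ref{Low}. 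This is $\ll a^2$ once $r\lesssim a^2/(\lambda N)$, so only the roughly $\log(\lambda/a)$ scales between that threshold and $1/N$ carry high-frequency mass. This limits the high--low loss to $\log(\lambda/a)^{O(1)}$ rather than $\log N$.

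\textbf{Interaction across scales (main obstacle).} Summing the broad estimates over the $\sim\log N$ scales $k$ naively loses $\log N$. To avoid this, we would prove the asymmetric estimate \eqref{eq: asym_superlevel} (Lemma~\ref{asym}), using the same high--low scheme but with the bilinear lemma applied to caps of unequal lengths $l(\Sigma_2)\ll l$. This yields the gain $\max(l(\Sigma_2)/l,\sqrt R/N)$. A point contributing at broad scale $k$ but lying in a narrow piece at a much finer scale $k'$ then gets weight $3^{-(k'-k)}+\sqrt R/N$. The geometric part sums to $O(1)$, and the $\sqrt R/N\le(\log N)^{-10000}$ part absorbs the $\log N$ count of scales. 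Proving Lemma~\ref{asym} with only polylogarithmic losses in $\lambda/a$, and organizing the refined broad--narrow induction so that each point of $U_a$ is charged to only $O(1)$ effectively separated scale pairs, is where I expect the real difficulty.
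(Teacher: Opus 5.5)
Your treatment of the $\Cn_0$ case follows the paper's architecture: a superlevel-set bound with a $(\log N)^{-C}$ floor, a broad estimate via high--low and bilinear restriction, and an asymmetric estimate for separated scales. However, both halves of the argument have a genuine gap.

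\textbf{The reduction to $\Cn_0$.} Rerunning the level-set argument with $\lambda$ replaced by $\Lambda=\sum_\gamma\alpha_\gamma^2$ is not justified. The argument uses $L^\infty$ information: the peak height of $f$, the bound on $\|f_\omega\|_\infty$ in Lemma~\ref{localorth}, Lemma~\ref{Low}, and the implication $|f_\tau(x)|\gtrsim a'\Rightarrow\lambda(\tau)\gtrsim a'$ behind Proposition~\ref{numbm}. These are $\ell^1$ sums $\sum_{\gamma\subset\tau}\alpha_\gamma$, while the $L^2$ bounds are $\ell^2$ sums, and the two agree only for $0/1$ amplitudes. For example, take $\lambda_1$ caps of amplitude $1$ and $\lambda_2$ caps of amplitude $\epsilon$ with $\epsilon^2\lambda_2\ll\lambda_1\ll\epsilon\lambda_2$. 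Then $\Lambda\sim\lambda_1$, but the peak has size $\epsilon\lambda_2\gg\Lambda$, and summing a bound $|U_a|a^6\lesssim\frac{a}{\Lambda}N^2R\Lambda^3$ up to that peak exceeds $N^2R\Lambda^3$.

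The paper handles exactly your \emph{balanced regime} with the $a/\lambda$ decay you already stated. Write $f=\sum_j f_j$ with $f_j$ satisfying $\Cn_1(100^j,\lambda_j)$, and group the $j$'s by $L^2$ mass $\alpha_j^2\lambda_j\sim 100^k$. Within a group $K_k$ the peaks $\alpha_j\lambda_j\sim 100^{k-j}$ are distinct. So for each $l\ge 0$, the level sets of the $f_j$ at height $100^{-l}$ times their respective peaks also occur at distinct heights. Lemma~\ref{beattri} (pointwise, a sum of $2^k\chi_{S_k}$ with distinct $k$ is comparable to its largest term) then bounds the sixth power of their sum by the sum of the sixth powers. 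The decay in Lemma~\ref{special0} makes this $\lesssim 10^{-l}QN^2R\,|K_k|$, with $Q\sim 100^{3k}$. Summing over $l$ gives Proposition~\ref{cond2}, and the mass groups are then combined geometrically (Proposition~\ref{general}).

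\textbf{The refined broad--narrow step.} This is the heart of the lossless bound, and you leave it open. What is missing is not an $O(1)$-charging scheme but a counting bound. By Proposition~\ref{numbm}, at most $\lambda/a'$ scales $m$ have $Br_m(a')\neq\emptyset$, because the broad pairs at different scales can be chosen so that each new scale adds at least $a'$ caps to their union. Set $h\sim\log(\frac{\lambda}{a}\log\frac{NR^{-1/2}}{\lambda_0})$. Then $B_1=\bigcup_m Br_m(a/(100h))$ costs only a factor $\lambda h/a$. This is affordable because Lemma~\ref{br} gives $|Br_m(a')|a'^6\lessapprox (a'/\lambda)^2N^2R\lambda^3$, leaving a net gain of $a/\lambda$.

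Points of $U_a$ outside $B_1$ must be broad at the much smaller threshold $a(a/\lambda)^3$ at two scales more than $h$ apart. Only there is Lemma~\ref{asym} used (Proposition~\ref{faraway}). Since $a>\lambda(\log N)^{-100}$, we have $\max(3^{-h},\sqrt R/N)\le (a/\lambda)^{100}$, which beats the polynomial losses in $\lambda/a$.

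\textbf{The scale count in the broad case.} Your count of high--low scales is also off. The function $g$ has frequencies up to $R^{-1/2}$, not $1/N$. Moreover, the low-part threshold in Proposition~\ref{highdom} must absorb the Cauchy--Schwarz factor $\#\tau_k$. Without pigeonholing $\lambda_0=\lambda(\theta)$ you get $\log(\frac{\lambda^2}{a^2}\frac{N}{\sqrt R})$ scales, which can be of size $\log N$. The paper's pigeonholing uses weights $\log(NR^{-1/2}/\lambda_0)^{-2}$, and Corollary~\ref{oldsup} when $\lambda_0<NR^{-1/2}(\log N)^{-100}$. This yields $\log(\frac{\lambda^2}{a^2}\frac{NR^{-1/2}}{\lambda_0})$ scales instead.
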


The proof of Theorem~\ref{main} is divided into two parts. First, we reduce matters to the special case in which $f$ satisfies $\Cn_0$. We then prove this special case.

\subsection{Reducing to the special case}

In order to prove the Theorem \ref{main}, we first prove that the general result follows from a special case when $f$ satisfies $\Cn_0$. 
Define the superlevel set $$U_a(f):=\{x \in B_R: a/2 \leq |f(x)| \leq a \}.$$
\begin{lemma}\label{special0}
Suppose that $f$ satisfies $\Cn_0$, then 
    $$|U_a(f)|a^6 \lesssim  \log\left(\frac{\lambda}{a}\right)^{O(1)}\frac{a}{\lambda} N^2R\lambda^3.$$
\end{lemma}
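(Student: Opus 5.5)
The plan is to adapt the proof of \cite[Theorem 4]{amplitude} to the $\Cn_0$ setting, with two changes: restrict the high--low scales so the loss is $\log(\lambda/a)$ rather than $\log N$, and use the asymmetric superlevel estimate (Lemma~\ref{asym}) to sum over broad--narrow scales without a $\log N$ loss. Since $\lambda$ is the number of caps $\gamma$ with $|f_\gamma|\sim 1$, the target is equivalent to
\[
|U_a(f)|\lessapprox_{\lambda/a} \frac{N^2R\lambda^2}{a^5},
\]
where the implicit loss is only a power of $\log(\lambda/a)$.

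\textbf{Step 1: trivial ranges.} Since $|f|\le \lambda$ on $B_R$, we may assume $a\le\lambda$. By orthogonality of the $f_\gamma$ (each has amplitude $\sim 1$ on $B_R$ and decays rapidly off it), $\int_{B_R}|f|^2\lesssim\lambda R^2$. Chebyshev then gives $|U_a|\lesssim \lambda R^2a^{-2}$, which already implies the claim when $a^3\lesssim \lambda N^2/R$. So it remains to treat large $a$, i.e.\ $a^3\gg\lambda N^2/R$.

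\textbf{Step 2: broad--narrow decomposition.} Fix $x\in U_a$ and run the broad--narrow dichotomy down the scales $l_k$. At the first scale $k$ where $f$ fails to be dominated by a single cap $\tau_k$, there are two $l_k$-separated children $\tau,\tau'\subset\tau_{k-1}$ with $|f_\tau(x)|,|f_{\tau'}(x)|\gtrsim a/C$. Because $a\le\lambda(l_k)$ forces the dominating caps to carry many $\gamma$'s, only scales with $\lambda(l_k)\gtrsim a$ occur. After rescaling $\tau_{k-1}$ to unit size, the broad estimate applies at scale $k$. In that estimate I bound $|f_\tau f_{\tau'}|^2$ on $B_{l_k^{-2}}$-type regions using Lemma~\ref{bilresg}, which reduces matters to $g_\tau\,g_{\tau'}$. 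I then decompose $g_\tau=\sum_s g_\tau*\check\eta_{\sim s}+g_\tau*\check\eta_{\le r}$.
\begin{itemize}
\item By Lemma~\ref{Low}, the low part is at most $\lambda(\tau)rN$. It is therefore negligible unless $r$ lies in a window of $O(\log(\lambda/a))$ dyadic scales determined by $a^2\sim\lambda(\tau) rN$.
\item At the remaining high scales $s$ in that window, Lemma~\ref{hl} and Lemma~\ref{localorth} bound $\int|g_\tau*\check\eta_{\sim s}|^2$ by $\lambda(l)\lambda(l^{-1}R^{-1})$ times tube counts, which is where the factor $N^2R\lambda^2/a^4$ appears.
\end{itemize}
Summing over this window of scales costs only $\log(\lambda/a)^{O(1)}$.

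\textbf{Step 3: summing over broad--narrow scales (main obstacle).} A naive sum over the $\sim\log N$ possible broad scales $k$ loses $\log N$. Instead I group points of $U_a$ by the pair of scales at which the two dominant caps were chosen. For pairs whose cap lengths are comparable to their separation, Step 2 gives the bound. For pairs where the shorter cap $\Sigma_2$ satisfies $l(\Sigma_2)\ll l$, I apply Lemma~\ref{asym} with $a,b\sim a$. This yields the extra factor $\max(l(\Sigma_2)/l,\sqrt R/N)$. The first term decays geometrically in the scale gap, so the sum over gaps converges. The term $\sqrt R/N\le(\log N)^{-10000}$ kills the $\log N$ count of scales. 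Finally, the $\lambda(\Sigma_j)$ factors are recombined via $\sum\lambda(\Sigma_1)\lambda(\Sigma_2)^2\le\lambda^3$ after pigeonholing.

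The delicate point, which I expect to be the hardest, is making the broad-scale bookkeeping compatible with this asymmetric gain. One must show that at each point only $O(1)$ scale pairs with gap $O(\log(\lambda/a))$ contribute non-geometrically. I would first attempt this by defining the narrow scale as the largest $k$ with $|f_{\tau_k}(x)|\ge (1-\epsilon_k)|f(x)|$, where $\epsilon_k$ decays in $k$ so that $\prod(1-\epsilon_k)\gtrsim1$.
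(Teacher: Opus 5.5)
Step~1 is fine, but you then run the broad/asymmetric machinery for every $a$ with $a^3\gg\lambda N^2/R$. In part of that range the displayed bound is false, so some step must break. Take every cap active ($\lambda=N$), $f=\psi\sum_{0\le j<N}e(\frac jN x+\frac{j^2}{N^2}t)$ with $\psi\in[\tfrac12,1]$ on $B_R$ and $\supp\widehat\psi\subset B_{c/R}$, and $N^{3/2}(\log N)^{C}\le R\le N^2(\log N)^{-20000}$. Then $\fint_{B_R}|f|^2\sim N$ and $\fint_{B_R}|f|^4\lesssim N^2\log N$, since only quadruples with $j_1+j_2=j_3+j_4$ and $|j_1^2+j_2^2-j_3^2-j_4^2|\lesssim N^2/R$ contribute. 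By Paley--Zygmund some dyadic $a\in[\sqrt N,\sqrt N\log N]$ has $|U_a|\gtrsim R^2(\log N)^{-2}$. The statement would instead force $|U_a|\lesssim(\log N)^{O(1)}N^2R\lambda^2a^{-5}\le(\log N)^{O(1)}N^{3/2}R\ll R^2$, and this $a$ does satisfy $a^3\gg\lambda N^2/R$.

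In your argument the failures are concrete:
\begin{itemize}
\item Lemma~\ref{br} needs $a>\lambda(\Sigma_1)(\log N)^{-4000}$.
\item Lemma~\ref{asym} needs $b\gg\lambda(\Sigma_2)\lambda_0^{-1/2}$, where $\lambda_0$ comes from a pigeonholing $\lambda(\theta)\sim\lambda_0$ that your plan omits.
\item The step \emph{$\sqrt R/N$ kills the $\log N$ count} requires $(\lambda/a)(\log N)^2\sqrt R/N\lesssim1$.
\end{itemize}
The paper's argument reflects this. Lemma~\ref{special00} gives the stated form only for $a>\lambda(\log N)^{-100}$, after pigeonholing $\lambda_0$ and discarding $\lambda_0<NR^{-1/2}(\log N)^{-100}$ via Corollary~\ref{oldsup}. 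For smaller $a$ it has only the Guth--Maldague bound $(\log N)^{100}\max(a^2/\lambda^2,R/N^2)N^2R\lambda^3$ of Corollary~\ref{oldsup2}. That is the version \eqref{eq: main decoupling estimate on Ua} with the extra $(\log N)^{-C}$ alternative, which is all Theorem~\ref{main} needs. Your Step~2 window of high--low scales is also off: it is really $\log\bigl(\frac{\lambda^2}{a^2}\frac{NR^{-1/2}}{\lambda_0}\bigr)$ (Proposition~\ref{highdom}), not $O(\log(\lambda/a))$. The paper pays for the $\log(NR^{-1/2}/\lambda_0)$ part with the factor $\lambda_0/(NR^{-1/2})$ from Lemma~\ref{localorth}, again through the $\lambda_0$-pigeonholing.

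Even for $a>\lambda(\log N)^{-100}$, Step~3 lacks the idea that removes the $\log N$. Summed over the pairs at one scale, Lemma~\ref{br} gives $|Br_m|\lessapprox N^2R\lambda a^{-4}$ with no decay in $m$. So for comparable pairs, \emph{Step~2 gives the bound} still costs a factor equal to the number of scales. A pointwise claim that each $x$ sees $O(1)$ relevant pairs does not bound $\sum_m|Br_m|$. Your condition $\lambda(l_k)\gtrsim a$ does not cap that number either: if the active $\gamma$'s lie in one interval of length $\lambda/N$, it allows $\sim\log(N/\lambda)$ scales.

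The paper's counting step is Proposition~\ref{numbm}: at most $\lambda/a'$ scales have $Br_m(a')\neq\emptyset$. The reason is that a heavy pair at a finer scale meets at most one cap of a heavy pair at a coarser scale, so each additional scale contributes $a'$ of new mass. This factor $\lambda/a$ is exactly the slack between $N^2R\lambda a^{-4}$ and the target $N^2R\lambda^2a^{-5}$.

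Lemma~\ref{asym} also enters differently than in your sketch, where you apply it with $a,b\sim a$ to pairs from the descent. In the paper, asymmetric pairs come from points that are broad at two scales $m,n$ with $|m-n|>h$, where $h=100\log\bigl(\frac\lambda a\log\frac{NR^{-1/2}}{\lambda_0}\bigr)$, and only at the much lower threshold $a(a/\lambda)^3$. The paper covers $U_a$ by $B_1=\bigcup_m Br_m(a/(100h))$ together with this set $B_2$. On $B_2$ the gain $3^{-h}\le(a/\lambda)^{100}$ absorbs the $(\lambda/a)^{O(1)}$ losses from the low threshold and from counting cap pairs.

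Your $\epsilon_k$ device cannot replace this. If the narrow step at scale $k$ may lose $\epsilon_k a$ with $\sum_k\epsilon_k\lesssim1$, the second cap at the broad scale is only guaranteed amplitude $\gtrsim\epsilon_k a$, so Lemma~\ref{br} costs $\epsilon_k^{-2}$ there. By Jensen, $\sum_{k\le K}\epsilon_k^{-2}\ge K^3$ over $K$ candidate scales. This exceeds $\log N$ and is fatal in the critical regime $a\sim\lambda$, where no loss at all is allowed.
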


Now let's prove that Theorem \ref{main} follows from Lemma \ref{special0}. 
\begin{definition}
    We say that a function $f$ satisfies condition $\con_0(\lambda)$ if $f=\sum_{\gamma}f_\gamma$ such that $\frac{1}{100}\leq |f_r(x)| \leq 1$ for $x \in B_R$ and $f_r$ decays rapidly off $B_R$ or $f_\gamma \equiv 0$ and we let $\lambda$ be the number of non-zero $\gamma$'s.  
    We say that a function $f$ satisfies $\con_1(\alpha,\lambda)$ if $f/\alpha$ satisfies $\con_0(\lambda)$.   
\end{definition}

By lemma \ref{special0} and a rescaling argument, if $f$ satisfies $\Cn_1(\alpha,\lambda)$, then 
\begin{equation}\label{con1}
  |U_a(f)|a^6 \lesssim \log\left(\frac{\alpha\lambda}{a}\right)^{O(1)} \frac{a}{\alpha\lambda}N^2R\alpha^6\lambda^3.  
\end{equation}

\begin{definition}
    We say that $f$ satisfies $\Cn_2(Q,S)$ if there exists a set $J \subset \Z$ with $|J|=S$ such that 
    $f=\sum_J f_j$ and each $f_j$ satisfied $\Cn_1(\alpha_j,\lambda_j)$ such that $Q/(100)^3 \leq \alpha_j^6\lambda_j^2 \leq  Q$ and $100^{j-1} \leq \alpha_j\lambda_j \leq 100^{j}$. 
\end{definition}
\begin{lemma}\label{beattri}
    Let $J \subset \Z$. If $h_k=2^k \chi_{S_k}$, then
    $$ \|\sum_{k \in J} h_k\|_r^r \lesssim \sum_k  \|h_k\|_r^r$$
    for any $r\geq 1$. 
    (Note that we do not require $S_k$ to be disjoint.)
\end{lemma}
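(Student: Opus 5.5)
The plan is to prove a pointwise inequality and then integrate; no orthogonality or geometry is needed. The only feature used is that the heights $2^k$ are distinct dyadic numbers, so at any point the sum is controlled by its largest active term.

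Fix $x$ and let $J(x):=\{k\in J : x\in S_k\}$, so that $\sum_{k\in J}h_k(x)=\sum_{k\in J(x)}2^k$. If $J(x)=\emptyset$, both sides of the pointwise inequality below vanish. If $J(x)$ is unbounded above, then $\sum_k h_k(x)^r=\sum_{k\in J(x)}2^{kr}=\infty$, and the pointwise inequality holds trivially. Otherwise let $K(x):=\max J(x)$. Since the elements of $J(x)$ are distinct integers all at most $K(x)$, the geometric series bound gives
\[
\sum_{k\in J(x)}2^k\le \sum_{j\le K(x)}2^j = 2\cdot 2^{K(x)}.
\]
Raising to the $r$-th power and using that the top term is one of the summands,
\[
\Bigl(\sum_{k\in J}h_k(x)\Bigr)^r\le 2^r\, 2^{rK(x)} = 2^r\, h_{K(x)}(x)^r\le 2^r\sum_{k\in J}h_k(x)^r .
\]

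Integrating this pointwise bound over $x$ and using $\|h_k\|_r^r=\int h_k^r$ yields
\[
\Bigl\|\sum_{k\in J}h_k\Bigr\|_r^r\le 2^r\sum_{k\in J}\|h_k\|_r^r .
\]
The implicit constant $2^r$ depends only on $r$, and in our application $r$ is fixed ($r=6$).

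The disjointness of the $S_k$ is never used: overlapping sets are handled entirely by the geometric decay of the lower levels. There is no real obstacle. The only point requiring care is the degenerate case where infinitely many large $k$ are active at one point, and that case is disposed of above because the right-hand side is then infinite.
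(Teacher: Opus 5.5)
Your proof is correct and is the same argument as the paper's: bound the dyadic sum pointwise by a constant times its largest active term, raise to the $r$-th power, dominate that maximum by $\sum_k h_k^r$, and integrate. Your explicit constant $2^r$ and your separate treatment of points where infinitely many arbitrarily large levels are active are small extra details that the paper leaves implicit.
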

\begin{proof}
    At each $x$, $\sum_k h_k (x) \lesssim \max_k h_k(x).$
    Thus, 
    \[\int (\sum h_k)^r \lesssim \int (\max_k h_k(x))^k \leq \int \sum_k h_k^r. \qedhere \]
\end{proof}
\begin{proposition}\label{cond2}
    Suppose that $f$ satisfies $\Cn_2(Q,S)$, then
    $$\|f\|_{L^6(B_R)}^6 \lesssim N^2RQS$$
\end{proposition}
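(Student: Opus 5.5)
We split $\|f\|_{L^6(B_R)}^6$ by dyadic level sets: $\|f\|_6^6 \lesssim \sum_{a \text{ dyadic}} |U_a(f)|a^6$. For a fixed dyadic $a$, $f=\sum_{j\in J} f_j$. Each $f_j$ is bounded pointwise on $B_R$ by roughly $\alpha_j\lambda_j\le 100^{j}$, with rapid decay off $B_R$. Since the scales $\alpha_j\lambda_j$ grow geometrically in $j$, the indices with $100^j \le a/C$ contribute at most a small multiple of $a$ by summing a geometric series. So on $U_a(f)$ the quantity $|f|$ is essentially carried by the $f_j$ with $100^j \gtrsim a$.

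The cleaner route is to bound each piece in $L^6$ separately and then recombine. Concretely, I plan the following steps.

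\emph{Step 1: a single-piece bound.} Apply \eqref{con1} to each $f_j$ and sum over dyadic $a\lesssim \alpha_j\lambda_j$ (larger $a$ give empty $U_a(f_j)$). The factor $\log(\alpha_j\lambda_j/a)^{O(1)}\,a/(\alpha_j\lambda_j)$ is summable over dyadic $a\le \alpha_j\lambda_j$, since $\sum_m m^{O(1)}2^{-m}<\infty$. This gives
\[
\|f_j\|_{L^6(B_R)}^6 \lesssim N^2R\,\alpha_j^6\lambda_j^3\cdot(\alpha_j\lambda_j)^{0}\lesssim N^2R\,\alpha_j^6\lambda_j^2\cdot \lambda_j .
\]
This is off by a factor of $\lambda_j$, so I need to recheck the normalization. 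In \eqref{con1}, $|U_a|a^6\lesssim \log(\cdot)\,\frac{a}{\alpha\lambda}N^2R\alpha^6\lambda^3$. At the top level $a\sim\alpha\lambda$ this gives $N^2R\alpha^6\lambda^3$, which exceeds $N^2RQ$ with $Q\sim\alpha^6\lambda^2$ by a factor of $\lambda$. I therefore expect that the intended normalization of $\Cn_2$ makes $\alpha_j^6\lambda_j^3\sim Q$, i.e.\ matching \eqref{eq: main_L6_dec_intro} with $(\sum a_\gamma^2)^3=\alpha^6\lambda^3$. I will carry out the argument with $Q$ denoting the common size of $\alpha_j^6\lambda_j^{3}$; the structure is the same either way. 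The output of Step 1 is $\|f_j\|_6^6\lesssim N^2RQ$ for every $j$.

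\emph{Step 2: decomposing into single-level pieces.} Refine the dyadic sum so that each term looks like $h=2^k\chi_{S}$. For each $j$ and each dyadic $a$, set $S_{j,a}=U_a(f_j)$. Then $|f_j|\sim\sum_a a\chi_{S_{j,a}}$ on $B_R$, and by Step 1
\[
\sum_a a^6|S_{j,a}|\lesssim N^2RQ\sum_a \log\Big(\tfrac{\alpha_j\lambda_j}{a}\Big)^{O(1)}\frac{a}{\alpha_j\lambda_j}.
\]

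\emph{Step 3: recombining with Lemma~\ref{beattri}.} Write $|f|\le\sum_j\sum_a a\chi_{S_{j,a}}$. I group the terms by the value $a=2^k$. For fixed $k$, the sets $S_{j,2^k}$ over $j$ overlap, so the multiplicity is the main obstacle: Lemma~\ref{beattri} only handles distinct scales $k$. I split $a=100^{j}\cdot 2^{-m}$ with $m\ge0$ and treat each $m$ separately. For fixed $m$, the scales $a$ are distinct across $j$ because $100^j$ increase geometrically, so Lemma~\ref{beattri} applies to $\sum_j a\chi_{S_{j,a}}$ and gives
\[
\Big\|\sum_j 100^j2^{-m}\chi_{S_{j,100^j2^{-m}}}\Big\|_6^6\lesssim \sum_j N^2RQ\,m^{O(1)}2^{-m}\lesssim S\,N^2RQ\,m^{O(1)}2^{-m}.
\]
Finally, sum over $m$ using Minkowski in $L^6$:
\[
\|f\|_6\lesssim\sum_m \big(S N^2RQ\,m^{O(1)}2^{-m}\big)^{1/6}\lesssim (SN^2RQ)^{1/6},
\]
since $m^{O(1)}2^{-m/6}$ is summable.

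The main obstacle is the interaction across the $|J|=S$ pieces, and the approach rests on the geometric separation $100^{j-1}\le\alpha_j\lambda_j\le100^j$, which turns overlaps into distinct scales so that Lemma~\ref{beattri} applies. The second delicate point is confirming the normalization of $Q$ so that the single-piece bound is exactly $N^2RQ$ with no extra factor of $\lambda_j$.
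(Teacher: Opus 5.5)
Your proof is correct and is essentially the paper's argument: the paper also slices each $f_j$ into level pieces indexed by the offset $l$ from the top level $100^j$, applies Lemma~\ref{beattri} across $j$ for each fixed offset (the heights are geometrically separated), bounds each piece by \eqref{con1} with a $10^{-l}$ gain, and then sums over the offset by Minkowski. Your normalization fix $Q\sim\alpha_j^6\lambda_j^3$ is the right reading, since that is how the paper actually uses $\Cn_2$ when it asserts that $f_k$ satisfies $\Cn_2(100^{3k},|K_k|)$ in the proof of Theorem~\ref{main}; also, your levels $100^j2^{-m}$ cover every value of $|f_j|$, which the paper's levels $100^k$ combined with ratio-$2$ superlevel sets do not quite do.
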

\begin{proof}
    First define $f_{j,k}=100^k \chi_{U_{100^k}(f_j)}$. Note $f_{j,k}=0$ if $k>j$. 
    For any $l \geq 0$, by Lemma \ref{beattri}, we have
    \begin{eqnarray*}
        &&\|\sum_{j \in J}f_{j,j-l}\|^6_{L^6(B_R)}\\
        &\lesssim &\sum_{j \in J}\|f_{j,j-l}\|^6_{L^6(B_R)}\\
        &\lesssim &\sum_{j \in J}10^{-l}QN^2R.\text{ by equation }\eqref{con1}\\
        &=&10^{-l}QN^2RS.
    \end{eqnarray*}
    Summing over all the $l$'s we have
    \begin{align*}
        \|f\|_{L^6(B_R)}^6 &\sim \|\sum_{j\in J}\sum_{l \geq 0}f_{j,j-l}\|_{L^6(B_R)}^6\\
        &\lesssim \left(\sum_{l\geq 0}\|\sum_{j\in J}f_{j,j-l}\|_{L^6(B_R)}\right)^6 \text{ triangle inequality }\\
        &\lesssim \left(\sum_{l \geq 0} (10^{-l}QN^2RS)^{1/6}\right)^6 \\
        &\lesssim  N^2RQS \qedhere
    \end{align*}
\end{proof}
\begin{proposition}\label{general}
    Suppose $f=\sum_k f_k$ with $f_k$ satisfying $\con_2(Q_k, S_k)$ with $Q_k$ taking dyadic values. Then, 
    $$\|f\|_{L^6}^6 \leq \left(\sum_{\gamma} \|f_\gamma\|_{L^6}^2\right)^3 \left(\frac{N^2}{R}\right).$$   
\end{proposition}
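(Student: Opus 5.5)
The plan is to combine Proposition \ref{cond2} over the dyadic values of $Q$ with a triangle inequality whose losses are summable, and then compare $\sum_k Q_kS_k$ with $\left(\sum_\gamma\|f_\gamma\|_{L^6}^2\right)^3 R^{-1}N^2$ on $B_R$.

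First I record what the right-hand side measures. For a piece $f_j$ satisfying $\Cn_1(\alpha_j,\lambda_j)$, each nonzero $f_\gamma$ in $f_j$ has modulus $\sim\alpha_j$ on $B_R$. Hence $\|f_\gamma\|_{L^6(B_R)}^2\sim \alpha_j^2R^{2/3}$ and
\[
\sum_{\gamma\in f_j}\|f_\gamma\|_{L^6}^2\sim \alpha_j^2\lambda_j R^{2/3}.
\]
Therefore
\[
\Big(\sum_{\gamma\in f_j}\|f_\gamma\|_{L^6}^2\Big)^3 \frac{N^2}{R}\sim \alpha_j^6\lambda_j^3 N^2R .
\]
Proposition \ref{cond2}, however, gives $N^2R\,QS$ with $Q\sim\alpha_j^6\lambda_j^2$. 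So the bound lacks one factor of $\lambda_j$ per piece, and the counting must be redone.

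I would therefore return to the level sets. Write $f_{j,k}=100^k\chi_{U_{100^k}(f_j)}$ as in the proof of Proposition \ref{cond2}. By \eqref{con1}, $\|f_{j,j-l}\|_6^6\lesssim l^{O(1)}100^{-l}\alpha_j^6\lambda_j^3N^2R$, since $\alpha_j\lambda_j\sim 100^j$. Across different $j$, the functions $f_{j,j-l}$ are dyadic multiples of indicator functions with distinct heights $100^{j-l}$. Lemma \ref{beattri} therefore applies to $\sum_j f_{j,j-l}$ over all $j$ and all $k$ simultaneously, with no need to group by $Q$.

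With that, I obtain
\[
\|f\|_6 \lesssim \sum_l \Big(\sum_j l^{O(1)}100^{-l}\alpha_j^6\lambda_j^3N^2R\Big)^{1/6}.
\]
The inner sum is bounded by $\sum_j \big(\sum_{\gamma\in f_j}\|f_\gamma\|_6^2\big)^3 N^2R^{-1}$, and since the $f_j$ have disjoint sets of caps this is at most $\big(\sum_\gamma\|f_\gamma\|_6^2\big)^3N^2/R$, by the superadditivity of $x\mapsto x^3$. Summing the geometric series in $l$ then gives the claim.

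The main obstacle is the bookkeeping: making sure each $\gamma$ lies in exactly one $f_j$, and that the pieces with a common $j$ but different $Q_k$ are finite in number, or again have distinct heights. If they are not, I would refine the decomposition by $\alpha$ dyadically, which makes the heights $100^{j-l}$ distinct after absorbing constants.
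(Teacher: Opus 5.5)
The gap is the step where you apply Lemma~\ref{beattri} to $\sum f_{j,j-l}$ ``over all $j$ and all $k$ simultaneously.'' The height of $f_{j,j-l}$ is $100^{j-l}$, where $100^j\sim\alpha_j\lambda_j$, and $\alpha\lambda$ does not determine the piece. Pieces from different $\Cn_2$-groups (different $\alpha$, hence different $Q$) can share the same $\alpha\lambda$. For example, take $\alpha=100^i$ and $\lambda=100^{J-i}$ for $0\le i\le J\sim\log_{100}N$. This uses only $\lesssim 100^J\le N$ caps in total, and it produces $\sim\log N$ pieces whose level-set functions all have height $100^{J-l}$. Lemma~\ref{beattri} rests on the pointwise bound $\sum_k h_k\lesssim\max_k h_k$, which requires each dyadic height to occur only once. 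With multiplicity $m$ the bound can fail by a factor $m$, which costs up to $(\log N)^{O(1)}$ in $\|f\|_{L^6}^6$. That is exactly the loss Theorem~\ref{main} is designed to avoid. Your fallback does not fix this. The pieces are already dyadically localized in $\alpha$ (each satisfies $\Cn_1(\alpha_j,\lambda_j)$), and refining further in $\alpha$ leaves the heights $100^{j-l}$ unchanged, since they depend only on $\alpha\lambda$. A warning sign is that your argument never uses the hypothesis that the $Q_k$ are dyadic.

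The missing idea is the two-level structure that this hypothesis supports. First, the missing factor of $\lambda_j$ you noticed is a typo: the constraint in $\Cn_2$ should read $Q\sim\alpha_j^6\lambda_j^3$. This is what \eqref{con1} gives, and it matches the proof of Theorem~\ref{main}, where $f_k$ satisfies $\Cn_2(100^{3k},|K_k|)$. So Proposition~\ref{cond2} can be used as is. Set $q_k:=Q_k^{1/3}\sim\alpha_j^2\lambda_j$ for the pieces of $f_k$. Within one group, fixing $\alpha^2\lambda$ forces distinct $\alpha$ to have distinct $\alpha\lambda$. The heights are therefore distinct there, and Proposition~\ref{cond2} gives $\|f_k\|_{L^6}\lesssim (N^2R)^{1/6}q_k^{1/2}S_k^{1/6}$ with no loss. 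Across groups, the triangle inequality and H\"older suffice:
\[
\sum_k q_k^{1/2}S_k^{1/6}=\sum_k (q_kS_k)^{1/6}q_k^{1/3}\le\Big(\sum_k q_kS_k\Big)^{1/6}\Big(\sum_k q_k^{2/5}\Big)^{5/6}\lesssim\Big(\sum_k q_kS_k\Big)^{1/2}.
\]
The last step uses two facts: the $q_k$ form a geometric sequence, so $\sum_k q_k^{2/5}\lesssim(\max_k q_k)^{2/5}$; and $S_k\ge1$, so $\max_k q_k\le\sum_k q_kS_k$. Since $\sum_k q_kS_k\sim R^{-2/3}\sum_\gamma\|f_\gamma\|_{L^6}^2$, this gives the claim. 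In the example above, the colliding pieces have geometrically separated $q$. That is why the triangle inequality, not Lemma~\ref{beattri}, is the right tool between groups.
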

\begin{proof}
    By triangle inequality and proposition \ref{cond2}, we have
    $$\|f\|_{L^6}\lesssim \sum_{k} \|f_k\|_{L^6} \lesssim (N^2R)^{1/6}\sum_k Q_k{S_k}^{1/6}.$$
    Thus, by Holder's inequality 
    \begin{eqnarray*}
    \|f\|_{L^6}^6 & \leq & (N^2R) \left(\sum_k Q_k{S_k}^{1/6}\right)^6\\
    & = & (N^2R) \left(\sum_k \left(Q_k^{1/3}{S_k}^{1/6}\right) Q_k^{2/3}\right)^6\\
    & \leq & (N^2R) \left(\sum_k Q_k^2{S_k}\right)\left(\sum_k Q_k^{4/5}\right)^5\\
    & \leq & (N^2R) \left(\sum_k Q_k^2{S_k}\right)^3
    \end{eqnarray*}
    as $Q_k$ are dyadic $\left(\sum_k Q_k^{4/5}\right)^5 \lesssim \left(\max_k Q_k\right)^4 \leq \left(\sum_k S_k Q_k^2\right)^2$.
\end{proof}
Theorem \ref{main} follows quickly from proposition \ref{general}. 
\begin{proof}[Proof of Theorem \ref{main}]
    First, we group all the $f_\gamma$ with similar $\alpha_\gamma$'s together. We write $f=\sum_{j\in J} f_j$ such that each $f_j$ satisfy $\con_1(100^j,\lambda_j)$ for some $\lambda_j$. 
    Then we group the $f_j$'s with similar $L_2$ norms. Define
    $$K_k:=\{j:  100^{k-1}\leq \alpha_j^2\lambda_j \leq 100^k\} \text{ and } f_k=\sum_{j \in K_k}f_{j}.$$
    Note that for each $j \in K_k$, we have $f_j$ satisfies $\Cn_1(\alpha_j,\lambda_j)$ with $\alpha_j=100^j$  and $100^{k-j-1}\leq \alpha_j\lambda_j \leq 100^{k-j}$. 
    Then, each $f_k$ satisfy $\con_2(100^{3k}, |K_k|)$ and $f=\sum_k f_k$. Thus, we can apply proposition \ref{general} to $f$ to get the result we want.
\end{proof}  
\subsection{Special Case}
The goal of this section is to prove the Lemma \ref{special0}, which follows quickly from the following Lemma.
\begin{lemma}\label{special00}
There exists a constant $c_0$ such that 
$$|U_a|a^6 \lesssim (\log N)^{c_0} \max\left(\frac{a^2}{\lambda^2},\frac{R}{N^2}\right)N^2R\lambda^3 .$$
We can put $c_0$ to be $100$, say. 
Moreover, suppose that $a >\lambda (\log N)^{-100}$,
    Then,
    $$|U_a|a^6 \lesssim  \log\left(\frac{\lambda}{a}\right)^{O(1)}\frac{a}{\lambda} N^2R\lambda^3$$
\end{lemma}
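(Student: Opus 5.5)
We argue along the lines of \cite[Theorem 4]{amplitude}: a broad estimate proved with the high--low method, then a refined broad--narrow decomposition. Throughout, $f$ satisfies $\Cn_0$ and $\lambda$ is the total number of caps $\gamma$ with $|f_\gamma|\sim 1$, so $\|f\|_\infty\lesssim\lambda$ and only $a\lesssim\lambda$ matters.

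\emph{Broad case.} Fix two caps $\tau_1,\tau_2$ of length $l_k$ separated by $\sim l_k$, and a point $x\in U_a$ with $|f(x)|\lesssim|f_{\tau_1}(x)f_{\tau_2}(x)|^{1/2}$. After rescaling by $l_k$ (parabolic rescaling), we may assume the separation is $\sim 1$. At $x$ we run the high--low dichotomy on the square functions $g_{\tau_i}$. We write $g_{\tau_i}=g_{\tau_i}*\check\eta_{\le r}+\sum_{s>r}g_{\tau_i}*\check\eta_{\sim s}$. By the Low Lemma (Lemma~\ref{Low}), the low part is $\lesssim\lambda(\tau_i)rN$. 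Choosing $r$ so that this is $\ll a^2$, i.e.\ $r\sim a^2/(\lambda N)$, forces some high scale $s\in[a^2/(\lambda N),R^{-1/2}]$ to dominate at $x$. The key point is that only $O(\log(\lambda/a))$ of these scales can dominate. For $s$ much larger than $a^2/(\lambda N)$, the bound of Lemma~\ref{hl} is weighted by the local orthogonality bounds of Lemma~\ref{localorth}, and this weight decays geometrically in $s$ relative to $a^2$. So the high scales can be summed with a geometric factor beyond $\log(\lambda/a)$ many.

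For the dominating scale $s$, we combine Chebyshev, the bilinear restriction estimate Lemma~\ref{bilresg} applied on $L_1\times L_2$ rectangles adapted to $s$, Lemma~\ref{hl}, and Lemma~\ref{localorth}. This should give
\[
|U_a^{\rm broad}|\,a^6\lesssim \log(\lambda/a)^{O(1)}\,\frac{a}{\lambda}\,N^2R\lambda^3 ,
\]
and, without the restriction $a>\lambda(\log N)^{-100}$, the crude bound with the maximum of $a^2/\lambda^2$ and $R/N^2$. The $R/N^2$ term comes from caps below the scale $1/N$ (the range $k\ge c$ in Lemma~\ref{localorth}). We pay $(\log N)^{O(1)}$ there for summing over all scales.

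\emph{Narrow case and first claim.} We perform the standard broad--narrow induction over the scales $l_k$: at each $x$, either some pair at scale $l_k$ is broad, or $f(x)$ is dominated by a single $f_{\tau_{k+1}}$. Summing the broad estimate over the $M\sim\log R$ scales, and applying parabolic rescaling to the narrow pieces, costs a power of $\log N$. This yields the first inequality with $c_0=100$, since $\lambda(\tau)\le\lambda$ and $\sum_\tau\lambda(\tau)^3\le\lambda^3$.

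\emph{Second claim (main obstacle).} To get the second inequality we must remove the $\log N$ loss coming from the number of broad--narrow scales. A point in the scale-$k$ broad set lies in $U_{a'}(f_{\tau})\cap U_{b}(f_{\tau'})$, where $\tau$ is narrow at a much finer scale and $\tau'$ sits at distance $l$. For these configurations we use the asymmetric superlevel estimate \eqref{eq: asym_superlevel} (Lemma~\ref{asym}). It gains a factor $\max(l(\Sigma_2)/l,\sqrt R/N)$, which decays geometrically in the scale gap. Summing in the gap, only $O(\log(\lambda/a))$ comparable scales contribute fully. The term $\sqrt R/N\le(\log N)^{-10000}$ is absorbed because $a>\lambda(\log N)^{-100}$; the same assumption absorbs the $(\log N)^{c_0}R/N^2$ term of the first claim. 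The remaining bookkeeping is to check that the amplitude ranges required by Lemma~\ref{asym} hold at each pigeonholed $(a',b)$. I expect this matching of the refined broad--narrow tree with the hypotheses of Lemma~\ref{asym} to be the hardest step.
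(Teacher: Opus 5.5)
Your outline has the right architecture: a high--low broad estimate, then a refined broad--narrow argument in which Lemma~\ref{asym} handles widely separated scales. However, the mechanism you give for cutting the number of high--low scales down to $O(\log(\lambda/a))$ is wrong, and the ingredient that actually does this in the paper is missing.

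\textbf{The scales do not decay.} The high-scale bounds do not decay geometrically in $s$. With $l=(sR)^{-1}$, Lemmas~\ref{hl} and~\ref{localorth} give $\int|g*\check\eta_{\sim s}|^2\lesssim\lambda(l)\lambda(s)\lambda R^2$. In the relevant range $\lambda(l)\lesssim\lambda_0 lR^{1/2}$ and $\lambda(s)\lesssim sN$, and since $ls=R^{-1}$ the bound is $\lambda_0NR^{-1/2}\lambda R^2$ at every scale (or $N^2R^{-1}\lambda R^2$ if you only use $\lambda(l)\lesssim lN$). Every scale in the window costs the same, so the only way to save is to shorten the window.

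\textbf{Your window is miscomputed.} From $|f_{\tau_i}(x)|\gtrsim a$ you cannot conclude $g_{\tau_i}(x)\gtrsim a^2$. Cauchy--Schwarz only gives $g_{\tau_i}(x)\gtrsim a^2/\#\{\text{nonzero pieces}\}$, and that count can be as large as $\lambda(\tau_i)/\lambda_0$, where $\lambda_0$ is the size of $\lambda(\theta)$. Done correctly (Proposition~\ref{highdom}), the low part is negligible only for $l_k\gg R^{-1/2}\frac{\lambda^2}{a^2}\frac{NR^{-1/2}}{\lambda_0}$. That leaves $\log\bigl(\frac{\lambda^2}{a^2}\frac{NR^{-1/2}}{\lambda_0}\bigr)$ scales, which is of size $\log N$ unless $\lambda_0$ is close to $NR^{-1/2}$. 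Even your own window $[a^2/(\lambda N),R^{-1/2}]$ contains $\log(\lambda NR^{-1/2}/a^2)$ scales, not $O(\log(\lambda/a))$.

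\textbf{Missing idea: pigeonhole on $\lambda_0$.} The paper writes $f=\sum_{\lambda_0}f^{\lambda_0}$ with $\lambda(\theta)\sim\lambda_0$.
For $\lambda_0\le NR^{-1/2}(\log N)^{-100}$, Corollary~\ref{oldsup} carries the factor $\lambda_0/(R^{-1/2}N)$, which absorbs the $(\log N)^{c_0}$ loss.
For larger $\lambda_0$, the window above is short. The same lower bound on $\lambda_0$, together with $N\ge\sqrt R(\log N)^{10000}$, is what verifies the hypotheses of Lemma~\ref{br} and the condition $b\gg\lambda(\Sigma_2)/\lambda_0^{1/2}$ of Lemma~\ref{asym}. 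That condition is not mere bookkeeping; it fails for small $\lambda_0$.
The sum over $\lambda_0$ is closed by using the threshold $a/\log(NR^{-1/2}/\lambda_0)^2$ for $f^{\lambda_0}$, against the factor $\lambda_0/(NR^{-1/2})$ coming from the broad estimate.

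\textbf{Missing count of broad scales.} In the broad--narrow step, the asymmetric estimate only controls points that are broad at two scales more than $h$ apart (the set $B_2$). You still must bound $\bigl|\bigcup_m Br_m(a/(100h))\bigr|$, and summing the broad estimate over all $\sim\log R$ scales $m$ brings the $\log N$ back. The paper uses Proposition~\ref{numbm}: $Br_m(a')\neq\emptyset$ requires two separated caps each with $\lambda\ge a'$, so at most $\lambda/a'$ scales are nonempty. Hence $|B_1|$ costs only $\lambda h/a$ scales. Your proposal has no substitute for this count.

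Two smaller points. The narrow step must lose amplitude additively, as in $Nar_m(a')\subset Br_{m+1}(a'')\cup Nar_{m+1}(a'-a'')$, not by a constant factor per level. For the first claim you need none of this machinery: it is read off from Theorem~\ref{super} and local orthogonality (Corollaries~\ref{oldsup} and~\ref{oldsup2}).
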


First, we derive the first part of the about lemma as a quick corollary from the superlevel bound from the wave envelop paper \cite[theorem 4]{amplitude}. The second part of the lemma is our key contribution. 

Let's recall a superlevel set estimate by Guth and Maldague that we heavily rely on in this paper. \begin{theorem}(Superlevel Set Bound)\label{super}
    $$|U_a|a^4 \lesssim(\log N)^{O(1)}K^{O(1)} \sum_k \sum_{\tau_k} \sum_{\substack{U_{\tau_k} \\ \|g_{\tau_k}\|^{1/2}_{\L_1(W_{U_{\tau_k}})} \geq \frac{K^{-1}b \lambda(l_k)}{\lambda} }}|U_{\tau_k}|\|g_{\tau_k}\|^2_{\L^1(W_{U_{\tau_k}})}.$$
\end{theorem}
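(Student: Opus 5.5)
The plan is to follow the wave-envelope argument of Guth--Maldague \cite[Theorem 4]{amplitude}, adapted to the notation of \cref{sec:2}. Throughout, $b$ is the superlevel height, comparable to $a$. The argument has four parts: multi-scale pruning of wave packets, a broad--narrow decomposition of $U_a$, a high--low analysis of the square functions at the broad scale, and a final summation.

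\textbf{Pruning.} Starting from the finest scale $k=M$ and moving to coarser scales, I would define pruned functions $f^{(k)}=\sum_{\tau_k}\sum_{U_{\tau_k}}\psi_{U_{\tau_k}}f^{(k+1)}_{\tau_k}$. Here $\psi_{U_{\tau_k}}$ is a smooth partition of unity adapted to the tubes $\U_{\tau_k}$, with Fourier support in $U_{\tau_k}^*$. The sum keeps only those $U_{\tau_k}$ on which $\|g_{\tau_k}\|^{1/2}_{\L^1(W_{U_{\tau_k}})}\geq K^{-1}b\lambda(l_k)/\lambda$.

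At a fixed scale, the discarded pieces are bounded pointwise, via Lemma~\ref{locally constant}, by $\sum_{\tau_k}K^{-1}b\lambda(l_k)/\lambda$. Since $\#\{\tau_k:\lambda(\tau_k)\neq 0\}\cdot\lambda(l_k)\lesssim\lambda$ up to a harmless factor, this is at most $b/K$. Summing over the $M\sim\log R$ scales and taking $K\sim(\log N)^{O(1)}$, we get $|f-f^{(k)}|\leq b/10$ on $U_a$ for all $k$. Thus $|f^{(k)}|\sim a$ on $U_a$.

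The key bookkeeping point is that pruning only decreases amplitudes and slightly enlarges Fourier supports. Consequently Lemma~\ref{localorth}, Lemma~\ref{Low} and Lemma~\ref{hl} continue to hold for the pruned square functions $g^{(k)}_{\tau}$. This costs only a constant, since the supports grow by $O(1)$ dilates.

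\textbf{Broad--narrow.} For $x\in U_a$, I would run the standard broad--narrow induction through the scales $l_k$. Either there is a first scale $k$ with two $K^{-1}$-transverse caps $\tau,\tau'\subset\tau_{k-1}$ such that $|f(x)|^4\lesssim K^{O(1)}|f^{(k)}_{\tau}(x)|^2|f^{(k)}_{\tau'}(x)|^2$, or $x$ stays narrow down to scale $R^{-1/2}$. In the narrow case, $|f(x)|\lesssim K^{O(1)}|f_\theta(x)|$ for a single $\theta$, which is controlled directly by the finest-scale term. Dyadically pigeonholing in $k$ costs a factor $\log N$, absorbed in $(\log N)^{O(1)}$.

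\textbf{High--low at the broad scale.} At the broad scale, rescale $\tau_{k-1}$ to unit size and apply Lemma~\ref{bilresg} on $l_k^{-1}$-boxes. This dominates $|f_\tau|^2|f_{\tau'}|^2$ by a product of square functions $g^{(k)}_{\tau}g^{(k)}_{\tau'}$, locally averaged. Then decompose $g^{(k)}_{\tau_{k-1}}=g*\check{\eta}_{\leq r}+\sum_{s>r}g*\check{\eta}_{\sim s}$.

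If the low part dominates, Lemma~\ref{Low} bounds it by $\lambda(\tau)$ (or by $\lambda(\tau)rN$ for $r>1/N$). Comparing with $a^2$, this either contradicts $|f|\sim a$ or passes us to the next coarser scale. This inductive step, via Lemma~\ref{break}, is how the scale $k$ is ultimately selected.

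If a high part at frequency $s$ dominates, then
\[
|U_a|a^4\lesssim K^{O(1)}\int|g^{(k)}*\check{\eta}_{\sim s}|^2\lesssim K^{O(1)}\sum_{\tau:\,l(\tau)=(sR)^{-1}}\sum_{U\in\U_\tau}|U|\|f^{(k)}_\tau\|^4_{\L^2(W_U)}
\]
by Lemma~\ref{hl}. By the pruning, only tubes satisfying the amplitude threshold appear in this sum, which is exactly the right-hand side of Theorem~\ref{super}. Summing over $k$ and the dyadic $s$ gives the stated bound.

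\textbf{Main obstacle.} The hardest part is the interaction between pruning and the high--low step. One needs the high part of the pruned square function to see only surviving wave packets. At the same time, the low part must be controlled at the correct pruning level: $f^{(k+1)}$ versus $f^{(k)}$ must be chosen so that the Fourier supports used in Lemma~\ref{hl} still align with the tube geometry.

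I would handle this as in \cite{amplitude}. Set up the pruning so that $g^{(k)}_{\tau_k}$ is pruned with respect to $\U_{\tau_k}$ exactly. Then use the $(\log N)^{O(1)}$ slack to absorb the finitely many neighbouring-scale error terms.
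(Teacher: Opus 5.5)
The paper does not prove this statement. It imports it from \cite[Theorem 4]{amplitude}, adding only the remark that the $R^\epsilon$ there is really $(\log N)^{O(1)}K^{O(1)}$ once $K$ is kept as a free parameter. So reconstructing the Guth--Maldague argument is the right plan, but your reconstruction breaks at the step that produces the amplitude restriction. You discard $\psi_{U_{\tau_k}}f^{(k+1)}_{\tau_k}$ whenever $\|g_{\tau_k}\|^{1/2}_{\L^1(W_{U_{\tau_k}})}<K^{-1}b\lambda(l_k)/\lambda$, and claim via Lemma~\ref{locally constant} that the discarded piece is pointwise at most that threshold. That lemma does not apply. $\widehat{f_{\tau_k}}$ lives on $\tau_k$, whose tangential width $l_k$ is much larger than the tangential width $(Rl_k)^{-1}$ of $U_{\tau_k}^*$ (unless $l_k\sim R^{-1/2}$). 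Hence neither $f_{\tau_k}$ nor $g_{\tau_k}$ is locally constant on $U_{\tau_k}$, and a small average of $g_{\tau_k}$ over $U_{\tau_k}$ says nothing about $\sup_{U_{\tau_k}}|f_{\tau_k}|$.

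The claimed consequence is in fact false. Take every $\gamma$ present, with $f_\gamma$ the plane wave of frequency $(\frac jN,\frac{j^2}{N^2})$ cut off to $B_R$. Then $\lambda=N$, $\lambda(l_k)=l_kN$, and $|f|\sim N$ near the origin; take $a\sim b\sim N$. At every scale with $l_k\ge N/R$, Lemma~\ref{localorth} gives $\|g_{\tau_k}\|_{\L^1(W_U)}\lesssim l_kN$ on every $U$, whereas the squared threshold is $K^{-2}(l_kN)^2$. Since $l_kN\ge N^2/R\gg K^2$, every envelope at that scale is discarded. So $f^{(k)}\equiv 0$ while $|f|\ge a/2$ on $U_a\neq\emptyset$. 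Thus your claim that $|f-f^{(k)}|\le b/10$ on $U_a$ fails, and so does the claim that only threshold-satisfying envelopes enter the high--low sum. The theorem itself survives this example: $|U_a|a^4\sim RN^3$ is matched by the $\theta$-scale envelopes through the peaks, where $\|g_\theta\|_{\L^1(W_U)}\sim N^2/R$ does exceed the threshold. Separately, the bound $\#\{\tau_k:\lambda(\tau_k)\neq 0\}\cdot\lambda(l_k)\lesssim\lambda$ is false in general, since $\lambda(l_k)$ is a maximum over caps.

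What is missing is a valid way to introduce the threshold. One place it can legitimately enter is where local constancy on $U_\tau$ genuinely holds. That is the frequency-localized pieces $g_\tau*\check{\eta}_{\sim s}$ with $l(\tau)=(sR)^{-1}$ from Lemma~\ref{hl}, or the low parts $g_\tau*\check\eta_{\le (Rl(\tau))^{-1}}$ as in the proof of Proposition~\ref{boundhigh}. Both are bounded on $U$ by $\|g_\tau\|_{\L^1(W_U)}$. To exploit this, you cannot integrate $|g^h|^2$ over all of $\R^2$, as you do, because that includes every light envelope. You must keep the integral localized near the broad part of $U_a$. There you must show that the light envelopes contribute pointwise much less than the lower bound on the high part forced by $|f|\sim a$ (cf.\ the level $a^2/\#\tau_k$ in Proposition~\ref{highdom}), uniformly over the $\sim\log R$ frequency scales. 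That comparison is exactly the content of \cite{amplitude} which your ``main obstacle'' paragraph defers rather than supplies. Also, to keep the losses polylogarithmic, the narrow iteration through the $\sim\log R$ scales must track amplitudes additively, as in the paper's $Br_m/Nar_m$ scheme. Losing a fixed constant factor $C$ per scale would cost $C^{\log R}$, a power of $R$.
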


\begin{remark}
    Note that this is not exactly what was stated in \cite[theorem 4]{amplitude}, they had $\lesssim_\epsilon R^\epsilon$ instead of $\lesssim(\log N)^{O(1)}K^{O(1)}$ because in their write up they put $K$ to be $R^\epsilon$ but they proof actually  gives $\lesssim(\log N)^{O(1)}K^{O(1)} $.  
\end{remark}
The above theorem implies the following corollary. 
\begin{corollary}\label{oldsup}
There exists a constant $c_0$ such that if $f$ satisfy $\Cn_0$, then 
    $$|U_a^{\lambda_0}|a^6 \lesssim (\log N)^{c_0} \max\left(\frac{\lambda_0}{R^{-1/2}N}\frac{a^2}{\lambda^2},\frac{R}{N^2}\right)\lambda^3 N^2R.$$
    We can put $c_0$ to be $100$, say. 
\end{corollary}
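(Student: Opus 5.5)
We will derive Corollary~\ref{oldsup} directly from Theorem~\ref{super} by bounding each term on the right-hand side with Lemma~\ref{localorth}. Here $U_a^{\lambda_0}$ denotes the superlevel set for a function satisfying $\Cn_0$ where each $\theta$ carries at most $\lambda_0$ nonzero caps $\gamma$. We take $b\sim a$ and $K\sim 1$, so that all the losses are absorbed into $(\log N)^{O(1)}$.

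\emph{Step 1: bound a single scale.} Fix a scale $k$. For every tube $U_{\tau_k}$ that survives the threshold in Theorem~\ref{super}, we have
\[
\|g_{\tau_k}\|_{\L^1(W_{U_{\tau_k}})} \gtrsim \frac{a^2\lambda(l_k)^2}{\lambda^2}.
\]
Write $A_k$ for the pointwise upper bound on $\|g_{\tau_k}\|_{\L^1(W_{U_{\tau_k}})}$ supplied by Lemma~\ref{localorth}; this is either $\lambda(l_k)$ or $\lambda(l_k)\lambda(l_k^{-1}R^{-1})$. We bound one factor of $\|g\|^2$ by $A_k$ and sum the other factor over all tubes. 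The sum $\sum_{U}|U|\,\|g_{\tau_k}\|_{\L^1(W_U)}$ is essentially $\int g_{\tau_k}$, and summing over $\tau_k$ gives
\[
\sum_{\tau_k}\int_{B_R} g_{\tau_k}\lesssim \int_{B_R}\sum_\theta|f_\theta|^2\lesssim \lambda R^2,
\]
since the $\Cn_0$ normalisation gives $\int_{B_R}|f_\gamma|^2\sim R^2$ and $L^2$-orthogonality gives $\int|f_\theta|^2\sim(\#\gamma\subset\theta)R^2$. The scale-$k$ contribution is therefore at most $A_k\lambda R^2$. Multiplying by $a^2$ converts the $a^4$ on the left into $a^6$. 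The job is then to show that $a^2A_k\lambda R^2$ is bounded by $\max\bigl(\frac{\lambda_0}{R^{-1/2}N}\frac{a^2}{\lambda^2},\frac{R}{N^2}\bigr)\lambda^3N^2R$. Since there are only $M\sim\log R\lesssim\log N$ scales, summing over $k$ costs one more logarithm.

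\emph{Step 2: the case analysis.} This is where the threshold matters. Using it, we trade the crude bound for a sharper one: on surviving tubes,
\[
\|g\|^2\le A_k\,\|g\|\cdot\frac{\|g\|}{A_k}\quad\text{and}\quad a^2\lesssim \frac{\lambda^2\|g\|}{\lambda(l_k)^2}\le\frac{\lambda^2A_k}{\lambda(l_k)^2}.
\]
Putting these together gives
\[
|U_a|a^6\lesssim(\log N)^{O(1)}\,a^2\,\frac{A_k\lambda R^2}{1}\cdot\min\Bigl(1,\frac{\lambda^2A_k}{a^2\lambda(l_k)^2}\Bigr).
\]
We estimate the two terms of the maximum separately.
\begin{itemize}
\item For $k<c$ we have $A_k=\lambda(l_k)\le\lambda$. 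Using $a\le\lambda$ and $\lambda(l_k)\le \lambda$, the crude bound $a^2\lambda(l_k)\lambda R^2$ must be compared with $\frac{\lambda_0 R^{1/2}}{N}a^2\lambda N^2R$. This reduces to $\lambda(l_k)R\lesssim\lambda_0R^{1/2}N$, which holds because a cap of length $l_k\le 1$ contains at most $\lambda(\tau_k)\lesssim(l_kR^{1/2})\lambda_0$ nonzero caps $\gamma$ (one count of $\theta$'s times $\lambda_0$), and $l_k\ge N/R$ in this range.
\item For $k\ge c$, a cap $\omega_k$ of length $l_k^{-1}R^{-1}\le 1/N$ contains at most one $\gamma$, so $\lambda(l_k^{-1}R^{-1})\le1$ and $A_k\lesssim\lambda(l_k)\lesssim l_kN$. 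In this range we use the second branch of the minimum, which gives the bound
\[
\frac{\lambda^3R^2A_k^2}{\lambda(l_k)^2}\lesssim\lambda^3R^2=\frac{R}{N^2}\lambda^3N^2R.
\]
This is exactly the second term of the maximum.
\end{itemize}

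\emph{Main obstacle.} The bookkeeping in Step 2 is the delicate part. In particular we must check that for $k<c$ the count $\lambda(l_k)\lesssim l_kR^{1/2}\lambda_0$, combined with $l_k\le1$, really produces the factor $\frac{\lambda_0R^{1/2}}{N}$, and does not lose a factor of $N/R^{1/2}$. If the crude bound falls short there, we will instead use the threshold branch with $A_k=\lambda(l_k)$, giving $\lambda^3R^2/\lambda(l_k)$, and interpolate between the two branches via the minimum. The constant $c_0=100$ is generous enough to cover the $(\log N)^{O(1)}$ from Theorem~\ref{super} plus the logarithm from summing over scales.
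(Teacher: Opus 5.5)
\textbf{There is a genuine gap in Step~2.} Your framework is the paper's own: Theorem~\ref{super}, the pointwise bound $A_k$ from Lemma~\ref{localorth}, and at each scale a choice between the crude bound and the threshold bound. However, your case analysis assigns the two branches to the wrong regimes, and both bullets fail as written.

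\emph{The regime $k<c$.} Here $l_k\ge N/R$. Your reduction $\lambda(l_k)R\lesssim\lambda_0R^{1/2}N$ via $\lambda(l_k)\lesssim l_kR^{1/2}\lambda_0$ would need $l_k\lesssim N/R$, which is the opposite of what holds. For example, at $l_k=1$ with every $\theta$ full it would need $R\lesssim N$.

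\emph{The regime $k\ge c$.} Here $l_k\lesssim N/R$, so $l_k^{-1}R^{-1}\gtrsim 1/N$, not $\le 1/N$. An $\omega_k$ can therefore contain up to $\min(\lambda_0,\,l_k^{-1}R^{-1}N)$ caps $\gamma$, and $A_k=\lambda(l_k)\lambda(l_k^{-1}R^{-1})$. The threshold branch then gives only $\lambda^3R^2\lambda(l_k^{-1}R^{-1})^2$. This exceeds $\lambda^3R^2$ by a factor of up to $\lambda_0^2$, which can be as large as $N^2/R$.

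\textbf{The correct assignment is the reverse, and this is what the paper does.}
\begin{itemize}
\item For $l_k\ge N/R$ (i.e.\ $k<c$), use the threshold. Since $A_k=\lambda(l_k)$,
\[
a^2\sum|U|\|g\|^2\lesssim K^2\lambda^2\lambda(l_k)^{-2}A_k^2\lambda R^2=K^2\lambda^3R^2 .
\]
This gives the second term of the maximum. Note it is $\lambda^3R^2$, not $\lambda^3R^2/\lambda(l_k)$ as your fallback computes.
\item For $l_k<N/R$ (i.e.\ $k\ge c$), use the crude bound together with the product estimate
\[
\lambda(l_k)\,\lambda(l_k^{-1}R^{-1})\lesssim(\lambda_0l_kR^{1/2})(l_k^{-1}R^{-1}N)=\lambda_0R^{-1/2}N ,
\]
in which $l_k$ cancels. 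This gives $a^2\lambda_0R^{-1/2}N\lambda R^2$, which is exactly the first term.
\end{itemize}

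The factor $\lambda_0/(R^{-1/2}N)$ in the first term can only come from this product in the small-$l_k$ regime. Your Step~2 never uses it, so as written it cannot produce the first term of the maximum. The fallback in your ``Main obstacle'' paragraph, once its arithmetic is fixed, repairs the $k<c$ case. The $k\ge c$ case remains broken until you switch to the crude branch with the product bound above.
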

\begin{proof}
Note by local orthogonality, 
$\|g_{\tau_k}\|_{\L^1(W_{U_{\tau_k}})} \sim \|f_{\tau_k}\|_{\L^2(U_k)}^2$.
If $l_k<N/R$, 
    \begin{eqnarray*}
        && \sum_{\tau_k} \sum_{\substack{U_k \\ \|g_{\tau_k}\|^{1/2}_{\L^1(W_{U_{\tau_k}})} \geq \frac{K^{-1}a \lambda(l_k)}{\lambda} }}|U_k|\|g_{\tau_k}\|^{2}_{\L^1(W_{U_{\tau_k}})}\\
        &\lesssim&\max_{\tau_k, U_{\tau_k}} \|f_{\tau_k}\|_{\L^2(U_k)}^2\sum_{\tau_k} \sum_{\substack{U_k \\ \|g_{\tau_k}\|^{1/2}_{\L^1(W_{U_{\tau_k}})}\geq \frac{K^{-1}a \lambda(l_k)}{\lambda} }}|U_k|\|g_{\tau_k}\|_{\L^1(W_{U_{\tau_k}})}\\
        &\lesssim&\max_{\tau_k, U_{\tau_k}} \|f_{\tau_k}\|_{\L^2(U_k)}^2\sum_{\tau_k} \|f_{\tau_k}\|^2_{L^2(B_R)}\\
        &\lesssim&\max_{\tau_k, U_{\tau_k}} \|f_{\tau_k}\|_{\L^2(U_k)}^2\|f\|^2_{L^2(B_R)}\\
&\lesssim&\lambda(l_k)\lambda(l_k^{-1}R^{-1})\|f\|^2_{L^2(B_R)}\\
    &\lesssim&\lambda_0(l_kR^{1/2})(l_k^{-1}R^{-1}N)\|f\|^2_{L^2(B_R)}\\
    \end{eqnarray*}
    as $\lambda(l_k)\lesssim \lambda_0(l_kR^{1/2})$ and $\lambda(l_k^{-1}R^{-1})\lesssim (l_k^{-1}R^{-1}N)$. 
    Thus, if $l_k<N/R$, we have 
    $$\sum_{\tau_k} \sum_{\substack{U_k \\ \|g_{\tau_k}\|^{1/2}_{\L^1(W_{U_{\tau_k}})}\geq \frac{K^{-1}a \lambda_2(l_k)}{\lambda_2} }}|U_k|\|g_{\tau_k}\|^{2}_{\L^1(W_{U_{\tau_k}})}\lesssim \lambda_0R^{-1/2}N\lambda R^2. $$
    On the other hand, if $l_k \geq N/R$, then 
    \begin{eqnarray*}
        && a^2\sum_{\tau_k} \sum_{\substack{U_k \\ \|g_{\tau_k}\|^{1/2}_{\L^1(W_{U_{\tau_k}})} \geq \frac{K^{-1}a \lambda(l_k)}{\lambda} }}|U_k|\|g_{\tau_k}\|^{2}_{\L^1(W_{U_{\tau_k}})}\\
        &\lesssim&K^2\lambda^2\frac{1}{\lambda(l_k)^2}\sum_{\tau_k} \sum_{U_k}|U_k|\|g_{\tau_k}\|^{3}_{\L^1(W_{U_{\tau_k}})}\\
        &\lesssim&K^2\lambda^2\frac{1}{\lambda(l_k)^2}\max_{\tau_k, U_{\tau_k}} \|f_{\tau_k}\|_{\L^2(U_k)}^4\sum_{\tau_k} \sum_{\substack{U_k \\ \|S_{U_{\tau_k}}\|_{\L_2} \geq \frac{R^{-\epsilon}a \lambda(l_k)}{\lambda} }}|U_k|\|g_{\tau_k}\|_{\L^1(W_{U_{\tau_k}})}\\
        &\lesssim&K^2\lambda^3R^2.\\
    \end{eqnarray*}
    where in the last line we used that $$\|f_{\tau_k}\|^2_{\L^2(U_k)}\lesssim \lambda(l_k)$$ if $l_k \geq N/R$ using local orthogonality. 
    Thus, 
    \begin{align*}
        |U_a|a^6 &\lesssim (\log N)^{O(1)} K^{O(1)} \max(a^2\lambda_0 R^{-1/2}N\lambda R^2, \lambda^3 R^2)\\
        &=(\log N)^{O(1)} K^{O(1)}\max\left(\frac{\lambda_0}{R^{-1/2}N}\frac{a^2}{\lambda^2},\frac{R}{N^2}\right)\lambda^3 N^2R. \qedhere
    \end{align*}
\end{proof}
In particular, summing up the different dyadic choices of $\lambda_0\leq \frac{N}{\sqrt{R}}$, we have the following corollary.
\begin{corollary}\label{oldsup2}
There exists a constant $c_0$ such that that if $f$ satisfy $\Cn_0$ then
    $$|U_a|a^6 \lesssim (\log N)^{c_0} \max\left(\frac{a^2}{\lambda^2},\frac{R}{N^2}\right)\lambda^3 N^2R.$$
    We can put $c_0$ to be $100$, say. 
\end{corollary}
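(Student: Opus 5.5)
The statement to prove is Corollary \ref{oldsup2}. It should follow from Corollary \ref{oldsup} by decomposing $f$ according to how many active small caps $\gamma$ each $\theta$ contains, and then summing over dyadic values.

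\emph{Decomposition.} For each cap $\theta$ of length $R^{-1/2}$, set $\lambda(\theta)=\#\{\gamma\subset\theta : |f_\gamma|\sim 1\}$; it satisfies $0\le\lambda(\theta)\le N/\sqrt R$. For each dyadic $\lambda_0\in[1,N/\sqrt R]$, let $f^{\lambda_0}=\sum_{\theta:\lambda(\theta)\sim\lambda_0} f_\theta$. Then $f=\sum_{\lambda_0} f^{\lambda_0}$, with $O(\log(N/\sqrt R))\le O(\log N)$ terms. Each $f^{\lambda_0}$ still satisfies $\Cn_0$ and has at most $\lambda$ active caps. The superscript in $U_a^{\lambda_0}$ in Corollary \ref{oldsup} refers to exactly this situation: $f$ with $\lambda(\theta)\lesssim\lambda_0$ for every $\theta$. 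This is the only hypothesis used in its proof, through $\lambda(l_k)\lesssim\lambda_0 l_kR^{1/2}$.

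\emph{Apply Corollary \ref{oldsup} piecewise.} Since $\lambda_0\le N/\sqrt R$, the prefactor $\lambda_0/(R^{-1/2}N)$ is at most $1$. Corollary \ref{oldsup} therefore gives, for every dyadic $b$,
\[
|U_b(f^{\lambda_0})|\,b^6\lesssim(\log N)^{c_0}\max\left(\frac{b^2}{\lambda^2},\frac{R}{N^2}\right)\lambda^3N^2R .
\]
The factor $\lambda^3$ may stay as the global count, since the estimate is monotone in $\lambda$.

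\emph{Pass back to $f$.} If $|f(x)|\sim a$ and there are $L=O(\log N)$ pieces, then some piece has $|f^{\lambda_0}(x)|\gtrsim a/L$. Hence
\[
U_a(f)\subset\bigcup_{\lambda_0}\ \bigcup_{b\gtrsim a/L} U_b(f^{\lambda_0}).
\]
Summing the bound above over $b\ge a/L$ dyadic up to $b\lesssim\lambda$, and multiplying by $a^6\le L^6 b^6$:
\begin{itemize}
\item the term $b^2/\lambda^2$ contributes $\lesssim (a/L)^{-4}\cdots$, up to $L^{O(1)}$ factors;
\item the term $R/N^2$ sums with $b^{-6}$ decay, so the smallest $b\sim a/L$ dominates and $\sum_b (a/b)^6\lesssim L^6$.
\end{itemize}
Collecting the $L^{O(1)}$ factors and the $\log N$ count of values of $\lambda_0$ gives
\[
|U_a(f)|a^6\lesssim(\log N)^{c_0+O(1)}\max\left(\frac{a^2}{\lambda^2},\frac{R}{N^2}\right)\lambda^3N^2R .
\]
The constant $c_0$ is enlarged but remains absolute, which is consistent with ``$c_0=100$, say''.

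\emph{Main obstacle.} The delicate step is the $b^2/\lambda^2$ branch, because its sum is dominated by the largest $b$, where the bound is weaker. The fix is to exploit the level-set structure through Lemma \ref{beattri} instead of a crude triangle inequality. Write $f^{\lambda_0}$ as $\sum_b b\,\chi_{U_b}$ up to constants. Then $|f|$ is controlled by $\max_{\lambda_0,b}$ up to the $\log$ count. So one only needs the pointwise fact that $x\in U_a(f)$ forces some $x\in U_b(f^{\lambda_0})$ with $b\ge a/L$, together with the bound
\[
|U_a(f)|\le\sum_{\lambda_0}\sum_{b\ge a/L}|U_b(f^{\lambda_0})|.
\]
In this sum the $b^2$ term gives $|U_b|\lesssim b^{-4}\cdots$, which is summable in $b$ from below. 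Its dominant contribution is again $b\sim a/L$, giving $a^6\cdot(a/L)^{-4}\lambda^{-2}\lesssim L^4a^2/\lambda^2$. So both branches are controlled at $b\sim a/L$ with polylogarithmic loss, and the corollary follows.
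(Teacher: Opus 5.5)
Your proposal is correct and follows the paper's route: split $f=\sum_{\lambda_0}f^{\lambda_0}$ according to $\lambda(\theta)$, apply Corollary \ref{oldsup} to each piece (where $\lambda_0/(R^{-1/2}N)\le 1$), and sum over the $O(\log N)$ dyadic values of $\lambda_0$. The paper leaves this summation implicit. Your union bound $U_a(f)\subset\bigcup_{\lambda_0}\bigcup_{b\gtrsim a/\log N}U_b(f^{\lambda_0})$ fills it in correctly at a cost of only $(\log N)^{O(1)}$, because after dividing by $b^6$ both branches decay in $b$ and so $b\sim a/\log N$ dominates; the appeal to Lemma \ref{beattri} in your last paragraph is not needed.
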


To deal with the second part of lemma \ref{special0}, we pigeonhole on $\lambda(\theta)$ and write $$f=\sum_{\substack{1\leq\lambda_0 \leq R^{-1/2}N\\ dyadic}} f^{\lambda_0}$$
with each $f^{\lambda_0}$ has $ \lambda_0/2\leq\lambda(\theta)\leq\lambda_0$.
Define 

$$U_{a}^{\lambda_0}=U_{a}(f_{\lambda_0}).$$
Note that, 
$$\int |f|^6 \leq \sum_{\substack{ 1 \leq a \leq \lambda\\a \text{ dyadic}}} |a|^6 |U_a|.$$

As $1+ \frac{1}{2^2}+\frac{1}{3^2}+\cdots \lesssim 1$ and the scale for $\lambda_0$ is dyadic, 
we have 
$$\sum_{1\leq \lambda_0 \leq R^{-1/2}N} \frac{1}{(\log (R^{-1/2}N/\lambda_0))^2} \sim 1+\frac{1}{2^2}+\frac{1}{3^2}+\cdots +\frac{1}{(\log R^{-1/2}N)^2}\lesssim 1.$$

Thus, 
$$\|f\|^6_{L^6(B_R)} \leq \sum_{\lambda_0}\sum_a |U_{a/(\log (R^{-1/2}N/\lambda_0))^2}^{\lambda_0}|a^6.$$

From now on in this subsection, we shall focus on the case when $a >\lambda (\log N)^{-100}$ and $\lambda_0>R^{-1/2}N(\log N)^{-100}$. We prove the following key lemma, which would immediately imply the second part of Lemma \ref{special00} by summing over all the dyadic $\lambda_0$'s. 
\begin{lemma}\label{key}
    Suppose that $a >\lambda (\log N)^{-100}$ and $\lambda_0>R^{-1/2}N(\log N)^{-100}$.
    Then,
    $$|U_a|a^6 \lesssim  \left((\log\left(\frac{\lambda}{a}\frac{NR^{-1/2}}{\lambda_0}\right)\right)^{O(1)}\frac{a}{\lambda}\frac{NR^{-1/2}}{\lambda_0} N^2R\lambda^3$$
\end{lemma}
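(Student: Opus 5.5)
The plan is to follow the proof of Theorem~\ref{super} (the Guth--Maldague argument from \cite{amplitude}), but to \emph{count scales} more carefully, using the two hypotheses $a>\lambda(\log N)^{-100}$ and $\lambda_0>R^{-1/2}N(\log N)^{-100}$. Set
\[
L:=\frac{\lambda}{a}\cdot\frac{NR^{-1/2}}{\lambda_0}\lesssim(\log N)^{200}.
\]
The goal is that every logarithmic loss in the argument is a loss in the number of \emph{relevant} scales, and that this number is $O(\log L)$ rather than $O(\log N)$.

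\textbf{Step 1: restricting the scales.} Since $f=f^{\lambda_0}$ satisfies $\Cn_0$ with $\lambda(\theta)\sim\lambda_0$, every cap $\tau$ satisfies
\[
\|f_\tau\|_\infty\lesssim\lambda(\tau)\lesssim\lambda_0\, l(\tau)R^{1/2}.
\]
A point $x\in U_a$ needs $|f(x)|\sim a$, and $a$ is at least $\lambda/L$. Hence any cap $\tau_k$ on which a $K^{-1}$-fraction of the mass of $f(x)$ concentrates must have $\lambda(\tau_k)\gtrsim K^{-1}a$. Combined with $\lambda\le\lambda(l_0)\lesssim\lambda_0R^{1/2}$, this forces $l_k\gtrsim (KL)^{-1}$. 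So the narrow descent from $l_0=1$ can only proceed through $O(\log(KL))$ scales before a broad scale must occur.

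In the same spirit, in the high--low step I would use Lemma~\ref{Low}: $|g_\tau*\check\eta_{\le r}|\lesssim\lambda(\tau)rN$. The low part can therefore only fail to dominate $a^2$ (up to $K$) at frequency scales $r$ with $rN\gtrsim a^2/(K\lambda(\tau)\lambda_0)$. Since $r\le R^{-1/2}$ and $a\approx\lambda$, this leaves only $O(\log(KL))$ dyadic scales $s$ at which the high part must be treated. This addresses obstacle (i) from the outline: the high--low iteration in the broad estimate is run only over these $O(\log(KL))$ scales, and $K$ is taken to be a fixed large constant (or $O(\log L)$).

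\textbf{Step 2: the broad estimate.} At a broad scale $l_k$ there are two caps $\tau_1,\tau_2$ of length $l_{k+1}$ that are $\sim l_k$-separated inside the same $\tau_k$, with
\[
|f(x)|^2\lesssim K^{O(1)}|f_{\tau_1}(x)f_{\tau_2}(x)|.
\]
I would apply Lemma~\ref{bilresg} on the $l_kR\times R$ tubes, after parabolic rescaling of $\tau_k$ to unit size. Then I would bound $\int g_{\tau_1}g_{\tau_2}$ by splitting into a low part, controlled in $L^\infty$ by Lemma~\ref{Low}, and high parts, controlled by Lemma~\ref{hl} summed over the $O(\log L)$ scales from Step 1. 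Lemma~\ref{localorth} then gives a bound of the form
\[
|U_a^{\mathrm{broad},k}|\,a^6\lesssim(\log L)^{O(1)}\,\frac{a}{\lambda}\,L\,N^2R\lambda^3 .
\]

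\textbf{Step 3: the narrow part (main obstacle).} The difficulty is the broad--narrow bookkeeping, i.e.\ obstacle (ii) from the outline. A naive union over all broad scales $l_k$ loses $\log N$. My first attempt is the refined argument sketched in the introduction. Decompose $U_a$ according to the pair consisting of the broad scale $l$ and the scale $l(\Sigma_2)$ of the cap that carries the secondary interaction. When $l(\Sigma_2)\ll l$, bound the contribution with the asymmetric superlevel estimate \eqref{eq: asym_superlevel} (Lemma~\ref{asym}, proved by the same high--low scheme as Step 2). Its gain
\[
\max\Bigl(\frac{l(\Sigma_2)}{l},\ \frac{\sqrt R}{N}\Bigr)
\]
behaves as follows.
\begin{itemize}
\item The factor $l(\Sigma_2)/l$ decays geometrically in the scale gap, so the sum over pairs $(l,\,l(\Sigma_2))$ with a large gap is dominated by its diagonal terms.
\item The factor $\sqrt R/N$ is at most $(\log N)^{-10000}$. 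By the lower bound on $\lambda_0$ it beats the $(\log N)^{O(1)}$ from any remaining scale count, for instance the one in Corollary~\ref{oldsup}.
\end{itemize}
Only the diagonal terms, where $l$ and $l(\Sigma_2)$ are comparable, remain. Step 1 shows there are $O(\log L)$ of them, and each is bounded by Step 2.

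Summing these contributions gives the claimed $(\log L)^{O(1)}\frac{a}{\lambda}\frac{NR^{-1/2}}{\lambda_0}N^2R\lambda^3$. I expect the hardest part to be verifying that the asymmetric estimate holds uniformly in the exact range of $a$, $b$ and $\lambda(\Sigma_j)$ that the decomposition in Step 3 produces.
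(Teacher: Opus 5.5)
Your high--low restriction of scales inside the broad estimate (the second half of Step~1) and your use of Lemma~\ref{asym} for widely separated scales match the paper. The gap is the scale count that your Step~3 rests on.

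\textbf{Step 1 does not bound the number of broad scales.} From $K^{-1}a\lesssim\lambda(\tau_k)\lesssim\lambda_0 l_kR^{1/2}$ and $a\ge\lambda/L$ you only get $l_k\gtrsim \lambda/(KL\lambda_0R^{1/2})$. This is $\gtrsim (KL)^{-1}$ only if $\lambda\gtrsim\lambda_0R^{1/2}$, so the inequality $\lambda\lesssim\lambda_0R^{1/2}$ you invoke points the wrong way; mass $\gtrsim a$ can sit in a single $\theta$. More importantly, the number of scales $m$ with $Br_m(a')\neq\emptyset$ is not $O(\log L)$ in general. Nested clusters of mass $\sim a'$ at frequency distances $3^{-j}$ from a fixed point, with aligned phases, make this number of order $\lambda/a'$ (as long as that is $\lesssim\log N$), and $\lambda/a$ may be as large as $(\log N)^{100}$.

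The paper replaces your count by Proposition~\ref{numbm}: a mass count over disjoint caps gives $\#\{m: Br_m(a')\neq\emptyset\}\le\lambda/a'$. This is polynomial in $\lambda/a$. It is affordable because the per-scale bilinear estimate (Lemma~\ref{br}), summed over pairs $\tau_m\sim\tau_m'$ at threshold $a/(100h)$, gives $|Br_m|\,a^6\lesssim(\log L)^{O(1)}\frac{a^2}{\lambda^2}N^2R\lambda^3$. The factor $\frac{a}{\lambda}$ in the statement is exactly $\frac{a^2}{\lambda^2}\cdot\frac{\lambda}{a}$. Your Step~2 per-scale bound $(\log L)^{O(1)}\frac{a}{\lambda}L\,N^2R\lambda^3=(\log L)^{O(1)}\frac{NR^{-1/2}}{\lambda_0}N^2R\lambda^3$ already exceeds the target by the factor $\lambda/a$. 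So no summation over scales can produce the claim from it.

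\textbf{Step 3 never allocates the threshold budget.} The descent $U_a\subset\bigcup_m Br_m(a_m)$ requires $\sum_m a_m<a/2$, and you never say how this budget is distributed. The paper's device is as follows.
\begin{itemize}
\item Set $h=100\log\bigl((\lambda/a)\log(NR^{-1/2}/\lambda_0)\bigr)\lesssim\log L$ and $a'=a(a/\lambda)^3$.
\item A point outside $B_2=\bigcup_{|m-n|>h}Br_m(a')\cap Br_n(a')$ lies in $Br_m(a')$ only for $m$ in a window of at most $h$ scales.
\item Assign threshold $a/(100h)$ on that window and $a'$ elsewhere. The point then lies in $B_1=\bigcup_m Br_m(a/(100h))$, which is bounded as above.
\item $B_2$ is handled by Lemma~\ref{asym} at the small threshold $a'$. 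This is admissible since $a'\ge\lambda(\log N)^{-400}\gg\lambda/\lambda_0^{1/2}$ under the standing assumptions, so the step you expected to be hardest is routine.
\item In $B_2$, the gain $3^{-h}\lesssim(a/\lambda)^{100}$ must absorb the polynomial loss in $\lambda/a$ from the small threshold. It must also absorb the polynomially many contributing caps and pairs of nonempty scales $(m,n)$, the latter again counted by Proposition~\ref{numbm}.
\end{itemize}
So your ``diagonal'' must mean a gap of at most $h\sim\log(\lambda/a)$ scales, not merely comparable scales. Geometric decay in the gap alone does not close the argument: without fixed thresholds the asymmetric bound $\lambda_1\lambda_2^2/(a^2b^4)$ has no a priori size, and summing over all $\sim\log N$ values of $l$ brings back the $\log N$ loss you set out to remove.
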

Lemma \ref{key} follows from two key lemmas: the broad estimate (see Lemma \ref{br}) and the asymmetric estimate (see Lemma \ref{asym}), which both estimate the size of the intersection of the superlevel set of $f_{\Sigma_1}$ and $f_{\Sigma_2}$. The broad estimate deals with the case when $\Sigma_1$ and $\Sigma_2$ have the same length, and the length is similar to the distance between the two caps, and the asymmetric estimate deals with the case when the length of the shorter cap is much smaller than the distance between the two caps. 

Let $\Sigma_1$ and $\Sigma_2$ be two caps. 
Within both proofs, we shall use $\lambda_i$ to denote $\lambda(\Sigma_i)$ and use $l_i$ to denote $l(\Sigma_i)$ for $i=1,2$. 
Define 
    $$U_{a,b}(f_1,f_2):=\{x\in B_R: |f_1|>a \text{ and }|f_2|>b\}$$
    to be the joint superlevel set of $f_1$ and $f_2$. 
Define $V(k,\beta):=\{x: g_k \gtrsim \beta\}$ and $V(\beta):=\{x: g \gtrsim \beta\}$ to be the superlevel set of the blurred versions of $f$. Define 
$V'(k,\beta):=\{x: g_k \sim \beta\}$ and $V'(\beta):=\{x: g \sim \beta\}$.

\subsubsection{Broad Estimate} The overall strategy of the proof for the broad estimate follows that of \cite{amplitude}, with the key additional ingredient being a more careful control on the number of scales that can contribute to the high–low estimate. To bound $U_{a,b}(f_{\Sigma_1},f_{\Sigma_2})$, we shall first bound the high frequency part of $g_k$, which can be viewed as the $|f|^2$ blurred at scale $l_k^{-1}$. Then, we shall see that $U_{a,b}(f_{\Sigma_1},f_{\Sigma_2})$ is contained in $V(k,\beta)$ the superlevel set of $g_k$ for a big enough $\beta$. Then, we shall show that when $\beta$ is big enough, $g_k$ is necessarily dominated by its high-frequency part. Finally, we related the size of $U_{a,b}(f_1,f_2)$ with that of $V(k,\beta)$ using bilinear restriction and the fact that the Fourier support of $f_1$ and $f_2$ are transversal.

\begin{lemma}(Broad Estimate)\label{br}
  Suppose $\Sigma_1$ and $\Sigma_2$ are caps of length $l$ and are $l$ distance apart also suppose that 
    $a> \lambda(\Sigma_1)(\log N)^{-4000}$, $b> \lambda(\Sigma_2)(\log N)^{-4000}$,
    $\lambda_0> NR^{-1/2}(\log N)^{-100}$ then
  $$U_{a,b}(f_{\Sigma_1},f_{\Sigma_2}) \lesssim \log\left(\frac{\lambda(\Sigma_2)}{b} \frac{NR^{-1/2}}{\lambda_0}\right)\frac{N^2R(\lambda(\Sigma_1)+\lambda(\Sigma_2))}{a^2b^2}\frac{\lambda_0}{NR^{-1/2}}.$$
\end{lemma}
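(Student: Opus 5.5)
Write $f_1=f_{\Sigma_1}$, $f_2=f_{\Sigma_2}$, $\lambda_i=\lambda(\Sigma_i)$, and let $x\in U_{a,b}(f_1,f_2)$. The plan is the high--low argument of \cite{amplitude}, run so that only about $\log\bigl(\frac{\lambda_2}{b}\frac{NR^{-1/2}}{\lambda_0}\bigr)$ scales can contribute.

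\textbf{Step 1: reduce to a superlevel set of $g_{\Sigma_2}$.} By Lemma~\ref{locally constant} and the transversality of $\Sigma_1,\Sigma_2$ (length $l$, separation $l$), Lemma~\ref{bilresg} applies on $R^{1/2}$-scale rectangles adapted to the two directions, after the usual parabolic rescaling by $l$. It gives
\[
|f_1|^2|f_2|^2\lesssim (g_{\Sigma_1}g_{\Sigma_2})*W
\]
pointwise, with $W$ a weight at the $R^{1/2}$ scale. Hence
\[
|U_{a,b}|\,a^2b^2\lesssim \int_{U_{a,b}} g_{\Sigma_1}g_{\Sigma_2}\,W .
\]

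\textbf{Step 2: high--low decomposition of $g_{\Sigma_2}$.} Write
\[
g_{\Sigma_2}=g_{\Sigma_2}*\check\eta_{\le r_0}+\sum_{r_0<s\le R^{-1/2}} g_{\Sigma_2}*\check\eta_{\sim s}.
\]
By Lemma~\ref{Low}, the low part is at most $\lambda_2 r_0N$ once $r_0>1/N$. On the other hand, on $U_{a,b}$ we have $g_{\Sigma_2}\gtrsim b^2/\lambda_0\cdot(\text{normalization})$. This is the quantity forced by $|f_2|\ge b$ together with $\lambda(\theta)\sim\lambda_0$, since each $|f_\theta|\lesssim\lambda_0$ and there are $\lambda_2/\lambda_0$ caps $\theta$, so $|f_2|^2\lesssim(\lambda_2/\lambda_0)\,g$.

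Choose $r_0$ to be the largest dyadic scale at which the low part is still at most half of that threshold. Then, on $U_{a,b}$, the function $g_{\Sigma_2}$ is dominated by its high part. The number of dyadic $s$ between $r_0$ and the top scale at which the high part can still be large is only
\[
O\Bigl(\log\bigl(\tfrac{\lambda_2}{b}\tfrac{NR^{-1/2}}{\lambda_0}\bigr)\Bigr).
\]
The lower bounds $b>\lambda_2(\log N)^{-4000}$ and $\lambda_0>NR^{-1/2}(\log N)^{-100}$ guarantee that this window lies inside $(1/N,R^{-1/2}]$. Pigeonholing over these scales costs exactly the claimed log factor and produces one scale $s$ such that, on a large portion of $U_{a,b}$,
\[
g_{\Sigma_2}\lesssim \log(\cdot)\,|g_{\Sigma_2}*\check\eta_{\sim s}|.
\]

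\textbf{Step 3: estimate the high part.} Bound $g_{\Sigma_1}$ in $L^\infty$ on the relevant tubes using Lemma~\ref{localorth}. Then use Cauchy--Schwarz, Chebyshev on the superlevel set, and Lemma~\ref{hl}:
\[
\int|g_{\Sigma_2}*\check\eta_{\sim s}|^2\lesssim\sum_{\tau}\sum_{U}|U|\,\|f_\tau\|^4_{\L^2(W_U)},
\]
with $l(\tau)=(sR)^{-1}$. Lemma~\ref{localorth} gives $\|f_\tau\|^2_{\L^2}\lesssim \lambda(l_k)\lambda(l_k^{-1}R^{-1})$. The tubes $U$ summed against $\|f_\tau\|^2$ give $\|f_{\Sigma_2}\|_{L^2(B_R)}^2\sim\lambda_2R^2$. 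Combining these should yield
\[
|U_{a,b}|\lesssim \frac{N^2R(\lambda_1+\lambda_2)}{a^2b^2}\cdot\frac{\lambda_0}{NR^{-1/2}},
\]
where the factor $\lambda_0/(NR^{-1/2})$ comes from $\lambda(l_k^{-1}R^{-1})$ weighed against $\lambda_0$, as in Corollary~\ref{oldsup}. The symmetric term $\lambda_1+\lambda_2$ arises because the roles of $f_1$ and $f_2$ may be swapped.

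\textbf{Main obstacle.} The hardest step is Step 2. One has to verify that, for all high scales $s$ outside the window of length $\log(\frac{\lambda_2}{b}\frac{NR^{-1/2}}{\lambda_0})$, the $L^\infty$ bound from Lemma~\ref{Low} together with the threshold rules out any contribution. That is, the cutoff scale must be tied to $b/\lambda_2$ and $\lambda_0$ rather than to $N$. I would first try defining $r_0$ by $\lambda_2 r_0N\sim b^2\,NR^{-1/2}/\lambda_0$. Then I would check that the top useful scale is within a constant multiple of $\log(\lambda_2/b)+\log(NR^{-1/2}/\lambda_0)$ dyadic steps from $r_0$.
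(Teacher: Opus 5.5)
Your choice of the $\theta$-scale function $g_{\Sigma_2}$ with a frequency cutoff is workable, but Step 3 has a genuine gap.

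Two smaller fixes first. By your own Cauchy--Schwarz, $|f_2|\ge b$ forces $g_{\Sigma_2}\ge\beta:=b^2\lambda_0/\lambda_2$, not $b^2NR^{-1/2}/\lambda_0$. With Lemma~\ref{Low} this gives $r_0\sim b^2\lambda_0/(\lambda_2^2N)$ and $\log(R^{-1/2}/r_0)=\log\bigl(\frac{\lambda_2^2}{b^2}\frac{NR^{-1/2}}{\lambda_0}\bigr)$. The hypotheses then give $r_0\gtrsim R^{-1/2}(\log N)^{-8100}>1/N$, so your window is indeed the right one. Also, bilinear restriction is not a pointwise bound: you must sum $\int_Q|f_1|^2|f_2|^2$ over the squares $Q$ meeting $U_{a,b}$.

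The real problem is Step 3. You bound $g_{\Sigma_1}$ in $L^\infty$ and run high--low only on $g_{\Sigma_2}$. The best available bound is $\|g_{\Sigma_1}\|_\infty\lesssim(\lambda_1/\lambda_0)\lambda_0^2=\lambda_1\lambda_0$. Chebyshev and Lemma~\ref{hl} give $\sum_Q|Q|\,g_{\Sigma_2}\lesssim\beta^{-1}\int|g_{\Sigma_2}*\check\eta_{>r_0}|^2\lesssim\beta^{-1}\log(\cdot)\,\lambda_0NR^{3/2}\lambda_2$. Combining these honestly yields
\[
|U_{a,b}|\lesssim\log(\cdot)\,\frac{N^2R\,\lambda_1\lambda_2^2}{a^2b^4}\,\frac{\lambda_0}{NR^{-1/2}},
\]
which is Lemma~\ref{asym2} without its gain. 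It exceeds the claim by $\lambda_1\lambda_2^2/\bigl(b^2(\lambda_1+\lambda_2)\bigr)$, roughly $(\lambda_2/b)^2$. That is a power of $\lambda_2/b$, not a logarithm, and can be as large as $(\log N)^{8000}$ under the hypotheses. Such a loss would destroy the $a/\lambda$ gain in Lemma~\ref{key}. Swapping $f_1$ and $f_2$ only produces the mirror bound with $\lambda_1^2\lambda_2/(a^4b^2)$, so that is not where $\lambda_1+\lambda_2$ comes from. Tellingly, you never use $a>\lambda(\Sigma_1)(\log N)^{-4000}$.

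The missing idea is to make \emph{both} factors high-dominated at a common cutoff and then pass to the $L^2$ norm of the sum. On $U_a(f_1)$ you have $g_{\Sigma_1}\ge a^2\lambda_0/\lambda_1$. Take $r_0=c\,\frac{\lambda_0}{N}\min\bigl(\frac{a^2}{\lambda_1^2},\frac{b^2}{\lambda_2^2}\bigr)$ with $c$ small. Then Lemma~\ref{Low} makes the low parts of both $g_{\Sigma_i}$ negligible, and this is where the hypothesis on $a$ enters. On each $Q$ meeting $U_{a,b}$,
\[
g_{\Sigma_1}g_{\Sigma_2}\lesssim|g_{\Sigma_1}*\check\eta_{>r_0}|\,|g_{\Sigma_2}*\check\eta_{>r_0}|\le|g_{\Sigma_1}*\check\eta_{>r_0}|^2+|g_{\Sigma_2}*\check\eta_{>r_0}|^2 .
\]
Summing over $Q$ gives $|U_{a,b}|a^2b^2\lesssim\sum_i\int|g_{\Sigma_i}*\check\eta_{>r_0}|^2\lesssim\log(\cdot)\,\lambda_0NR^{3/2}(\lambda_1+\lambda_2)$. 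Here each of the $\log(R^{-1/2}/r_0)$ dyadic scales $s$ contributes $\lambda(l)\,\lambda(l^{-1}R^{-1})\,\lambda_iR^2\lesssim\lambda_0NR^{3/2}\lambda_i$ via Lemmas~\ref{hl} and~\ref{localorth}, with $l=(sR)^{-1}$. This, not a swap of roles, is where $\lambda_1+\lambda_2$ and $a^{-2}b^{-2}$ come from.

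This is the paper's argument (Propositions~\ref{highdom} and~\ref{boundhigh}). The paper runs it at the intermediate scale $l_k=(Rr_0)^{-1}$, with $g_k=\sum_{\tau_k}|f_{\tau_k}|^2$ and bilinear restriction on $l_k^{-1}$-balls. Your $\theta$-scale organization is equivalent once both factors are treated this way. The resulting logarithm involves $\lambda_1/a$ as well as $\lambda_2/b$, as in the paper's choice of $l_k$; this is harmless in the application, where $a=b$.
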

\begin{proof}
    By a rescaling argument, it suffices to prove the case where $l\sim 1$. 
    Now, let's get an estimate of the high-dominated part. 
\begin{proposition}(Bounding High dominated part)\label{boundhigh}
For any $0 \leq k<M$, 
    $$\int| \sum_{\tau_k} |f_{\tau_k}|^2*\check{\eta}_{\geq R^{-1}l_k^{-1}}(x)|^2dx \lesssim K^{O(1)}(M+1-k)\sum_{m=k}^M \sum_{\tau_m}|U_m|\|f_{\tau_m}\|^2_{\L^4(U_m)}.$$
\end{proposition}

\begin{proof}
Let $\eta_{\sim r}$ be the frequency support on the annulus $\{\xi: r/K \leq |\xi| \leq r\}$ and is about $r^2$ on a ball of radius $\sim 1/r$ and decays rapidly off that ball.
\begin{eqnarray*}
     \int| \sum_{\tau_k} |f_{\tau_k}|^2*\check{\eta}_{\geq R^{-1}l_k^{-1}}(x)|^2dx &\sim& \sum_{m=k}^M \int| \sum_{\tau_m} |f_{\tau_m}|^2*\check{\eta}_{\sim l_m^{-1}R^{-1}}(x)|^2dx\\
     &+& \sum_{m=k}^M \int| \sum_{\tau_m} |f_{\tau_m}|^2*\check{\eta}_{\sim l_m}(x)|^2dx.
\end{eqnarray*}
   By  high-low lemma \ref{hl}, we can bound 
\begin{eqnarray*}
    &&\int| \sum_{\tau_m} |f_{\tau_m}|^2*\check{\eta}_{\sim l_m^{-1}R^{-1}}(x)|^2dx\\ 
    &\lesssim &\int| \sum_{\theta} |f_{\theta}|^2*\check{\eta}_{\sim l_m^{-1}R^{-1}}(x)|^2dx \text{ by lemma }\ref{break}\\ 
    &\sim&  K^{O(1)}\sum_{\tau_m}\sum_{U_{\tau_m}} |U_{\tau_m}| \|f_{\tau_m}\|^4_{\L^2(U_{\tau_m})}.
\end{eqnarray*}
Also note that for $m \geq k$, 
   \begin{eqnarray*}
       &&\int| \sum_{\tau_k} |f_{\tau_k}|^2*\check{\eta}_{\sim l_m}(x)|^2dx\\
       &\sim & K^{O(1)} \int| \sum_{\tau_m} |f_{\tau_m}|^2*\check{\eta}_{\sim l_m}(x)|^2dx\\
       &\lesssim & K^{O(1)}\sum_{\tau_m} \int |f_{\tau_m}|^4dx \text{ by disjoint support in high frequency}\\
       &\lesssim & K^{O(1)}\sum_{\tau_m} \int|  \sum_{\theta \subset \tau_m}|f_\theta|^2|^2dx \text{ by Cordoba }L^4 \text{ estimate \cite{cor77}}\\
   \end{eqnarray*}
   Let's bound $\int|  \sum_{\theta \subset \tau_m}|f_\theta|^2|^2dx$.
   Note that
   $$\int|  \sum_{\theta \subset \tau_m}|f_\theta|^2|^2dx=\int|  \sum_{\theta \subset \tau_m}|f_\theta|^2*\check{\eta}_{ \leq l_m^{-1}R^{-1}}|^2dx+\sum_{n=m}^M \int|  \sum_{\theta \subset \tau_m}|f_\theta|^2*\check{\eta}_{ \sim l_n^{-1}R^{-1}}|^2dx.$$
   By the low lemma \ref{Low} and orthogonality, we have
   $$\sum_{\theta\subset \tau_m}|f_\theta|^2* \check{\eta}_{s \leq l_m^{-1}R^{-1}}(x) \sim \sum_{U_m} 1_{U_m} \|f_{\tau_m}\|_{\L^2(U_m)}^2$$
   so 
   $$\int|\sum_{\theta \subset \tau_m}|f_\theta|^2*\check{\eta}_{s \leq l_m^{-1}R^{-1}}|^2dx\lesssim K^{O(1)}\sum_{U_{\tau_m}} |U_{\tau_m}| \|f_{\tau_m}\|^4_{\L^2(U_{\tau_m})}.$$
   Finally, note that 
   \begin{eqnarray*}
       &&\int|  \sum_{\theta \subset \tau_m}|f_\theta|^2*\check{\eta}_{s \sim l_n^{-1}R^{-1}}|^2dx\\
       &\lesssim & K^{O(1)} \int|  \sum_{\tau_n \subset \tau_m}\left(\sum_{\theta \subset \tau_n}|f_\theta|^2\right)*\check{\eta}_{s \sim l_n^{-1}R^{-1}}|^2dx\\
       &\lesssim & K^{O(1)} \sum_{\tau_n \subset \tau_m}\sum_{U_{\tau_n}} |U_{\tau_n}| \|f_{\tau_n}\|^4_{\L^2(U_{\tau_n})} \text{ by lemma }\ref{hl}\\
   \end{eqnarray*}
   Thus, we have 
   \begin{align*}
       & \int| \sum_{\tau_k} |f_{\tau_k}|^2*\check{\eta}_{\geq R^{-1}l_k^{-1}}(x)|^2dx \\
       & \lesssim  K^{O(1)}\sum_{m=k}^M\left(\sum_{\tau_m}\sum_{U_{\tau_m}} |U_{\tau_m}| \|f_{\tau_m}\|^4_{\L^2(U_{\tau_m})}+\sum_{n=m}^M \sum_{\tau_n }\sum_{U_{\tau_n}} |U_{\tau_n}| \|f_{\tau_n}\|^4_{\L^2(U_{\tau_n})} \right)\\
       & \lesssim  K^{O(1)}(M+1-k)\sum_{m=k}^M\sum_{\tau_m}\sum_{U_{\tau_m}} |U_{\tau_m}| \|f_{\tau_m}\|^4_{\L^2(U_{\tau_m})} \qedhere
   \end{align*}
\end{proof}
Next, we show that if $a$ and $b$ are big enough, then the joint superlevel set $U_{a,b}(f_1,f_2)$ is contained in $V(k,\beta)$ for a big enough $\beta$ and show that when $\beta$ is big enough, $g$ is high-dominated. 
\begin{proposition}(Condition for high dominated) \label{highdom}
    Suppose that 
    $a> \lambda_1(\log N)^{-4000}$, $b> \lambda_2(\log N)^{-4000}$,
    $\lambda_0> NR^{-1/2}(\log N)^{-100}$ and  
    $l_k\gg R^{-1/2}\frac{\lambda^2}{a^2}\frac{R^{-1/2}N}{\lambda_0}$,
    Then, we are in the high dominated case, i.e. $$|U_{a,b}(f_1,f_2)|a^2b^2 \lesssim \int_{B_R}| \sum_{\tau_k\subset \tau_1\cup \tau_2} |f_{\tau_k}|^2*\check{\eta}_{\geq R^{-1}l_k^{-1}}(x)|^2dx.$$
\end{proposition}
\begin{proof}
First, we show that $U_a(f_1) \subset V(k,\beta,\Sigma_1)$ with $\beta=\frac{a^2}{\# (\tau_k\subset \Sigma_1)}$. This just follows from Cauchy-Schwartz, as $x \in U_a(f_1)$ implies that $\sum_{\tau_k \subset \Sigma_1}|f_{\tau_k}(x)|^2 \geq \frac{a^2}{\# (\tau_k\subset \Sigma_1)}$. 

Next, we show that for $x \in V(k,\frac{a^2}{\# (\tau_k\subset \Sigma_1)},\Sigma_1)$, 
we have $$\sum_{\tau_k\subset \Sigma_1}|f_{\tau_k}(x)|^2\lesssim \sum_{\tau_k\subset \Sigma_1}|f_{\tau_k}(x)|^2*\check{\eta}_{\geq R^{-1}l_k^{-1}}(x).$$
This basically follows from the fact that the low-frequency part cannot have a big $L^\infty$ norm.
By lemma \ref{break} and low lemma \ref{Low}, 
    \begin{eqnarray*}
    |g_{k,\Sigma_1}*\check{\eta}_{\leq R^{-1}l_k^{-1}}(x)|
        \lesssim \begin{cases}
\lambda_1 \text{ if }l_k \leq N/R \\
  \lambda_1\lambda(l_k^{-1}R^{-1}) \text{ if }l_k >N/R\\
    \end{cases}
    \end{eqnarray*}
    Note that
    \begin{align*}
      \lambda_1 \#(\tau_k \subset \tau_1)\leq \lambda_1\frac{\lambda_1}{\lambda_0}&\leq \frac{\lambda_1^2}{R^{-1/2}N}(\log N)^{100}\\
      &\leq \lambda_1^2(\log N)^{-9900}\\
      &\ll (\lambda_1 (\log N)^{-4000})^2 \\
      &\ll a^2.  
    \end{align*}
    Suppose that $l_k<N/R$, then as $l_k\gg R^{-1/2}\frac{\lambda_1^2}{a^2}\frac{R^{-1/2}N}{\lambda_0}$, 
    $$\lambda_1\lambda(l_k^{-1}R^{-1})\# \tau_k \leq \frac{\lambda_1^2}{\lambda_0}(l_k^{-1}R^{-1}N)\ll \frac{\lambda_1^2}{\lambda_0}(R^{-1}N)R^{1/2}(a^2/\lambda_1^2)\frac{\lambda_0}{R^{-1/2}N}=a^2.$$
Thus, in both cases 
$$ |g_{k,\Sigma_1}*\check{\eta}_{\leq R^{-1}l_k^{-1}}(x)|\ll \frac{a^2}{\# (\tau_k\subset \Sigma_1)}. $$
As $$g_{k,\Sigma_1}=g_{k,\Sigma_1}*\check{\eta}_{\leq R^{-1}l_k^{-1}}(x)+g_{k,\Sigma_1}*\check{\eta}_{\geq R^{-1}l_k^{-1}}(x),$$
so 
$|g_{k,\Sigma_1}(x)\lesssim g_{k,\Sigma_1}*\check{\eta}_{\geq R^{-1}l_k^{-1}}(x)|$
if $x \in V(k,\frac{a^2}{\# (\tau_k\subset \Sigma_1)},\Sigma_1)$ as $x \in U_a(f_1)\subseteq  V(k,\frac{a^2}{\# (\tau_k\subset \Sigma_1)},\Sigma_1)$, the inequality holds when $x \in U_a(f_1)$. 
Similarly, $x \in U_b(f_2)$ implies that 
$|g_{k,\Sigma_2}(x)\lesssim g_{k,\Sigma_2}*\check{\eta}_{\geq R^{-1}l_k^{-1}}(x)|$
Now we first convert bounds for $f_1$ and $f_2$ to bounds for $g_{k,\Sigma_1}$ and $g_{k,\Sigma_2}$ using bilinear restriction lemma \ref{bilresg}.
\begin{align*}
    |U_{a,a}(f_1,f_2)|a^4 &\leq \sum_{U_a \cap B_{l_k^{-1}}\not=\emptyset} \int_{B_{l_k^{-1}}}|f_1|^2|f_2|^2 \\
    &\lesssim \sum_{U_a \cap B_{l_k^{-1}}\not=\emptyset} \int_{B_{l_k^{-1}}}\left(\sum_{\tau_k \subset \Sigma_1} |f_{\tau_k}|^2 \right)\left(\sum_{\tau_k \subset \Sigma_2} |f_{\tau_k}|^2 \right), 
\end{align*}
where the second line follows from bilinear restriction lemma \ref{bires}. Next, we exploit the fact that $g_{k,\Sigma_1}$, $g_{k,\Sigma_2}$, $g_{k,\Sigma_1}*\check{\eta}_{\geq R^{-1}l_k^{-1}}$ and $g_{k,\Sigma_2}*\check{\eta}_{\geq R^{-1}l_k^{-1}}$ are basically constant on $B_{l_k^{-1}}$. 
\begin{align*}
    & \sum_{U_a \cap B_{l_k^{-1}}\not=\emptyset} |B_{l_k^{-1}}|\left\|\left(\sum_{\tau_k \subset \Sigma_1} |f_{\tau_k}|^2 \right)\left(\sum_{\tau_k \subset \Sigma_2} |f_{\tau_k}|^2 \right)\right\|_{L^\infty(B_{l_k^{-1}})}\\
    &\lesssim \sum_{U_a \cap B_{l_k^{-1}}\not=\emptyset} |B_{l_k^{-1}}|\left\|\left(\sum_{\tau_k \subset \Sigma_1} |f_{\tau_k}|^2 *\check{\eta}_{\geq R^{-1}l_k^{-1}}\right)\left(\sum_{\tau_k \subset \Sigma_2} |f_{\tau_k}|^2 *\check{\eta}_{\geq R^{-1}l_k^{-1}}\right)\right\|_{L^\infty(B_{l_k^{-1}})}\\
    &\lesssim \sum_{U_a \cap B_{l_k^{-1}}\not=\emptyset} |B_{l_k^{-1}}|\left\|\left(\sum_{\tau_k \subset \Sigma_1\cup \Sigma_2} |f_{\tau_k}|^2 *\check{\eta}_{\geq R^{-1}l_k^{-1}}\right)^2\right\|_{L^\infty(B_{l_k^{-1}})}\\
    &\lesssim \sum_{U_a \cap B_{l_k^{-1}}\not=\emptyset} \left\|\sum_{\tau_k \subset \Sigma_1\cup \Sigma_2} |f_{\tau_k}|^2 *\check{\eta}_{\geq R^{-1}l_k^{-1}}\right\|_{L^2(W_{B_{l_k^{-1}}})} \\
    &\lesssim  \int_{B_R}|\sum_{\tau_k\subset \tau_1 \cup \tau_2}|f_{\tau_k}|^2*\check{\eta}_{\geq R^{-1}l_k^{-1}}(x)|^2
\end{align*}
where the third line follows from Cauchy Schwartz, the fourth line follows by locally constant principle lemma \ref{locally constant} and the last line follows as the tail of $W_{B_{l_k^{-1}}}$ is rapidly decaying.
\end{proof}
Combining Propositions \ref{boundhigh} and \ref{highdom}, we have 
$$|U_{a,b}(f_1,f_2)|a^2b^2 \leq K^{O(1)}(M+1-k)\sum_{m=k}^M \sum_{\tau_m \subset \Sigma_1 \cup \Sigma_2}|U_{\tau_m}|\|f_{\tau_m}\|^4_{\L^2(U_{\tau_m})}$$
with $k$ chosen so that $l_k=K' R^{-1/2}\frac{\lambda^2}{a^2}\frac{R^{-1/2}N}{\lambda_0}$ for some big constant $K'$. Note that $(M-k)\lesssim \log(\frac{\lambda^2}{a^2}\frac{R^{-1/2}N}{\lambda_0})$ so there are $\sim \log(\frac{\lambda^2}{a^2}\frac{R^{-1/2}N}{\lambda_0})$ terms in the above sum. Thus,
$$|U_{a,b}(f_1,f_2)|a^2b^2 \lesssim \left(\log(\frac{\lambda^2}{a^2}\frac{R^{-1/2}N}{\lambda_0})\right)^2\max_{k \leq m\leq M} \sum_{\tau_m \subset \Sigma_1 \cup \Sigma_2}\sum_{U_{\tau_m}}|U_{\tau_m}|\|f_{\tau_m}\|^4_{\L^2(U_{\tau_m})}.$$
Next, we bound  
\begin{align*}
\sum_{\substack{\tau_m \subset \Sigma_1 \cup \Sigma_2 \\ U_{\tau_m}}} |U_{\tau_m}|
\|f_{\tau_m}\|^4_{\L^2(U_{\tau_m})}
&\leq \max_{\tau_m,U_{\tau_m}}\|f_{\tau_m}\|^2_{\L^2(U_{\tau_m})}
\sum_{\substack{\tau_m \subset \Sigma_1 \cup \Sigma_2 \\ U_{\tau_m}}} |U_{\tau_m}|
\|f_{\tau_m}\|^2_{\L^2(U_{\tau_m})} \\
&\leq \max_{\tau_m,U_{\tau_m}}\|f_{\tau_m}\|^2_{\L^2(U_{\tau_m})} \sum_{\tau_m \subset \Sigma_1 \cup \Sigma_2}\|f_{\tau_m}\|^2_{L^2(B_R)}\\
&\leq \lambda(l_m)\lambda(l_m^{-1}R^{-1})(\lambda_1+\lambda_2)R^2\\
&\leq \lambda_0 \frac{l_m}{R^{-1/2}}(l_m^{-1}R^{-1}N)(\lambda_1+\lambda_2)R^2\\
&\leq \lambda_0 \frac{\lambda_0}{NR^{-1/2}}(\lambda_1+\lambda_2)N^2R \qedhere
\end{align*}   
\end{proof}

\subsubsection{Asymmetric estimate}
Next, we shall prove the following asymmetric estimate. 
\begin{lemma}(Asymmetric Estimate)\label{asym}
Let $\Sigma_1$ and $\Sigma_2$ be caps that are $\sim l$ distance apart. 
Suppose $ \lambda(\Sigma_2)/\lambda_0^{1/2}  \ll b \leq \lambda(\Sigma_2)$, then
  $$\abs{U_{a,b}(f_{\Sigma_1},f_{\Sigma_2})} \lesssim \log\left(\frac{\lambda(\Sigma_2)}{b} \frac{NR^{-1/2}}{\lambda_0}\right)\max\left(\frac{l(\Sigma_2)}{l}, \frac{\sqrt{R}}{N}\right)\frac{N^2R\lambda(\Sigma_1)\lambda(\Sigma_2)^2}{a^2b^4}.$$
\end{lemma}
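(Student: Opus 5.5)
Write $f_1=f_{\Sigma_1}$, $f_2=f_{\Sigma_2}$, $\lambda_i=\lambda(\Sigma_i)$, $l_2=l(\Sigma_2)$, with $\Sigma_2$ the shorter cap, $l_2\ll l$. The plan mirrors the broad estimate (Lemma~\ref{br}). One factor is handled by a crude $L^\infty$/$L^2$ bound on $f_1$. The other is handled by a high--low analysis of the square function of $f_2$ at a scale adapted to the separation $l$. Transversality between $\Sigma_1$ and $\Sigma_2$ enters through the generalized bilinear restriction estimate, Lemma~\ref{bilresg}.

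\textbf{Step 1: bilinear reduction.} Since the caps are $\sim l$ apart, the normals $d(\Sigma_1)$ and $d(\Sigma_2)$ make an angle $\sim l$. Tile $B_R$ by rectangles $T$ of dimensions $L_1\times L_2$, with $L_2\sim l_2^{-1}$ in the direction dictated by $\Sigma_2$ and $L_1\sim l^{-1}$. Apply Lemma~\ref{bilresg} on each $T$ meeting $U_{a,b}$, with $\omega_1\subset\Sigma_1$ of length $\sim l$ (so $f_{\omega_1}=f_1$ essentially) and $\omega_2\subset\Sigma_2$ of length $\sim L_1^{-1}$. This bounds
\[
|U_{a,b}|\,a^2b^2\lesssim \sum_T\int |f_1|^2\Big(\sum_{\omega_2\subset\Sigma_2}|f_{\omega_2}|^2\Big)W_T .
\]
Because of the separation, the number of $\omega_2$ pieces is only $\max(l_2/l,\ \cdot)$. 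The smallest admissible $\omega_2$ is a $\theta$ of length $R^{-1/2}$, which explains the second entry $\sqrt R/N$ in the maximum after normalising by $\lambda_0$. This count is the source of the gain factor.

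\textbf{Step 2: high dominance for $f_2$.} Exactly as in Proposition~\ref{highdom}, Cauchy--Schwarz gives $\sum_{\omega_2}|f_{\omega_2}|^2\gtrsim b^2/\#\omega_2$ on $U_b(f_2)$. The Low Lemma~\ref{Low}, together with Lemma~\ref{break}, shows the low-frequency part is $\ll b^2/\#\omega_2$ once we work at a scale $l_k$ with $l_k\gg R^{-1/2}\frac{\lambda_2^2}{b^2}\frac{R^{-1/2}N}{\lambda_0}$. The hypothesis $b\gg\lambda_2/\lambda_0^{1/2}$ is what makes this compatible with the required scale. Thus the square function of $f_2$ is dominated by its high-frequency part on $U_b$.

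\textbf{Step 3: bounding the pieces.} For $f_1$, use the trivial bound $|f_1|\lesssim\lambda_1$ (or $|f_1|^2\le a^2$ on the set) together with $\int|f_1|^2\lesssim\lambda_1R^2$. Trading one $|f_1|^2$ for $\lambda_1/a^2$-type factors yields the $\lambda_1/a^2$ in the statement. For $f_2$, pass from $L^2$ to $L^4$ of the high part, i.e.\ multiply by $b^2$ and divide by $b^4$. Then apply Proposition~\ref{boundhigh}, restricted to $\tau_m\subset\Sigma_2$, together with local orthogonality (Lemma~\ref{localorth}), to get
\[
\sum_m\sum_{\tau_m\subset\Sigma_2}\sum_U|U|\,\|f_{\tau_m}\|^4_{L^2(U)}\lesssim \lambda_0\frac{\lambda_0}{NR^{-1/2}}\lambda_2N^2R .
\]
As in Lemma~\ref{br}, only $\log\big(\frac{\lambda_2}{b}\frac{NR^{-1/2}}{\lambda_0}\big)$ scales contribute. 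Using $\lambda_0\lesssim NR^{-1/2}$ and combining gives the $\lambda_1\lambda_2^2/(a^2b^4)$ shape.

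\textbf{Main obstacle.} The delicate step is Step 1. Lemma~\ref{bilresg} is stated for separation $\sim1$, so it must be rescaled or adapted to separation $l$ with an $l_2$-cap. The rectangle dimensions must be chosen so that the $\omega_2$-count genuinely produces $\max(l_2/l,\sqrt R/N)$ without reintroducing a $\log N$. I would first try a parabolic rescaling by $l$, which maps $\Sigma_1,\Sigma_2$ to caps separated by $\sim 1$ with $\Sigma_2$ of length $l_2/l$, and then apply Lemma~\ref{bilresg} directly.
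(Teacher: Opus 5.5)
There is a genuine gap. The source you give for the gain $\max(l(\Sigma_2)/l,\sqrt R/N)$ does not produce it, and the step that actually produces it is missing.

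\textbf{Why your Step 1 gives nothing.} After the parabolic rescaling to $l\sim 1$ (which is what the paper does, via Lemma~\ref{asym2}), your rectangles have size $\sim 1\times l_2^{-1}$, far below the scale $R^{1/2}$. On such rectangles Lemma~\ref{bilresg} only allows $\omega_1$ of length $\gtrsim 1$ and $\omega_2$ of length $\gtrsim l_2$, so nothing is decomposed. You take $f_{\omega_1}=f_1$, and $\Sigma_2$ contains at most one $\omega_2$, so no count equals $l_2/l$.

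\textbf{What this costs.} You then control $f_1$ by $|f_1|^2\le\lambda_1^2$. The alternative $|f_1|^2\le a^2$ is unavailable, since $U_{a,b}$ only imposes lower bounds, and it would cancel the $a^2$ anyway. Even with the $f_2$ side done optimally, this loses the factor $\lambda_1/\max(1,l_2N/\sqrt R)$ relative to the statement. For a fully populated $\Sigma_1$ this is $\min(N,\sqrt R/l_2)\ge\sqrt R$, a polynomial loss that no logarithm absorbs.

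\textbf{The missing idea.} The gain comes from decomposing $f_1$, not $f_2$, on rectangles adapted to the square function of $f_2$.
\begin{enumerate}
\item Set $g=\sum_{\theta\subset\Sigma_2}|f_\theta|^2$. Its Fourier transform lives in an $R^{-1/2}\times R^{-1/2}l_2$ box, so $g$ is locally constant on rectangles $T$ of size $R^{1/2}\times R^{1/2}l_2^{-1}$.
\item On these $T$, apply Lemma~\ref{bilresg} with $\omega_2=\theta$ and $\omega_1=\omega\subset\Sigma_1$ of length $\max(N^{-1},R^{-1/2}l_2)$. This gives $\int_T|f_1|^2|f_2|^2\lesssim\|\sum_\omega|f_{1,\omega}|^2\|_\infty\int g\,W_T$.
\item Since $\|\sum_\omega|f_{1,\omega}|^2\|_\infty\le\lambda_1\max(1,l_2N/\sqrt R)$, this replaces the trivial $\lambda_1^2$. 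That replacement is exactly the gain.
\end{enumerate}

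\textbf{The $f_2$ side.} It is then handled through dyadic level sets $\{g\sim\beta\}$ with $b^2\lambda_0/\lambda_2\lesssim\beta\lesssim\lambda_0\lambda_2$.
\begin{enumerate}
\item The lower bound on $\beta$ comes from Cauchy--Schwarz over the $\lambda_2/\lambda_0$ active $\theta$'s.
\item The hypothesis $b\gg\lambda_2\lambda_0^{-1/2}$ forces $\beta\gg\lambda_2$, the average of $g$ on $B_R$.
\item One bounds $\beta^2|T|\,\#\{T: T\cap\{g\sim\beta\}\neq\emptyset\}$ by $\|g*\check\eta_{>r}\|_2^2$ with $r\sim\beta/(N\lambda_2)$, using the Low Lemma~\ref{Low} and the high--low Lemma~\ref{hl}.
\item Finally $1/\beta\lesssim\lambda_2/(b^2\lambda_0)$ produces the $b^{-4}$.
\end{enumerate}
Your Steps 2--3 gesture at this high--low part. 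But without pairing $f_1$ and $g$ on the same rectangles $T$, they do not combine into the stated estimate.
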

By a rescaling argument, the lemma above follows from the following lemma. 
\begin{lemma}\label{asym2}
Let $\Sigma_1$ and $\Sigma_2$ be caps that are $\sim 1$ distance apart and suppose that $ \lambda(\Sigma_2)/\lambda_0^{1/2}  \ll b \leq \lambda(\Sigma_2)$. Then
  $$\abs{U_{a,b}} \lesssim \log\left(\frac{\lambda(\Sigma_2)}{b} \frac{NR^{-1/2}}{\lambda_0}\right)^3\max\left(l(\Sigma_2), \frac{\sqrt{R}}{N}\right)\frac{N^2R\lambda(\Sigma_1)\lambda(\Sigma_2)^2}{a^2b^4}.$$
\end{lemma}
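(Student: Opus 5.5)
We follow the template of the broad estimate (Lemma \ref{br}): bilinear restriction on a suitable rectangle, then the high--low method. The new feature is that we keep $f_{\Sigma_1}$ essentially untouched and apply the high--low analysis only to the short cap $\Sigma_2$. Write $f_1=f_{\Sigma_1}$, $f_2=f_{\Sigma_2}$, $\lambda_i=\lambda(\Sigma_i)$ and $l_2=l(\Sigma_2)$. Choose the scale $l_k$ with $l_k\sim K'R^{-1/2}\frac{\lambda_2^2}{b^2}\frac{R^{-1/2}N}{\lambda_0}$, exactly as in Proposition \ref{highdom} but with $(b,\lambda_2)$ in place of $(a,\lambda)$. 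The hypothesis $b\gg \lambda_2/\lambda_0^{1/2}$ is what should make $l_k\ll l_2$ admissible, so that $\Sigma_2$ splits into caps $\tau_k$.

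\textbf{Step 1 (bilinear reduction).} Since $\Sigma_1,\Sigma_2$ are $\sim 1$ apart, apply Lemma \ref{bilresg} on a rectangle $T$. Take the side adapted to $\Sigma_2$ of length $l_k^{-1}$, so that $f_2$ is replaced by $g_{k,\Sigma_2}=\sum_{\tau_k\subset\Sigma_2}|f_{\tau_k}|^2$. Take the side adapted to $\Sigma_1$ of length $\sim \max(l_2,\sqrt R/N)^{-1}$, the natural uncertainty scale of the cap $\Sigma_2$ (or $\gamma$). On this side we decompose $\Sigma_1$ into caps $\omega_1$ of that length, which gives
\[
|U_{a,b}|a^2b^2\lesssim \sum_{T\cap U_{a,b}\neq\emptyset}\int \Big(\sum_{\omega_1}|f_{\omega_1}|^2\Big)\, g_{k,\Sigma_2}\, W_T .
\]
As in Proposition \ref{highdom}, Cauchy--Schwarz together with the Low Lemma \ref{Low} and Lemma \ref{break} shows that on $U_b(f_2)$ we have $g_{k,\Sigma_2}\lesssim g_{k,\Sigma_2}*\check\eta_{\ge R^{-1}l_k^{-1}}$. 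We may therefore replace $g_{k,\Sigma_2}$ by its high part.

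\textbf{Step 2 (splitting the two factors).} Use Hölder in the form $\int AB\le \|A\|_\infty\int B$ only where this is cheap. The factor $\sum_{\omega_1}|f_{\omega_1}|^2$ is handled by local orthogonality (Lemma \ref{localorth}): its average over $T$ is $\lesssim \lambda_1\cdot\#\{\gamma\subset\omega_1\}$, which gains $\max(l_2,\sqrt R/N)$ relative to the trivial bound $\lambda_1^2$. For the $\Sigma_2$ factor, $\sum_T|T|\,\|g^{high}_{k,\Sigma_2}\|$ is then controlled via $L^2$. Since on $U_b(f_2)$ one has $g^{high}\gtrsim b^2/\#\tau_k$, we get $\int g^{high}\mathbf 1 \lesssim \frac{\#\tau_k}{b^2}\int|g^{high}|^2$. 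Proposition \ref{boundhigh} bounds the latter by $(M-k)\sum_{m\ge k}\sum_{\tau_m\subset\Sigma_2}|U_m|\|f_{\tau_m}\|^4_{\L^2}$. Computing as at the end of Lemma \ref{br}, this is at most $(M-k)^2\lambda_0\frac{\lambda_0}{NR^{-1/2}}\lambda_2 N^2R$. This produces the $b^{-4}$ and $\lambda_2^2$ structure, and $(M-k)\lesssim\log(\frac{\lambda_2}{b}\frac{NR^{-1/2}}{\lambda_0})$ accounts for two of the three log factors; the third comes from dyadic pigeonholing of the level of $\sum_{\omega_1}|f_{\omega_1}|^2$ on $T$. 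Dividing by $a^2b^2$ and collecting factors should give the claimed bound, with $\lambda_1$ entering linearly from the $L^1$ (orthogonality) bound on $f_1$.

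\textbf{Main obstacle.} Step 1 is the delicate part. The difficulty is choosing $T$ and the caps $\omega_1$ so that two things hold at once. First, the transversality computation in Lemma \ref{bilresg} must go through with the unequal scales. Second, the locally constant property must persist for both factors on the same $T$, including the high part $g^{high}_{k,\Sigma_2}$. Otherwise the $L^\infty/L^1$ separation in Step 2 costs $\lambda_1$ rather than gaining $\max(l_2,\sqrt R/N)$. My first attempt would be to take $T$ to be an $l_k^{-1}\times \min(l_2^{-1},N/\sqrt R)$ rectangle, with the long side along $d(\Sigma_2)$. I would then verify that $g^{high}_{k,\Sigma_2}$ is constant at this scale, since its Fourier support lies in $\Sigma_2-\Sigma_2$, an $l_2\times l_2^2$ box. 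If this fails, I would fall back to averaging $\sum_{\omega_1}|f_{\omega_1}|^2$ over $l_2^{-1}$-tubes, using the rapid decay of $W_T$.
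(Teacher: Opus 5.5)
There is a genuine gap, and it sits in the step you flag as the main obstacle; your proposed choices for that step do not work. In the transversal bilinear argument, the side of $T$ along $d(\Sigma_2)$ has some length $D$, and this is what forces $|\xi_1-\xi_1'|\lesssim D^{-1}$, up to an error $l_2|\xi_2-\xi_2'|$. The constraint on $\xi_2-\xi_2'$, however, always picks up the length of the $\Sigma_1$-caps. So $f_2$ can be replaced by $g_{k,\Sigma_2}$ with $l_k<l_2$ only if the $\Sigma_1$-caps have length $\lesssim l_k$.

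Your $T$, with long side $l_k^{-1}$ along $d(\Sigma_2)$, splits $\Sigma_1$ at scale $\sim l_k$ and does not split $\Sigma_2$ at all. With the other orientation you only get $|\xi_2-\xi_2'|\lesssim l_k+\max(l_2,\sqrt R/N)$. Also, the hypothesis $b\gg\lambda_2\lambda_0^{-1/2}$ only gives $l_k\lesssim N/R$, not $l_k\ll l_2$.

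More decisively, even if you grant Step 1, caps $\omega_1$ of length $\max(l_2,\sqrt R/N)$ are far too long. They give $\lambda_1\cdot\#\{\gamma\subset\omega_1\}\sim\lambda_1\max(l_2N,\sqrt R)$. This factor multiplies the $NR^{-1/2}$ that comes out of the high--low bound, so it must be $\lambda_1\max(1,l_2N/\sqrt R)$ for the product to become $\max(l_2,\sqrt R/N)$. Comparing with $\lambda_1^2$ is the wrong benchmark. Concretely, take $\lambda_1\sim N$, $\lambda_2\sim l_2N$, $\lambda_0\sim NR^{-1/2}$, $b\sim\lambda_2$, so that $l_k\sim R^{-1/2}$ and $\#\tau_k\sim l_2R^{1/2}$. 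Your chain then gives $|U_{a,b}|a^2b^4\lesssim\lambda_1\lambda_2\max(l_2N,\sqrt R)\,l_2R^{1/2}\,N^2R$, which is $R^{1/2}$ times the claimed bound. So "collecting factors" does not close.

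\textbf{Missing idea.} The paper works at the finest scale $\theta$, not at $l_k$.
\begin{enumerate}
\item The function $g=\sum_{\theta\subset\Sigma_2}|f_\theta|^2$ has Fourier support in an $R^{-1/2}\times R^{-1/2}l_2$ box. Hence it is locally constant on rectangles $T$ of size $R^{1/2}\times R^{1/2}l_2^{-1}$, with the long side along $d(\Sigma_2)$.
\item Lemma~\ref{bilresg} on these $T$ splits $f_2$ into $\theta$'s and $f_1$ into caps of length $\max(1/N,R^{-1/2}l_2)$. This gives $\int_{V(\beta)}|f_1|^2|f_2|^2\lesssim\lambda_1\max(1,l_2N/\sqrt R)\,\beta\,|T|\,\#\{T:T\cap V(\beta)\neq\emptyset\}$.
\item Cauchy--Schwarz over the $\lesssim\lambda_2/\lambda_0$ nonzero $\theta$'s gives $g\gtrsim b^2\lambda_0/\lambda_2$ on $U_b(f_2)$. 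This is sharper than your $b^2/\#\tau_k$.
\item The hypothesis $b\gg\lambda_2\lambda_0^{-1/2}$ is used exactly to make every relevant level satisfy $\beta\gg\lambda_2$. High--low on $g$ with cutoff $r\sim\beta/(N\lambda_2)$ (Lemma~\ref{Low} and Lemma~\ref{hl}) then gives $\beta^2|T|\#\{T\}\lesssim\log^2\cdot\frac{\lambda_0}{NR^{-1/2}}\lambda_2N^2R$.
\item Pigeonholing the dyadic level $\beta$ supplies the third log.
\end{enumerate}

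Your $l_k$-scheme can be made geometrically consistent. Use the locally constant rectangle of $g_{k,\Sigma_2}$, of size $l_k^{-1}\times(l_kl_2)^{-1}$, together with $\Sigma_1$-caps of length $\max(1/N,l_kl_2)$. But since $l_k\ge R^{-1/2}$, this loses a factor $l_kR^{1/2}\sim\frac{\lambda_2^2}{b^2}\frac{NR^{-1/2}}{\lambda_0}$. Under the lemma's hypotheses that factor is in general a power, not a logarithm.
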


We begin by estimating the superlevel sets of $g_{\Sigma_2}$ via a high--low decomposition, while carefully controlling the number of scales that can contribute. Note that $g_{\Sigma_2}$ is essentially locally constant on rectangles $T$ of dimensions $R^{1/2} \times R^{1/2}L_2^{-1}$.

Next, we apply a slightly generalized bilinear restriction estimate \ref{bilresg} on each such rectangle $T$. This yields a decomposition of $f_{\Sigma_1}$ into caps $\omega$ of length $\max\{N^{-1},\allowbreak R^{-1/2}L_2\}$, and of $f_{\Sigma_2}$ into caps of length $L_2$, allowing us to bound the intersection of $U_{a,b}$ with each $T$.

Since we assume $L_2$ is small, the length of the cap $\omega$ we can break $f_{\Sigma_1}$ into is particularly small, which yields a gain of a factor $l(\Sigma_2)$.

Right now to simplify notation, write $g=\sum_{\theta \subset \Sigma_2} |f_\theta|^2$ and $V_\beta=\{x: g(x)>\beta\}$. Note that 
    $$\fint_{B_R} g \sim \fint_{B_R} \sum_{\gamma \subset \Sigma_2}|f_\gamma|^2 \sim \lambda_2.$$
    In the following lemma, we proved a bound for $|V_\beta|$ when $\beta$ is significantly bigger than the average size of $g$ on $B_R$.
\begin{proposition}
        Suppose that $\beta \gg \lambda_2$. Then,
    $$\beta^2 |T|(\#\{T:T\cap V(\beta)\not= \emptyset\})\lesssim \log\left(\frac{\lambda_2 }{\beta}\frac{N}{\sqrt{R}}\right)^2\lambda_0 NR^{-1/2}R^2\lambda_2. $$
    \end{proposition}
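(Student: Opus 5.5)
We will run a high--low argument for $g=\sum_{\theta\subset\Sigma_2}|f_\theta|^2$, in the same spirit as Propositions \ref{boundhigh} and \ref{highdom}. The aim is to show that on $V(\beta)$ the function $g$ is dominated by a small number of high-frequency pieces, and then to control those pieces in $L^2$ with Lemma \ref{hl}. The counting of $T$'s comes from Chebyshev's inequality together with the locally constant property. Since $g$ has Fourier support in the $R^{-1/2}\times R^{-1/2}L_2$ rectangle $\bigcup_{\theta\subset\Sigma_2}(\theta-\theta)$, Lemma \ref{locally constant} says $g$ is essentially constant on translates of $T$. Hence, if $T\cap V(\beta)\neq\emptyset$, then $g\gtrsim\beta$ on a large portion of $T$ in the weighted sense. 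It follows that $\beta^2|T|\,\#\{T: T\cap V(\beta)\neq\emptyset\}\lesssim \int_{V(c\beta)}|g|^2$, up to rapidly decaying tails. These tails are absorbed in the usual way.

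Next we split $g=g*\check\eta_{\le r}+\sum_{r\le s\le R^{-1/2},\ s\ \mathrm{dyadic}} g*\check\eta_{\sim s}$. We choose $r$ so that the Low Lemma \ref{Low} gives $|g*\check\eta_{\le r}|\ll\beta$. By Lemma \ref{Low}, $|g*\check\eta_{\le r}|\lesssim \lambda_2\max(1,rN)$. If $\beta\gg\lambda_2$ we may therefore take $r\sim \frac{\beta}{\lambda_2}N^{-1}$, divided by a large constant, and when $\beta/\lambda_2$ is only moderately large we take $r=N^{-1}$ instead. With this choice, every point of $V(\beta)$ satisfies $g\lesssim \sum_{s\ge r}|g*\check\eta_{\sim s}|$. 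The number of dyadic scales between $r$ and $R^{-1/2}$ is $\lesssim\log\bigl(\frac{N}{\sqrt R}\cdot\frac{\lambda_2}{\beta}\bigr)$, since $R^{-1/2}/r\sim \frac{N}{\sqrt R}\frac{\lambda_2}{\beta}$; when this ratio is $\le 1$ there are $O(1)$ scales. Cauchy--Schwarz over the scales then gives
\[
\int_{V(\beta)}|g|^2\lesssim \log\Bigl(\tfrac{\lambda_2}{\beta}\tfrac{N}{\sqrt R}\Bigr)\sum_{s}\int|g*\check\eta_{\sim s}|^2 .
\]
This is one factor of the logarithm. The second factor comes from bounding the sum over scales by the number of scales times the largest term.

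For each scale $s$ we apply Lemma \ref{hl} with $\tau_0=\Sigma_2$:
\[
\int|g*\check\eta_{\sim s}|^2\lesssim\sum_{\tau:\,l(\tau)=(sR)^{-1}}\sum_{U\in\U_\tau}|U|\,\|f_\tau\|^4_{\L^2(U)} .
\]
Following the last computation in the proof of Lemma \ref{br}, we pull out $\max\|f_\tau\|^2_{\L^2(U)}$ and bound it by Lemma \ref{localorth}, getting $\lambda(l)\lambda(l^{-1}R^{-1})\lesssim \lambda_0\frac{l}{R^{-1/2}}\cdot l^{-1}R^{-1}N=\lambda_0NR^{-1/2}$, or $\lambda(l)\lesssim\lambda_0 lR^{1/2}\le\lambda_0NR^{-1/2}$ when $l<N/R$. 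The remaining sum $\sum_\tau\sum_U|U|\|f_\tau\|^2_{\L^2(U)}\lesssim\|f_{\Sigma_2}\|^2_{L^2(B_R)}\lesssim\lambda_2R^2$ by orthogonality. Thus each scale contributes $\lesssim\lambda_0NR^{-1/2}R^2\lambda_2$, and multiplying by the number of scales gives the claimed bound with the squared logarithm.

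The main obstacle is the choice of $r$: the low part must really be $\ll\beta$ while the number of high scales stays logarithmic in $\frac{\lambda_2}{\beta}\frac{N}{\sqrt R}$ and not in $N$. In particular, the Low Lemma bound $\lambda_2 rN$ has to be read with $\lambda(\tau)=\lambda_2$ for $\tau=\Sigma_2$. The small-$r$ regime ($r\le 1/N$, where the bound is just $\lambda_2$) is exactly where the hypothesis $\beta\gg\lambda_2$ is used. We must also check that $r\ge R^{-1}$, which holds because $\beta\gtrsim\lambda_2$ and $N\le R$, so Lemma \ref{hl} applies at every scale used.
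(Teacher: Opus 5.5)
Your proposal is correct and is essentially the paper's proof: choose $r\sim\beta/(N\lambda_2)$ so that Lemma \ref{Low} makes $g*\check{\eta}_{\le r}$ negligible, use local constancy on the $T$'s plus Chebyshev, then apply Cauchy--Schwarz over the $\lesssim\log\bigl(\frac{\lambda_2}{\beta}\frac{N}{\sqrt{R}}\bigr)$ dyadic scales, Lemma \ref{hl}, and the per-scale bound $\lambda_0NR^{-1/2}\lambda_2R^2$. The paper applies the locally constant lemma directly to $g*\check{\eta}_{>r}$ rather than first passing to $\int_{V(c\beta)}g^2$, which avoids the positivity argument behind your ``weighted sense'' step; and note a small slip: Lemma \ref{localorth} gives the bare bound $\lambda(l)$ only when $l\ge N/R$ (not $l<N/R$), but since $r\gtrsim 1/N$ every scale used has $l=(sR)^{-1}\le N/R$, so your product bound covers all cases and the second case is unnecessary.
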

    \begin{proof}
    Let $r=\frac{\beta}{2N\lambda_2}$, then
    \begin{eqnarray*}
       &&\beta^2 |T|(\#\{T:T\cap V(\beta)\not= \emptyset\}) \\
       &\leq &\beta^2 |T|(\#\{T:\|g*\check{\eta}_{>r}\|_{L^\infty(T)} \geq \frac{\beta}{2}\}) \\
       &\leq &\beta^2 |T|(\#\{T:\|g*\check{\eta}_{>r}\|_{L^2(W_T)} \geq \frac{\beta}{2}\}) \text{ locally constant  }\\
       &\leq & \|g*\eta_{>r}\|_2^2\\
        &\leq &  \log\left(\frac{R^{-1/2}}{r}\right)\sum_{\substack{r\leq s \leq R^{-1/2}\\ s \text{ dyadic }}}\|g*\check{\eta}_{\sim s}\|_2^2 \text{ Cauchy Schwartz}\\
        &\leq &  \log\left(\frac{R^{-1/2}}{r}\right)\sum_{\substack{r\leq s \leq R^{-1/2}\\ s \text{ dyadic }}}\sum_{\substack{\tau\subset \Sigma_2 \\ l(\tau)=(sR)^{-1}}}\sum_{U \subset \U_{\tau}}|U|\|f_{\tau}\|^4_{\L^2(U)} \text{ by lemma }\ref{hl}\\
        &\leq &  \log\left(\frac{R^{-1/2}}{r}\right)\sum_{\substack{r\leq s \leq R^{-1/2}\\ s \text{ dyadic }}}\lambda_2(s)\lambda_2(s^{-1}R^{-1})\lambda_2 R^2\\
        & \lesssim & \log\left(\frac{\lambda_2 }{\beta}\frac{N}{\sqrt{R}}\right)^2\lambda_0 NR^{-1/2}R^2\lambda_2
   \end{eqnarray*}
        Note $\lambda(s)\leq sN$ and $\lambda(s^{-1}R^{-1}) \leq \lambda_0 s^{-1}R^{-1/2}$. 
    \end{proof}
The following lemma enables us to bound the size of the intersection between $V_\beta$ and $U_{a,b}(f_1,f_2)$. 
 \begin{proposition}\label{bires}
    Let $\omega$ be cap of length $\geq R^{-1/2}L_2$. Then, 
$$\int_{V(\beta)}|f_1|^2|f_2|^2 \lesssim \|\sum_{\omega \subseteq \Gamma_1} |f_{1, \omega}|\|_\infty \beta |T|(\#\{T:T\cap V(\beta)\not= \emptyset\})$$
    \end{proposition}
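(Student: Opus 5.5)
The plan is to work rectangle by rectangle. Recall that $g=\sum_{\theta\subset\Sigma_2}|f_\theta|^2$ is locally constant on the $R^{1/2}\times R^{1/2}L_2^{-1}$ rectangles $T$, with the long side oriented according to $d(\Sigma_2)$. So $V(\beta)$ is essentially covered by the union of those $T$ with $T\cap V(\beta)\neq\emptyset$. This gives
\[
\int_{V(\beta)}|f_1|^2|f_2|^2\lesssim \sum_{T\cap V(\beta)\neq\emptyset}\int |f_1|^2|f_2|^2 W_T ,
\]
and it suffices to bound each term by $\|\sum_{\omega}|f_{1,\omega}|\|_\infty\,\beta|T|$.

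On a fixed $T$ I would apply the generalized bilinear restriction Lemma~\ref{bilresg}, using the weighted form its proof actually gives (with $\eta_T$ in place of $\chi_T$). I take $L_1=R^{1/2}$ for the side perpendicular to $d(\Sigma_1)$ and $L_2$ for the other side. Then $f_1$ may be split into caps $\omega$ of length $\ge R^{-1/2}L_2$, i.e.\ the $\omega$ of the statement, and $f_2$ into caps of length $\ge R^{-1/2}$, i.e.\ the $\theta$'s. Since $\Sigma_1,\Sigma_2$ are $\sim1$ apart, the transversality needed in that lemma holds. The lemma yields
\[
\int |f_1|^2|f_2|^2W_T\lesssim \int\Big(\sum_{\omega\subset\Sigma_1}|f_{1,\omega}|^2\Big)\, g\, W_T .
\]

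Next I bound the first factor pointwise by $\sum_\omega|f_{1,\omega}|^2\le \|\sum_\omega|f_{1,\omega}|\|_\infty\cdot\max_\omega\|f_{1,\omega}\|_\infty$. There are two ways to proceed. One is to absorb the second factor, or keep it as stated: the stated bound has one factor $\|\sum|f_{1,\omega}|\|_\infty$, so I would use $\sum_\omega|f_{1,\omega}|^2\le(\sum_\omega|f_{1,\omega}|)^2$ and interpret the statement up to this. The other, which is the cleaner reading, is to treat $\|\sum_\omega|f_{1,\omega}|^2\|_\infty$ as the intended quantity. Either way the remaining integral is $\int gW_T$. Because $g$ is locally constant on $T$, and $T$ meets $V(\beta)$ only where $g\sim\beta$ after dyadic pigeonholing (we may restrict to $V'(\beta)$), we get $\int gW_T\lesssim \|g\|_{L^\infty(T)}\lesssim\beta$ times $|T|$ in the unnormalized weight. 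Summing over $T$ gives the claim.

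The main obstacle is the last step. On $V(\beta)$ we only know the lower bound $g\gtrsim\beta$, not $g\lesssim\beta$, so I would first replace $V(\beta)$ by the dyadic shells $V'(\beta')$ with $\beta'\ge\beta$. I would then sum over these shells, using the preceding proposition, which decays like $\beta'^{-2}$, so that the geometric sum is dominated by $\beta'\sim\beta$. A second point needing care is checking the orientation and dimension hypotheses of Lemma~\ref{bilresg} after rescaling. In particular, $d(\Sigma_1)$ must be transverse to the long side of $T$, which holds since the two caps are at distance $\sim1$.
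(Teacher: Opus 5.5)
Your argument is the paper's proof: you cover $V(\beta)$ by the rectangles $T$ meeting it, apply Lemma~\ref{bilresg} (with $\eta_T$) on each $T$ to reach $\int(\sum_\omega|f_{1,\omega}|^2)\,g\,\eta_T$, pull out $\|\sum_\omega|f_{1,\omega}|^2\|_\infty$, and bound $g\lesssim\beta$ on $T$. The square is indeed what is meant, as the later use $\|\sum_\omega|f_{1,\omega}|^2\|\le\lambda_1\lambda_1(R^{-1/2}l_2)$ shows, and the step $g\lesssim\beta$ is one the paper takes silently; as you note, it is legitimate only on the dyadic shell $V'(\beta)$, which is how the proposition is actually applied. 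Two small corrections: in setting up Lemma~\ref{bilresg}, the side of $T$ of length $R^{1/2}$ is the one perpendicular to $d(\Sigma_2)$ and produces the $\theta$'s of $f_2$, while the long side $R^{1/2}L_2^{-1}$ (transverse to $d(\Sigma_1)$) produces the $\omega$'s of length $R^{-1/2}L_2$ for $f_1$, so your labels are swapped even though the decomposition you then state is right; and your alternative of summing over shells $\beta'\ge\beta$ via the preceding proposition yields the downstream bound $\beta^{-1}\log(\cdot)^2\lambda_0NR^{-1/2}R^2\lambda_2$ rather than the count-based right-hand side, which suffices for the application but is not the statement itself.
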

 \begin{proof}
     \begin{align*}
    \int_{V(\beta)}|f_1|^2|f_2|^2
    & \leq  \sum_{T: T \cap V(\beta) \not=\emptyset} \int_{T}|f_1|^2|f_2|^2\\
    & \lesssim  \sum_{T: T \cap V(\beta) \not=\emptyset} \int  (\sum_\omega |f_\omega|^2) (\sum_\theta |f_\theta|^2)\eta_T \\
    & \lesssim  \beta \|\sum_{\omega \subseteq \Gamma_1} |f_{1, \omega}|\|_\infty\sum_{T: T \cap V(\beta) \not=\emptyset} \int \eta_T \\
    & \lesssim  \beta \|\sum_{\omega \subseteq \Gamma_1} |f_{1, \omega}|\|_\infty(\#\{T: T \cap V(\beta) \not=\emptyset\}) |T| \qedhere
\end{align*}
\end{proof}   
 Now, we are ready to put things together. 
    First, by Cauchy-Schwartz, $|f_2(x)| \geq b$ implies that $g(x) \geq \frac{b^2\lambda_0}{\lambda_2}$.  Thus,
$$U_{a,b} \subseteq \bigcup_{\lambda_2\lambda_0 \geq \beta \geq \frac{b^2\lambda_0}{\lambda_2}} V(\beta) \cap U_{a,b}. $$
For $b \gg  \lambda_2 \lambda_0^{-1/2} $, we need $\beta \gg \lambda_2$.
Note there are just $\log(\frac{\lambda_2^2}{b^2})$ many choices of dyadic $\lambda_2\lambda_0 \geq \beta \geq \frac{b^2\lambda_0}{\lambda_2}$. We shall bound each $|V(\beta) \cap U_{a,b}|$. 

\begin{eqnarray*}
    && |V(\beta) \cap U_{a,b}|a^2b^2 \\&\leq& \int_{V(\beta)} |f_1|^2|f_2|^2 \\
    & \lesssim & \|\sum_{\omega \subseteq \Gamma_1} |f_{1, \omega}|\|_\infty \beta |T|(\#\{T:T\cap V(\beta)\not= \emptyset\}) \text{ by lemma }\ref{bires}\\
    & \lesssim_\epsilon& \frac{1}{\beta}\|\sum_{\omega \subseteq \Gamma_1} |f_{1, \omega}|\|_\infty \log\left(\frac{\lambda_2 }{\beta}\frac{N}{\sqrt{R}}\right)^2\lambda_0 NR^{-1/2}R^2\lambda_2.
\end{eqnarray*}
Suppose that $R^{-1/2}l_2 \geq 1/N$. We let $l(\omega)=R^{-1/2}L_2$, so $\|\sum_{\omega \subset \Gamma_1} |f_{1,\omega}|^2\| \leq \lambda_1\lambda_1(R^{-1/2}l_2)\leq \lambda_1 (l_2N/\sqrt{R})$. Suppose that $R^{-1/2}l_2 \leq 1/N$. We let $l(\omega)=1/N$ so $\|\sum_{\omega \subset \Gamma_1} |f_{1,\omega}|^2\| \leq \lambda_1$. 
Thus, we have plugging in $\beta \geq \frac{b^2\lambda_0}{\lambda_2}$. 
\begin{eqnarray*}
  && |V(\beta) \cap U_{a,b}|a^2b^2 \\
  &\leq& \frac{\lambda_2}{b^2\lambda_0} \lambda_1 \max(1, l_2N/\sqrt{R}) \log\left(\frac{\lambda_2^2 }{b^2}\frac{NR^{-1/2}}{\lambda_0}\right)^2\lambda_0 NR^{-1/2}R^2\lambda_2 \\
& \lesssim & \frac{1}{b^2}\max(1, l_2N/\sqrt{R}) \log\left(\frac{\lambda_2 }{b}\frac{NR^{-1/2}}{\lambda_0}\right)^2\lambda_0 NR^{-1/2}R^2 \lambda_1\lambda_2^2
\end{eqnarray*}
Summing over all the dyadic $\beta$'s, we proved Lemma \ref{asym2}. 

\subsubsection{Refined Broad and Narrow Argument}
Now, we develop a refined broad and narrow argument by combining the broad estimate with the asymmetric estimate to prove lemma \ref{key}. 
\begin{definition}
    We say $x \in Br_{m}(a')$ if there exist  $\tau_m,\tau_m' $ such that $ l_m \leq dist( \tau_m, \tau_m')\leq 2l_{m-1}$ and $x \in U_{a'/10,a'/10}(f_{\tau_m}, f_{\tau_m'})$. We say $x \in Nar_m(a')$ if there exists $\Tilde{\tau}_m$ of length $2l_m$ such that $|f_{\tau_m}(x)|>a'/10$. 
\end{definition}
Now, let's gather some properties of $Br_m$ and $Nar_m$. 
\begin{proposition}
    If $a_1+\cdots+a_k <a/2$ and $l_k<a/(10N)$, then
    $$U_a \subset \bigcup_{j=1}^k Br_m(a_m).$$
\end{proposition}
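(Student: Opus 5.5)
The plan is the standard iterative broad--narrow descent, run down the scales $l_0=1>l_1>\dots>l_k$. The indices in the statement should be read as $U_a\subset\bigcup_{m=1}^k Br_m(a_m)$. Fix $x\in U_a$, so $|f(x)|\ge a/2$, and suppose $x\notin Br_m(a_m)$ for every $1\le m\le k$. We will produce nested caps
\[
\tilde\tau_0\supset\tilde\tau_1\supset\cdots\supset\tilde\tau_k,\qquad l(\tilde\tau_m)\le 2l_m,\qquad \tilde\tau_0=\P,
\]
with
\[
|f_{\tilde\tau_m}(x)|\ \ge\ |f(x)|-\sum_{j\le m}a_j .
\]
At the end we contradict the trivial size bound on $f_{\tilde\tau_k}$.

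\emph{Inductive step.} Suppose $\tilde\tau_{m-1}$ has been built, with length at most $2l_{m-1}$. Write $f_{\tilde\tau_{m-1}}=\sum_{\tau_m\subset\tilde\tau_{m-1}} f_{\tau_m}$. Since $l_{m-1}=3l_m$, this sum has $O(1)$ terms, at most about $6$. Call $\tau_m$ \emph{large} if $|f_{\tau_m}(x)|>a_m/10$.

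Any two large caps must lie at distance $<l_m$. Indeed, two caps of $\tilde\tau_{m-1}$ are automatically at distance $\le 2l_{m-1}$, so two large caps at distance $\ge l_m$ would place $x$ in $U_{a_m/10,a_m/10}(f_{\tau_m},f_{\tau_m'})$. That is exactly $x\in Br_m(a_m)$, which we excluded. Hence all large caps are adjacent, and they are contained in a single cap $\tilde\tau_m$ of length $2l_m$. This is the narrow alternative $Nar_m(a_m)$.

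The remaining caps in $\tilde\tau_{m-1}\setminus\tilde\tau_m$ each contribute at most $a_m/10$, and there are at most about $6$ of them. So their total is $<a_m$, which gives $|f_{\tilde\tau_m}(x)|\ge|f_{\tilde\tau_{m-1}}(x)|-a_m$. One minor bookkeeping point: we should tile $\tilde\tau_{m-1}$ by the fixed caps $\tau_m$, with boundary overlap handled by at most one extra cap. This keeps both the count and the $2l_m$ length bound.

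\emph{Conclusion.} After $k$ steps,
\[
|f_{\tilde\tau_k}(x)|\ \ge\ \frac a2-\sum_{j=1}^k a_j .
\]
On the other hand, $f_{\tilde\tau_k}=\sum_{\gamma\subset\tilde\tau_k}f_\gamma$ with $|f_\gamma|\le 1$ under $\Cn_0$. There are at most $2l_kN+O(1)$ such caps $\gamma$, so the hypothesis $l_k<a/(10N)$ gives $|f_{\tilde\tau_k}(x)|\le a/5+O(1)$.

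The main obstacle is making these two bounds strictly incompatible. As stated, the hypothesis $\sum_j a_j<a/2$ only makes the lower bound positive. I would therefore first try to absorb the slack by sharpening constants: use $|f_\gamma|\le 1$ together with the threshold $a_m/10$ versus the number of discarded caps, and if necessary strengthen the hypothesis to $\sum_j a_j<a/4$. This is harmless in the application, since $a\gtrsim\lambda(\log N)^{-100}$ is large, so the $O(1)$ term is negligible, and the $a_m$ are chosen as $a/(\log N)^{O(1)}$-type summable sequences. The other point to check carefully is that the definition of $Br_m$ (distance between $l_m$ and $2l_{m-1}$) exactly covers every non-adjacent pair inside a cap of length $2l_{m-1}$.
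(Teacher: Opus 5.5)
Your argument is essentially the paper's proof: the paper likewise iterates $U_a\subset Br_1(a_1)\cup Nar_1(\cdot)$ and $Nar_m(\cdot)\subset Br_{m+1}(a_{m+1})\cup Nar_{m+1}(\cdot)$ using the same clustering of the large subcaps, and it empties the last narrow set with the same trivial bound $|f_{\tilde\tau_k}|\le 2\lambda(l_k)\le 2l_kN<a/5$. Tracking $|f_{\tilde\tau_m}(x)|$ directly, as you do, is actually more careful than the paper's chain of $Nar$ inclusions, which is loose with constants.

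You do not need to strengthen the hypothesis, because the sharper count you mention already closes the argument. The cap $\tilde\tau_{m-1}$ consists of exactly six triadic caps $\tau_m$ (three when $m=1$), and $\tilde\tau_m$ keeps two of them, so at most four are discarded. The loss per step is therefore at most $0.4a_m$, which gives $|f_{\tilde\tau_k}(x)|>a/2-0.2a=0.3a>a/5$, the desired contradiction. If the $\gamma$'s are nested in the triadic caps, as the paper implicitly assumes, there is no $O(1)$ boundary term.
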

\begin{proof}
    First, we note that $U_a \subset Br_1(a')\cup Nar_1(a-a')$ for $0<a'<a$. Then, note that for $m \geq 1$, $Nar_m(a') \subset Br_{m+1}(a'')\cup Nar_{m+1}(a'-a'')$ for $0<a''<a'$. Thus, inductively, we have 
    $$U_a \subset Br_1(a_1)\cup \cdots \cup Br_m(a_m) \cup Nar_m(a-\sum_{j=1}^m a_j).$$
    In particular, suppose $a_1+\cdots+ a_k <a/2$, then 
    $$U_a \subset Br_1(a_1)\cup \cdots \cup Br_k(a_k) \cup Nar_k(1-\sum_{j=1}^k a_k).$$
    Note that $Nar_k(a-\sum_{j=1}^k a_k)$ is empty as $a-\sum_{j=1}^k a_k>a/2$ and $\lambda(l_k)<a/10$. 
\end{proof}
\begin{proposition}\label{numbm}
    $$\#\{m: Br_m(a')\not=\emptyset \}\leq \frac{\lambda}{a'}.$$
\end{proposition}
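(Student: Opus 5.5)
The plan is to bound the sup norm of $f_{\tau_m}$ on $B_R$ and then count, for each fixed $a'$, the scales at which a broad configuration of amplitude $a'$ can occur. The key observation is that $x\in Br_m(a')$ forces two caps $\tau_m,\tau_m'$ of length $l_m$ with $|f_{\tau_m}(x)|>a'/10$. Under $\Cn_0$ each $|f_\gamma|\le 1$, so by the triangle inequality $|f_{\tau_m}(x)|\le \lambda(\tau_m)\le \lambda(l_m)$. Hence $Br_m(a')\neq\emptyset$ requires $\lambda(l_m)\gtrsim a'$, and in particular some cap $\tau_m$ must carry at least $a'/10$ of the active $\gamma$'s.

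The counting step goes as follows. For each $m$ with $Br_m(a')\neq\emptyset$, choose a pair $\tau_m,\tau_m'$ that witnesses it. These satisfy $l_m\le \operatorname{dist}(\tau_m,\tau_m')\le 2l_{m-1}$, and each contains at least $a'/10$ active caps $\gamma$. I would like to use that the witnessing pairs at different scales essentially use disjoint sets of $\gamma$'s. Then the total count $\lambda$ would dominate $(a'/10)$ times the number of scales, giving $\#\{m\}\lesssim \lambda/a'$.

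Disjointness does not hold literally: the nested caps at different scales overlap. So I would instead use the weaker fact that $|f_{\tau_m}(x)|\,|f_{\tau'_m}(x)|>(a'/10)^2$ at a single point $x$. Fix $x\in B_R$ and note that $|f(x)|\le \lambda$. The more robust route is to pass to a decreasing chain. Order the scales $m_1<m_2<\dots$ where broadness occurs. For the $j$-th scale, the two caps are separated by about $l_{m_j}$. At least one of them, call it $\sigma_j$, lies outside the $l_{m_{j+1}}$-neighbourhood-sized region, of length about $l_{m_j}$, that contains the later witnessing pairs after a suitable choice. Choosing the witnesses greedily along a nested sequence of caps then makes the $\sigma_j$ pairwise disjoint. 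Each $\sigma_j$ contains at least $a'/10$ active $\gamma$'s, so summing gives $\#\{m\}\cdot a'/10\le\lambda$, which is the claim up to the absolute constant.

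The main obstacle is making this nesting argument work. The witnessing pairs at different scales need not be nested, because $Br_m(a')$ is a union over points $x$, and pairs at distinct scales may sit in unrelated locations on $\P$. If the greedy nesting fails, the fallback is a coarser bound. The number of scales $m$ with $\lambda(l_m)\gtrsim a'$ can be controlled because $\lambda(l_m)\le l_mN$, which forces $l_m\gtrsim a'/N$. Counting only these admissible scales gives $\lesssim\log(N/a')$, and this can be combined with the trivial bound. I expect the intended statement rests on $\lambda(l_m)\ge a'/10$ together with a disjointness-of-supports argument, and I would verify it carefully against the pigeonholed structure $\lambda(\theta)\sim\lambda_0$.
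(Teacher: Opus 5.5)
Your starting point matches the paper's: under $\Cn_0$ one has $|f_\tau(x)|\le\lambda(\tau)$, so each witnessing cap of $Br_m(a')$ carries $\gtrsim a'$ active $\gamma$'s. The task is then to show that different scales use up essentially different batches of these $\gamma$'s, and the gap is in that counting step.

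Your greedy scheme needs one cap $\sigma_j\in\{\tau_{m_j},\tau'_{m_j}\}$ per scale with the $\sigma_j$ pairwise disjoint. This is false in general, not just hard to arrange. Take well-separated scales $m_1<m_2<m_3$. Put the witnessing pair for $m_2$ inside $\tau_{m_1}$ and the pair for $m_3$ inside $\tau'_{m_1}$, and concentrate all active $\gamma$'s in these four finer caps, arranged so that these are the only possible witnesses. Whichever of $\tau_{m_1},\tau'_{m_1}$ you take as $\sigma_1$ then contains both caps of a finer pair, so no disjoint selection exists. In particular, there need not be a single region of length about $l_{m_j}$ containing all later pairs.

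Your fallback does not close the gap either. The bound $\#\{m:\lambda(l_m)\gtrsim a'\}\lesssim\log(N/a')$ is correct, but it is not $\lesssim\lambda/a'$ in general (take $a'\sim\lambda\ll N$). In the application, in the bound for $B_1$, one has $a'=a/(100h)$ with $a$ possibly comparable to $\lambda$. There your fallback reintroduces exactly the $\log N$ loss that the refined broad--narrow argument is built to remove. The remark $|f(x)|\le\lambda$ at a fixed $x$ gives nothing either, since the witnessing points vary with $m$.

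What is missing is a count of \emph{branchings} rather than of representatives. The paper uses the geometric fact that for $n>m$ the finer pair $\tau_n\cup\tau'_n$ cannot meet both $\tau_m$ and $\tau'_m$. From this it asserts that the \emph{union} of all witnessing caps carries at least $|A|a'$ active $\gamma$'s; in the example above, three scales produce four disjoint heavy caps. One way to make this rigorous is as follows.
\begin{enumerate}
\item Pigeonhole $A$ over $O(1)$ shifted triadic grids, so that each hull of $\tau_m\cup\tau'_m$ lies in a grid cap $Q_m$ of length $O(l_m)$.
\item Let $\mathcal{H}$ be the subtree of grid caps containing $\gtrsim a'$ active $\gamma$'s.
\item Below $Q_m$, within $O(1)$ levels, there are two distinct caps of $\mathcal{H}$ meeting $\tau_m$ and $\tau'_m$ respectively. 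Their lowest common ancestor is a node of $\mathcal{H}$ with at least two children in $\mathcal{H}$.
\item This branch node determines $m$ up to $O(1)$ choices.
\item A finite tree has fewer branch nodes than leaves. The leaves of $\mathcal{H}$ are disjoint caps with $\gtrsim a'$ active $\gamma$'s each, so there are $\lesssim\lambda/a'$ of them.
\end{enumerate}
This yields $\#\{m:Br_m(a')\neq\emptyset\}\lesssim\lambda/a'$, which is the form used later.
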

\begin{proof}
    Define $A=\{m: Br_m(a')\not=\emptyset \}$. Then, for each $m \in A$, there exist $\tau_m$ and $\tau_{m'}$ that are $\gtrsim l_m$ distance apart and $\lambda(\tau_m)\geq a'$ and $\lambda(\tau_m')\geq a'$ so $\lambda(\tau_m \cup \tau_{m'}) \geq 2a'$. 
    Suppose $n>m$, $m,n \in A$, then either $(\tau_n \cup \tau_n') \cap \tau_m=\emptyset$ or  $(\tau_n \cup \tau_n') \cap \tau_m'=\emptyset$, $\lambda(\tau_m \cup \tau_{m'} \cup \tau_n \cup \tau_n') \geq 3a'$. In general, $\lambda(\cup_{j=1}^{|A|}(\tau_{m_j} \cup\tau_{m_j}'))\geq |A|a'$. Thus, $|A|\leq \lambda/a'$. 
\end{proof}
From now on, we put
$$h=100\log((\lambda/a)\log(NR^{-1/2}/\lambda_0)).$$
\begin{proposition}\label{faraway}
    If $|m-n|>h$, then $$\abs{Br_m(a(a/\lambda)^3) \cap Br_n(a(a/\lambda)^3)}\lesssim \frac{N^2R\lambda}{a^4}\left(\frac{a}{\lambda}\right)^{20}.$$
\end{proposition}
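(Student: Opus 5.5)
Assume without loss of generality $n>m+h$, so $l_n \le 3^{-h} l_m$. Write $a'=a(a/\lambda)^3$. A point $x$ in the intersection lies in $U_{a'/10,a'/10}(f_{\tau_m},f_{\tau_m'})$ for some pair at distance $\sim l_m$. It also lies in $U_{a'/10,a'/10}(f_{\tau_n},f_{\tau_n'})$ for some pair at distance $\sim l_n$. Since $\tau_m$ and $\tau_m'$ are separated by $\gtrsim l_m \gg l_n$, the short cap $\tau_n$ is at distance $\gtrsim l_m$ from at least one of $\tau_m,\tau_m'$. Call that one $\Sigma_1$ and set $\Sigma_2:=\tau_n$, so that $\Sigma_1$ and $\Sigma_2$ are $\sim l\gtrsim l_m$ apart (up to dyadic pigeonholing of the distance) with $l(\Sigma_2)=l_n$. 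Hence
\[
Br_m(a')\cap Br_n(a')\subseteq \bigcup_{\Sigma_1,\Sigma_2} U_{a'/10,a'/10}(f_{\Sigma_1},f_{\Sigma_2}),
\]
where the union runs over such pairs.

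Each piece is bounded by the asymmetric estimate, Lemma \ref{asym}, with $a=b=a'/10$. This gives
\[
\abs{U_{a'/10,a'/10}(f_{\Sigma_1},f_{\Sigma_2})}\lesssim \log\Bigl(\tfrac{\lambda(\Sigma_2)}{a'}\tfrac{NR^{-1/2}}{\lambda_0}\Bigr)\max\Bigl(\tfrac{l_n}{l_m},\tfrac{\sqrt R}{N}\Bigr)\frac{N^2R\,\lambda(\Sigma_1)\lambda(\Sigma_2)^2}{a'^6}.
\]
Before applying it we must check the hypothesis $\lambda(\Sigma_2)/\lambda_0^{1/2}\ll a'\le \lambda(\Sigma_2)$. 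The upper bound is automatic, since $|f_{\Sigma_2}|\le\lambda(\Sigma_2)$ on the set. For the lower bound:
\begin{itemize}
\item If $\lambda(\Sigma_2)\lesssim a'\lambda_0^{1/2}$, the hypothesis holds.
\item Otherwise the lower threshold fails, and we instead use Cauchy--Schwarz together with the standing assumptions $\lambda_0> NR^{-1/2}(\log N)^{-100}$, $a>\lambda(\log N)^{-100}$ and $N\ge \sqrt R(\log N)^{10000}$. These give $\lambda_0^{1/2}\ge (\log N)^{4000}\gg(\lambda/a)^{4}$, hence $a'\lambda_0^{1/2}\gg \lambda\ge\lambda(\Sigma_2)$. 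So this case does not actually occur.
\end{itemize}

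Now sum over the pairs. Since $\lambda(\Sigma_2)\le\lambda$ and $\sum_{\Sigma_1}\lambda(\Sigma_1)\lesssim\lambda$ (the caps $\Sigma_1$ are essentially disjoint at scale $l_m$), the factor $\lambda(\Sigma_1)\lambda(\Sigma_2)^2$ sums to at most $\lambda^3$ once we fix $\Sigma_2$ and sum over $\Sigma_1$. The sum over $\Sigma_2$ is the delicate point. We pigeonhole $\Sigma_2=\tau_n$ by $\lambda(\tau_n)\ge a'/10$; by the disjointness argument of Proposition \ref{numbm} there are at most $10\lambda/a'$ relevant $\tau_n$, and each contributes $\lambda(\tau_n)^2\le\lambda\,\lambda(\tau_n)$. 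Summing then gives $\lesssim\lambda\cdot\lambda\cdot\lambda$ with no extra loss.

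The gain is $\max(3^{-h},\sqrt R/N)$. We have $3^{-h}\le (\lambda/a)^{-100}(\log(NR^{-1/2}/\lambda_0))^{-100}$, and $\sqrt R/N\le(\log N)^{-10000}\le (a/\lambda)^{100}$, using $a>\lambda(\log N)^{-100}$. The logarithmic factor is $\lesssim (\log(\lambda/a)+\log(NR^{-1/2}/\lambda_0))$, which is absorbed by these powers. Altogether
\[
\abs{Br_m(a')\cap Br_n(a')}\lesssim \Bigl(\frac{a}{\lambda}\Bigr)^{99}\frac{N^2R\lambda^3}{a^6}\Bigl(\frac{\lambda}{a}\Bigr)^{18}=\frac{N^2R\lambda}{a^4}\Bigl(\frac{a}{\lambda}\Bigr)^{79}\cdot\frac{\lambda^2}{a^2}\cdot\frac{a^{2}}{\lambda^{2}},
\]
which is comfortably below $\frac{N^2R\lambda}{a^4}(a/\lambda)^{20}$, using $\lambda^2/a^2\le (a/\lambda)^{-2}$.

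The main obstacle I expect is the bookkeeping of the sum over pairs $(\Sigma_1,\Sigma_2)$. One must avoid losing a factor equal to the number of caps $\tau_n$, which could be as large as $l_n^{-1}$ and would destroy the gain. The plan above handles this by bounding $\sum_{\Sigma_2}\lambda(\Sigma_2)^2$ by $\lambda\cdot\max\lambda(\Sigma_2)$ rather than counting caps. The generous exponent $100$ in $h$ leaves ample room for the remaining polynomial losses in $\lambda/a$.
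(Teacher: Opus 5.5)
Your proposal is correct and follows the paper's proof. You take the fine cap together with whichever coarse cap lies at coarse-scale distance from it, check $a'\gg \lambda(\log N)^{-400}\gg \lambda(\Sigma_2)/\lambda_0^{1/2}$ from the standing assumptions, apply Lemma~\ref{asym} with gain $\max(3^{-h},\sqrt{R}/N)$, and sum over contributing caps. The only difference is bookkeeping: you sum $\lambda(\Sigma_1)\lambda(\Sigma_2)^2$ directly to get $\lambda^3$, whereas the paper bounds each pair by $\lambda^3$ and multiplies by the $(\lambda/a')^2$ contributing pairs, a loss of $(\lambda/a)^8$ that the exponent slack absorbs.
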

\begin{proof}
    Suppose $x \in Br_m(a(a/\lambda)^3) \cap Br_n(a(a/\lambda)^3)$, suppose $m>n+h$ then there exist $\tau_m$ and $\tau_n$ such that $dist(\tau_m,\tau_n) \gtrsim l_n$ and $x \in U_{a',a'}(f_{\tau_m},f_{\tau_n})$ with $a'=a(a/\lambda)^3$. Note that 
    $$a'\gg \lambda (\log N)^{-400}\gg \lambda(\tau_m)/\lambda_0^{1/2} $$     
as we assumed that \[a>\lambda (\log N)^{-100}, N>R^\frac{1}{2}(\log N)^{100000} \text{ and } \lambda_0>R^{-1/2}N(\log N)^{-100}. \] 
Thus, we can apply lemma \ref{asym} to get
\begin{align*}
   \abs{U_{a',a'}(f_{\tau_m},f_{\tau_n})}
   &\lesssim\log\left(\frac{\lambda(\tau_2)}{a'} \frac{NR^{-1/2}}{\lambda_0}\right)\max\left(\frac{l_m}{l_n}, \frac{\sqrt{R}}{N}\right)\frac{N^2R\lambda(\tau_m)\lambda(\tau_n)^2}{a'^6} \\
   &\lesssim \log\left(\frac{\lambda}{a'} \frac{NR^{-1/2}}{\lambda_0}\right)\max\left(3^{-h}, (\log N)^{-10000}\right)\frac{N^2R\lambda^3}{a'^6}\\
    &\lesssim\ \log\left(\frac{\lambda}{a'} \frac{N R^{-1/2}}{\lambda_0}\right)
\max\left(\left(\frac{a}{\lambda}\right)^{100}, (\log N)^{-10000}\right) \\
&\quad \times \log\left(\frac{N R^{-1/2}}{\lambda_0}\right)^{-100}
\frac{N^2 R \lambda^3}{a'^6} \\
&\lesssim \frac{N^2 R \lambda}{a^4}\left(\frac{a}{\lambda}\right)^{50}.
\end{align*}  
Note that in the last step we used that  $(a/\lambda)^{100} \geq (\log N)^{-10000}$ as we assume that $a>\lambda (\log N)^{-100}$.

Note in order for $\tau_m$ to contribute, $\lambda(\tau_m) \geq a'$ and the same to $\tau_n$. Thus, there are less than $\lambda/a'$ many $\tau_m$ and $\lambda/a'$ many $\tau_n$ that might contribute. 
Then, we sum things up 
\begin{align*}
 \abs{Br_m(a(a/\lambda)^3) \cap Br_n(a(a/\lambda)^3)}
   &\lesssim  \sum_{\substack{\tau_m,\tau_n\\ \lambda(\tau_m)\geq a',\lambda(\tau_n)\geq a'\\ dist(\tau_m,\tau_n)\gtrsim l_n}}U_{a',a'}(f_{\tau_m},f_{\tau_n})\\
   &\lesssim  \sum_{\substack{\tau_m,\tau_n\\ \lambda(\tau_m)\geq a',\lambda(\tau_n)\geq a'\\ dist(\tau_m,\tau_n)\gtrsim l_n}}\frac{N^2R\lambda}{a^4}\left(\frac{a}{\lambda}\right)^{50}\\
   &\lesssim  (\lambda/a')^2\frac{N^2R\lambda}{a^4}\left(\frac{a}{\lambda}\right)^{50}\\
   &\lesssim \frac{N^2R\lambda}{a^4}\left(\frac{a}{\lambda}\right)^{20} \qedhere
\end{align*}
\end{proof}
\begin{proposition}
    If we define $B_1=\cup_m Br_m(a/(100h))$ and 
    $$B_2=\bigcup_{|m-n|>h}\left(Br_m(a(a/\lambda)^3) \cap Br_n(a(a/\lambda)^3)\right)$$
    Then, $U_a \subset B_1 \cup B_2$. 
\end{proposition}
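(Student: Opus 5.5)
Fix $x \in U_a$ and assume $x \notin B_1$; we will show $x \in B_2$. The tool is the chain of inclusions from the proposition before Proposition~\ref{numbm}: for any thresholds $a_1,\dots,a_k$ with $a_1+\cdots+a_k<a/2$ and $l_k$ small enough that the final narrow set is empty,
\[
U_a \subset Br_1(a_1)\cup\cdots\cup Br_k(a_k).
\]
The naive choice $a_m=a/(100h)$ for every $m$ fails once $k>50h$, since then $\sum a_m$ exceeds $a/2$. So the thresholds must be chosen adaptively, depending on which scales are actually broad.

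\textbf{Step 1: only a few scales can be broad at the large threshold.} By Proposition~\ref{numbm}, the number of $m$ with $Br_m(a/(100h))\neq\emptyset$ is at most $100h\lambda/a$. Let $A$ be this set of scales. Every $m\notin A$ has $Br_m(a/(100h))=\emptyset$.

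\textbf{Step 2: choose the thresholds.} Run the broad--narrow iteration with $a_m=a/(100h)$ for $m\in A$ and $a_m=a'':=a(a/\lambda)^3$ for $m\notin A$. The total is then
\[
\sum_m a_m \le |A|\,\frac{a}{100h} + M a''.
\]
The first term is a problem: it is bounded by $\lambda/a \cdot a$, which is not $<a/2$. So the set $A$ has to be restricted further. The key observation, which is the central step, is this. Suppose $x\in Br_m(a'')$ and $x\in Br_n(a'')$ with $|m-n|>h$. Then $x\in B_2$ and we are done. Otherwise every scale $m$ with $x\in Br_m(a'')$ lies in a window $[m_0,m_0+h]$.

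Now run the iteration pointwise at $x$, starting from the top scale, with threshold $a''$ at every scale outside the window. Since $x\notin Br_m(a'')$ for $m$ outside the window, the narrow alternative is forced at each such step, and the iteration costs only $Ma''$ in total. Inside the window, use threshold $a/(100h)$, which costs at most $(h+1)a/(100h)<a/50$. To keep $Ma''<a/4$ we need $M(a/\lambda)^3\ll 1$, i.e. $\log N\,(a/\lambda)^3\ll 1$. This is not automatic from $a>\lambda(\log N)^{-100}$.

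\textbf{Step 3: handle the cost outside the window.} The exponent in $a'' = a(a/\lambda)^3$ suggests the intended argument uses Proposition~\ref{numbm} at level $a''$ rather than summing over all $M$ scales. The number of scales with $Br_m(a'')\neq\emptyset$ is at most $\lambda/a''=(\lambda/a)^4$. Scales where the broad set is empty are automatically narrow and cost nothing: in the inductive inclusion $Nar_m(a')\subset Br_{m+1}(a'')\cup Nar_{m+1}(a'-a'')$ with $Br_{m+1}(a'')$ empty, we may take $a''\to0$. Charging only the nonempty scales would give a cost of $(\lambda/a)^4\,a(a/\lambda)^3=\lambda$, which is still too large.

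I therefore expect the genuine obstacle to be this bookkeeping. I would try to refine it by noting that nonempty $Br_m$ away from the window are irrelevant at the specific point $x$. Pointwise, $x\notin Br_m(a'')$ for $m$ outside the window, so the pointwise iteration needs only $a_m$ with $x\notin Br_m(a_m)$. At those scales we may take $a_m$ arbitrarily small, because the pointwise narrow statement holds for any $a_m$ exceeding the broad content at $x$. Concretely, define the pointwise $a_m(x)$ to be the smallest threshold at which $x$ fails to be broad at scale $m$. Inside the window use $a/(100h)$; outside, the broad content at $x$ is less than $a''$. The remaining task is to sum these pointwise sub-$a''$ contents over possibly $\log N$ scales, and I would attempt to show that these contents decay geometrically.

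Finally, the narrow set at the bottom scale is empty because $l_k$ is small: $\lambda(l_k)<a/10$ once $l_k<a/(10N)$. This closes the inclusion $U_a\subset B_1\cup B_2$.
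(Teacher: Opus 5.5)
Your Steps 1--2 are exactly the paper's proof. From $x\notin B_2$, the scales $m$ with $x\in Br_m(a(a/\lambda)^3)$ lie in a window $A$ of about $h$ consecutive scales. One then runs the broad--narrow chain with $a_m=a/(100h)$ on $A$ and $a_m=a(a/\lambda)^3$ off $A$. The paper finishes with the bare assertion $\sum_{m=1}^k a_m<a/2$. You correctly see that this needs $k(a/\lambda)^3\ll 1$, where $k\approx\log_3(10N/a)$. You also correctly see that the standing hypotheses ($a>\lambda(\log N)^{-100}$, $\lambda_0>NR^{-1/2}(\log N)^{-100}$) allow $a\sim\lambda$ with $k\sim\log N$. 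But you never supply this step, so the proposal is not a proof, and both repairs you sketch fail.

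First, in Step 3, $Br_{m+1}(a'')=\emptyset$ does not let you send $a''\to 0$. The step $Nar_m(a')\subset Nar_{m+1}(a'-a'')$ still costs $a''$, and emptiness at level $a''$ says nothing about lower levels. Second, the hoped-for geometric decay of the sub-threshold contents is false; in fact the inclusion itself fails in this regime. Here is an example:
\begin{itemize}
\item Take $R=N^{3/2}$, $a=\lambda=N^{1/2}$ and $\lambda_0=NR^{-1/2}(\log N)^{-50}$.
\item Let $\tau_1^*\supset\tau_2^*\supset\cdots$ be nested triadic caps, and choose $k'$ with $3^{k'}\approx 10N^{1/2}(\log N)^{-50}$, so $k'\approx\frac{1}{2}\log_3 N$.
\item For $1\le m\le k'$, activate $\lambda/k'$ caps $\gamma$ (grouped in $\theta$'s carrying $\lambda_0$ each) inside a sibling of $\tau_m^*$, normalized so that $f_\gamma(0)=1$.
\end{itemize}
One checks that $\lambda(\tau)\le l(\tau)R^{1/2}\lambda_0$ for every cap. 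Also $|f(0)|=\lambda$, so $0\in U_\lambda$. Now $|f_\tau|\le\lambda(\tau)$ everywhere, and at each scale at most one cap, namely $\tau_m^*$, has $\lambda(\tau)>\lambda/k'$. Hence every $Br_m(a')$ is empty once $a'\ge 10\lambda/k'$. This covers $a'=a(a/\lambda)^3=\lambda$. It also covers $a'=a/(100h)$, because here $h=100\log(50\log\log N)$, so $1000h<k'$ for $N$ large. Thus $B_1\cup B_2=\emptyset$ while $U_\lambda\neq\emptyset$. At $0$ the chain loses exactly $\lambda/k'$ at every scale, with no decay.

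So the missing step cannot be supplied by finer bookkeeping; the statement, and the paper's argument for it, must be modified. Lowering the off-window threshold to $a/(100k)$ makes the inclusion immediate from the chain. However, Proposition~\ref{faraway} would then have to be redone at that lower level, and with the existing argument this needs $h\gtrsim\log k$. The bound for $B_1$, and hence Lemma~\ref{key}, would then have to be re-examined with this larger $h$.
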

\begin{proof}
    Suppose $x \not\in B_2$, then there exists $A \in \{1,\cdots, k\}$ such that $|A|<h$ and for $m \not\in A$, $x\not \in Br_m(a(a/\lambda)^3)$. Put $a_m=a(a/\lambda)^3$ for $m\not\in A$ and $a_m=a/(100h)$ for $m\in A$. Note $\sum_{m=1}^k a_m<a/2$, so $x \in \cup_{m\in A}Br_m(a/(10h))$. Thus, $x\in B_1$. 
\end{proof}
Thus, it remains to bound $B_1$ and $B_2$. 
\begin{proposition}(Bounding $B_1$)
$$\abs{B_1}\lesssim \log\left(\frac{\lambda}{a}\frac{NR^{-1/2}}{\lambda_0}\right)^{O(1)} \frac{N^2R\lambda^2}{a^5}\frac{\lambda_0}{NR^{-1/2}}$$
\end{proposition}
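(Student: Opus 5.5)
Since $B_1=\bigcup_m Br_m(a/(100h))$, the plan is to bound $|B_1|$ by a union bound over $m$, then over the pairs of caps at each scale, and to control each pair with the broad estimate (Lemma \ref{br}). Write $a'':=a/(100h)$. By definition, $x\in Br_m(a'')$ means there are caps $\tau_m,\tau_m'$ of length $l_m$ with $l_m\le \operatorname{dist}(\tau_m,\tau_m')\le 2l_{m-1}$ and $x\in U_{a''/10,a''/10}(f_{\tau_m},f_{\tau_m'})$. Thus
\[
|B_1|\le \sum_{m:\,Br_m(a'')\neq\emptyset}\ \sum_{(\tau_m,\tau_m')}\bigl|U_{a''/10,a''/10}(f_{\tau_m},f_{\tau_m'})\bigr| .
\]
Such a pair is at distance comparable to its length, so Lemma \ref{br} applies to each term. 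The hypotheses of Lemma \ref{br} must be checked first. Only pairs with $\lambda(\tau_m),\lambda(\tau_m')\gtrsim a''$ can contribute, since $|f_\tau|\le\lambda(\tau)$. We also need $a''\gtrsim\lambda(\tau_m)(\log N)^{-4000}$. This holds because $a>\lambda(\log N)^{-100}$ and $h\lesssim \log\log N$, so $a''\ge \lambda(\log N)^{-200}\ge\lambda(\tau_m)(\log N)^{-4000}$. The assumption $\lambda_0>NR^{-1/2}(\log N)^{-100}$ is given.

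Applying Lemma \ref{br} with $a=b=a''/10$ gives, for each pair,
\[
\bigl|U_{a''/10,a''/10}(f_{\tau_m},f_{\tau_m'})\bigr|\lesssim \log\Bigl(\tfrac{\lambda}{a''}\tfrac{NR^{-1/2}}{\lambda_0}\Bigr)\frac{N^2R\,(\lambda(\tau_m)+\lambda(\tau_m'))}{a''^4}\frac{\lambda_0}{NR^{-1/2}} .
\]
Next we sum over pairs at a fixed scale $m$. Each cap $\tau_m$ has only $O(1)$ partners $\tau_m'$ with $\operatorname{dist}\le 2l_{m-1}=6l_m$. Hence $\sum_{\text{pairs}}(\lambda(\tau_m)+\lambda(\tau_m'))\lesssim\sum_{\tau_m}\lambda(\tau_m)=\lambda$. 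This gives a bound of $\log(\cdot)\,N^2R\lambda\,a''^{-4}\lambda_0/(NR^{-1/2})$ per scale.

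Then we sum over $m$, using Proposition \ref{numbm}: the number of $m$ with $Br_m(a'')\ne\emptyset$ is at most $\lambda/a''$. Multiplying gives
\[
|B_1|\lesssim \log\Bigl(\tfrac{\lambda}{a''}\tfrac{NR^{-1/2}}{\lambda_0}\Bigr)\frac{N^2R\lambda^2}{a''^5}\frac{\lambda_0}{NR^{-1/2}} .
\]
Finally we replace $a''=a/(100h)$ by $a$, which costs $h^5$. Since $h=100\log((\lambda/a)\log(NR^{-1/2}/\lambda_0))$, we have $h^{O(1)}\lesssim\log(\frac{\lambda}{a}\frac{NR^{-1/2}}{\lambda_0})^{O(1)}$, and likewise the logarithm with $a''$ is comparable to the one with $a$. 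This yields the stated bound.

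The main obstacle is avoiding a loss of the number of scales ($\sim\log N$) in the sum over $m$. That is exactly why Proposition \ref{numbm} is used, which replaces $\log N$ by $\lambda/a''$. The resulting extra factor of $\lambda/a$ is the reason the bound has $a^5$ rather than $a^4$ in the denominator. A secondary point is making sure the sum over pairs collapses to $\lambda$ rather than to $\lambda$ times the number of caps; this relies on the bounded number of partners per cap at each scale.
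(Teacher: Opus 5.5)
Your proposal is correct and follows the paper's proof essentially step for step. You check that $a/(100h)\gtrsim\lambda(\log N)^{-4000}$ so that Lemma~\ref{br} applies to each neighboring pair $\tau_m\sim\tau_m'$, collapse the pair sum to $\lambda$ by bounded overlap, sum over the at most $\lambda/(a/100h)$ active scales via Proposition~\ref{numbm}, and absorb the powers of $h$ into the logarithm, the only cosmetic difference being that you convert $a/(100h)$ back to $a$ at the end rather than scale by scale.
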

\begin{proof}
We write $\tau_m \sim \tau_m'$ if  $l_m<dist(\tau_m', \tau_m)<2l_{m-1}$. Note that 
$$\frac{a}{100h} \gtrsim \lambda (\log N)^{-100} \log((\lambda/a)\log(NR^{-1/2}/\lambda_0))^{-1}\gtrsim \lambda (\log N)^{-4000}.$$
Thus, we can apply lemma \ref{br} to get
    \begin{align*}
        \abs{Br_m(a/100h)}&\lesssim \sum_{\tau_m\sim\tau_m'}\log\left(\frac{\lambda(\tau_2)}{b} \frac{NR^{-1/2}}{\lambda_0}\right)\frac{N^2R(\lambda(\tau_m)+\lambda(\tau_m'))}{(a/(100h))^4}\frac{\lambda_0}{NR^{-1/2}}\\
&\lesssim\sum_{\tau_m\sim\tau_m'}\log\left(\frac{\lambda(\tau_2)}{b} \frac{NR^{-1/2}}{\lambda_0}\right)\frac{N^2R(\lambda(\tau_m)+\lambda(\tau_m'))}{a^4}h^4\frac{\lambda_0}{NR^{-1/2}}\\
&\lesssim\log\left(\frac{\lambda(\tau_2)}{b} \frac{NR^{-1/2}}{\lambda_0}\right)\frac{N^2R\lambda}{a^4}h^4\frac{\lambda_0}{NR^{-1/2}}\\
&\lesssim\log\left(\frac{\lambda(\tau_2)}{b} \frac{NR^{-1/2}}{\lambda_0}\right)^{O(1)}\frac{N^2R\lambda}{a^4}\frac{\lambda_0}{NR^{-1/2}}
    \end{align*}
as we put $h=100\log((\lambda/a)\log(NR^{-1/2}/\lambda_0))$.
Summing things up and by proposition \ref{numbm},   
    \begin{align*}
        \abs{B_1}&\lesssim  \sum_{m=1}^k Br_m(a/100h)\\
        &\lesssim  \frac{\lambda}{a/(100h)}\log\left(\frac{\lambda(\tau_2)}{b} \frac{NR^{-1/2}}{\lambda_0}\right)^{O(1)}\frac{N^2R\lambda}{a^4}\frac{\lambda_0}{NR^{-1/2}}\\
        &\lesssim\log\left(\frac{\lambda}{a}\frac{NR^{-1/2}}{\lambda_0}\right)^{O(1)} \frac{N^2R\lambda^2}{a^5}\frac{\lambda_0}{NR^{-1/2}} \qedhere
    \end{align*}
\end{proof}
\begin{proposition}(Bound $B_2$)
   \begin{equation*}
       \abs{B_2} \lesssim \frac{N^2R\lambda}{a^4}\left(\frac{a}{\lambda}\right)^{10}
   \end{equation*} 
\end{proposition}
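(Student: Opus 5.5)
We bound $\abs{B_2}$ by a union bound over pairs of broad scales, combined with Proposition \ref{faraway}. Set $a' := a(a/\lambda)^3$. By definition,
\[
B_2=\bigcup_{|m-n|>h}\left(Br_m(a')\cap Br_n(a')\right),
\]
so
\[
\abs{B_2}\leq \sum_{\substack{m,n\\ |m-n|>h}} \abs{Br_m(a')\cap Br_n(a')}.
\]
A pair $(m,n)$ can only contribute when both $Br_m(a')$ and $Br_n(a')$ are nonempty.

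\textbf{Step 1: count the contributing scales.} Proposition \ref{numbm} with $a'$ in place of its parameter gives
\[
\#\{m : Br_m(a')\neq\emptyset\}\leq \frac{\lambda}{a'}=\left(\frac{\lambda}{a}\right)^4.
\]
Hence the number of contributing pairs is at most $(\lambda/a)^8$. This is the key point: we do not sum over all $\sim\log N$ scales, which would reintroduce the logarithmic loss we are trying to avoid.

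\textbf{Step 2: bound each pair.} For $|m-n|>h$, Proposition \ref{faraway} gives
\[
\abs{Br_m(a')\cap Br_n(a')}\lesssim \frac{N^2R\lambda}{a^4}\left(\frac{a}{\lambda}\right)^{20}.
\]
Its hypotheses, namely $a>\lambda(\log N)^{-100}$, $\lambda_0>R^{-1/2}N(\log N)^{-100}$ and the standing range of $N$, are exactly the standing assumptions of this subsection, so the proposition applies.

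\textbf{Step 3: multiply.} Combining the two steps,
\[
\abs{B_2}\lesssim \left(\frac{\lambda}{a}\right)^8\frac{N^2R\lambda}{a^4}\left(\frac{a}{\lambda}\right)^{20}=\frac{N^2R\lambda}{a^4}\left(\frac{a}{\lambda}\right)^{12}\leq \frac{N^2R\lambda}{a^4}\left(\frac{a}{\lambda}\right)^{10},
\]
where the last inequality uses $a\leq\lambda$ (up to constants, since $\abs{f}\lesssim\lambda$).

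\textbf{Where care is needed.} The only delicate point is making sure the pair count genuinely depends on $\lambda/a'$ rather than on the total number of scales $M\sim\log R$. If Proposition \ref{numbm} were unavailable, the count would be $M^2\sim(\log N)^2$. Even then the argument could be rescued: Proposition \ref{faraway} has surplus powers of $a/\lambda$, and $(a/\lambda)^{-1}\lesssim(\log N)^{100}$ would not suffice directly, but one could fall back on its intermediate bound with the exponent $50$. With Proposition \ref{numbm} in hand, however, this is unnecessary and the argument above is a direct two-line estimate.
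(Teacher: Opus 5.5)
Your argument is correct and is the paper's proof: a union bound over pairs with $|m-n|>h$, Proposition \ref{faraway} for each pair, and Proposition \ref{numbm} to limit the contributing pairs to $(\lambda/a')^2=(\lambda/a)^8$, which gives $(a/\lambda)^{12}\lesssim(a/\lambda)^{10}$. You should drop the closing ``fallback'' remark, because it is wrong: without Proposition \ref{numbm}, a $(\log N)^2$ pair count could not be absorbed when $a\sim\lambda$, since both the stated bound of Proposition \ref{faraway} and its intermediate form with exponent $50$ gain only powers of $a/\lambda$.
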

\begin{proof}

By proposition \ref{faraway},
    \begin{align*}
        \abs{B_2} &\leq  \sum_{|m-n|>l} \left|Br_m(a(a/\lambda)^3) \cap Br_n(a(a/\lambda)^3)\right|\\
        &\lesssim  \sum_{\substack{m:Br_m(a(a/\lambda)^3)\not=\emptyset\\n:Br_n(a(a/\lambda)^3)\not=\emptyset}}\frac{N^2R\lambda}{a^4}\left(\frac{a}{\lambda}\right)^{20}\\
        &\lesssim  \left(\frac{\lambda}{a}\right)^8\frac{N^2R\lambda}{a^4}\left(\frac{a}{\lambda}\right)^{20}\\
        &\lesssim  \frac{N^2R\lambda}{a^4}\left(\frac{a}{\lambda}\right)^{10} \qedhere
    \end{align*}
\end{proof}
Adding the previous two propositions up, we proved that 
$$|U_a^{\lambda_0}|\lesssim \log\left(\frac{\lambda}{a}\frac{NR^{-1/2}}{\lambda_0}\right)^{O(1)} \frac{N^2R\lambda^2}{a^5}\frac{\lambda_0}{NR^{-1/2}}.$$

\section{Strichartz Estimates in $Y^s$ spaces}\label{sec:4}
\subsection{The Rescaled Torus and $Y^s$ Spaces}\label{sec:4.1}
We begin by introducing the rescaled torus 
 $\T_\lambda:= \R/\left(2\pi \lambda \Z\right)$ with $\lambda\geq 1$ and the associated function spaces that will be used in the sequel. For $f: \T_\lambda \rightarrow \mathbb{C}$ measurable, we define Lebesgue norms by
\begin{equation*}
    \norm{f}_{L^p(\T_\lambda)}^p = \int_{0}^{2\pi\lambda} \abs{f(x)}^p \,\, dx,
\end{equation*}
for $1 \leq p < \infty$ and the usual modification for $p=\infty$. We also define $\left(dk\right)_\lambda$ to be the normalized counting measure on $\Z_\lambda := \frac{1}{\lambda}\Z$
\begin{equation*}
    \int_{\Z_\lambda} a(k) \left(dk\right)_\lambda := \frac{1}{2\pi\lambda} \sum _{k \in \Z_\lambda} a(k).
\end{equation*}
The Fourier coefficients of $f \in L^1(\T_\lambda)$ are given by
\begin{equation*}
    \hat{f}(k) = \int_{0}^{2\pi\lambda} e^{-i k x} f(x) \, \, dx,
\end{equation*}
for $k \in \Z_\lambda$ and the Fourier inversion formula is given by
\begin{equation*}
    f(x)= \int_{\Z_\lambda} e^{i k x} \hat{f}(k)\left(dk\right)_\lambda.
\end{equation*}
Using this convention, the following identities hold
\begin{subequations}
\begin{align}
 \label{def: Parseval}
   & \int_{0}^{2\pi\lambda} f(x) \overline{g(x)} \,\, dx = \int_{\Z_\lambda} \hat{f}(k) \overline{\hat{g}(k)} \,\, (dk)_\lambda,  \tag{Parseval}  \\ 
    \label{def:Plancharel} 
&\norm{f}_{L^2(\T_\lambda)}=\norm{\hat{f}}_{L^2((dk)_\lambda)},\tag{Plancherel} \\
\label{def: Convolution}
   &\widehat{fg}(k) = \int_{\Z_\lambda} \hat{f}(k_1) \hat{g}(k-k_1) \,\, (dk_1)_\lambda.  \tag{Convolution}
\end{align}
\end{subequations}
The Sobolev space, $H^s=H^s(\T_\lambda)$, is defined as the completion of smooth functions under the norm
\begin{equation*}
    \norm{f}_{H^s}=\norm{\langle k \rangle^s \hat{f}(k)}_{L^2((dk)_\lambda)},
\end{equation*}
where $\langle k \rangle := (1+\abs{k}^2)^\frac{1}{2}$. We will frequently make use of Littlewood-Paley theory which allows us to quantitatively separate the rough, high-frequency behavior of a function from the smooth, low-frequency one. In particular, for $A \subset \Z_\lambda$ let $P_A$ denote the Fourier multiplier $\widehat{P_A f}:= \chi_A \hat{f}.$ By a slight abuse of notation, for $N \in 2^{k_0\N_0}$ with $k_0 \geq 1$ we define 
\begin{equation*}
    P_N:= \begin{cases}
        P_{\left[-2^{k_0}N,-N\right) \cup \left(N,2^{k_0}N\right]} &\text{ for } N > 1\\
        P_{\left[-1,1\right]}           &\text{ for } N \leq 1.
    \end{cases}
\end{equation*}
In most of our estimates, we adopt the standard dyadic decomposition with $k_0 = 1$. The sole exception occurs in Lemma \ref{lemma:multilinear_estimates_lwp}, where we take $k_0 = 1/s$ for $0 < s \leq 1$. We also stress that if \(f = f(x,t)\) depends on time, the operator \(P_A\) acts only on the spatial variable: \( (P_A f)(x,t) = P_A(f(\cdot,t))(x) \).

\noindent We define $S_\lambda(t)$ to be the solution operator to the linear Schrödinger equation
\begin{equation*}
    i \partial_t u + \Delta u =0, \text{ } u(x,0)=u_0(x), \text{  } x \in \T_\lambda,
\end{equation*}
that is,
\begin{equation*}
    S_\lambda(t) u_0(x) = \int_{\Z_\lambda} e^{ i(k x+\abs{k}^{2}t)} \hat{u}_0(k) (dk)_\lambda.
\end{equation*}
When $\lambda=1$, we drop the subscript and write $S(t):=S_1(t)$. 

As we will be working with long-time Strichartz estimates on the rescaled torus $\T_\lambda$, it is convenient to prove local well-posedness results in adapted function spaces instead of using $X^{s,b}$ spaces. To this end, we introduce the $V^p$, $U^p$ and $Y^s$ spaces which were first used in the critical regularity well-posedness theory of dispersive equations on periodic domains \cite{Herr_Tartaru_Tzvetkov,Hadac_Herr_Koch,Koch_Tataru}. More recently, the same spaces have been used to prove global well-posedness for the mass critical NLS on $\T$ \cite{Schippa_mass_critical,McConnell_Mass_critical_d_1} and $\TT$ \cite{Herr_Kwak_2024_mass_critical} in the sub-critical setting.

Throughout the remainder of this section, let $p \geq 1$, let $\Hi$ be a separable Hilbert space over $\C$ and define $\mathcal{Z}$ to be set of finite partitions $-\infty <t_0 <t_1 < \cdots<t_K\leq \infty$ of the real line. If $t_K=\infty$, we use the convention that $v(t_K):=0$ for all functions $v: \R \rightarrow \Hi$.
\begin{definition}
\label{Def: V^p spaces}
$V^p=V^p(\R \rightarrow \Hi)$ is defined to be the space of all right continuous functions $f : \R  \rightarrow \Hi $ which satisfy $\lim_{t \rightarrow - \infty} f(t)=0$ and
\begin{equation*}
    \norm{f}_{V^p}^p:= \sup_{\{t_k\}_{k=0}^K \in \mathcal{Z}} \sum_{k=1}^K \norm{f(t_k)-f(t_{k-1})}_{\Hi}^p < \infty.
\end{equation*}
Moreover, for $\Hi=L^2(\T_\lambda)$, we define $V^p_S:=S_\lambda(\cdot) V^p$ and endow it with norm $\norm{f}_{V^p_S}:=\norm{S_\lambda(-\cdot)f}_{V^p}. $
\end{definition}

\begin{definition}
 For $\{t_k\}_{k=0}^K \in \mathcal{Z}$ and $\{f_k\}_{k=0}^{K-1} \subset \Hi$ satisfying $\sum_{k=0}^{K-1} \norm{f_k}_{\Hi}^p =1$ and $f_0=0$, we define $f: \R \rightarrow \Hi$ to be a $U^p$-atom if
\begin{equation*}
    f(t)=\sum_{k=1}^K \chi_{[t_k,t_{k-1})}(t)f_{k-1}.
\end{equation*}
The atomic space $U^p=U^p(\R \rightarrow \Hi)$ comprises functions $f: \R \rightarrow \Hi$ of the form
\begin{equation*}
    f(t)=\sum_{j=1}^\infty \lambda_j f_j(t) \text{ for } U^p\text{-atoms } f_j, \{\lambda_j\} \in \ell^1, 
\end{equation*}
and is endowed with norm
\begin{equation*}
    \norm{f}_{U^p} := \inf\left\{ \sum_{j=1}^\infty \abs{\lambda_j} : f=\sum_{j=1}^\infty \lambda_j f_j, \,\, \lambda_j \in \C, \,\, f_j \text{ } U^p\text{-atom}\right\}.
\end{equation*}
Moreover, for $\Hi=L^2(\T_\lambda)$, we define $U^p_S:=S_\lambda(\cdot) U^p$ and endow it with norm $\norm{f}_{U^p_S}:=\norm{S_\lambda(-\cdot)f}_{U^p}$.
\end{definition}
The spaces $U^p$ and $V^p$ are related by the following continuous embeddings which follow from \cite[Section 2]{Herr_Tartaru_Tzvetkov}.
\begin{lemma}
\label{lemma: U^p/V^p embeddings}
For $1 \leq p < q <\infty$, we have the following continuous embeddings
\begin{equation*}
    U^p \hookrightarrow V^p \hookrightarrow U^q.
\end{equation*}
\end{lemma}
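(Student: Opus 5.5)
The plan is to prove the two embeddings separately, following the standard argument of Hadac--Herr--Koch and Herr--Tataru--Tzvetkov. The first embedding is proved at the level of atoms. The second is proved by a dyadic step-function approximation of a $V^p$ function.

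\emph{Step 1: $U^p \hookrightarrow V^p$.} Let $f=\sum_{k=1}^K \chi_{[t_{k-1},t_k)} f_{k-1}$ be a $U^p$-atom with $f_0=0$ and $\sum\norm{f_k}_{\Hi}^p=1$.
\begin{itemize}
\item $f$ is right continuous and vanishes near $-\infty$.
\item Fix any partition $\{s_j\}\in\mathcal{Z}$. Each increment $f(s_j)-f(s_{j-1})$ is either zero or of the form $f_m-f_n$, where $f_m, f_n$ are the values of $f$ at $s_j$ and $s_{j-1}$.
\item For such an increment, $\norm{f_m-f_n}^p\le 2^{p-1}(\norm{f_m}^p+\norm{f_n}^p)$.
\item A nonzero increment requires a jump of $f$ in $(s_{j-1},s_j]$. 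Because $f$ is piecewise constant, consecutive nonzero increments share endpoint values, and each value $f_k$ appears as an endpoint of at most two nonzero increments.
\item Summing over $j$ gives $\norm{f}_{V^p}^p\le 2^{p}$.
\end{itemize}
For a general $f=\sum_j\lambda_j f_j$, apply the triangle inequality in the normed space $V^p$, using that $V^p$ is complete. Taking the infimum over representations gives $\norm{f}_{V^p}\le 2\norm{f}_{U^p}$.

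\emph{Step 2: $V^p\hookrightarrow U^q$ for $q>p$.} Normalize $\norm{v}_{V^p}=1$. For each $n\ge 0$ build a partition greedily.
\begin{itemize}
\item Set $t^n_0=-\infty$ (using $v(-\infty)=0$).
\item Set $t^n_{j}=\inf\{t>t^n_{j-1}:\norm{v(t)-v(t^n_{j-1})}_{\Hi}^p>2^{-n}\}$.
\item By right continuity the infimum is attained in the sense that $\norm{v(t^n_j)-v(t^n_{j-1})}^p\ge 2^{-n}$.
\item These increments are disjoint contributions to the $p$-variation, so the number of points satisfies $K_n\le 2^n$.
\end{itemize}
Define $v_n(t)=v(t^n_j)$ on $[t^n_j,t^n_{j+1})$. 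By construction $\sup_t\norm{v(t)-v_n(t)}\le 2^{-n/p}$. Replacing the $n$-th partition by the union of the first $n$ partitions keeps the same estimates with $K_n\lesssim 2^{n}$.

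Write $v=\sum_{n\ge 1}(v_n-v_{n-1})$, where $v_0$ is handled trivially. Each summand is a step function that vanishes near $-\infty$, has at most $\lesssim 2^n$ steps, and satisfies $\sup\le 2\cdot 2^{-n/p}$. A step function $w$ with $K$ steps and values bounded by $A$ is a multiple of a $U^q$-atom with coefficient $(\sum\norm{w_k}^q)^{1/q}\le K^{1/q}A$. Hence $\norm{v_n-v_{n-1}}_{U^q}\lesssim 2^{n/q-n/p}$, which is summable precisely because $q>p$. The series therefore converges in $U^q$, which is complete.

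It remains to identify the $U^q$-limit with $v$. Convergence in $U^q$ implies uniform convergence, since $\sup\norm{\cdot}_{\Hi}\le\norm{\cdot}_{U^q}$. The partial sums equal $v_n$, and $v_n\to v$ uniformly, so the limit is $v$ and $\norm{v}_{U^q}\lesssim_{p,q}1$.

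The main obstacle is the bookkeeping in Step 2:
\begin{itemize}
\item making the partitions nested, so that $v_n-v_{n-1}$ is a genuine step function on a common partition with $\lesssim 2^n$ pieces;
\item respecting the atom normalization $f_0=0$, which holds since $v_n=v(-\infty)=0$ on the first interval;
\item using right continuity to ensure the greedy times are well defined and the uniform bound holds on each half-open interval.
\end{itemize}
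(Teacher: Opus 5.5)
Your proof is correct, and it is the standard Hadac--Herr--Koch argument: you bound $\|a\|_{V^p}\le 2$ for atoms, and you use greedy partitions at threshold $2^{-n}$, with at most $2^n$ points, to get $\|v_n-v_{n-1}\|_{U^q}\lesssim 2^{n(1/q-1/p)}$. The paper gives no proof of this lemma and simply cites \cite[Section 2]{Herr_Tartaru_Tzvetkov}, so this is exactly what it relies on; the one cosmetic slip is that refining to nested partitions doubles the uniform bound on $\sup_t\|v(t)-v_n(t)\|_{\Hi}$ (so your ``$\sup\le 2\cdot 2^{-n/p}$'' should carry a slightly larger absolute constant), which is harmless and could be avoided by using only the union of the $n$-th and $(n-1)$-st partitions.
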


We conclude by the defining the $Y^s$ spaces and stating some well-known properties. For details, see \cite[Section 2]{Herr_Tartaru_Tzvetkov}.
\begin{definition}
    For $s \in \R$, define $Y^s=Y^s(\R \times \T_\lambda \rightarrow \C)$ as the space of functions $f: \R \times \T_{\lambda} \rightarrow \C$ such that $e^{-it\abs{k}^{2\alpha}}\widehat{f(t)}(k)$ lies in $V^2(\R \rightarrow \C)$ for all $k \in \Z_\lambda$ and
\begin{equation*}
    \norm{f}_{Y^s}^2:= \int_{\Z_\lambda} \langle k \rangle^{2s} \norm{e^{-it\abs{k}^{2}}\widehat{f(t)}(k)}_{V^2}^2 (dk)_\lambda<\infty.
\end{equation*}
For a time interval $J_T \subset \R$ of length $T>0$, we define $Y_{J_T}^s$ to be the restriction of $Y^s$ to $J_T \times \T_\lambda$ and endow it with the norm
\begin{equation*}
    \norm{f}_{Y^s_{J_T}} := \inf \left\{\norm{g}_{Y^s} : f= g \text{ on } J_{T} \times \T_\lambda \right\}.
\end{equation*}
When $J_T=[0,T)$, we drop the double subscript and write $Y^s_T$ instead.
\end{definition}
\begin{lemma}
\label{lemma: Y^s properties} For any $s \in \R$, the $Y^s$ spaces satisfy the following properties:
\begin{enumerate}
    \item[(a)] The embeddings $Y^s \hookrightarrow L^\infty(\R\rightarrow H^s)$ and $Y^0 \hookrightarrow V^2_S$ are continuous;
     \item[(b)] For any $A, B$ disjoint subsets of $\Z_\lambda$, 
    \[
    \norm{P_{A\cup B} \phi }_{Y^s}^2=\norm{P_{A} \phi}_{Y^s}^2+\norm{P_{B} \phi}_{Y^s}^2;
    \]
     \item[(c)] For any  $T>0$ and any $\phi \in H^s$,
        \begin{equation*}
            \norm{\chi_{[0,T)}S_\lambda(t)\phi}_{Y^s} \sim \norm{\phi}_{H^s};
        \end{equation*}
     \item[(d)] For any $T>0$ and any $f \in L^1 \left([0,T) \rightarrow H^s\right)$,
        \begin{equation*}
            \norm{\chi_{[0,T)} \int_0^t S_\lambda(t-t') f(t') \,\,dt'}_{Y^s} \lesssim \sup_{\norm{g}_{Y^{-s}} \leq 1} \abs{\int_0^T \int_{\T_\lambda} f\overline{g} dx dt}.
        \end{equation*}
\end{enumerate}
We stress that the constants in the above estimates are independent of $\lambda$, $s$ and $T$.
\end{lemma}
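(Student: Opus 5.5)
All four properties reduce to scalar statements, one for each frequency $k\in\Z_\lambda$, because the $Y^s$ norm is a weighted $L^2((dk)_\lambda)$-integral of $V^2$ norms. Throughout I write $a_k(t):=e^{-it\abs{k}^2}\widehat{f(t)}(k)$ for the profile of $f$ at frequency $k$. The plan is to prove each property for the scalar profiles $a_k$ and then integrate in $k$. All constants will come from scalar $V^2$/$U^2$ facts, so they are automatically independent of $\lambda$, $s$ and $T$.

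For (a), first note that a function in $V^2$ satisfies $\lim_{t\to-\infty}a_k(t)=0$. Taking the two-point partition $t_0<t$ with $t_0\to-\infty$ gives $\abs{a_k(t)}\le \norm{a_k}_{V^2}$. Since $\abs{\widehat{f(t)}(k)}=\abs{a_k(t)}$, this yields $\norm{f(t)}_{H^s}^2\le \int\langle k\rangle^{2s}\norm{a_k}_{V^2}^2(dk)_\lambda=\norm{f}_{Y^s}^2$ for every $t$.

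For $Y^0\hookrightarrow V^2_S$, fix a partition $\{t_j\}$. By Plancherel,
\[
\sum_j\norm{S_\lambda(-t_j)f(t_j)-S_\lambda(-t_{j-1})f(t_{j-1})}_2^2=\int\sum_j\abs{a_k(t_j)-a_k(t_{j-1})}^2(dk)_\lambda .
\]
The inner sum is at most $\norm{a_k}_{V^2}^2$. Taking the supremum over partitions gives the claim, and the right-continuity and decay at $-\infty$ of $S_\lambda(-\cdot)f$ are inherited from the profiles.

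Property (b) is immediate. The profiles of $P_A\phi$ are $\chi_A(k)a_k$, so the defining integral splits over the disjoint sets $A$ and $B$.

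For (c), the profile of $\chi_{[0,T)}S_\lambda(t)\phi$ is $\chi_{[0,T)}(t)\hat\phi(k)$. This is a right-continuous step function taking the values $0$, $\hat\phi(k)$, $0$. Any partition sees at most the two jumps, each of size $\abs{\hat\phi(k)}$, and the partition $-1<0<T$ sees both. Hence $\abs{\hat\phi(k)}^2\le\norm{\cdot}_{V^2}^2\le 2\abs{\hat\phi(k)}^2$, and integrating against $\langle k\rangle^{2s}$ gives the equivalence.

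For (d), write $F(t)=\chi_{[0,T)}\int_0^tS_\lambda(t-t')f(t')\,dt'$. Its profile is $F_k(t)=\chi_{[0,T)}(t)\int_0^t h_k(t')\,dt'$ with $h_k(t')=e^{-it'\abs{k}^2}\widehat{f(t')}(k)$. The main step is the scalar duality bound
\[
\norm{F_k}_{V^2}\lesssim \sup_{\norm{w}_{V^2}\le 1}\Bigl|\int_0^T h_k\overline{w}\,dt\Bigr| .
\]
I would prove it via $\norm{F_k}_{V^2}\lesssim\norm{F_k}_{U^2}$, which is the embedding $U^p\hookrightarrow V^p$ of Lemma \ref{lemma: U^p/V^p embeddings} with $p=2$, together with the $U^2$–$V^2$ duality of \cite[Prop.~2.10]{Herr_Tartaru_Tzvetkov}, applied in the scalar case $\Hi=\C$. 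This is the step I expect to be the real content; everything else is bookkeeping.

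Next I glue the frequencies together. Given $\epsilon>0$, choose for each $k$ a $w_k$ with $\norm{w_k}_{V^2}\le1$ that nearly attains the supremum. Modulating by a unimodular constant, I may assume $\int_0^T h_k\overline{w_k}\,dt\ge 0$; this is needed so that the contributions of different $k$ do not cancel. Set $c_k=\langle k\rangle^{2s}\norm{F_k}_{V^2}/\norm{F}_{Y^s}$, and define $g$ by its profiles $c_kw_k$. Then
\[
\norm{g}_{Y^{-s}}^2\le\int\langle k\rangle^{-2s}c_k^2(dk)_\lambda=1 .
\]
By Parseval in $x$, the modulations $e^{\pm it\abs{k}^2}$ cancel, and
\[
\int_0^T\!\!\int f\overline{g}=\int c_k\int_0^T h_k\overline{w_k}\,dt\,(dk)_\lambda\gtrsim\int c_k\norm{F_k}_{V^2}(dk)_\lambda-\epsilon=\norm{F}_{Y^s}-\epsilon .
\]
Letting $\epsilon\to0$ gives (d).

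One technical point remains: the choice $k\mapsto w_k$ must be measurable, which is automatic because $\Z_\lambda$ is countable. Also, only the values of $w_k$ on $[0,T)$ enter the pairing, so no truncation of $g$ is required.
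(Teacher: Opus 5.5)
Your proposal is correct and is essentially the standard argument of \cite[Section~2]{Herr_Tartaru_Tzvetkov}; the paper itself gives no proof and simply defers to that source, where (a)--(c) are reduced to scalar facts about the frequency profiles and (d) comes from scalar $U^2$--$V^2$ duality glued over the frequencies, exactly as you do. The one detail to tidy is in (d): $F_k$ jumps at $t=T$, so it is not absolutely continuous, and the duality formula should instead be applied to $G_k(t):=\int_0^{\min(t,T)}h_k\,dt'$ (with $G_k=0$ for $t<0$), whose derivative is $\chi_{[0,T)}h_k$, combined with $\|F_k\|_{V^2}=\|\chi_{[0,T)}G_k\|_{V^2}\lesssim\|G_k\|_{V^2}$; you should also note that $\|F\|_{Y^s}\lesssim\int_0^T\|f(t)\|_{H^s}\,dt<\infty$ before you normalize by it.
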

\subsection{Strichartz Estimates}
\noindent We next turn to transferring the Strichartz estimate proved in \cref{sec:3} to $Y^s$ spaces. Throughout this section, all frequency scales $M$ appearing in the projections $P_M$ are assumed to satisfy $M \geq 1$, unless stated otherwise. We also assume throughout that $\lambda \geq 1$.

In view of the scaling symmetry of the problem, it is natural to work on the rescaled torus $\mathbb{T}_\lambda$. To relate this setting to the standard torus $\mathbb{T}$, we begin by recalling a rescaling lemma from \cite[Lemma~3.1]{fractional_global_wellposedness_T}, which allows us to transfer estimates between the two settings.
\begin{lemma}
\label{Lemma: Conversion_between_T_and_T_lambda}
    Let $T>0$, $n\in \N$ and $N_1, \dots N_n >0$. Then the following two statements are equivalent:
    \begin{enumerate}
        \item[(i)] There exists a constant $C_1(T, N_1, \dots , N_n)>0$ such that for all $f_j \in L^2(\T)$ satisfying $\supp \hat{f}_j \subset \left\{k \in \Z:  \frac{N_j}{2} \leq \abs{k} < N_j \right\}$ for $1\leq j \leq n$, we have the following estimate    
    \begin{equation*}
        \int_{[0,T] \times \T} \prod_{j=1}^n \abs{S(t)f_j}^2 \leq C_1(T, N_1, \dots , N_n) \prod_{j=1}^n \norm{f_j}_{2}^2.
    \end{equation*}
    \item[(ii)] There exists a constant $C_\lambda(T, N_1, \dots , N_n)>0$ such that for all $\phi_j \in L^2(\T_\lambda)$ satisfying $\supp \hat{\phi}_j \subset \left\{k \in \Z_\lambda: \frac{N_j}{2} \leq \abs{k} < N_j \right\}$ for $1\leq j \leq n$, we have the following estimate
     \begin{equation*}
        \int_{[0,T] \times \T_\lambda} \prod_{j=1}^n \abs{S_\lambda(t)\phi_j}^2 \leq C_\lambda(T, N_1, \dots , N_n)\prod_{j=1}^n \norm{\phi_j}_{2}^2.
\end{equation*}
\end{enumerate}
Moreover, the constants are related by
\begin{equation*}
    C_\lambda(T, N_1, \dots , N_n)=\lambda^{3-n}C_1(\lambda^{-2}T, \lambda N_1, \dots , \lambda N_n).
\end{equation*}
\end{lemma}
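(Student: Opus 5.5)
The plan is to prove the equivalence by an explicit change of variables relating linear Schrödinger evolutions on $\T$ and on $\T_\lambda$. For $\phi \in L^2(\T_\lambda)$, define $f(x) := \phi(\lambda x)$ on $\T$; equivalently, work with the scaling $u^\lambda$ recorded in the introduction. On the Fourier side, $\hat f(k)$ for $k\in\Z$ corresponds to $\hat\phi(k/\lambda)$ with $k/\lambda \in \Z_\lambda$, up to the normalisation constant fixed by the conventions for $(dk)_\lambda$. Hence $\supp \hat\phi \subset \{N_j/2 \le |k| < N_j\}$ in $\Z_\lambda$ if and only if $\supp\hat f \subset \{\lambda N_j/2 \le |k| < \lambda N_j\}$ in $\Z$. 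This explains the frequency arguments $\lambda N_j$ in the constant.

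The first step is to verify the intertwining identity
\[
S_\lambda(t)\phi(x) = \big(S(\lambda^{-2}t) f\big)(x/\lambda).
\]
Writing both sides through the Fourier inversion formulas, the phase $e^{i(kx+|k|^2 t)}$ with $k\in\Z_\lambda$ becomes $e^{i(k' x/\lambda + |k'|^2 t/\lambda^2)}$ with $k' = \lambda k \in \Z$. Substituting $x = \lambda y$ and $t = \lambda^2 s$ then gives
\[
\int_{[0,T]\times\T_\lambda}\prod_{j=1}^n |S_\lambda(t)\phi_j|^2\,dx\,dt \;=\; \lambda^{3}\int_{[0,\lambda^{-2}T]\times\T}\prod_{j=1}^n |S(s)f_j|^2\,dy\,ds ,
\]
where the factor $\lambda^3$ comes from the Jacobians $dx = \lambda\,dy$ and $dt = \lambda^2\,ds$.

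Next compare the $L^2$ norms: $\|\phi_j\|_{L^2(\T_\lambda)}^2 = \lambda \|f_j\|_{L^2(\T)}^2$, so $\prod_j \|f_j\|_2^2 = \lambda^{-n}\prod_j\|\phi_j\|_2^2$. Applying (i) with time $\lambda^{-2}T$ and frequencies $\lambda N_j$ yields (ii) with constant $\lambda^{3-n}C_1(\lambda^{-2}T,\lambda N_1,\dots,\lambda N_n)$. Because the map $\phi\mapsto f$ is a bijection between the two admissible classes, the same computation read backwards gives (i) from (ii), so the two statements are equivalent with the stated relation between the constants.

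I expect no real obstacle; the only care needed is to track the normalisation constants in the Fourier transform on $\T_\lambda$ consistently. I would sidestep those entirely by defining $f$ directly in physical space and computing its norm there, rather than passing through Fourier coefficients.
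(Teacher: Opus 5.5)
Your rescaling argument is correct: the intertwining identity $S_\lambda(t)\phi(x)=\bigl(S(\lambda^{-2}t)f\bigr)(x/\lambda)$, the Jacobian $\lambda^{3}$ and the norm factor $\lambda^{-n}$ give exactly $C_\lambda(T,N_1,\dots,N_n)=\lambda^{3-n}C_1(\lambda^{-2}T,\lambda N_1,\dots,\lambda N_n)$. You also correctly pair (ii) at parameters $(T,N_j)$ with (i) at $(\lambda^{-2}T,\lambda N_j)$, which is the intended reading of the equivalence. The paper gives no proof of its own and simply cites Lemma 3.1 of \cite{fractional_global_wellposedness_T}; your change of variables is the standard argument behind that citation.
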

The following lemma follows immediately from Theorem~\ref{theorem:L^6 Strichartz}.
\begin{lemma}
\label{lemma_Strichartz_Linear_unrescaled}
  There exist a constant $C>1$ such that for any $\phi \in L^2(\T)$ and any $N \geq 1$, the following estimate holds
   \[
\norm{S(t)P_M\phi}_{L_{t,x}^6([0,\left(\log N\right)^{-C}] \times \T)} \leq C \left(1+\frac{\log M}{\log N}\right)^\frac{C}{6}  \norm{\phi}_2.
\]
\end{lemma}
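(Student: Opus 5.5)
The plan is to reduce to Theorem~\ref{theorem:L^6 Strichartz}, applied at the frequency scale $M$ rather than $N$, and then cover the interval $[0,(\log N)^{-C}]$ by shorter intervals of length $(\log M)^{-C}$. Set $\psi := P_M\phi$. By the definition of $P_M$, $\supp\hat\psi \subset [-2M,2M]$, and $\norm{\psi}_2\le\norm{\phi}_2$. We split into two cases according to the relative size of $M$ and $N$.

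\emph{Case $2M\le N$.} Then $\supp\hat\psi\subset[-N,N]$. Applying Theorem~\ref{theorem:L^6 Strichartz} directly with frequency scale $N$ gives
\[
\norm{S(t)\psi}_{L^6_{t,x}([0,(\log N)^{-C}]\times\T)}\le C\norm{\psi}_2 .
\]
This is the claimed bound, since $\bigl(1+\tfrac{\log M}{\log N}\bigr)^{C/6}\ge1$.

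\emph{Case $2M>N$.} Let $N' := 2M$, so that $\supp\hat\psi\subset[-N',N']$. Theorem~\ref{theorem:L^6 Strichartz} gives the bound on intervals of length $(\log N')^{-C}$, and it also holds on every translate $[t_0,t_0+(\log N')^{-C}]$. To see this, replace $\phi$ by $S(t_0)\psi$, which has the same Fourier support and the same $L^2$ norm because $S(t)$ is unitary. Cover $[0,(\log N)^{-C}]$ by
\[
J \le \Bigl(\frac{\log N'}{\log N}\Bigr)^{C}+1
\]
such intervals. Raising to the sixth power and summing the local bounds over these intervals gives
\[
\norm{S(t)\psi}^6_{L^6([0,(\log N)^{-C}]\times\T)}\le J\,C^6\norm{\psi}_2^6 .
\]

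It remains to bound $J$, using $\log N'=\log 2+\log M\lesssim 1+\log M$. One minor point is how $\log$ behaves near $N=1$: there $\log N=0$ and the interval $[0,(\log N)^{-C}]$ is infinite, which is clearly unintended. We therefore interpret $\log$ as $\log(2+\cdot)$, or restrict to $N\ge2$, in both the theorem and the lemma. With that convention, $\log N'/\log N\lesssim 1+\log M/\log N$, so
\[
J\lesssim \Bigl(1+\frac{\log M}{\log N}\Bigr)^{C}.
\]
Taking sixth roots gives the factor $\bigl(1+\tfrac{\log M}{\log N}\bigr)^{C/6}$.

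Finally, the constant. The covering argument produces an absolute multiplicative constant in front of the bound. We absorb it by enlarging $C$: the step from the constant $C$ of Theorem~\ref{theorem:L^6 Strichartz} to a larger $C'$ is harmless, because shrinking the time interval only decreases the left-hand side, and increasing the exponent only increases the factor on the right. There is no real obstacle here; the only care needed is this bookkeeping of constants and the $\log$ convention.
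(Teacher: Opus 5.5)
Your proposal is correct and is exactly the argument the paper leaves implicit when it says the lemma follows immediately from Theorem~\ref{theorem:L^6 Strichartz}. You apply the theorem at frequency scale $\max(N,2M)$ on time-translates, using unitarity of $S(t_0)$, and cover $[0,(\log N)^{-C}]$ by $\lesssim (1+\log M/\log N)^{C}$ such intervals, summing sixth powers. Your $\log(2+\cdot)$ convention is also the right choice for the final step of enlarging $C$: with the natural logarithm and $2\le N<e$, a larger exponent would lengthen rather than shorten $[0,(\log N)^{-C}]$, though this is harmless since that range of $N$ only involves intervals of bounded length.
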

Combining Lemma \ref{lemma_Strichartz_Linear_unrescaled} with Lemma \ref{Lemma: Conversion_between_T_and_T_lambda} yields the following Strichartz estimates on the rescaled torus.
\begin{lemma}
    \label{lemma_Strichartz_Linear}
  There exists a constant $C>1$ such that for any $\phi \in L^2(\T_\lambda)$ and any $N \geq 1$, the following estimate holds
  \[ \norm{S_\lambda(t)P_M\phi}_{L_{t,x}^6([0,\lambda^2\left(\log \lambda N\right)^{-C}] \times \T_\lambda)} \leq C \left(1+\frac{\log M}{\log N}\right)^\frac{C}{6} \norm{\phi}_2.
\]
\end{lemma}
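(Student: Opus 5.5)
The plan is to apply Lemma~\ref{Lemma: Conversion_between_T_and_T_lambda} with $n=3$ and read the resulting factor $\lambda^{3-n}=\lambda^0=1$ as saying the $L^6_{t,x}$ constant is scale invariant. The $L^6$ norm cubed is $\int\prod_{j=1}^3|S(t)f_j|^2$ with $f_1=f_2=f_3$.

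\textbf{Step 1 (unrescaled input).} Fix $\phi$ on $\T_\lambda$ and set $f(x):=\lambda^{1/2}\phi(\lambda x)$ on $\T$. Then $\norm{f}_{L^2(\T)}=\norm{\phi}_{L^2(\T_\lambda)}$, and the Fourier support of $P_M\phi$ in $\Z_\lambda$, namely $\{\abs{k}\sim M\}$, corresponds to frequencies $\abs{k}\sim\lambda M$ in $\Z$. We apply Lemma~\ref{lemma_Strichartz_Linear_unrescaled} on $\T$ with the parameter $N$ replaced by $\lambda N$ and $M$ replaced by $\lambda M$. This gives
\[
\norm{S(t)P_{\lambda M}f}_{L^6([0,(\log\lambda N)^{-C}]\times\T)}\le C\Bigl(1+\frac{\log(\lambda M)}{\log(\lambda N)}\Bigr)^{C/6}\norm{f}_2 .
\]
In Lemma~\ref{Lemma: Conversion_between_T_and_T_lambda} notation, this is statement (i) with $T'=(\log\lambda N)^{-C}$ and $C_1$ equal to the cube of the right-hand constant.

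\textbf{Step 2 (transfer).} The relation $C_\lambda(T,N_j)=C_1(\lambda^{-2}T,\lambda N_j)$ with $T=\lambda^2(\log\lambda N)^{-C}$ gives the same constant on $[0,\lambda^2(\log\lambda N)^{-C}]\times\T_\lambda$. One can also check this directly: the change of variables $x=\lambda y$, $t=\lambda^2 s$ maps $S_\lambda(t)\phi$ to $\lambda^{-1/2}S(s)f$, and the Jacobian $\lambda^3$ exactly cancels the factor $\lambda^{-3}$ coming from $|\cdot|^6$.

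\textbf{Step 3 (the ratio factor).} It remains to replace $\frac{\log(\lambda M)}{\log(\lambda N)}$ by $\frac{\log M}{\log N}$.
\begin{itemize}
\item If $M\le N$, the first ratio is at most $1$, so the factor is at most $2^{C/6}$.
\item If $M>N\ge 2$, then $\log(\lambda M)/\log(\lambda N)\le\log M/\log N$, since $t\mapsto(a+t)/(b+t)$ is nonincreasing for $a\ge b>0$.
\end{itemize}
In both cases the factor is bounded by $2^{C/6}(1+\log M/\log N)^{C/6}$, and the $2^{C/6}$ is absorbed by enlarging $C$. Small $N$ (say $N<2$, where $\log N$ is degenerate) is handled by the same enlargement of $C$. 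The frequency shell in Lemma~\ref{Lemma: Conversion_between_T_and_T_lambda} uses $[N/2,N)$ while $P_M$ uses $(M,2M]$; this mismatch is harmless after splitting into $O(1)$ pieces and using the triangle inequality.

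\textbf{Main obstacle.} Essentially none: the argument is bookkeeping. The only real care points are the monotonicity of the logarithmic ratio in Step 3 and consistency of the normalizations of $\T_\lambda$ and $(dk)_\lambda$, both of which are already built into Lemma~\ref{Lemma: Conversion_between_T_and_T_lambda}.
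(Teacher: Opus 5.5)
Your proposal is correct and follows the paper's argument. The paper's proof consists only of combining Lemma~\ref{lemma_Strichartz_Linear_unrescaled} (at frequency scales $\lambda N,\lambda M$) with Lemma~\ref{Lemma: Conversion_between_T_and_T_lambda} for $n=3$, where $\lambda^{3-n}=1$; your explicit change of variables and the monotonicity of $t\mapsto (a+t)/(b+t)$ in Step~3 supply the details the paper leaves implicit.

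There are two small slips, neither of which affects the regime that matters. First, $\int\prod_{j=1}^3|S(t)f_j|^2$ is the sixth power (not the cube) of the $L^6$ norm; this is harmless because the transfer factor is $1$. Second, enlarging $C$ shortens the interval $[0,\lambda^2(\log\lambda N)^{-C}]$ only when $\lambda N\ge e$. For $\lambda N$ near $1$ the statement itself fails (take $\phi\equiv 1$, $M=\lambda=1$, $N\downarrow 1$), so it should be read for $\lambda N\ge e$, which is the only regime used later.
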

\noindent We conclude this section by transferring the linear Strichartz estimate to $Y^s$ spaces and by stating a trilinear estimate on $Y^s$ spaces.
\begin{lemma}
\label{lemma:Linear Y^0 Strichartz}
There exist a constant $C>1$, such that for any $u \in Y^0(\R \times \T_\lambda \rightarrow \C)$ and any $N \geq 1$, the following estimate holds
    \begin{eqnarray*}
\norm{P_Mu}_{L_{t,x}^6([0,\lambda^2\left(\log \lambda N\right)^{-C}]\times \T_\lambda)}\leq C \left(1+\frac{\log M}{\log N}\right)^\frac{C}{6}  \norm{u}_{Y^0}.
    \end{eqnarray*} 
\begin{proof}
Define $T:=\lambda^2\left(\log \lambda N\right)^{-C} $ and let $u$ be a $U^6_S$ atom    
    \begin{equation*}
        u(t)=\sum_{k=1}^K \chi_{[t_{k-1},t_k)}(t) S_\lambda(t)u_{k-1},
    \end{equation*}
where $\{u_k\}_{k=0}^{K-1} \subset L^2(\T_\lambda)$ such that $\sum_{k=0}^{K-1} \norm{u_k}_{2}^6 =1$. Then by Lemma \ref{lemma_Strichartz_Linear} followed by Hölder's inequality, we have
\begin{align*}
    \norm{\chi_{[0,T]}P_Mu}_{L^6_{t,x}}^6 &\leq  \sum_{k=1}^K \norm{\chi_{[0,T]}S_\lambda(t) P_Mu_{k-1}}_{L^6_{t,x}}^6, \\
    &\lesssim \left(1+\frac{\log M}{\log N}\right)^{C}  \sum_{k=1}^K \norm{ u_{k-1}}_{2}^6, \\
    &\lesssim \left(1+\frac{\log M}{\log N}\right)^{C}.
\end{align*}
From the above estimate, it immediately follows that for any $u \in U^6_S$
\begin{equation*}
\norm{\chi_{[0,T]}P_Mu}_{L^6_{x,t}}\lesssim \left(1+\frac{\log M}{\log N}\right)^\frac{C}{6} \norm{u}_{U_S^6}.
\end{equation*}
Therefore, for any $u \in Y^0$, we can use Lemma \ref{lemma: U^p/V^p embeddings} followed by Lemma \ref{lemma: Y^s properties} to conclude that
\begin{equation*}
  \norm{\chi_{[0,T]}P_Mu}_{L^6_{x,t}} \lesssim \left(1+\frac{\log M}{\log N}\right)^\frac{C}{6}  \norm{u}_{V^2_S} \lesssim \left(1+\frac{\log M}{\log N}\right)^\frac{C}{6} \norm{u}_{Y^0}. \qedhere
\end{equation*}
\end{proof}
\end{lemma}
\begin{lemma}
    \label{lemma: Trilinear Y^0 Strichartz}
There exists a constant $C>1$ such that for any functions $u_1, u_2 \in Y^0(\mathbb{R}\times \T_\lambda \rightarrow \C) $ and any $ N \geq 1$, the following estimate holds
    \begin{equation*}
        \norm{P_{N_1}u_1 \left(P_{N_2}u_2\right)^2}_{L_{t,x}^2([0, \lambda^2\left(\log \lambda N\right)^{-C}] \times \T_\lambda)} \leq C\left(1+ \frac{\log N_2}{\log N} \right)^\frac{C}{2}  \norm{u_1}_{Y^0}\norm{u_2}_{Y^0}^2.        \end{equation*}
\begin{proof}
If $N_1 \lesssim N_2$, the statement follows immediately from H\"older inequality and Lemma \ref{lemma:Linear Y^0 Strichartz}. On the other hand, if $N_1 \gg N_2$, we decompose $u_1= \sum_{\substack{\abs{J} \sim  N_2}} P_{J}u_1$ where the summation is taken over intervals $J \subset \{ k \in \Z_\lambda: \frac{N_1}{2}\leq \abs{k} < N_1\}$ of length $\abs{J}=N_2$. Define $T:= \lambda^2\left(\log \lambda N\right)^{-C}$ and note that 
\begin{align*}
    \norm{P_{N_1}u_1 \left(P_{N_2}u_2\right)^2}_{L_{t,x}^2([0,T] \times \T_\lambda)} &\lesssim \left(\sum_J \norm{P_J u_1 \left(P_{N_2}u_2\right)^2}_{L_{t,x}^2([0,T]\times \T_\lambda)}^2\right)^\frac{1}{2}, \\
   &\lesssim  \left(1+\frac{\log N_2}{\log N}\right)^\frac{C}{2}  \left(\sum_J \norm{P_J u_1}_{Y^0}^2 \norm{ u_2}_{Y^0}^4\right)^\frac{1}{2}, \\
   &\lesssim  \left(1+\frac{\log N_2}{\log N}\right)^\frac{C}{2}  \norm{u_1}_{Y^0}\norm{u_2}_{Y^0}^2, 
\end{align*}
where the first line follows from almost orthogonality, the second line follows from Lemma \ref{lemma:Linear Y^0 Strichartz} and last line is due to Lemma \ref{lemma: Y^s properties}.
\end{proof}
\end{lemma}
\section{Proof of Theorem \ref{theorem:main_gwp_theorem}}\label{sec:5}
The aim of this section is to prove Theorem~\ref{theorem:main_gwp_theorem} using the $I$-method. Henceforth, let $0 < s < 1$, and unless otherwise stated, all spatial norms are taken on $\T_\lambda$, i.e.\ $H^s = H^s(\T_\lambda)$, etc. Under the small-mass assumption, it suffices to restrict attention to the defocusing case, i.e. we assume $\mu = 1$ throughout this section.

To prove Theorem~\ref{theorem:main_gwp_theorem}, we follow a strategy similar to that in \cite{mass_critical_gwp_staffilani_tzirakis,Schippa_mass_critical,McConnell_Mass_critical_d_1}. Rather than working directly with the original \eqref{eq:NLS} on $\T$, we consider the $I$-system \eqref{eq:I_system} (see \cref{sec:5.1}) on $\T_\lambda$, in order to exploit the scaling symmetry of the equation. We show that solutions to \eqref{eq:I_system} can be extended to arbitrarily large times, which yields a growth bound for solutions to \eqref{eq:NLS} on the rescaled torus $\T_\lambda$. By inverting the rescaling, we obtain polynomial-in-time bounds for the $H^s(\T)$ norm of solutions to \eqref{eq:NLS}, thereby establishing global well-posedness.

Following \cite{mass_critical_gwp_staffilani_tzirakis}, our proof is divided into two steps: (i) establishing well-posedness for the $I$-system, and (ii) deriving a growth bound for the modified energy $E^1$.

\subsection{The $I$-operator} \label{sec: I-operator}
We begin by constructing the $I$-operator and the first modified energy. Define $m_1$ to be the restriction to $\Z_\lambda$ of the following smooth and monotone multiplier which satisfies
\begin{equation*}
    m_1(x):=\begin{cases}
        1 &\text{ if } \abs{x} \leq 1, \\
        \abs{x}^{-1} &\text{ if } \abs{x} > 2,
    \end{cases}
\end{equation*}
and for which, for every \(k\in\mathbb{N}_0\), there exists a constant $C_k>0$ such that
\begin{equation*}
    \sup_{x\in\mathbb{R}}
    \Bigg|
        \frac{x^k}{m_1(x)}
        \frac{d^k m_1}{dx^k}(x)
    \Bigg|
    \le C_k.
\end{equation*}
The conditions on derivatives of $m_1$ are included to guarantee that the multiplier is sufficiently smooth for subsequent analysis (see Lemma \ref{Lemma: bound on M_6}). For $N>0$ set $m_N(x):=m_1(\frac{x}{N})$ and for $\beta\geq 0$, let $I_N^\beta$ denote the Fourier multiplier $\widehat{I_N^\beta f}=m^\beta_N\hat{f}$. Additionally, note that $I_N^\beta$ is a smoothing operator of degree $\beta$, satisfying
\begin{equation*}
    \norm{u}_{H^{s_0}(\T_\lambda)} \lesssim \norm{I_N^\beta u}_{H^{s_0+\beta}(\T_\lambda)}\lesssim_\beta N^{\beta}  \norm{u}_{H^{s_0}(\T_\lambda)},
\end{equation*}
for all $s_0 \in \R$. We stress that the implicit constant in the first inequality is independent of $\beta$ provided $N \geq 2$.

We also recall an interpolation lemma \cite[Lemma 12.1]{Invariant_Lemma} which is useful for proving local well-posedness estimates.
\begin{lemma} 
\label{lemma: I-method_invariant_lemma}
    Let $\beta>0$ and $n\geq 1$. Suppose that $Z, X_1, \dots X_n$ are translation-invariant Banach spaces and $F$ is a translation invariant $n$-linear operator such that the following estimate holds
    \begin{equation}
    \label{hypothesis: N=1}
        \norm{I_1^\beta F(u_1,\dots,u_n)}_Z \lesssim \prod_{j=1}^n \norm{I_1^\beta u_j }_{X_j},
    \end{equation}
    for all $0 \leq \beta \leq \beta_0$ and $u_1, \dots u_n$. Then one has the estimate,
        \begin{equation}
        \label{conlcusion: all N}
        \norm{I_N^\beta F(u_1,\dots,u_n)}_Z \lesssim \prod_{j=1}^n \norm{I_N^\beta u_j }_{X_j},
    \end{equation}
    for all $0 \leq \beta \leq \beta_0$,  $u_1, \dots u_n$ and $N \geq 1$, with explicit constant independent of $N$.
\end{lemma}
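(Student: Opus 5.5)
The plan is to follow the standard proof of the Fourier-multiplier interpolation lemma (Colliander--Keel--Staffilani--Takaoka--Tao). The core of the argument is a reduction of the $N$-dependent estimate to the $N=1$ hypothesis \eqref{hypothesis: N=1} by a scaling/decomposition argument combined with interpolation in $\beta$.

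\textbf{Step 1: reduction to interpolating multipliers.} We first observe that $m_N(\xi)=m_1(\xi/N)$, and that for $|\xi|>2N$ one has $m_N(\xi)^\beta = N^\beta|\xi|^{-\beta}$. It is therefore natural to compare $I_N^\beta$ with the operator whose symbol is $\langle \xi/N\rangle^{-\beta}$. We write $m_N^\beta(\xi) = m_1^\beta(\xi/N)$ and note that $m_1^\beta(\xi)/\langle\xi\rangle^{-\beta}$ is a smooth symbol, bounded above and below and uniformly in $\beta\le\beta_0$, by the derivative conditions on $m_1$.

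\textbf{Step 2: complex interpolation.} Consider the analytic family of multilinear operators
\[
z \mapsto I_N^{z}F\bigl(I_N^{-z}u_1,\dots,I_N^{-z}u_n\bigr)
\]
on the strip $0\le \Re z\le\beta_0$. On the line $\Re z=0$, $I_N^{i t}$ is bounded with polynomial growth in $t$ on translation-invariant spaces. The key point on the line $\Re z=\beta_0$ is that, since $F$ and all spaces are translation invariant, we may decompose each input into frequency pieces $|\xi|\le N$ and dyadic pieces $|\xi|\sim 2^jN$. On the low pieces, $I_N$ acts as the identity. On the high pieces, $I_N^{\beta}$ equals $N^\beta$ times a fixed homogeneous multiplier, which is exactly the structure covered by the $N=1$ case after accounting for the factor $N^\beta$. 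Both sides of \eqref{conlcusion: all N} carry $n$ copies of such factors, so the powers of $N$ match.

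\textbf{Step 3: the main obstacle.} The genuine difficulty is that the hypothesis is stated only for $N=1$, with no scaling symmetry assumed for the spaces $Z,X_j$. The CKSTT trick avoids scaling entirely. One writes
\[
m_N^\beta = m_1^\beta + (m_N^\beta - m_1^\beta),
\]
or, more cleanly, uses that for $N\ge1$ the ratio $m_N^\beta/m_1^\beta$ is a symbol taking values in $[1,N^\beta]$. One then interpolates in $\beta$ between the trivial case $\beta=0$ and the hypothesis at $\beta$, handling the high-frequency and low-frequency parts of each argument separately. In each of the resulting pieces, the ratio is comparable to a constant times $\min(N,|\xi|)^\beta$, and monotonicity of $m_1$ lets us dominate the output frequency factor by the largest input frequency factor.

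Concretely, we would decompose each $u_j$ into a part at frequencies $\lesssim N$ and a part at frequencies $\gg N$. For each term, we bound $m_N^\beta(\xi_{\mathrm{out}})/\prod_j m_N^\beta(\xi_j)$ by $m_1^\beta(\xi_{\mathrm{out}})/\prod_j m_1^\beta(\xi_j)$ times a bounded multiplier, using monotonicity of $m_1$ and $|\xi_{\mathrm{out}}|\le n\max_j|\xi_j|$. We then invoke the hypothesis with $\beta$ replaced by the appropriate interpolated exponent. Since the paper merely cites this lemma from \cite{Invariant_Lemma}, we expect to verify only this frequency-comparison step carefully and otherwise quote the reference.
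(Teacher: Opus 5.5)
There is a genuine gap: neither mechanism you propose works for abstract translation-invariant spaces, and the one case that needs a separate input is never isolated.

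\textbf{Complex interpolation.} Interpolation in $\beta$ cannot transfer anything from $N=1$ to general $N$. The bound you need for $z\mapsto I_N^{z}F(I_N^{-z}u_1,\dots,I_N^{-z}u_n)$ on the line $\Re z=\beta_0$ is the conclusion itself at $\beta_0$, so all the content sits in that endpoint step. Moreover, the hypothesis is only available for real $\beta$. Finally, $I_N^{it}$ has a symbol behaving like $(N/|\xi|)^{it}$ at high frequency. Its kernel is a singular integral, not a finite measure, so it is not bounded on an arbitrary translation-invariant Banach space (think of $L^1$ or $L^\infty$).

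\textbf{Pointwise symbol comparison.} A pointwise inequality between $m_N^\beta(\xi_{\mathrm{out}})/\prod_j m_N^\beta(\xi_j)$ and $m_1^\beta(\xi_{\mathrm{out}})/\prod_j m_1^\beta(\xi_j)$ gives no norm inequality for an abstract $F$. Translation invariance enters only through one fact: convolution with $K\in L^1$ has norm at most $\norm{K}_{L^1}$ on $Z$ and on each $X_j$ (average the translates). So every comparison must be made separately on the output and on each input, by one-variable multipliers with $N$-uniform $L^1$ kernels. Relations such as $|\xi_{\mathrm{out}}|\le n\max_j|\xi_j|$ are unusable.

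\textbf{Dyadic pieces and powers of $N$.} Your dyadic pieces $|\xi|\sim 2^jN$ cannot be resummed in a general Banach space. The claim that both sides carry $n$ matching powers of $N$ is also false: the output carries at most one factor $N^\beta$, while $k$ high inputs carry $N^{k\beta}$.

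\textbf{The elementary argument.} The argument behind the cited lemma exploits exactly this mismatch. Split each $u_j$ smoothly into two pieces: a low part with $|\xi|\lesssim N$ and a high part supported in $|\xi|\ge 2N$. The cutoffs commute with $I_N^\beta$ and have $O(1)$ kernels. Then expand by multilinearity.

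\emph{All inputs low.} $I_N^\beta$ has kernel $N K_\beta(N\,\cdot)$, where $K_\beta$ is the inverse Fourier transform of $m_1^\beta$, so its $L^1$ norm is independent of $N$. The inverse multiplier $m_N^{-\beta}$, cut off to $|\xi|\lesssim N$, also has an $O(1)$ kernel. Combining these with the $\beta=0$ case of the hypothesis gives
\[
\norm{I_N^\beta F(u_1,\dots,u_n)}_Z\lesssim\norm{F(u_1,\dots,u_n)}_Z\lesssim\prod_{j=1}^n\norm{u_j}_{X_j}\lesssim\prod_{j=1}^n\norm{I_N^\beta u_j}_{X_j}.
\]
Here the hypothesis at $\beta$ cannot replace $\beta=0$. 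Relative to $I_1^\beta$, the output factor $m_N^\beta/m_1^\beta$ can be of size $N^\beta$, while the low-input factors $m_1^\beta/m_N^\beta$ are only $O(1)$.

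\emph{At least one high input ($k\ge1$).} On the high inputs, $I_N^\beta u_j=N^\beta I_1^\beta u_j$. On the low inputs, $\norm{I_1^\beta u_j}_{X_j}\lesssim\norm{I_N^\beta u_j}_{X_j}$. On the output, $\norm{I_N^\beta G}_Z\lesssim N^\beta\norm{I_1^\beta G}_Z$, because $N^{-\beta}m_N^\beta/m_1^\beta$ has an $O(1)$ $L^1$ kernel. The hypothesis at $\beta$ then gives
\[
N^{\beta-k\beta}\prod_j\norm{I_N^\beta u_j}_{X_j}\le\prod_j\norm{I_N^\beta u_j}_{X_j}.
\]
No interpolation and no coupling of output and input frequencies is needed.
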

\begin{remark}
We will use Lemma \ref{lemma: I-method_invariant_lemma} with $\beta=1-s$ where $0<s<1$ and with the translation-invariant Banach spaces $Z=X_1=  \dots=X_n=H^1(\T_\lambda)$ or $Z=X_1=  \dots=X_n=Y^1(\R \times \T_\lambda \rightarrow \C)$. For details, see Lemma \ref{lemma:multilinear_estimates_lwp}. Under these assumptions, note that $\norm{I_1^{1-s} f}_{H^1} \sim \norm{f}_{H^1}$ where the implicit constants are independent of $s$. Moreover, if the implicit constant in the hypothesis \eqref{hypothesis: N=1} is independent of $s$, then the constant in the conclusion \eqref{conlcusion: all N} will also not depend on $s$.
\end{remark}
For the remainder of this paper, we define $m:=m_N^{1-s}$ and $I=I_N^{1-s}$ for some $N \gg 1$ to be specified later. The first modified energy associated with the $I$-operator is defined by
\begin{equation*}
    E^1\left(u(t)\right):= E\left(Iu(t)\right)= \frac{1}{2}\int_{\T_\lambda} \abs{\partial_xIu(x,t)}^2 \,\, dx + \frac{1}{6} \int_{\T_\lambda} \abs{Iu(x,t)}^{6}\,\, dx.
\end{equation*}
\subsection{Local well-posedness for the $I$-system}\label{sec:5.1}
The first step in the proof of Theorem \ref{theorem:main_gwp_theorem} is establishing a bound for $\norm{Iu}_{Y^1_T}$ for some long enough $T>0$. To do this we apply the $I$-operator to \eqref{eq:NLS} and use a fixed point argument on the following initial value problem, 
\begin{equation}
    \label{eq:I_system}
    \begin{cases}
       i \partial_t Iu+\Delta Iu=I\left(\left\lvert u \right \rvert^4 u\right), \\
       Iu(x,0)=Iu_0(x) \in H^{1}.
    \end{cases}
    \tag{I-NLS}
\end{equation}
In particular, we prove the following result.
\begin{lemma}
\label{lemma: lwp_ILNS}
There exist constants $\delta>0$ and $c>1$ such that the following holds: for any data $u_0$ satisfying $\norm{I u_0}_{H^1} \leq \delta$ and any $s>0$, there exists a constant $N_0 = N_0(s,c,\delta) > 0$ such that for all $N > N_0$, the solution $u$ to \eqref{eq:NLS} satisfies
\[
\norm{Iu}_{Y^1_T} \leq 2\delta,
\]
where $T := \lambda^{2}\log(\lambda N)^{-c}.$
\end{lemma}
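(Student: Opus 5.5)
We run a contraction-mapping argument for $v:=Iu$ in the ball $\{v : \norm{v}_{Y^1_T}\le 2\delta\}$. Writing the Duhamel formulation of \eqref{eq:I_system}, we set
\[
\Phi(v)(t):=\chi_{[0,T)}(t)\,S_\lambda(t)Iu_0 - i\,\chi_{[0,T)}(t)\int_0^t S_\lambda(t-t')\,I\bigl(\abs{I^{-1}v}^4 I^{-1}v\bigr)(t')\,dt'.
\]
By Lemma \ref{lemma: Y^s properties}(c), the linear part has $Y^1_T$ norm at most $C_0\norm{Iu_0}_{H^1}\le C_0\delta$. By Lemma \ref{lemma: Y^s properties}(d), the Duhamel term is controlled by the duality pairing
\[
\sup_{\norm{g}_{Y^{-1}}\le 1}\Bigl|\int_0^T\!\!\int_{\T_\lambda} I\bigl(u_1\bar u_2u_3\bar u_4u_5\bigr)\,\bar g\,dx\,dt\Bigr|.
\]
The key multilinear estimate to establish is therefore
\begin{equation*}
\Bigl|\int_0^T\!\!\int_{\T_\lambda}\partial_x I\bigl(u_1\bar u_2u_3\bar u_4u_5\bigr)\bar g\Bigr|\le C_1\prod_{j=1}^5\norm{Iu_j}_{Y^1_T}\,\norm{g}_{Y^0},
\end{equation*}
together with the analogous $L^2$-level bound. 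By Lemma \ref{lemma: I-method_invariant_lemma} and the following remark, it suffices to prove this for $N=1$, i.e.\ essentially with $I$ replaced by the identity at regularity $H^1$ (equivalently in $Y^1$). The constant must be uniform in $s$, which is why the dyadic decomposition in this lemma uses $k_0=1/s$: it groups frequencies into blocks of width $2^{1/s}$, so that the multiplier $m$ varies by only a bounded factor within a block.

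To prove the $N=1$ estimate, decompose every factor dyadically, $u_j=\sum P_{M_j}u_j$ and $g=\sum P_{M_0}g$, and order $M_1\ge M_2\ge\dots$. The derivative falls on the highest frequency, which is comparable to $\max(M_0,M_1)$. We then apply H\"older to split the integrand into two trilinear pieces of the form $P_{M_0}g\,P_{M_a}u\,P_{M_b}u$ and $P_{M_c}u\,P_{M_d}u\,P_{M_e}u$, each estimated in $L^2_{t,x}$ by Lemma \ref{lemma: Trilinear Y^0 Strichartz} (and by Lemma \ref{lemma:Linear Y^0 Strichartz} for the symmetric $L^6$ pieces) on the interval $[0,\lambda^2(\log\lambda N)^{-C}]$. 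The derivative weight $M_{\max}$ is absorbed by $\norm{P_{M_1}u_1}_{Y^1}\sim M_1\norm{P_{M_1}u_1}_{Y^0}$. Since the high-high pairing forces $M_0\lesssim M_1$, summing over $M_0$ and $M_1$ uses orthogonality (Lemma \ref{lemma: Y^s properties}(b)) plus Cauchy--Schwarz. Each lower frequency $M_j$ carries a factor $\langle M_j\rangle^{-1}$ from the $Y^1$ norm, which beats the Strichartz loss $\bigl(1+\frac{\log M_j}{\log N}\bigr)^{C/2}$ and makes the lower sums converge.

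Thus $\norm{\Phi(v)}_{Y^1_T}\le C_0\delta+C_1(2\delta)^5\le 2\delta$ once $\delta$ is small; difference estimates are identical by multilinearity, and we take $c:=C$. The main obstacle is the logarithmic loss $\bigl(1+\frac{\log M}{\log N}\bigr)^{C}$ at the top frequencies $M_1,M_0\gg N$, where there is no decaying factor. My first attempt is to exploit that for $M\ge N$ the $I$-multiplier gives $\norm{P_Mu}_{Y^1}\sim (M/N)^{s}\norm{P_M Iu}_{Y^1}$, so a fraction $(N/M)^{s}$ of decay is available to absorb the polylog loss; this forces $N>N_0(s,c,\delta)$. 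If instead the loss cannot be absorbed, I would pass to the $N=1$ formulation and use the $k_0=1/s$ blocks so that $\log M_j/\log N$ is bounded by the number of blocks, again beaten by the factor $M_j^{-s}$.
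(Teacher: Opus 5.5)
Your skeleton (Duhamel contraction, duality via Lemma \ref{lemma: Y^s properties}(d), reduction through Lemma \ref{lemma: I-method_invariant_lemma}, dyadic decomposition, H\"older into $L^6$ or trilinear $L^2$ pieces) matches the paper's. However, two steps fail as written.

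\textbf{The $N=1$ reduction is misidentified.} $I_1^{1-s}$ is not the identity but a smoothing operator of order $1-s$, so $\norm{I_1^{1-s}f}_{Y^1}\sim\norm{f}_{Y^s}$. Lemma \ref{lemma: I-method_invariant_lemma} therefore requires the quintic Duhamel bound in $Y^{1-\beta}$ for all $0\le\beta\le 1-s$, in particular in $Y^s$. This is exactly what the proof of Lemma \ref{lemma:multilinear_estimates_lwp} establishes; the remark after Lemma \ref{lemma: I-method_invariant_lemma} is misleading on this point. Your bookkeeping, in which every lower frequency carries $\langle M_j\rangle^{-1}$, only yields the case $\beta=0$, which says nothing about $I$. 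In $Y^s$ the lower frequencies give only $M_j^{-s}$. This is the actual reason for the $2^{1/s}$-adic blocks: they make $\sum_L L^{-2s}$ and $\sum_{M\lesssim L_1}(M/L_1)^{s}$ bounded uniformly in $s$, so that $\delta$ can be taken independent of $s$. It has nothing to do with $m$ being nearly constant on a block; on such a block $m_1^{1-s}$ varies by a factor $2^{(1-s)/s}$.

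\textbf{Neither of your fixes removes the top-frequency loss.} The obstacle you flag is real. When the output frequency $M_0$ and the top input $M_1$ are comparable, the weights $\langle M_0\rangle m(M_0)$ and $\langle M_1\rangle m(M_1)$ cancel. So there is no factor $(N/M_1)^s$ (nor, after the reduction, $M_1^{-s}$) at the top, and if also $M_2\lesssim N$, no other factor decays in $M_1$. A weight $(\log M_1/\log N)^{C}$ then destroys the Cauchy--Schwarz summation over $M_0\sim M_1$. Also, for $M\gg N$ the correct relation is $\norm{P_Mu}_{Y^1}\sim(M/N)^{1-s}\norm{P_MIu}_{Y^1}$, not $(M/N)^{s}$.

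The paper never pays a top-frequency loss that is not compensated by a second high input. For $M_1\gg N$ it splits into two cases.
\begin{enumerate}
\item $M_0\lesssim M_2$, forcing $M_1\sim M_2$. It uses six $L^6$ norms via Lemma \ref{lemma:Linear Y^0 Strichartz}; the loss, a power of $\log M_2/\log N$, is absorbed by the decay $M_2^{-s}$ of the \emph{second} input. This produces the factor $N^{-s}/s$ and hence $N_0(s)$.
\item $M_0\gg M_2$, forcing $M_0\sim M_1$. It groups $P_{M_1}u_1$ with $u_2,u_3$ and $P_{M_0}g$ with $u_4,u_5$, and applies Lemma \ref{lemma: Trilinear Y^0 Strichartz}.
\end{enumerate}
The essential point in the second case is that the loss in that lemma, $(1+\log N_2/\log N)^{C/2}$, depends only on the \emph{lower} frequency. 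This is because the high factor is cut into intervals of length $N_2$, each of which obeys the $L^6$ bound with an $N_2$-dependent loss by Galilean invariance. Hence the two top frequencies carry no loss and are summed by orthogonality and Cauchy--Schwarz. You cite the trilinear lemma, but without this case split and without exploiting where its loss sits, so the obstacle is left unresolved in your argument.
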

\noindent By standard iterative methods and the properties of $Y^s$ spaces, Lemma \ref{lemma: lwp_ILNS} follows immediately from the following multilinear estimate.
\begin{lemma}
\label{lemma:multilinear_estimates_lwp}
There exits constants $c>1$ and $C>1$ with the following property: for every $0<s \leq 1$ there exists a constant $N_0=N_0(s,c,C)>1$ such that for any $N>N_0$, the following estimate holds
\begin{equation*}
    \norm{\chi_{[0,T)}\int_0^t S_\lambda(t-t')I(u_1u_2u_3u_4u_5) dt'}_{Y^1} \leq C \prod_{j=1}^5 \norm{Iu_j}_{Y^1},
\end{equation*}
where $T:=\lambda^{2}\log(\lambda N)^{-c}$. Moreover, in the above estimate, each $u_j$ may be replaced by its complex conjugate $\overline{u}_j$.
\begin{proof}
First note that by Lemma \ref{lemma: I-method_invariant_lemma}, it suffices to show that
\begin{equation*}
    \norm{\chi_{[0,T)}\int_0^t S_\lambda(t-t')u_1u_2u_3u_4u_5 dt'}_{Y^s} \lesssim \prod_{j=1}^5 \norm{u_j}_{Y^s}.
\end{equation*}
By Lemma \ref{lemma: Y^s properties}, we have that
\begin{equation*}
     \norm{\chi_{[0,T)}\int_0^t S_\lambda(t-t')u_1u_2u_3u_4u_5 dt'}_{Y^s} \lesssim \sup_{\norm{v}_{Y^{-s}}\leq 1} \abs{\int_0^T\int_{\T_\lambda}u_1u_2u_3u_4u_5\overline{v} dx dt}.
\end{equation*}
We perform a Paley-Littlewood decomposition, $u_j=\sum_{L_j} P_{L_j}u_j$ for $j=1,\dots, 5$ and $v=\sum_{M} P_M v$ where $L_1, \dots L_5, M \in  2^{k_s\N_0}$ and $k_s:=\frac{1}{s}$. We therefore need to bound
\begin{equation*}
    \sum_{L,M} \mathcal{I}(L,M) := \sum_{L,M} \int_0^T\int_{\T_\lambda}\abs{P_{L_1}u_1P_{L_2}u_2P_{L_3}u_3P_{L_4}u_4P_{L_5}u_5P_M v} dx dt,
\end{equation*}
where $L=(L_1,L_2,L_3,L_4,L_5)$ and the summation is restricted to $M \lesssim L_1+\dots+L_5$ due to orthogonality. Without loss in generality we can assume that $L_1 \geq \cdots \geq L_5$ and so $M \lesssim L_1$. Also note that each $u_j$ can be replaced by its complex conjugate $\overline{u}_j$ in the definition of $\mathcal{I}(L,M)$.

We split the summation into two regions: (a) $L_1 \lesssim  N$ and (b) $L_1 \gg N$. First consider case (a) $L_1 \lesssim  N$. Then,
\begin{align*}
    \sum_{\substack{L,M\\ L_1 \lesssim N}} \mathcal{I}(L,M) &\leq \sum_{L,M} \prod_{j=1}^5 \norm{\chi_{[0,T)}P_{L_j}u_j}_{L^6_{t,x}} \norm{\chi_{[0,T)}P_{M}v}_{L^6_{t,x}}, \\
    &\lesssim \sum_{L,M}\prod_{j=1}^5 \norm{P_{L_j}u_j}_{Y^0} \norm{P_{M}v}_{Y^0}, \\
     &\lesssim \prod_{j=2}^5 \norm{u_j}_{Y^s} \sum_{M\lesssim L_1} \left(\frac{M}{L_1}\right)^s \norm{P_{L_1}u_1}_{Y^s} \norm{P_{M}v}_{Y^{-s}}, \\
&\lesssim \prod_{j=1}^5 \norm{u_j}_{Y^{s}} \norm{v}_{Y^{-s}},
\end{align*}
where the first line follows from H\"older's inequality, the second line follows from Lemma \ref{lemma:Linear Y^0 Strichartz} and the last two lines follow from summing geometric progressions and the fact that $s>0$. The selection of $k_s=\frac{1}{s}$ ensures that the implicit constants in the above estimates do not depend on $s$.

Next consider case (b): $L_1 \gg N$. We further split of case (b) into two regions: (i) $M \lesssim L_2$ and (ii) $M \gg L_2$. In sub-case (i), $M \lesssim L_2$ and so $L_1 \sim L_2$. Therefore, an analogous argument to case (a) yields
\begin{align*}
    \sum_{\substack{L,M \\ L_1 \sim L_2}} \mathcal{I}(L,M) &\leq \sum_{\substack{L,M \\ L_1 \sim L_2}}\prod_{j=1}^5 \norm{\chi_{[0,T)}P_{L_j}u_j}_{L^6_{t,x}} \norm{\chi_{[0,T)}P_{M}v}_{L^6_{t,x}}, \\
    &\lesssim \sum_{\substack{L,M \\ L_1 \sim L_2}} \left(\frac{\log L_2}{\log N}\right)^{c} \prod_{j=1}^5 \norm{P_{L_j}u_j}_{Y^0} \norm{P_{M}v}_{Y^0}, \\
     & \lesssim \frac{N^{-s}}{s} \prod_{j=2}^5 \norm{u_j}_{Y^s} \sum_{M \lesssim  L_1} \left(\frac{M}{L_1}\right)^s \norm{P_{L_1}u_1}_{Y^s} \norm{P_{M}v}_{Y^{-s}}, \\
    &\lesssim \frac{N^{-s}}{s} \prod_{j=1}^5 \norm{u_j}_{Y^s} \norm{v}_{Y^{-s}},
\end{align*}
where the first line follows from H\"older's inequality, the second line follows from Lemma \ref{lemma:Linear Y^0 Strichartz} and the fact that $L_2 \sim L_1 \gg N$. The last two estimates follow from summing geometric progressions and the fact that $s>0$.

On the other hand, for region (ii): $M \gg L_2$ so we have $L_1 \sim M$. Therefore, using the trilinear bound from Lemma \ref{lemma: Trilinear Y^0 Strichartz} and noting that $L_1 \gg L_2 \geq L_3$, we obtain
\begin{align*}
\norm{\chi_{[0,T)} \prod_{j=1}^3 P_{L_j}u_j}_{L^2_{t,x}}^2 &\leq \norm{\chi_{[0,T)}P_{L_1} u_1\left(P_{L_2}u_2\right)^2}_{L^2_{t,x}} \norm{\chi_{[0,T)}P_{L_1}u_1\left(P_{L_3}u_3\right)^2}_{L^2_{t,x}}, \\
& \lesssim \left(1+\frac{\log L_2}{\log N}\right)^{c}  \norm{P_{L_1}u_1}_{Y^0}^2 \norm{P_{L_2}u_2}_{Y^0}^2\norm{P_{L_3}u_3}_{Y^0}^2.
\end{align*}
An identical argument and the fact that $M \gg L_4 \geq L_5$ yields,
\[
\norm{\chi_{[0,T)} P_M v \prod_{j=4}^5 P_{L_j}u_j}_{L^2_{t,x}} \lesssim \left(1+\frac{\log L_4}{\log N}\right)^\frac{c}{2}   \norm{P_{L_M}v}_{Y^0} \norm{P_{L_4}u_4}_{Y^0}\norm{P_{L_5}u_5}_{Y^0}.
\]
We can thus conclude,
\begin{align*}
\sum_{ L_1 \sim M \gg L_2}  \mathcal{I}(L,M) &\leq \sum_{ L_1 \sim M \gg L_2} \norm{\chi_{[0,T)} \prod_{j=1}^3 P_{L_j}u_j}_{L^2_{t,x}} \norm{\chi_{[0,T)} P_M v \prod_{j=4}^5 P_{L_j}u_j}_{L^2_{t,x}}, \\
&\lesssim \sum_{ L_1 \sim M \gg L_2 } \left(1+\frac{\log L_2}{\log N}\right)^{c} \prod_{j=1}^5 \norm{P_{L_j}u_j}_{Y^0}, \\
&\lesssim \left(1+\frac{N^{-s}}{s}\right) \prod_{j=1}^5 \norm{u_j}_{Y^s} \norm{v}_{Y^{-s}}. \qedhere
\end{align*}
\end{proof}
\end{lemma}

\subsection{Growth of $E^1$}
Next, we proceed with bounding the growth of the modified energy. Note that if $u$ is a smooth solution to \eqref{eq:NLS}, then
\begin{align*}
    \partial_t E^1(u(t)) &= \Re \int_{\T_\lambda} \overline{\partial_t Iu } \left( \abs{Iu}^4 Iu - \Delta Iu \right),\\ 
    &= \Re \int_{\T_\lambda} \overline{\partial_t Iu} \left( \abs{Iu}^4 Iu - I(\abs{u}^4 u) \right), \\
    &= \Im \int_{\T_\lambda} \left(\overline{\Delta I u}- \overline{I(\abs{u}^4u)} \right) \left( \abs{Iu}^4 Iu - I(\abs{u}^4 u) \right).
\end{align*}
Therefore, by the fundamental theorem of calculus, for any $t_0 \in \R$ and any $T>0$, we have
\begin{equation}
    \label{eq: fundamental_theorem_calculus_E1}
    E^1(u(t_0+T))-E^1(u(t_0)) = \int_{t_0}^{t_0+T} \left(\Lambda_6(u(t)) +\Lambda_{10}(u(t) \right)\,\,dt, 
\end{equation}
where we define  
\begin{align*}
    \Lambda_6(u) &:= \Im \int_{\Gamma_6} M_6(k_1, \dots, k_6) \widehat{\overline{\Delta Iu}} (k_1) \widehat{u}(k_2) \widehat{\overline{u}}(k_3) \widehat{u}(k_4) \widehat{\overline{u}}(k_5) \widehat{u}(k_6), \\
    \Lambda_{10}(u) &:=  \frac{i}{2}\int_{\Gamma_{10}} M_{10}(k_1, \dots, k_{10} ) \prod_{j=1}^5 \widehat{u} (k_{2j-1}) \widehat{\overline{u}}(k_{2j}).
\end{align*}
Here the hyperplane $\Gamma_n=\left\{ (k_1, \dots, k_n) \in \Z_\lambda^n : k_1+\cdots+k_n=0 \right\}$ is endowed with measure $\prod_{j=1}^{n-1} (dk_j)_\lambda$ and the multipliers $M_6: \Gamma_6 \rightarrow \R$ and $M_{10}: \Gamma_{10} \rightarrow \R$ are given by
\begin{align*}
    M_6(k_1, \dots, k_6)&:=\prod_{j=2}^6 m(k_j) - m(k_1),  \\
     M_{10}(k_1,\dots,k_{10}) &:=m(\sum_{j=6}^{10} k_j) \prod_{j=1}^{5} m(k_j)- m(\sum_{j=1}^5 k_j)\prod_{j=6}^{10} m(k_j).
     \end{align*}
Our goal is to bound the right-hand side of \eqref{eq: fundamental_theorem_calculus_E1} by using the long-time linear and trilinear Strichartz estimates from Lemma \ref{lemma:Linear Y^0 Strichartz} and Lemma \ref{lemma: Trilinear Y^0 Strichartz}. The remainder of this section will focus on proving following estimate.
\begin{lemma}
\label{lemma: growth E1}
Fix $c>1$ as in Lemma~\ref{lemma: lwp_ILNS} and set $T := \lambda^2 \log(\lambda N)^{-c}$. Then, for every $s>0$ and every $0<\epsilon \ll_s 1$, there exists a constant $C=C(s,\epsilon)>1$ such that for any smooth solution $u:\T_\lambda\times \R \rightarrow \mathbb{C}$ to \eqref{eq:NLS} and any $t_0\in \R$, the following estimate holds
\begin{equation*}
    E^1(u(t_0+T)) -E^1(u(t_0)) \leq C N^{-1+\epsilon}\left(\norm{Iu}_{Y^1_{J_T}}^6+\norm{Iu}_{Y^1_{J_T}}^{10}\right),
\end{equation*}
where $J_T:=[t_0,t_0+T]$.
\end{lemma}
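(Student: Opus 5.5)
The plan follows the standard first-modified-energy argument of De Silva et al., Schippa and McConnell. I will bound $\int_{J_T}\Lambda_6$ and $\int_{J_T}\Lambda_{10}$ separately using \eqref{eq: fundamental_theorem_calculus_E1}, the linear and trilinear Strichartz bounds of Lemma \ref{lemma:Linear Y^0 Strichartz} and Lemma \ref{lemma: Trilinear Y^0 Strichartz} on intervals of length $T=\lambda^2\log(\lambda N)^{-c}$, and pointwise bounds on $M_6$ and $M_{10}$.

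First, I dyadically decompose each factor, $u=\sum_{N_j}P_{N_j}u$, and order the frequencies $N_2\ge N_3\ge\dots\ge N_6$ in $\Lambda_6$. If all $N_j\ll N$ then $m\equiv1$ on every factor and $M_6=0$, so this contribution vanishes. On the hyperplane $\Gamma_6$ we therefore have $N_2\gtrsim N$ and $N_1\lesssim N_2$.

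Next, I establish the symbol bounds. In the regime $N_2\sim N_1\gg N_3$, I will use the mean value theorem together with the derivative conditions on $m_1$ to get
\[
|M_6|\lesssim \frac{N_3}{N_2}\,m(N_1).
\]
In the regime $N_2\sim N_3\gtrsim N$, I will use the crude bound
\[
|M_6|\lesssim \prod_{j=2}^6 m(N_j)\big/ m(N_1).
\]
In both cases I divide by $\prod_{j=1}^6 m(N_j)N_j^{\sigma_j}$, with weight $N_1$ on the $\Delta I$ factor and $N_j$ for those factors estimated in $Y^1$ after applying $I$. The target is the gain
\[
\frac{N_1\,|M_6|}{m(N_1)\prod_{j\ge2}m(N_j)\,N_2N_3}\lesssim N^{-1+\epsilon}N_2^{-\epsilon}
\]
in the first case, with an analogous bound in the second. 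This uses $m(N_j)N_j\gtrsim N^{1-s}N_j^{s}$ for high frequencies, and the fact that $s>0$ allows $N_2^{0-}$ summability.

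For the space-time integral, I pair $P_{N_1}\Delta Iu$ with $P_{N_3}u$ and $P_{N_4}u$, and $P_{N_2}u$ with $P_{N_5}u$ and $P_{N_6}u$. I then apply Lemma \ref{lemma: Trilinear Y^0 Strichartz} twice when there is frequency separation, and six $L^6$ bounds from Lemma \ref{lemma:Linear Y^0 Strichartz} otherwise. The logarithmic losses $(1+\log N_j/\log N)^{C}$ are absorbed by $N_2^{-\epsilon}$. I also extend $u$ from $J_T$ by translation invariance and a time cutoff, so norms become $\|Iu\|_{Y^1_{J_T}}$.

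For $\Lambda_{10}$, I use $|M_{10}|\lesssim1$ together with the observation that a factor $m(\cdot)$ differs only when some frequency is $\gtrsim N$. Writing $\Lambda_{10}$ as $\int I(|u|^4u)\,\overline{(|Iu|^4Iu-I(|u|^4u))}$, I estimate it by Hölder in $L^2_{t,x}$ through trilinear and $L^6$ bounds, gaining $N^{-1+}$ from the derivative on the highest frequency factor, which is $\gtrsim N$, via $\|P_{\gtrsim N}Iu\|_{Y^0}\lesssim N^{-1}\|Iu\|_{Y^1}$.

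I expect the main obstacle to be the symbol estimate for $M_6$ in the regime $N_1\sim N_2\gg N_3$ when the high--high pair is near $N$. The Strichartz estimates here carry no bilinear smoothing, unlike in prior work, so the full $N^{-1+\epsilon}$ must come from the derivative gain $N_3/N_2$ and from the $Y^1$ weights alone. I would first try the pairing that puts both high factors with the lowest frequencies, so that the trilinear estimate handles the separation, and check that the weights still sum.
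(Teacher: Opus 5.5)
Your plan for $\Lambda_6$ matches the paper's argument: you split into $\Lambda_6$ and $\Lambda_{10}$, decompose dyadically, use the mean value theorem or the crude symbol bound, and use either six $L^6$ bounds or the trilinear pairing $(\Delta Iu_1,u_3,u_4)$, $(u_2,u_5,u_6)$. The $\Lambda_{10}$ step, however, has a genuine gap.

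\textbf{The gap in $\Lambda_{10}$.} If you use only $|M_{10}|\lesssim 1$, all ten factors enter as plain $u$'s. Hölder with $L^6$ and trilinear $L^2$ bounds absorbs at most six of them. The other four must go into $L^\infty_{t,x}$ via Bernstein, $\|P_Ku\|_{L^\infty}\lesssim K^{1/2}\|P_Ku\|_{Y^0}$. But a plain factor at frequency $K\gg N$ only gives $\|P_Ku\|_{Y^0}\lesssim (m(K)K)^{-1}\|Iu\|_{Y^1}\sim N^{-(1-s)}K^{-s}\|Iu\|_{Y^1}$. When all ten frequencies are comparable to some $K\gg N$, the best you get is
\[
K^{2-10s+}N^{-10(1-s)}\prod_{j=1}^{10}\|Iu_j\|_{Y^1},
\]
which is not summable in $K$ for $s<1/5$. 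The $N^{-1}$ gain on the top factor does not help. Nor is this a bookkeeping artifact: a unit-$L^2$ wave packet at frequency $K$ has $L^{10}_{t,x}$ norm $\gtrsim K^{1/5}$ on any interval of length $\gtrsim K^{-2}$. So no argument that sees only $|M_{10}|\lesssim 1$ can work for small $s$.

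\textbf{How to fix it.} You must keep the product structure of $M_{10}$. Split $M_{10}=M_{10}^{(1)}-M_{10}^{(2)}$ and discard only $m(\sum_{l\in J_j}k_l)\le 1$, keeping $\prod_{l\in J_j^c}m(k_l)$, so that five of the ten factors are $Iu$'s. Equivalently, use $\Im\int |I(|u|^4u)|^2=0$ to reduce $\Lambda_{10}$ to $-\Im\int\overline{I(|u|^4u)}\,|Iu|^4Iu$; your form with the difference hides this. You then have to choose the $L^\infty$ factors with care:
\begin{itemize}
\item If a top frequency sits on a plain factor, put the four lowest $Iu$ factors in $L^\infty$. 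This is cheap, since $\|P_KIu\|_{L^\infty}\lesssim K^{-1/2}\|Iu\|_{Y^1}$. All plain factors go in $L^6$.
\item If both top frequencies $N_1\sim N_2$ sit on $Iu$ factors, put the two lowest $Iu$ factors and the two lowest plain factors (frequencies $N_9\ge N_{10}$) in $L^\infty$. Pay for $1/(m(N_9)m(N_{10}))$ with $m(N_1)m(N_2)\lesssim m(N_9)m(N_{10})$.
\end{itemize}
This is how Lemma~\ref{lemma: growth E1_ten} obtains $N^{-2+}$.

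\textbf{Smaller corrections in $\Lambda_6$.}
\begin{itemize}
\item The bound $|M_6|\lesssim \frac{N_3}{N_2}m(N_1)$ requires $N_3\ll N$, so that $M_6=m(k_2)-m(k_2+\dots+k_6)$. When $N_1\sim N_2\gg N_3\gtrsim N$, the term $m(k_1)\bigl(1-\prod_{j\ge 3}m(k_j)\bigr)$ is of size $m(N_1)$. There the gain comes from your trilinear pairing together with $(m(N_3)N_3)^{-1}\lesssim N^{-1}(N/N_3)^{s}$, as in case (b)(ii) of the paper.
\item Do not divide by $m(N_1)$: the first factor is already $\Delta Iu$, so the correct weight is $|M_6|\,N_1\prod_{j\ge 2}(m(N_j)N_j)^{-1}$. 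With your normalization and $|M_6|\lesssim m(N_1)$, the regime $N_2\sim N_3\gg N$ would produce $N_2^{1-2s}N^{-2(1-s)}$, which is not summable for $s<1/2$.
\item Pointwise symbol bounds cannot simply be pulled out of physical-space Strichartz estimates. You need a multiplier lemma such as Lemma~\ref{Lemma: bound on M_6}, which uses derivative bounds on a smooth extension of the symbol and translation invariance of the Strichartz bounds.
\end{itemize}
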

We prove Lemma \ref{lemma: growth E1} by separately estimating the contributions of $\Lambda_6$ and $\Lambda_{10}$ in \eqref{eq: fundamental_theorem_calculus_E1}. To this end, we first we first state a helpful lemma. 
\begin{lemma}
\label{Lemma: bound on M_6}
Let $\mathcal{M}: \Gamma_6 \rightarrow \R$ and suppose there exists an extension $\widetilde{\mathcal{M}}: \R^6 \rightarrow \R$ satisfying 
\[\widetilde{\mathcal{M}}(\xi_1,\dots, \xi_6)= \mathcal{M}(\xi_1, \dots, \xi_6) \text{ for all } (\xi_1, \dots, \xi_6) \in \Gamma_6,
\]
and for dyadic $N_1,\dots,N_6\in 2^{\N_0}$ and all $0\le\alpha\le7$, there exists a constant 
$C_1(N_1,\dots,N_6)>0$ such that
\begin{equation}
    \sup_{\substack{|\xi_k|\sim N_k\\ 1\le k\le6}} \abs{\partial_{\xi_j}^\alpha \widetilde{\mathcal{M}}(\xi_1, \dots, \xi_6)} \lesssim_{\alpha} \frac{C_1(N_1,\dots, N_6)}{N_j^\alpha} \text{ for all } 1\leq j \leq 6.
    \label{eq: hypothesis_1}
\end{equation}
Moreover, suppose that there exists a constant $C_2(N_1, \dots, N_6)>0$ such that
\begin{equation}
    \sup_{x_1, \dots, x_6 \in \R}\int_{0}^T \int_{\T_\lambda} \abs{ \prod_{j=1}^6 P_{N_j} f_j(x+x_j,t)} dx dt  \leq C_2(N_1, \dots, N_6) \prod_{j=1}^6 \norm{f_j}_{Y^0}.
    \label{eq: hypothesis_2}
\end{equation}
Then, 
\begin{equation*}
    \abs{\int_{0}^T \int_{\Gamma_6} \mathcal{M}(k_1,\dots, k_6)\prod_{j=1}^6 P_{N_j} \hat{f}_j(k_j,t) dk_\lambda dt} \leq C_3(N_1, \dots, N_6) \prod_{j=1}^6 \norm{f_j}_{Y^0},
\end{equation*}
where $C_3(N_1, \dots, N_6)\sim C_1(N_1, \dots, N_6)C_2(N_1, \dots, N_6)$.
\begin{proof}
This lemma follows from the Fourier series expansion of $\widetilde{\mathcal{M}}$ followed by Plancherel's theorem. For details, see Proposition 5.4 and Remark 5.5 in \cite{Schippa_mass_critical}.
\end{proof}
\end{lemma}
By combining Lemma \ref{Lemma: bound on M_6} with the long-time Strichartz estimates from Lemmata \ref{lemma:Linear Y^0 Strichartz} and \ref{lemma: Trilinear Y^0 Strichartz} we obtain the following estimates.
\begin{lemma}
\label{lemma: growth E1_six}
Fix $c>1$ as in Lemma~\ref{lemma: lwp_ILNS} and set $T := \lambda^2 \log(\lambda N)^{-c}$. Then, for every $s>0$ and every $0<\epsilon \ll_s 1$, there exists a constant $C=C(s,\epsilon)>1$ such that for any function $u: \T_\lambda\times \R \rightarrow \mathbb{C}$ satisfying $Iu \in Y^1$ and any $t_0\in \R$, the following estimate holds
\begin{equation*}
   \abs{\int_{t_0}^{t_0+T} \Lambda_6(u(t)) \,\, dt }\leq C N^{-1+\epsilon}\norm{Iu}_{Y^1}^6.
\end{equation*}
\begin{proof}
    Without loss in generality, we may assume $t_0=0$. We dyadically decompose $u = \sum_{j\geq 1} u_j$ where $u_j:=P_{N_j} u$ for dyadic $N_j \in 2^{\N_0}$. With this decomposition in hand, it suffices to prove that
    \[ \sum_{N_1, \dots N_6} \abs{\int_0^T \int_{\Gamma_6} M_6 \widehat{\overline{\Delta Iu}}_1 \widehat{u}_2 \widehat{\overline{u}}_3 \widehat{u}_4 \widehat{\overline{u}}_5 \widehat{u}_6} \lesssim N^{-1+} \norm{Iu}_{Y^1}^6.  \]
We will use Lemma \ref{Lemma: bound on M_6} to prove the above estimate. As left-hand side of \eqref{eq: hypothesis_2} is invariant under complex conjugation and $M_6$ is symmetric under permutations of $k_2, \dots, k_6$, we may assume without loss of generality that $N_2 \geq \cdots \geq N_6$. Furthermore, since we are integrating on $\Gamma_6$, $N_1 \lesssim N_2$ also holds. We also stress that $M_6$ vanishes unless $N_2 \gtrsim N$. Thus we split the summation into two cases: (a) $N_3 \ll N \lesssim N_2$ and (b) $N_2 \geq N_3 \gtrsim N$.

For case (a), we have that $N_1 \sim N_2 \gtrsim N \gg N_3$ and $k_1k_2<0$. We write $M_6(k_1, \dots, k_6) = m(k_2)-m(k_2+\dots+k_6)$ and note that by the mean value theorem,
\[
\abs{m(k_2)-m(k_2+\dots+k_6)} \lesssim  \abs{k_3+\dots+k_6} m'(N_2) \lesssim \frac{N_3}{N_2} m(N_2). 
\]
Using a similar argument, we can bound the partial derivatives by the same bound and conclude that $C_1(N_1,\dots, N_6) \lesssim \frac{N_3}{N_2}m(N_2)$ where $C_1$ is the constant defined in hypothesis \eqref{eq: hypothesis_1} of Lemma \ref{Lemma: bound on M_6}. We next use H\"older's inequality followed by Lemma \ref{lemma:Linear Y^0 Strichartz}, to obtain
\begin{align*}
\int_0^T \int_{\T_\lambda} \abs{\Delta I u_1 \prod_{j=2}^6 u_j} &\leq \norm{\chi_{[0,T)}\Delta Iu_1}_{L^6_{t,x}} \prod_{j=2}^6 \norm{\chi_{[0,T)}u_j}_{L^6_{t,x}} \\
&\lesssim N_2^{0+}\norm{\Delta Iu_1}_{Y^0} \prod_{j=2}^6 \norm{u_j}_{Y^0}.
\end{align*}
Note that since Strichartz estimate are translation invariant, we conclude that $C_2(N_1, \dots, N_6) \lesssim N_2^{0+}$ where $C_2$ is the constant defined in hypothesis \eqref{eq: hypothesis_2} of Lemma \ref{Lemma: bound on M_6}. Therefore, by Lemma \ref{Lemma: bound on M_6}, we have shown that when $N_3 \ll N \lesssim N_2$, we have the estimate
\begin{align*}
    \abs{\int_0^T \int_{\Gamma_6} M_6 \widehat{\overline{\Delta Iu}}_1 \widehat{u}_2 \widehat{\overline{u}}_3 \widehat{u}_4 \widehat{\overline{u}}_5 \widehat{u}_6} &\lesssim N_2^{0+}\frac{N_3}{N_2} m(N_2) \norm{\Delta Iu_1}_{Y^0} \prod_{j=2}^6 \norm{u_j}_{Y^0}  \\
    &\lesssim N_2^{0+}\frac{N_3}{N_2}\frac{N_1}{N_2N_3N_4N_5N_6} \prod_{j=1}^6 \norm{Iu_j}_{Y^1}  \\
     &\lesssim N^{-1+}N_2^{0-} \prod_{j=1}^6 \norm{Iu_j}_{Y^1_T},
\end{align*}
where the second line follows by noting that $N_3 \ll N $ and so $m(N_j)=1$ for $j=3,4,5,6$. The last line follows since $N_1 \sim N_2 \gtrsim N$ and $N_4,N_5, N_6 \geq 1$. The desired bound is then obtained by summing geometric progressions.

Now consider case (b): $N_3 \gtrsim N $ and further split the summation into two sub-cases (i) $ N_3 \sim N_2$ and (ii) $N_3 \ll N_2 \sim N_1$. For both sub-cases, we have the trivial bound 
\[\abs{M_6(k_1, \dots, k_6)} \lesssim m(k_1)\] 
and the same upper bound holds for the partial derivatives so  $C_1(N_1,\dots, N_6) \lesssim m(N_1)$ where $C_1$ is the constant defined in hypothesis \eqref{eq: hypothesis_1} of Lemma \ref{Lemma: bound on M_6}. 

For sub-case (b)(i): $N_2 \sim N_3 \gtrsim N$, we proceed in the similar way as we did in case (a) to obtain $C_2(N_1, \dots, N_6) \lesssim N_2^{0+}$ where $C_2$ is the constant defined in hypothesis \eqref{eq: hypothesis_2} of Lemma \ref{Lemma: bound on M_6}. An application of Lemma \ref{Lemma: bound on M_6} yields,
\begin{align*}
    \abs{\int_0^T \int_{\Gamma_6} M_6 \widehat{\overline{\Delta Iu}}_1 \widehat{u}_2 \widehat{\overline{u}}_3 \widehat{u}_4 \widehat{\overline{u}}_5 \widehat{u}_6} &\lesssim N_2^{0+}m(N_1) \norm{\Delta Iu_1}_{Y^0_T} \prod_{j=2}^6 \norm{u_j}_{Y^0_T}  \\
    &\lesssim N_2^{0+}\frac{m(N_1)N_1}{\prod_{j=2}^6 N_jm(N_j)} \prod_{j=1}^6 \norm{Iu_j}_{Y^1_T}  \\
     &\lesssim N^{-1+}N_2^{0-} \prod_{j=1}^6 \norm{Iu_j}_{Y^1_T},
\end{align*}
where in the last line we used the fact $m(N_j)N_j^{1-\epsilon} \gtrsim \min(N,N_j)^{1-\epsilon}$ for every $\epsilon \in [0,s]$. We then obtain the desired bound by summing geometric progressions.

Next consider sub-case (b)(ii): $N_1 \sim N_2 \gg N_3 \gtrsim N$. Here we cannot use the same bounding strategy as sub-case (b)(i) as $N_2 \gg N_3$ and the derivative loss of $N_2^{0+}$ cannot be removed by the $N_3$ factor. Instead, we use H\"older's inequality followed by the trilinear estimate from Lemma \ref{lemma: Trilinear Y^0 Strichartz} to obtain
\begin{align*}
    \int_0^T \int_{\T_\lambda} \abs{\Delta I u_1 \prod_{j=2}^6 u_j} &\leq \norm{\chi_{[0,T)}\Delta Iu_1 u_3 u_4}_{L^2_{t,x}} \norm{\chi_{[0,T)}u_2 u_5 u_6}_{L^2_{t,x}},  \\
    &\leq \prod_{j=3}^4 \norm{\chi_{[0,T)}\Delta Iu_1 u_j^2 }_{L^2_{t,x}}^\frac{1}{2} \prod_{j=5}^6 \norm{\chi_{[0,T)}u_2 u_j^2 }_{L^2_{t,x}}^\frac{1}{2}, \\ 
    &\lesssim N_3^{0+} \norm{\Delta Iu_1}_{Y^0_T} \prod_{j=2}^6 \norm{u_j}_{Y^0}.
\end{align*}
Hence, an application of Lemma \ref{Lemma: bound on M_6} yields
\begin{align*}
    \abs{\int_0^T \int_{\Gamma_6} M_6 \widehat{\overline{\Delta Iu}}_1 \widehat{u}_2 \widehat{\overline{u}}_3 \widehat{u}_4 \widehat{\overline{u}}_5 \widehat{u}_6} &\lesssim N_3^{0+} m(N_1) \norm{\Delta Iu_1}_{Y^0_T} \prod_{j=2}^6 \norm{u_j}_{Y^0_T},  \\
    &\lesssim N_3^{0+}\frac{m(N_1)N_1}{\prod_{j=2}^6 N_jm(N_j)} \prod_{j=1}^6 \norm{Iu_j}_{Y^1_T}, \\
     &\lesssim N^{-1+}N_3^{0-} \prod_{j=1}^6 \norm{Iu_j}_{Y^1_T}.
\end{align*}
We conclude by summing geometric progressions and observing that $N_1 \sim N_2$, so the lack of decay in $N_2$ is harmless.
\end{proof}
\end{lemma}
\begin{lemma}
\label{lemma: growth E1_ten}
Fix $c>1$ as in Lemma~\ref{lemma: lwp_ILNS} and set $T := \lambda^2 \log(\lambda N)^{-c}$. Then, for every $s>0$ and every $0<\epsilon \ll_s 1$, there exists a constant $C=C(s,\epsilon)>1$ such that for any function $u: \T_\lambda\times \R \rightarrow \mathbb{C}$ satisfying $Iu \in Y^1$ and any $t_0\in \R$, the following estimate holds
\begin{equation*}
   \abs{\int_{t_0}^{t_0+T} \Lambda_{10}(u(t)) \,\, dt }\leq C N^{-2+\epsilon}\norm{Iu}_{Y^1}^{10}.
\end{equation*}
\begin{proof}
    Just as in the proof of Lemma \ref{lemma: growth E1_six}, it suffices to consider $t_0=0$. We dyadically decompose $u = \sum_{j\geq 1} u_j$ where $u_j:=P_{N_j} u$ for dyadic $N_j \in 2^{\N_0}$. Let $N_1^\star \geq \cdots \geq N_{10}^\star$ denote the decreasing ordering of  $(N_1, \dots, N_{10})$ and define $u_j^\star:=P_{N_j^\star}u$. We will prove that,
\[ \sum_{N_1, \dots, N_{10}} \abs{\int_0^T \int_{\Gamma_6} M_{10}\widehat{u}_1 \widehat{\overline{u}}_2 \widehat{u}_3 \widehat{\overline{u}}_4 \widehat{u}_5 \widehat{\overline{u}}_{6} \widehat{u}_7 \widehat{\overline{u}}_8 \widehat{u}_{9}\widehat{\overline{u}}_{10}} \lesssim N^{-2+} \norm{Iu}_{Y^1}^{10}. \]
First note that $M_{10}$ vanishes unless $N_1^\star \gtrsim N$. Therefore, since we are integrating in $\Gamma_{10}$ we may restrict the summation to $N_1^\star \sim N_2^\star \gtrsim N$. Next, by the triangle inequality, it suffices to show that,
\[ \max_{j=1,2} \sum_{N_1, \dots, N_{10}} \abs{\int_0^T \int_{\Gamma_6} M_{10}^{(j)}\widehat{u}_1 \widehat{\overline{u}}_2 \widehat{u}_3 \widehat{\overline{u}}_4 \widehat{u}_5 \widehat{\overline{u}}_{6} \widehat{u}_7 \widehat{\overline{u}}_8 \widehat{u}_{9}\widehat{\overline{u}}_{10}} \lesssim N^{-2+} \norm{Iu}_{Y^1}^{10}, \]
where
\[M_{10}^{(j)}(k_1,k_2, \dots, k_{10}):= m\left(\sum_{l \in J_j} k_l\right) \prod_{l \in J_j^c} m(k_l),\]
$J_{1}:= \{6, \dots 10\},$ $J_{2}:=\{1,\dots 5\}$ and $J_j^c$ denotes the complement of $J_j$ in $\{1,\dots, 10\}.$ We view $M_{10}^{(j)}$ as a multiplier acting on $\Gamma_6$ and apply Lemma \ref{Lemma: bound on M_6}. By symmetry, it suffices to consider $j=1$. To this end, we further decompose the summation
\[ \sum_{N_1, \dots, N_{10},K } \abs{\int_0^T \int_{\Gamma_6} M_{10}^{(1)}\widehat{u}_1 \widehat{\overline{u}}_2 \widehat{u}_3 \widehat{\overline{u}}_4 \widehat{u}_5 \widehat{f}_K}:= \sum_{N_1, \dots, N_{10},K } \mathcal{I}(N_1, \dots, N_{10}, K) , \]
where $\hat{f}_K(k):= \sum_{\substack{ k_6+\dots+ k_{10}=k \\ \abs{k} \sim K}} \widehat{\overline{u}}_6(k_{6}) \widehat{u}_7(k_{7}) \widehat{\overline{u}}_8(k_{8}) \widehat{u}_9(k_{9}) \widehat{\overline{u}}_{10}(k_{10})$. By the invariance of left-hand side in hypothesis \eqref{eq: hypothesis_2} under complex conjugation together with the permutation symmetry of $M_{10}^{(1)}$, we may further restrict the summation to {$N_1 \geq N_2 \geq \cdots \geq N_5$} and $N_6 \geq N_7 \geq \cdots \geq N_{10}$. We split the summation into two cases: (a) $N_1^\star=N_1, N_2^\star=N_2$ and (b) at least one of $N_1^\star, N_2^\star$ lies in $\{N_6, \dots, N_{10}\}$.

In both case we have the trivial bound $m(K) \leq 1$, so it is clear that 
\[\abs{M_{10}^{(1)}(k_1, \dots, k_{10})} \leq \prod_{j=1}^{5} m(N_j),\]
and analogous bounds hold for the partial derivatives, so that 
\[C_1(N_1, \dots, N_5, K) \lesssim \prod_{j=1}^{5} m(N_j),\]
where $C_1$ is the constant defined in hypothesis \eqref{eq: hypothesis_1} of Lemma \ref{Lemma: bound on M_6}. 

Next consider case (a): $N_1^\star=N_1, N_2^\star=N_2$. By H\"older's inequality followed by Lemma \ref{lemma: Trilinear Y^0 Strichartz}, we have that,

\begin{align*}
\int_0^T \int_{\T_\lambda} \abs{\prod_{j=1}^{5} u_j\, f_K}
&\le \Bigl\| \prod_{j=1}^5 \chi_{[0,T)} u_j \Bigr\|_{L^2_{t,x}}
      \, \| \chi_{[0,T)} f_K \|_{L^2_{t,x}}, \\
&\le \prod_{j=1}^3 \|\chi_{[0,T)} u_j\|_{L^6_{t,x}}
     \prod_{j=4}^5 \|\chi_{[0,T)} u_j\|_{L^\infty_{t,x}} \\
&\qquad \times
     \prod_{j=6}^8 \|\chi_{[0,T)} u_j\|_{L^6_{t,x}}
     \prod_{j=9}^{10} \|\chi_{[0,T)} u_j\|_{L^\infty_{t,x}}, \\
&\lesssim N_1^{0+} \left(N_4 N_5 N_9 N_{10}\right)^{1/2}
           \prod_{j=1}^{10} \|u_j\|_{Y^0}.
\end{align*}


Therefore, by Lemma \ref{Lemma: bound on M_6} we obtain
\begin{align*}
  \mathcal{I} &\lesssim N_1^{0+} \left(N_4 N_5 N_9 N_{10}\right)^{1/2} \prod_{j=1}^{5} \norm{Iu_j}_{Y^0} \prod_{j=6}^{10} \norm{u}_{Y^0}, \\
    &\lesssim N_1^{0+} \frac{\left(N_4 N_5 N_9 N_{10}\right)^{1/2}}{\prod_{j=1}^5 N_j \prod_{j=6}^{10} m(N_j)N_j} \prod_{j=1}^{10} \norm{Iu_j}_{Y^1}, \\
    &\lesssim N_1^{0+} \frac{m(N_1)m(N_2)}{m(N_1) N_1 m(N_2) N_2 m(N_9) N_9^{\frac{1}{2}} m(N_{10}) N_{10}^{\frac{1}{2}}} \prod_{j=1}^{10} \norm{Iu_j}_{Y^1}, \\
    &\lesssim N^{-2+} N_1^{0-} \prod_{j=1}^{10}\norm{Iu_j}_{Y^1}, 
\end{align*}
where the third line follows from the fact that $m(N_j)N_j^{1-\epsilon} \gtrsim \min(N,N_j)^{1-\epsilon}$ for all $\epsilon \in [0,s]$ and the last line follows from the aforementioned fact and since $N_1 \sim N_2 \geq N_9 \geq N_{10}$ implies that $m(N_1) \sim m(N_2) \leq m(N_9) \leq m(N_{10})$.
Summing geometric progressions concludes the proof for case (a). 

We now turn to case (b): at least one of $N_1^\star, N_2^\star$ lies in $\{N_6, \dots, N_{10}\}$. Here we use H\"older's inequality again but the derivative loss is placed at the frequencies $N_2, \dots, N_5$ where it can be compensated by the $m(N_2) \cdots m(N_5)$ gain. In fact,

\begin{align*}
\int_0^T \int_{\T_\lambda} \abs{\prod_{j=1}^{5} u_j\, f_K}
&\le \Bigl\| \prod_{j=1}^5 \chi_{[0,T)} u_j \Bigr\|_{L^6_{t,x}}
      \, \| \chi_{[0,T)} f_K \|_{L^\frac{6}{5}_{t,x}}, \\
&\le \|\chi_{[0,T)} u_1\|_{L^6_{t,x}}
     \prod_{j=2}^5 \|\chi_{[0,T)} u_j\|_{L^\infty_{t,x}}
     \prod_{j=6}^{10} \|\chi_{[0,T)} u_j\|_{L^6_{t,x}}, \\
&\lesssim \left(N_1^\star\right)^{0+} \left(N_2 N_3 N_4 N_{5}\right)^{1/2}
           \prod_{j=1}^{10} \|u_j\|_{Y^0}.
\end{align*}
An application of Lemma \ref{Lemma: bound on M_6} and an analogous argument to the one used above conclude the proof.
\end{proof}
\end{lemma}
\subsection{Proof of Global Well-posedness}
We are now in a position to prove Theorem \ref{theorem:main_gwp_theorem} using Lemmata \ref{lemma: lwp_ILNS} and \ref{lemma: growth E1}. Let $u: \T \times \R \rightarrow \C$ denote the solution to \eqref{eq:NLS} with initial condition $u_0 \in H^s(\T)$. By assumption, $u_0$ has small mass, so we rescale the initial data so that $Iu_0^\lambda$ has small $H^1$-norm, where \[ u_0^\lambda= \lambda^{-\frac{1}{2}}u_0\left(\frac{x}{\lambda}\right),\]
and $u^\lambda : \T_\lambda \times \R \rightarrow \C$ denotes the corresponding solution to \eqref{eq:NLS} with initial data $u_0^\lambda$.

Applying Lemma \ref{lemma: lwp_ILNS} yields a bound for the $H^1$ norm of $Iu^\lambda(t)$ for all $t \in [0,T_0]$ for some fixed $T_0>0$. The strategy is to iterate this local result: we use Lemma \ref{lemma: growth E1} to bound the growth of the modified energy $E^1(u^\lambda(t))$ on $[0,T_0]$ which in turn controls the $\dot{H}^s$-norm of $Iu^\lambda(T_0)$. Combined with the conservation of mass, this provides uniform control of the $H^s$ norm of $Iu^\lambda(T_0)$ for sufficiently large $N$, allowing Lemma \ref{lemma: lwp_ILNS} to be applied successively with initial condition $Iu^\lambda(T_0)$. By repeating this argument and reversing the rescaling, we extend the original solution $u$ globally in time by letting $N \rightarrow \infty$. We now turn to details.
\begin{proof}[Proof of Theorem  \ref{theorem:main_gwp_theorem}] 
Suppose that $0<s<1$ and let $u_0 \in H^s(\T)$ be such that $\norm{u_0}_2 = \delta$ where $0<\delta < 1$. Let $u^\lambda$ denote the solution to (\ref{eq:NLS}) with initial condition $u^\lambda(x,0)=u^\lambda_0(x):= \lambda^{-\frac{1}{2}} u_0(\frac{x}{\lambda})$. We stress that $u^\lambda_0 : \T_\lambda \rightarrow 
\C$ whereas $u_0: \T \rightarrow \C$ and hence all $L^p$ norms ($p \geq 1$) in the remainder of the proof are defined on the appropriate domain ($\T_\lambda$ for $u^\lambda_0$ and $\T$ for $u_0$).

We select $\lambda \sim_s  N^{\frac{1-s}{s}} \norm{u_0}_{\dot{H}^s}^{\frac{1}{s}} \norm{u_0}_2^{-\frac{1}{s}}$ such that $E^1(u_0^\lambda) \leq 2 \norm{u_0}_2^2$. In particular note that
\begin{equation*}
    \norm{\partial_x Iu_0^\lambda}_{2}^2=\norm{m(k) k \hat{u}_0^\lambda}_{L^2((dk)_\lambda)}^2 \lesssim \frac{N^{2-2s}}{\lambda^{2s}} \norm{u_0}_{\dot{H}^s}^2 \lesssim \norm{u_0}_2^2,
\end{equation*}
and by the Gagliardo-Nirenberg inequality \cite{Nirenberg}, 
\begin{align*}
    \norm{Iu_0^\lambda}_{6}^6 &\lesssim \norm{\partial_xIu_0^\lambda}_{2}^{2} \norm{Iu_0^\lambda}_{2}^{4} + \norm{Iu_0^\lambda}_{2}^{6}, \notag \\ 
    &\lesssim \frac{N^{2-2s}}{\lambda^{2s}} \norm{u_0}_{\dot{H}^s}^{2}\norm{u_0}_{2}^{4}+\norm{u_0}_{2}^{6},  \notag\\
      &\lesssim  \norm{u_0}_2^6.
\end{align*}
Therefore, since $E^1(u_0^\lambda) \leq 2 \delta^2$, we have  
\[\norm{Iu_0^\lambda}_{H^1}^2 \leq \norm{Iu_0^\lambda}_2^2+E^1(u_0^\lambda) \leq 3\delta^2.\]
We select $\delta \ll 1$ small enough and $N=N(s,\delta) \gg 1$ large enough so that Lemma \ref{lemma: lwp_ILNS} can be applied and let $T_0\sim \lambda^{2}\log(\lambda N)^{-c}$. Next, by Lemma \ref{lemma: growth E1}, we have that
\begin{align*}
    E^1(u^\lambda(T_0)) &\leq  E^1(u^\lambda(0)) + C_1 N^{-1+} \left(\norm{Iu^\lambda}_{Y^1_{T_0}}^6+\norm{Iu^\lambda}_{Y^1_{T_0}}^{10}\right), \\
    &\leq 2\delta^2+C_2 N^{-1+}\delta^6,
\end{align*} 
where the second line follows from Lemma \ref{lemma: lwp_ILNS} and $C_1, C_2 >0$ are constants depending on $s$. By possibly increasing $N=N(s,\delta) \gg 1$ one can ensure that $E^1(u^\lambda(T_0)) \leq 3\delta^2$ so that Lemma \ref{lemma: lwp_ILNS} can be applied once more on the subsequent interval $[T_0,2T_0]$. We can repeat this process and obtain solutions satisfying
\[
\norm{Iu^\lambda(nT_0)}_{H^1} \leq 2\delta,
\]
as long as $n \ll N^{1-}$. Undoing the rescaling, the corresponding solutions to \eqref{eq:NLS} satisfy
\begin{align*}
    \norm{u(t)}_{H^s(\mathbb{T})}^2 &=  \norm{u_0}_2^2+ \lambda^{2s} 
 \norm{u^\lambda(t\lambda^{2})}_{\dot{H}^s(\T_\lambda)}^2, \\
 &\lesssim \left(1 +\lambda^{2s}\right) \delta^2. \\
 &\lesssim \left(1 + N^{2(1-s)}\right) \norm{u_0}_{H^s(\T)}^2.
\end{align*}
 for $t\in [0,T]$ where $T \sim N^{1-}\log(N)^{-c} $. Therefore, we deduce that solutions can be continued for arbitrary time intervals by selecting $N \rightarrow \infty$.
\end{proof}
\appendix
\section{Refined Trilinear Strichartz Estimates via Decoupling}\label{sec:appendix A}
In this appendix, we give an alternative proof of the refined trilinear Strichartz estimate first established in \cite[Proposition 5.10]{McConnell_Mass_critical_d_1}.

\begin{theorem}\label{theorem: asymetric_strichartz}
For every $\epsilon>0$ there exists a constant $C_\epsilon>0$ such that for all $N_1 \gg N_2 \geq 1$, all $N_1^{-1} \leq T \leq 1$ and all $\phi_1,\phi_2 \in L^2(\T)$ with $\supp \hat{\phi}_j \subset [N_j,2N_j]$, $j=1,2$, the following estimate holds, 
$$\norm{S(t)\phi_1\left(S(t) \phi_2\right)^2}_{L^2_{t,x}([0,T] \times \T )}^2 \leq C_\epsilon N_2^\epsilon \left(T^{1/2}+\frac{N_2}{N_1}\right)\|\phi_1\|_2^2 \|\phi_2\|_2^4.$$
\end{theorem}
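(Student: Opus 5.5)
We propose to rescale to the decoupling setup of \cref{sec:2} and prove the estimate as a superlevel set bound for the trilinear product, with the asymmetric estimate (Lemma \ref{asym}) as the main tool. Write $u_j=S(t)\phi_j$. The map $(x,t)\mapsto (x/N, t/N^2)$ with $N:=2N_1$ and $R:=N^2T$ sends $[0,T]\times\T$ (periodically extended in $x$ over $\sim R/N$ periods) to a ball $B_R$. Under this map $u_1$ and $u_2$ become functions $f_1=f_{\Sigma_1}$, $f_2=f_{\Sigma_2}$ with Fourier support in the $1/R$-neighbourhood of $\P$. Here $\Sigma_1$ is a cap of length $\sim 1/2$ at distance $\sim 1$ from the origin, and $\Sigma_2$ is a cap of length $l(\Sigma_2)\sim N_2/N_1$ near the origin, so $\operatorname{dist}(\Sigma_1,\Sigma_2)\sim 1$. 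Each integer frequency gives a cap $\gamma$ of length $1/N$. By translation in $x$ and the standard averaging over modulations (as in the reduction of Theorem \ref{theorem:L^6 Strichartz} to Theorem \ref{main}), we may assume each $f_\gamma$ has essentially constant modulus $a_\gamma$ on $B_R$.

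Next we pigeonhole the amplitudes. This reduces to $f_1$ satisfying $\Cn_1(\alpha_1,\lambda_1)$ and $f_2$ satisfying $\Cn_1(\alpha_2,\lambda_2)$. The general case then follows by summing dyadic pieces exactly as in Propositions \ref{cond2}--\ref{general}, at the cost of at most $\log(N_2)^{O(1)}\lesssim N_2^\epsilon$ (only the $f_2$ pigeonholing is costly; the $f_1$ pieces recombine orthogonally as in the proof of Lemma \ref{lemma: Trilinear Y^0 Strichartz}, by cutting $\Sigma_1$ into caps of length $l(\Sigma_2)$). After normalising $\alpha_j=1$, the quantity to bound is
\[
\int_{B_R}|f_1|^2|f_2|^4\lesssim \sum_{a,b \text{ dyadic}} a^2b^4\,|U_{a,b}(f_1,f_2)|,
\]
and the target is $N_2^{\epsilon}\max\big(\tfrac{N_2}{N_1},\tfrac{\sqrt R}{N}\big)N^2R\lambda_1\lambda_2^2$. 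Since $\sqrt R/N=T^{1/2}$ and $N^2R\lambda_1\lambda_2^2$ corresponds to $\norm{\phi_1}_2^2\norm{\phi_2}_2^4$ after undoing the scaling, this is the claimed bound.

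To bound $|U_{a,b}|$ we split according to the size of $b$. In the range $\lambda_2/\lambda_0^{1/2}\ll b\le\lambda_2$, Lemma \ref{asym} gives
\[
a^2b^4|U_{a,b}|\lesssim \log\Big(\tfrac{\lambda_2}{b}\tfrac{NR^{-1/2}}{\lambda_0}\Big)\max\big(l(\Sigma_2),\tfrac{\sqrt R}{N}\big)N^2R\lambda_1\lambda_2^2,
\]
where $\lambda_0$ is pigeonholed over dyadic values of $\lambda(\theta)$ for $\theta\subset\Sigma_2$. The logarithm, together with the number of dyadic $a$, $b$ and $\lambda_0$, is $\lesssim \log(N_1)^{O(1)}$. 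This is too much, since we only permit a loss of $N_2^\epsilon$. To fix this, we first reduce to $N_1\le N_2^{C}$: when $N_1\ge N_2^{C}$, the bilinear estimate Lemma \ref{bilresg} applied with caps $\omega$ of length $1/N$ for $f_1$ already gives
\[
\int |f_1|^2|f_2|^4\lesssim \lambda_1\norm{f_2}_{L^\infty}^2\int |f_2|^2\lesssim \lambda_1\lambda_2^3R^2 .
\]
Because $\lambda_2\le N_2$, this is acceptable whenever $N_2/N_1$ or $T^{1/2}$ dominates (the most delicate case is when $T\approx N_1^{-1}$). Hence the logarithm becomes $\log N_2^{O(1)}$.

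In the complementary range of small $b$ (and small $a$), we use the crude bound $|U_{a,b}|\le|U_b(f_2)|$ and Corollary \ref{oldsup2} for $f_2$, together with $|f_1|\lesssim\lambda_1$ on average via local orthogonality. Here the smallness $b\lesssim\lambda_2\lambda_0^{-1/2}$ supplies a factor $\lambda_0^{-1}$, which should beat the required gain $N_2/N_1$ or $T^{1/2}$. \textbf{I expect this to be the main obstacle}: verifying that the low-amplitude regime does not lose a factor $\log N_1$ and still carries the asymmetric gain. If the crude bound fails, I would instead apply Lemma \ref{bires} directly with $V(\beta)$ replaced by all of $B_R$. This gives the gain $\max(l(\Sigma_2),\sqrt R/N)$ from the short $\omega$ caps without using any high-frequency domination.
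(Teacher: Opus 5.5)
Your treatment of the range $\lambda_2\lambda_0^{-1/2}\ll b\le\lambda_2$ via Lemma \ref{asym} matches the paper. The other two steps fail.

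\textbf{The reduction to $N_1\le N_2^{C}$ is false.} After undoing the scaling, the bilinear bound $\lambda_1\lambda_2^3R^2$ becomes $T\lambda_2\norm{\phi_1}_2^2\norm{\phi_2}_2^4\le TN_2\norm{\phi_1}_2^2\norm{\phi_2}_2^4$. The inequality $TN_2\lesssim T^{1/2}+N_2/N_1$ holds only when $T\lesssim\max(N_1^{-1},N_2^{-2})$, i.e.\ when $R\sim N$ or $L_2\lesssim R^{-1/2}$. That is exactly the paper's base case (Lemma \ref{base}). The size of $N_1$ relative to $N_2^{C}$ plays no role: for $T\sim1$ and $N_1=N_2^{100}$ the bound loses a full factor $N_2$. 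So this does not remove the $\log N_1$ losses. (The paper does not remove them either: Theorem \ref{asymthm} carries $(L_2NR)^{\epsilon}$, and its pigeonholing costs $\log R$.)

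\textbf{The small-$b$ regime is not handled, and it is not a side case.} Since $\lambda_0\le NR^{-1/2}=T^{-1/2}$, the range of Lemma \ref{asym} is empty when $T\sim1$ (then $\theta=\gamma$ and $\lambda_0=1$). So for $T\sim1$ the entire estimate lives in your complementary range.

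Your crude bound does not work there. Corollary \ref{oldsup2} alone only supplies $\lambda_0^{-1}\ge T^{1/2}$, so it cannot beat $T^{1/2}$. Even the $\lambda_0$-refined Corollary \ref{oldsup} (Lemma \ref{smallb}) only gives $a^2b^4\abs{U_b(f_2)}\lesssim(\log N)^{O(1)}\frac{a^2}{b^2}\frac{\sqrt R}{N}N^2R\lambda_2^3$. This is off from the target by a factor up to $a^2\lambda_2/(b^2\lambda_1)$, which is large for $a\approx\lambda_1$ and small $b$.

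Replacing $a^2$ by $\lambda_1$ would require $\int_{U_b(f_2)}\abs{f_1}^2\lesssim\lambda_1\abs{U_b(f_2)}$. Local orthogonality does not give this. $U_b(f_2)$ is only a union of $L_2^{-1}\times L_2^{-2}$ rectangles, and on these one gets only $\lambda_1\max(1,L_2^2N)$. That is a loss of $N_2^2/N_1$, precisely in the regime $N_1\le N_2^{C}$ that you kept.

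Your fallback, Lemma \ref{bires} on all of $B_R$, gives $b^2\lambda_1\lambda_2\max(1,L_2N/\sqrt R)R^2$. This is acceptable only if $b^2\lesssim\lambda_2N/\sqrt R$. At $T\sim1$ it is just $\norm{f_2}_\infty^2$ times the bilinear bound.

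\textbf{The missing idea} is the paper's induction on scales with the roles of $f_1$ and $f_2$ swapped. By Cauchy--Schwarz,
\[
\abs{U_{a,b}}a^2b^4\le\bigl(\abs{U_{a,b}}a^4b^2\bigr)^{1/2}\bigl(\abs{U_b(f_2)}b^6\bigr)^{1/2}.
\]
The second factor gains $N/\sqrt R$ in the small-$b$ range (Lemma \ref{smallb}). The first factor is $\int\abs{f_2}^2\abs{f_1}^4$. Apply the almost orthogonality of Lemma \ref{orth1} twice: first to cut $f_1$ into caps of length $L_2^2$, then to cut $f_2$ into caps of length $L_2^4$. The first factor is then controlled by the induction hypothesis at the smaller scale.

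This gives Lemma \ref{Ind}:
\[
D(L_2^2,L_2)\lesssim R^{O(\epsilon)}\max\bigl(L_2N^2/R,\,N/\sqrt R,\,D(L_2^4,L_2^2)^{1/2}(N/\sqrt R)^{1/2}\bigr).
\]
The induction closes from the bilinear base case, because $\bigl(\max(L_2^2N^2/R,N/\sqrt R)\,N/\sqrt R\bigr)^{1/2}\le\max(L_2N^2/R,N/\sqrt R)$.
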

The corresponding result in \cite{McConnell_Mass_critical_d_1} is stronger, as it does not contain the $N_2^\epsilon$ loss. Their proof uses number-theoretic methods, whereas ours uses decoupling tools. Thus, our method allows more flexible choices of the locations for the frequencies. We include an alternative proof here for completeness and to highlight our method, which may be useful in higher-dimensional settings.

We shall show that Theorem \ref{asymthm} follows quickly from the asymmetric estimate in Lemma \ref{asym2} by combining it with an induction argument similar to the one in \cite{schippa2023_trilinear}. We also emphasize that Theorem \ref{asymthm} is sharp by considering Example \ref{example1} and Example \ref{example2}.  

By a standard rescaling and periodization argument, it suffices to establish the following decoupling estimate.
\begin{theorem}\label{asymthm} 
Let \[
\Gamma_1, \Gamma_2 \subseteq N_{1/R}(\P)
\] be two caps with $l(\Gamma_i)=L_i \leq 1$ such that $dist(\Gamma_1,\Gamma_2) \sim 1$. Let $f_1$ and $f_2$ be two functins with
\[
\spp \,\widehat{f_i} \subseteq \Gamma_i, 
\qquad 
f_i = \sum_{\gamma \subseteq \Gamma_i} f_\gamma,
\]
where each $f_\gamma$ satisfies the following:
\begin{enumerate}
    \item $\spp \,\widehat{f_\gamma} \subset \gamma$ a cap of radius $1/N$ on $N_{1/R}(\P)$.   
    \item For each $\gamma$, 
    \[
    |f_\gamma| \sim a_\gamma \quad \text{on } B_R, 
    \] for some $a_\gamma \in \R$
    and $f_\gamma$ decays rapidly off $B_R$.
\end{enumerate}
Assume also that $N \leq R \leq N^2$. Then, for any $\epsilon>0$, there exists a constant $C_\epsilon$ such that
\begin{align*}
    \fint_{B_R} 
    & |f_1|^2 |f_2|^4 \\
    &\leq C_\epsilon \,(L_2 N R)^{\epsilon} 
    \max\!\left( L_2, \frac{\sqrt{R}}{N} \right)
    \frac{N^2}{R}\Bigg(\sum_{\gamma \in \Gamma_1} |a_{\gamma}|^2 \Bigg)
    \Bigg(\sum_{\gamma \in \Gamma_1}  |a_{\gamma}|^2 \Bigg)^{2}.
\end{align*}
\end{theorem}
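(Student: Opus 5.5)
We would prove Theorem~\ref{asymthm} by reducing to the asymmetric superlevel estimate of Lemma~\ref{asym2} after the usual pigeonholing, and then running an induction on scales, in the spirit of \cite{schippa2023_trilinear}, to handle the regime that Lemma~\ref{asym2} does not cover.

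\textbf{Step 1: pigeonholing.} The first step is to pigeonhole the amplitudes $a_\gamma$ into dyadic classes, exactly as in the reduction from Theorem~\ref{main} to the $\con_0$ case (Propositions~\ref{cond2} and~\ref{general}). This lets us assume $a_\gamma\in\{0,1\}$ on $\Gamma_1$ and on $\Gamma_2$ separately, at a cost of $(\log NR)^{O(1)}\lesssim_\epsilon (NR)^{\epsilon}$. We then also pigeonhole $\lambda(\theta)\sim\lambda_0$ for the caps $\theta\subset\Gamma_2$. Writing $\lambda_i=\lambda(\Gamma_i)$, the target becomes
\[
\int_{B_R}|f_1|^2|f_2|^4\lessapprox \max\left(L_2,\tfrac{\sqrt R}{N}\right) N^2R\,\lambda_1\lambda_2^2 .
\]
We decompose $B_R$ into the joint superlevel sets $U_{a,b}(f_1,f_2)$ with $a,b$ dyadic, where $a\le\lambda_1$ and $b\le\lambda_2$. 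Since $a^2b^4|U_{a,b}|$ summed over these classes controls the integral, it suffices to bound each piece by the right-hand side up to an $R^\epsilon$ loss.

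\textbf{Step 2: large $b$.} When $b\gg\lambda_2\lambda_0^{-1/2}$, Lemma~\ref{asym2} applies directly and gives
\[
a^2b^4|U_{a,b}|\lesssim \log(\cdot)^3\max\left(L_2,\tfrac{\sqrt R}{N}\right)N^2R\lambda_1\lambda_2^2 .
\]
The logarithmic factor and the logarithmic number of dyadic pairs $(a,b)$ are absorbed into $(L_2NR)^\epsilon$.

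\textbf{Step 3: small $b$.} We expect the regime $b\lesssim\lambda_2\lambda_0^{-1/2}$ to be the main obstacle. Here $|f_2|$ is at most a $\lambda_0^{-1/2}$ fraction of its maximum, and the high--low argument of Lemma~\ref{asym2} breaks down. We would first try the trivial bound, using Lemma~\ref{bires} with the threshold $\beta\sim\lambda_2$ (the average of $g$) together with $\int|f_2|^2\sim\lambda_2R^2$:
\[
\int_{U_{a,b}}|f_1|^2|f_2|^4\le b^2\int |f_1|^2|f_2|^2\lesssim b^2\,\big\|\textstyle\sum_\omega|f_{1,\omega}|^2\big\|_\infty\,\lambda_2R^2 .
\]
Here the caps $\omega$ have length $\max(N^{-1},R^{-1/2}L_2)$, so the supremum is at most $\lambda_1\max(1,L_2N/\sqrt R)$. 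With $b^2\lesssim\lambda_2^2/\lambda_0$ and $\lambda_0\ge 1$, this gives
\[
\int_{U_{a,b}}|f_1|^2|f_2|^4\lesssim \lambda_1\lambda_2^3\lambda_0^{-1}R^2\max\left(1,\tfrac{L_2N}{\sqrt R}\right).
\]
This matches the target only when $\lambda_2\lesssim \lambda_0 N^2/R$, that is, essentially when $\Gamma_2$ is no longer than a single $\theta$.

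Otherwise we would induct on $L_2$. We split $\Gamma_2$ into caps $\Gamma_2'$ of length $L_2/K$, with $K=R^{\epsilon}$. In the small-$b$ region, the pointwise inequality
\[
|f_2|^4\lesssim K^{O(1)}\sum_{\Gamma_2'}|f_{\Gamma_2'}|^4+(\text{bilinear terms between separated }\Gamma_2',\Gamma_2'')
\]
would be used. The bilinear terms are handled by Cordoba--Fefferman/bilinear restriction (Lemma~\ref{bilresg}), which yields the $\frac{\sqrt R}{N}$ branch. The diagonal terms are handled by the induction hypothesis at length $L_2/K$. Summing over the pieces $\Gamma_2'$ via $\sum\lambda(\Gamma_2')^2\le\lambda_2^2$ closes the induction with constant $C_\epsilon(L_2NR)^\epsilon$, since the number of induction steps is $O(\epsilon^{-1})$. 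The base case is $L_2\le\sqrt R/N$, where $\Gamma_2$ consists of $O(1)$ caps $\theta$ and the trivial bound above suffices.
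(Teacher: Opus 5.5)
\textbf{The gap is in Step 3, at the broad terms.} After your broad--narrow split of $\Gamma_2$, the diagonal part can be handled by induction. Everything else rests on the unproved claim that, for separated $\Gamma_2',\Gamma_2''\subset\Gamma_2$,
\[
\int_{U_{a,b}}|f_1|^2|f_{\Gamma_2'}|^2|f_{\Gamma_2''}|^2\lesssim K^{O(1)}\max\Big(L_2,\frac{\sqrt R}{N}\Big)N^2R\,\lambda_1\lambda_2^2 ,
\]
and Lemma~\ref{bilresg} does not give this. That lemma controls a product of two transversal factors, so with three factors you must sacrifice one of them:
\begin{itemize}
\item Pairing $f_1$ with $f_{\Gamma_2'}$ and bounding $|f_{\Gamma_2''}|$ pointwise is no better than your own trivial bound, which loses $\lambda_2\sqrt R/(\lambda_0N)$.
\item Pairing the weakly transversal pair $f_{\Gamma_2'},f_{\Gamma_2''}$ leaves $|f_1|^2\le\lambda_1^2$. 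This has the wrong homogeneity in $\lambda_1$ and loses a power of $N$ in general. For example, it loses a factor $L_2R/N$ when $\lambda_1\sim L_2^2N$, $L_2\ge\sqrt R/N$ and $L_2R\ge N$.
\end{itemize}
The small-$b$ localization does not help either. At a broad point, $|f_{\Gamma_2'}|,|f_{\Gamma_2''}|\gtrsim K^{-1}|f_2|$, but they can be much larger than $b$ because of cancellation. If you pigeonhole their sizes, the case where the larger one is $\gg\lambda(\Gamma_2')\lambda_0^{-1/2}$ can be fed into Lemma~\ref{asym2} applied to $(f_1,f_{\Gamma_2'})$. The case where both are small is the original small-amplitude problem one scale down. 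So your induction only relocates the difficulty. What is missing is an $L^6$ small-cap input producing the gain $\sqrt R/N$ at low amplitude, and nothing in your scheme supplies one. Bilinear restriction alone is an $L^4$ tool.

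\textbf{How the paper fills this hole.} For $b\lesssim\lambda_2\lambda_0^{-1/2}$ it uses
\[
|U_{a,b}|a^2b^4\le\big(|U_{a,b}|a^4b^2\big)^{1/2}\big(|U_b(f_2)|b^6\big)^{1/2}.
\]
The second factor is bounded by the Guth--Maldague superlevel estimate (Corollary~\ref{oldsup}, Lemma~\ref{smallb}), which in this amplitude range gives $\frac{\sqrt R}{N}N^2R\lambda_2^3$ up to $R^{O(\epsilon)}$. The first factor is the same trilinear quantity with the roles of $f_1,f_2$ swapped. The almost orthogonality Lemma~\ref{orth1} then applies: the Fourier support of the fourth-power factor lies in a thin rectangle transversal to the other cap, so that cap may be cut into caps of length equal to the square of the shorter length. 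This gives the recursion of Lemma~\ref{Ind},
$D(L_2^2,L_2)\lessapprox\max\big(L_2N^2/R,\ N/\sqrt R,\ D(L_2^4,L_2^2)^{1/2}(N/\sqrt R)^{1/2}\big)$.
This is an induction $L_2\mapsto L_2^2$ with base case $L_2\le R^{-1/2}$ (Lemma~\ref{base}).

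\textbf{Smaller slips to fix.}
\begin{itemize}
\item Your trivial bound matches the target iff $\lambda_2\lesssim\lambda_0N/\sqrt R$, not $\lambda_0N^2/R$.
\item Since $\sqrt R/N\ge R^{-1/2}$, a cap of length $\sqrt R/N$ can contain up to $R/N$ caps $\theta$, not $O(1)$. So your base case is not settled by the trivial bound; the correct base case is $L_2\le R^{-1/2}$.
\item The diagonal term in your pointwise inequality must carry an $O(1)$ constant rather than $K^{O(1)}$. Otherwise $\sim\epsilon^{-1}$ steps with $K=R^\epsilon$ accumulate an $R^{O(1)}$ loss.
\end{itemize}
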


\subsection{Examples}
We see that lemma \ref{asymthm} is sharp up to a $(L_2NR)^\epsilon$ factor, if we consider the following two examples. In both examples, we shall let $a_\xi=1$ for all $\xi \in \Xi_1$ and $\Xi_2$. 
Put 
$$g_1(x,t):=\sum_{j=1}^{L_1N}e(\frac{j}{N}x+\frac{j^2}{N^2}t) \text{ and }g_2(x,t):=\sum_{j=N}^{N+L_2N}e(\frac{j}{N}x+\frac{j^2}{N^2}t).$$
\begin{example}\label{example1}
    Let 
$L_1=L_2^2\geq 1/N.$ Note that $g_i(k/N)\sim L_iN$ for integers $k$. As $g_i$ has Fourier support in a $L_i \times L_i^2$ rectangle, we know that $g_i$ is locally constant on $L_i^{-1} \times L_i^{-2}$ tubes  whose long axes are in the same direction as $d(\Gamma_i)$ where $d(\Gamma_i)$ is the direction of the normal to the paraboloid at the center of $\Gamma_i$. 
Thus, on $R/N$ many $L_2^{-1} \times L_2^{-2}$ tubes in $B_R$, we have $g_i \sim L_iN$. Thus,
\begin{eqnarray*}
    &&\fint_{B_R}|\sum_{\xi\in \Xi_1}a_{\xi}e((x,t)\cdot (\xi,\xi^2))|^2|\sum_{\xi\in \Xi_2}a_{\xi}e((x,t)\cdot (\xi,\xi^2))|^4\\
    &\geq & L_2^{-3}\left(\frac{R}{N}\right)(L_1N)^2(L_2N)^4\\    
    &\sim& L_2N^2/R (\sum_{\xi\in \Xi_1}|a_{\xi}|^2)(\sum_{\xi\in \Xi_1}|a_{\xi}|^2)^2.
\end{eqnarray*}

\end{example}

\begin{example}\label{example2}
    Let 
$L_1=L_2= R^{-1/2}.$ Note that $g_i(k/N)\sim L_iN$ for integers $k$. We know that $g_i$ is locally constant on a $R \times R^{1/2}$ tube whose long axis is in the same direction as $d(\Gamma_i)$ where $d(\Gamma_i)$ is the direction of the normal to the paraboloid at the center of $\Gamma_i$. 
Thus, on $(R/N)^2$ many $R^{1/2} \times R^{1/2}$ squares, we have $g_i \sim L_iN$. Thus,
\begin{eqnarray*}
    &&\fint_{B_R}|\sum_{\xi\in \Xi_1}a_{\xi}e((x,t)\cdot (\xi,\xi^2))|^2|\sum_{\xi\in \Xi_2}a_{\xi}e((x,t)\cdot (\xi,\xi^2))|^4\\
    &\geq & (R^{1/2})^2\left(\frac{R}{N}\right)^2(L_1N)^2(L_2N)^4\\    
    &\sim& (N/\sqrt{R}) (\sum_{\xi\in \Xi_1}|a_{\xi}|^2)(\sum_{\xi\in \Xi_1}|a_{\xi}|^2)^2.
\end{eqnarray*}
\end{example}
\subsection{Proof of Theorem \ref{asymthm}}
We shall focus on the special case when we are in the situation such that for each $\gamma$ either $a_\gamma \sim 1$ or $a_\gamma=0$. In this case, we say that $f_1$ and $f_2$ satisfy $\Cn(L_1,L_2)$. The general case would just follow from it from a standard pigeonholing argument, giving rise to an extra $\log R$ loss. 
\begin{definition}
    Define $D(L_1,L_2,R,N)$ to be the smallest constant such that for any $(R,1/M)$ normalized $(f_1,f_2)$ satisfying $\Cn(L_1,L_2)$, we have 
    \begin{equation}\label{maineq}
        \int_{B_R}|f_1|^2|f_2|^4\lesssim D(L_1,L_2,R,N)\lambda_1 \lambda_2^2R^2.
    \end{equation}
\end{definition}
To prove Theorem \ref{asymthm}, it is enough to prove $D(L_1,L_2,R,N)\lessapprox \max(L_2, \sqrt{R}/N) \frac{N^2}{R}$. To do so, we shall bound $|U_{a,b}(f_1,f_2)|a^2b^4$ for the case when $b$ is big and when $b$ is small separately. 

Suppose that $b \geq R^\epsilon \lambda_2 \lambda_2(R^{-1/2})^{-1/2}$, then Lemma \ref{asym2} implies that 
$$|U_{a,b}(f_1,f_2)|a^2b^4 \lessapprox \max\!\left( L_2, \frac{\sqrt{R}}{N} \right)
    \frac{N^2}{R}\lambda_1
    \lambda_2^2.$$
Suppose that $b \leq R^\epsilon \lambda_2 \lambda_2(R^{-1/2})^{-1/2}$, Lemma \ref{oldsup2} immediately implies the following bound on $|U_b(f_2)|b^6$. 
\begin{lemma}\label{smallb}
    Suppose that  $b \leq R^\epsilon \lambda_2 \lambda_2(R^{-1/2})^{-1/2}$, then 
    $$b^6|B_b| \lesssim_\epsilon R^{O(\epsilon)} N/\sqrt{R}\lambda_2^3R^2.$$
\end{lemma}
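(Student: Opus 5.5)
The plan is to read Lemma~\ref{smallb} off Corollary~\ref{oldsup2} applied to $f_2$ alone, using the restriction on $b$ to turn the factor $\max(b^2/\lambda_2^2, R/N^2)$ into $N/\sqrt{R}$.

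\textbf{Step 1: applicability.} We have $f_2=\sum_{\gamma\subseteq\Gamma_2}f_\gamma$ with $|f_\gamma|\sim 1$ or $f_\gamma=0$ on $B_R$, and $\lambda=\lambda_2$ counts the nonzero caps. This is exactly the hypothesis $\Cn_0$, after harmless normalisation of constants. Hence
\[
|U_b(f_2)|\,b^6\lesssim(\log N)^{c_0}\max\left(\frac{b^2}{\lambda_2^2},\frac{R}{N^2}\right)\lambda_2^3N^2R .
\]

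\textbf{Step 2: insert the bound on $b$.} The assumption $b\le R^\epsilon\lambda_2\,\lambda_2(R^{-1/2})^{-1/2}$ gives $b^2/\lambda_2^2\le R^{2\epsilon}/\lambda_2(R^{-1/2})$. Here $\lambda_2(R^{-1/2})$ is the maximal number of active $\gamma$ in a single $\theta$, i.e.\ $\lambda_0$, and it satisfies $1\le\lambda_0\le N/\sqrt{R}$. Using only $\lambda_0\ge 1$ would be too weak, so instead one should go back to the $\lambda_0$-refined Corollary~\ref{oldsup}. Its first term is
\[
\frac{\lambda_0}{R^{-1/2}N}\cdot\frac{b^2}{\lambda_2^2}\lesssim R^{2\epsilon}\frac{\sqrt{R}}{N},
\]
and the $\lambda_0$ cancels exactly. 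The second term is $R/N^2\le\sqrt{R}/N$, since $N\ge\sqrt{R}$.

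Multiplying out, $R^{2\epsilon}(\sqrt{R}/N)\lambda_2^3N^2R=R^{2\epsilon}(N/\sqrt{R})\lambda_2^3R^2$. The factor $(\log N)^{c_0}$ is absorbed into $R^{O(\epsilon)}$, because $N\le R$.

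\textbf{Step 3: the expected obstacle.} The only delicate point is that Corollary~\ref{oldsup} is stated for $f$ with a fixed dyadic value of $\lambda(\theta)\sim\lambda_0$. For general $f_2$ I will pigeonhole $f_2=\sum_{\lambda_0}f_2^{\lambda_0}$ over the $O(\log R)$ dyadic values. A point of $U_b(f_2)$ lies in some $U_{b'}(f_2^{\lambda_0})$ with $b'\gtrsim b/\log R$, and I apply the above bound to each piece. This needs each $b'$ to still satisfy the $\lambda_0$-adapted constraint $b'^2\lesssim R^{2\epsilon}\lambda_2^2/\lambda_0$ with respect to its own $\lambda_0$. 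Pieces with larger $b'$ must be handled by the trivial bound $b'\lesssim\lambda(f_2^{\lambda_0})$ together with the same corollary. The resulting losses are only $(\log R)^{O(1)}\lesssim_\epsilon R^{\epsilon}$. Summing over the dyadic $\lambda_0$ then gives the stated estimate.
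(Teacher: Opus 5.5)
Steps 1--2 are correct and follow the paper's route: the lemma is read off the Guth--Maldague-type superlevel bound applied to $f_2$ alone. You are right that the $\lambda_0$-refined Corollary~\ref{oldsup} is what does the work. The paper cites Corollary~\ref{oldsup2}, which by itself only gives $\max(R^{2\epsilon}/\lambda_2(R^{-1/2}),R/N^2)\,\lambda_2^3N^2R$. That loses a factor $N/(\sqrt{R}\,\lambda_2(R^{-1/2}))$ against the claimed bound.

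Step 3 is unnecessary, and its treatment of the large-$b'$ pieces does not close as sketched.

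\emph{Why it is unnecessary.} In the proof of Corollary~\ref{oldsup}, $\lambda_0$ enters only through $\lambda(l_k)\lesssim\lambda_0\,l_kR^{1/2}$. This uses only the upper bound $\lambda(\theta)\le\lambda_0$ for every $\theta$, never the lower bound. So the corollary applies verbatim to $f_2$ with $\lambda_0:=\lambda_2(R^{-1/2})$, and Step 2 already finishes the proof.

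\emph{Why the sketch fails.} Suppose you do pigeonhole, and take a piece $f_2^{\lambda_0'}$ with $\mu:=\lambda(f_2^{\lambda_0'})$ and $b'\gg b$. Using $b^6\le b'^6$ together with the trivial bound $b'\lesssim\mu$ leaves $(\log N)^{c_0}\lambda_0'\,(N/\sqrt{R})\,\lambda_2^3R^2$, which is off by the factor $\lambda_0'$.

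\emph{The fix.} Estimate $b^6|U_{b'}|\le b^2\cdot b'^4|U_{b'}|$ for $b'\ge b$. The corollary's first term then becomes $\frac{\lambda_0'\sqrt{R}}{N}\frac{b^2}{\mu^2}$, so the smallness hypothesis is needed only for $b$ itself. It holds for every piece, since $\lambda_0'\le\lambda_2(R^{-1/2})$.
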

Next, we prove the following lemma relating $D(L_1,L_2,R,N)$ to $D(L_2^2,L_2,R,N)$. 
\begin{lemma}(Almost Orthogonality 1)\label{orth1}
    $$D(L_1,L_2,R,N) \lesssim D(L_2^2,L_2,R,N)$$
\end{lemma}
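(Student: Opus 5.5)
We reduce to the case $L_1 \geq L_2^2$ (if $L_1 < L_2^2$, enlarge $\Gamma_1$ to a cap of length $L_2^2$ containing it; the condition $\Cn$ is unchanged since the added $\gamma$ simply have $a_\gamma=0$). We then decompose $\Gamma_1$ into caps $\Gamma_1^{(j)}$ of length $L_2^2$, so that $f_1=\sum_j f_1^{(j)}$ with $\lambda_1=\sum_j \lambda(\Gamma_1^{(j)})$. The claim will follow once we show
\begin{equation*}
\int_{B_R} |f_1|^2|f_2|^4 \lesssim \sum_j \int_{B_R} |f_1^{(j)}|^2 |f_2|^4 .
\end{equation*}
Indeed, each pair $(f_1^{(j)},f_2)$ satisfies $\Cn(L_2^2,L_2)$, so the right side is at most $D(L_2^2,L_2,R,N)\sum_j\lambda(\Gamma_1^{(j)})\lambda_2^2R^2 = D(L_2^2,L_2,R,N)\lambda_1\lambda_2^2R^2$.

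To prove this almost orthogonality, we expand $|f_1|^2=\sum_{j,j'} f_1^{(j)}\overline{f_1^{(j')}}$ and insert a smooth cutoff $\eta_{B_R}$ with Fourier support in $B_{R^{-1}}$. By Plancherel, the cross term $\int f_1^{(j)}\overline{f_1^{(j')}}|f_2|^4\eta_{B_R}$ vanishes unless $(\Gamma_1^{(j)}-\Gamma_1^{(j')})$ meets $-\supp\widehat{|f_2|^4}+B_{R^{-1}}$. The Fourier support of $|f_2|^4$ lies in $\Gamma_2+\Gamma_2-\Gamma_2-\Gamma_2$.

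The key geometric step is to describe this set. Writing $\Gamma_2$ as $\{(\xi,\xi^2):\xi\in[c,c+L_2]\}$ plus an $R^{-1}$-neighborhood, the set consists of points $(\eta,\, \xi_1^2+\xi_2^2-\xi_3^2-\xi_4^2)$ with $\eta=\xi_1+\xi_2-\xi_3-\xi_4$ and $|\eta|\lesssim L_2$. Setting $\xi_i=c+t_i$, the second coordinate equals $2c\eta+O(L_2^2)$. Hence the set is contained in an $O(L_2)\times O(L_2^2)$ parallelogram: a slab of width $\sim L_2^2$ around the line through the origin with slope $2c$, truncated at length $\sim L_2$.

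For $\Gamma_1^{(j)}-\Gamma_1^{(j')}$ with centers $\xi,\xi'$, the difference is $(\xi-\xi',\, (\xi-\xi')(\xi+\xi'))+O(L_2^2)$. Because $\operatorname{dist}(\Gamma_1,\Gamma_2)\sim 1$, the slope $\xi+\xi'$ differs from $2c$ by $\gtrsim 1$. Thus membership in the slab forces $|\xi-\xi'|\cdot|\xi+\xi'-2c| \lesssim L_2^2$, i.e.\ $|\xi-\xi'|\lesssim L_2^2$. This leaves only $O(1)$ neighbouring $j'$ for each $j$ (the $R^{-1}$ errors are harmless since $L_2^2\ge$ cap scale we are working at; if $L_2^2<R^{-1}$ we instead cut at scale $R^{-1}$, which is still consistent with $\Cn$).

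Cauchy--Schwarz, $|f_1^{(j)}\overline{f_1^{(j')}}|\le \tfrac12(|f_1^{(j)}|^2+|f_1^{(j')}|^2)$, then gives the displayed inequality with the weight $\eta_{B_R}$. The weight is removed using the rapid decay of the $f_\gamma$ off $B_R$, which dominates $\eta_{B_R}$-weighted integrals by integrals over $B_R$ up to constants.

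The main obstacle is the transversality computation. One must make sure that the $O(L_2^2)$ thickness of $\Gamma_2^{(+2,-2)}$ uses the curvature correctly, namely that the quadratic error $\sum\pm t_i^2$ is $O(L_2^2)$ and not $O(L_2)$. One must also make sure the separation constant prevents the slope degenerating, so that the count of interacting pairs is uniformly $O(1)$.
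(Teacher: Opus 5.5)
Your proposal is correct and is essentially the paper's argument. Both reduce to $L_1\ge L_2^2$ by monotonicity of $D$ in $L_1$, cut $\Gamma_1$ into caps of length $L_2^2$, use Plancherel and the fact that $\widehat{|f_2|^4}$ lives in an $O(L_2)\times O(L_2^2)$ slab transversal to $\Gamma_1^{(j)}-\Gamma_1^{(j')}$ to leave only $O(1)$ interacting pairs, and finish with Cauchy--Schwarz. One small correction to your side remark: when $L_2^2<1/N$ (which covers $L_2^2<R^{-1}$), cut into the individual $\gamma$'s of length $1/N\ge\max(L_2^2,R^{-1})$ rather than at scale $R^{-1}$, because pieces shorter than $1/N$ split the $\gamma$'s and are not consistent with $\Cn$.
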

\begin{proof}
  As $D(L_1,L_2,R,N)$ is monotonically increasing in $L_1$ it is enough to prove the case when $L_1 \geq L_2^2$. In the definition of $D(L_1,L_2,R,N)$, let's replace $e^{it\Delta}f_i\eta_{B_R}$ by $g_i$ to simplify the notation. 
  Thus, our  goal is to prove that
  $$\int |g_1|^2|g_2|^4 \lesssim \sum_{\tau\subseteq \Gamma_1}\int|g_{1,\tau}|^2|g_2|^4 $$
  where we break $\Gamma_1$ into caps $\tau$ of length $L_2^2$. 
  \begin{eqnarray*}
   &&\int |g_1|^2|g_2|^4\\
   &=&\int \widehat{|g_1|^2}\widehat{|g_2|^4} \text{ by Plancherel }\\
   &=&\sum_{\tau \subseteq \Gamma_1}\sum_{\tau' \subseteq \Gamma_1}\int \widehat{g_\tau}*\widehat{\overline{g_{\tau'}}}\widehat{|g_2|^4} 
  \end{eqnarray*}
  In order for the integrand to be non-zero, we need 
  $$\tau-\tau' \cap \spp \widehat{|g_2|^4}\not=\emptyset.$$
  Note that $ \spp \widehat{|g_2|^4} \subseteq \Gamma_2+\Gamma_2-\Gamma_2-\Gamma_2$ which is contained in a $O(L_2) \times O(L_2^2)$ rectangle whose long axis is in direction $d(\Gamma_2)$. As $d(\Gamma_1)$ is transversal to $d(\Gamma_2)$, we need $|c(\tau)-c(\tau')| \lesssim L_2^2$. In this case, we call $\tau\sim \tau'$. Thus, 
  \begin{align*}
   \int |g_1|^2|g_2|^4
   &=\sum_{\tau \subseteq \Gamma_1}\sum_{\tau' \sim \tau}\int \widehat{g_\tau}*\widehat{\overline{g_{\tau'}}}\widehat{|g_2|^4} \\
   &=\sum_{\tau \subseteq \Gamma_1}\sum_{\tau' \sim \tau}\int g_\tau\overline{g_{\tau'}}|g_2|^4 \\
   &\lesssim\int |g_\tau|^2|g_2|^4. \qedhere
  \end{align*}
\end{proof}
Combining the two lemmas above, we obtain the following inductive relation. 
\begin{lemma}\label{ind1}
    Suppose that $b \leq R^\epsilon \lambda_2 \lambda_2(R^{-1/2})^{-1/2}$, then 
    $$|U_{a,b}|a^2b^4 \lesssim_\epsilon R^{o(\epsilon)} (D(L_2,L_2^2))^{1/2}(N/\sqrt{R})^{1/2} \lambda_1\lambda_2^2R^2. $$
\end{lemma}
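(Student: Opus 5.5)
The plan is to split the quantity $|U_{a,b}|a^2b^4$ via Cauchy--Schwarz into two factors. One factor is controlled by the small-$b$ superlevel bound of Lemma~\ref{smallb}. The other is controlled by the inductive constant $D(L_2^2,L_2,R,N)$, which enters through Lemma~\ref{orth1}.

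First, on $U_{a,b}$ we have $|f_1|\gtrsim a$ and $|f_2|\sim b$, so
\[
|U_{a,b}|a^2b^4 \lesssim \int_{U_{a,b}} |f_1|^2|f_2|^4 .
\]
We split $|f_1|^2|f_2|^4=\bigl(|f_1|^2|f_2|^2\bigr)\cdot |f_2|^2$ and apply Cauchy--Schwarz on $U_{a,b}$:
\[
\int_{U_{a,b}} |f_1|^2|f_2|^4 \le \Bigl(\int_{B_R}|f_1|^4|f_2|^4\Bigr)^{1/2}\Bigl(\int_{U_b(f_2)}|f_2|^4\Bigr)^{1/2}.
\]
This form is awkward because it involves $|f_1|^4$. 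The better choice is to write the integrand as $\bigl(|f_1|^2|f_2|^3\bigr)|f_2|$, or more directly to use that $|f_2|\sim b$ on the set. This gives
\[
\int_{U_{a,b}}|f_1|^2|f_2|^4 \lesssim \Bigl(\int_{B_R}|f_1|^2|f_2|^4\Bigr)^{1/2}\Bigl(\int_{U_{a,b}}|f_1|^2|f_2|^4\Bigr)^{1/2}.
\]
That is circular, so instead I will bound the second factor using $|f_1|\lesssim \lambda_1$ in sup norm. Using $|U_{a,b}|\le |U_b(f_2)|$ and Hölder,
\[
|U_{a,b}|a^2b^4 \le \Bigl(|U_{a,b}|a^4 b^4\Bigr)^{1/2}\Bigl(|U_b(f_2)|\,b^4\Bigr)^{1/2}.
\]
The first factor is at most $\bigl(\int |f_1|^2|f_2|^4\cdot a^2/b^0\bigr)^{1/2}$, which is wasteful. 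My concrete choice is therefore the splitting
\[
|U_{a,b}|a^2b^4 \le \Bigl(\int_{B_R}|f_1|^2|f_2|^4\Bigr)^{1/2}\Bigl(|U_{a,b}|\,a^2b^4\Bigr)^{1/2}\cdot\Bigl(\tfrac{|U_{a,b}|a^2b^4}{\int |f_1|^2|f_2|^4}\Bigr)^{0},
\]
and more usefully the weighted form
\[
|U_{a,b}|a^2b^4 \le \Bigl(\int_{B_R}|f_1|^2|f_2|^4\Bigr)^{1/2}\bigl(\|f_1\|_\infty^2\,|U_b(f_2)|\,b^4\bigr)^{1/2}.
\]

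Second, bound each factor. By the definition of $D$ and Lemma~\ref{orth1},
\[
\int_{B_R}|f_1|^2|f_2|^4\lesssim D(L_2^2,L_2,R,N)\lambda_1\lambda_2^2R^2 .
\]
For the other factor, the aim is $\|f_1\|_\infty^2|U_b|b^4\lesssim R^{O(\epsilon)}(N/\sqrt R)\lambda_1\lambda_2^2R^2$. The naive bound $\|f_1\|_\infty\le\lambda_1$ gives $\lambda_1^2$ instead of $\lambda_1$, which is too much. So I will not take a sup norm of $f_1$. Instead, I first apply Lemma~\ref{orth1}'s almost orthogonality inside the integral over $U_b(f_2)$ to reduce to pieces $f_{1,\tau}$ with $l(\tau)=L_2^2$. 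I then use the bilinear restriction Lemma~\ref{bilresg} on the rectangles where $|f_2|^4$ is locally constant (dimensions $L_2^{-1}\times L_2^{-2}$), and local orthogonality (Lemma~\ref{localorth}), to replace $|f_{1}|^2$ by its average over such rectangles. This gives
\[
\int_{U_b}|f_1|^2|f_2|^2\lesssim \lambda_1\,|U_b|\,b^2
\]
up to rapidly decaying tails. Combining with Lemma~\ref{smallb}, $b^6|U_b|\lesssim R^{O(\epsilon)}(N/\sqrt R)\lambda_2^3R^2$, we get
\[
\Bigl(\int_{U_b}|f_1|^2|f_2|^4\Bigr)\lesssim \lambda_1 b^4|U_b|\lesssim R^{O(\epsilon)}\frac{N}{\sqrt R}\,\frac{\lambda_1\lambda_2^3R^2}{b^2}.
\]
Since $b\gtrsim$ (the relevant lower range) and $\lambda_2/b\ge1$ only costs in the wrong direction, I would instead use Cauchy--Schwarz in the form
\[
\int_{U_{a,b}}|f_1|^2|f_2|^4\le\Bigl(\int|f_1|^2|f_2|^4\Bigr)^{1/2}\Bigl(\int_{U_b}|f_1|^2|f_2|^4\Bigr)^{1/2},
\]
with the second factor $\lesssim \lambda_1 b^4|U_b|$. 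Balancing $b^4|U_b|\le b^{-2}\cdot R^{O(\epsilon)}(N/\sqrt R)\lambda_2^3R^2$ against the trivial $b\le\lambda_2$ should give $(N/\sqrt R)\lambda_1\lambda_2^2R^2$ after noting that $|U_b|b^4\lesssim |U_b|b^6\lambda_2^{-2}\cdot(\lambda_2/b)^2$. The extra factor $(\lambda_2/b)^2\le R^{2\epsilon}\lambda_2(R^{-1/2})$ has to be absorbed. This is exactly where the hypothesis $b\le R^\epsilon\lambda_2\lambda_2(R^{-1/2})^{-1/2}$ and the precise form of Lemma~\ref{smallb} must be used.

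Multiplying the square roots then yields $R^{O(\epsilon)}D(L_2^2,L_2)^{1/2}(N/\sqrt R)^{1/2}\lambda_1\lambda_2^2R^2$. The main obstacle is the second factor. One must avoid losing $\|f_1\|_\infty$ (which would give $\lambda_1^2$) and the factor $(\lambda_2/b)^2$. If the direct route fails, I would first try to bound $\int_{U_b}|f_1|^2|f_2|^4$ by combining the locally-constant/bilinear step with a version of Lemma~\ref{smallb} stated for $\int_{U_b}|f_2|^4$ directly.
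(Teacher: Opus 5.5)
There is a genuine gap. Your argument rests on
\[
\int_{U_b(f_2)}|f_1|^2|f_2|^4\lesssim \lambda_1\, b^4\,|U_b(f_2)|,
\]
i.e.\ on $|f_1|^2$ having size $\lambda_1$ on average over the superlevel set of $f_2$. Nothing in the setup gives this. $\lambda_1$ is only the average of $|f_1|^2$ over all of $B_R$, while $U_b(f_2)$ is a special set on which $f_1$ may concentrate. In Example~\ref{example1} the large values of $f_1$ and $f_2$ sit on the same tubes, so $|f_1|^2\sim\lambda_1^2$ on the top superlevel set of $f_2$.

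Bilinear restriction (Lemma~\ref{bilresg}) on the $L_2^{-1}\times L_2^{-2}$ rectangles does not rescue this. It only replaces $|f_1|^2$ by $\sum_\omega|f_{1,\omega}|^2$ over caps of length about $L_2^2$, which is pointwise bounded only by $\lambda_1\min(\lambda_1,L_2^2N)$. Controlling $f_1$ where $f_2$ is large is exactly the job of the decoupling constant, and your splitting never hands that job to it.

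Separately, you use the hypothesis in the wrong direction. $b\le R^\epsilon\lambda_2\lambda_2(R^{-1/2})^{-1/2}$ is an upper bound on $b$, so it gives $(\lambda_2/b)^2\ge R^{-2\epsilon}\lambda_2(R^{-1/2})$, not $\le$. It cannot absorb any loss in $\lambda_2/b$; its only role is to make Lemma~\ref{smallb} applicable.

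The missing idea is to arrange the exponents so that $f_1$ appears to the fourth power, and then invoke $D$ with the roles of the two functions interchanged. Since $a^2b^4=(a^4b^2)^{1/2}(b^6)^{1/2}$ and $U_{a,b}\subset U_b(f_2)$,
\[
|U_{a,b}|a^2b^4\le \bigl(|U_{a,b}|a^4b^2\bigr)^{1/2}\bigl(|U_b(f_2)|b^6\bigr)^{1/2}.
\]
The first factor is at most $\int_{B_R}|f_1|^4|f_2|^2$. This is the quantity in the definition of $D$ with $f_2$ (cap length $L_2$) as the squared function and $f_1$ (cap length $L_2^2$, after Lemma~\ref{orth1}) as the fourth-power function. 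That swapped constant is what $D(L_2,L_2^2)$ in the statement means, not $D(L_2^2,L_2)$ as you read it. Hence the first factor is $\lesssim D(L_2,L_2^2)\lambda_1^2\lambda_2R^2$.

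The second factor is bounded by Lemma~\ref{smallb} by $R^{O(\epsilon)}(N/\sqrt R)\lambda_2^3R^2$. Taking the geometric mean gives $R^{O(\epsilon)}D(L_2,L_2^2)^{1/2}(N/\sqrt R)^{1/2}\lambda_1\lambda_2^2R^2$, with no need to control $f_1$ on $U_b(f_2)$ by hand.

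You did consider a splitting with $|f_1|^4$ and discarded it. With exponents $(4,2)$ on $(f_1,f_2)$ instead of $(4,4)$, it is precisely the right move.
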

\begin{proof}
Suppose that $b \leq R^\epsilon \lambda_2 \lambda_2(R^{-1/2})^{-1/2}$, by lemma \ref{smallb},
    \begin{align*}
        |U_{a,b}|a^2b^4 & \leq  (|U_{a,b}|a^4b^2)^{1/2} (|W_{2,b}|b^6)^{1/2}\\
        &\leq (D(L_2,L_2^2)\lambda_1^2\lambda_2)^{1/2} (|W_{2,b}|b^6)^{1/2}\\
        &\lesssim_\epsilon R^{o(\epsilon)}(D(L_2,L_2^2)\lambda_1^2\lambda_2)^{1/2} (N/\sqrt{R}\lambda_2^3R^2)^{1/2}\\
        &= R^{o(\epsilon)} (D(L_2,L_2^2))^{1/2}(N/\sqrt{R})^{1/2} \lambda_1\lambda_2^2R^2. \qedhere
    \end{align*}
\end{proof}
Combining lemma \ref{ind1} and lemma \ref{asym2} and orthogonality, we have the following induction lemma 
\begin{lemma}\label{Ind}
    $$D(L_2^2,L_2) \lesssim_\epsilon R^{O(\epsilon)} \max(N^2/R l_2, N/\sqrt{R}, D(L_2^4,L_2^2)^{1/2}(N/\sqrt{R})^{1/2})\lambda_1\lambda_2^2R^2.$$
\end{lemma}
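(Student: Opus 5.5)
Lemma~\ref{Ind} follows by splitting the integral $\int_{B_R}|f_1|^2|f_2|^4$ into superlevel sets and bounding each piece with the lemmas above. Let $f_1,f_2$ satisfy $\Cn(L_2^2,L_2)$, so $\Gamma_1$ has length $L_2^2$ and $\Gamma_2$ has length $L_2$. Decompose dyadically in the values $a\sim|f_1|$ and $b\sim|f_2|$:
\[
\int_{B_R}|f_1|^2|f_2|^4\lesssim \sum_{a,b\ \mathrm{dyadic}} |U_{a,b}(f_1,f_2)|\,a^2b^4 .
\]
Since $a\lesssim\lambda_1$ and $b\lesssim\lambda_2$, and levels far below these are negligible, only $O(\log R)$ values of each matter. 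This costs a factor $(\log R)^2\lesssim_\epsilon R^{\epsilon}$. It therefore suffices to bound each $|U_{a,b}|a^2b^4$ by the stated maximum.

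Next, split according to the size of $b$ relative to the threshold $b_0:=R^\epsilon\lambda_2\,\lambda_2(R^{-1/2})^{-1/2}$.

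\textbf{Large $b$, i.e.\ $b\ge b_0$.} We have $\lambda_2(R^{-1/2})\ge\lambda_0$, so $b\gg\lambda_2/\lambda_0^{1/2}$ and the hypothesis of Lemma~\ref{asym2} holds. That lemma then gives
\[
|U_{a,b}|a^2b^4\lessapprox \max\left(L_2,\tfrac{\sqrt R}{N}\right)N^2R\,\lambda_1\lambda_2^2 .
\]
After normalizing $R^2$ against $N^2R$, this is exactly the term $\max(\tfrac{N^2}{R}L_2,\tfrac{N}{\sqrt R})\lambda_1\lambda_2^2R^2$. The logarithmic factors from Lemma~\ref{asym2} are absorbed into $R^{O(\epsilon)}$.

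\textbf{Small $b$, i.e.\ $b\le b_0$.} Apply Lemma~\ref{ind1}, which rests on Cauchy--Schwarz between $|U_{a,b}|a^4b^2$ and $|U_b(f_2)|b^6$, with Lemma~\ref{smallb} bounding the latter. This gives
\[
|U_{a,b}|a^2b^4\lesssim R^{O(\epsilon)}\,D(\cdot)^{1/2}(N/\sqrt R)^{1/2}\lambda_1\lambda_2^2R^2 .
\]
This is the step I expect to be the main obstacle: the $D$ appearing here must be $D(L_2^4,L_2^2)$, not $D(L_2^2,L_2)$.

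The factor $\int|f_1|^4|f_2|^2$ produced by Cauchy--Schwarz has the roles of the two functions swapped. The function raised to the fourth power is now $f_1$, with cap length $L_2^2$. The definition of $D$ places the fourth power on the second slot, so the relevant constant is $D(\,\cdot\,,L_2^2)$, where the first slot is the length of $\Gamma_2$, namely $L_2$. Almost Orthogonality (Lemma~\ref{orth1}) then lets us shrink that first cap to length $(L_2^2)^2=L_2^4$, giving $D(L_2,L_2^2)\lesssim D(L_2^4,L_2^2)$. Some care is needed here. Lemma~\ref{orth1} uses only transversality of the directions, which holds since $\operatorname{dist}(\Gamma_1,\Gamma_2)\sim1$. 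One must also check that the normalization $\lambda_1^2\lambda_2$ in the swapped inequality matches what Lemma~\ref{ind1} uses.

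Finally, sum the two regimes over the $O(\log^2R)$ dyadic pairs $(a,b)$ and divide by $\lambda_1\lambda_2^2R^2$. The definition of $D$ as the smallest admissible constant then yields Lemma~\ref{Ind}.
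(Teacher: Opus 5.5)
Your proposal is correct and follows the paper's argument. The paper's proof is just the one-line combination of Lemma~\ref{asym2} for $b\ge b_0$, Lemma~\ref{ind1} for $b\le b_0$, and Almost Orthogonality (Lemma~\ref{orth1} with $L_2$ replaced by $L_2^2$), which passes from the swapped constant $D(L_2,L_2^2)$ to $D(L_2^4,L_2^2)$ exactly as you worked out.

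One small slip: to get $b\gg\lambda_2/\lambda_0^{1/2}$ from $b\ge R^\epsilon\lambda_2\,\lambda_2(R^{-1/2})^{-1/2}$, you need the upper bound $\lambda_2(R^{-1/2})\lesssim\lambda_0$, not the lower bound you wrote. The upper bound does hold, because $\lambda_0$ is the pigeonholed size of $\lambda(\theta)$.
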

\begin{lemma}(Base Case)\label{base}\\
$$D(L_1,L_2) \leq L_2N.$$  
In particular, we have $D(L_1,L_2) \leq R^{-1/2}N$ for $L_2 \leq R^{-1/2}$. 
\end{lemma}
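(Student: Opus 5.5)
The plan is to bound $\int_{B_R}|f_1|^2|f_2|^4$ trivially, using only that the caps $\gamma$ in $\Gamma_2$ are few and that each $f_\gamma$ has amplitude $\lesssim 1$. Under $\Cn(L_1,L_2)$, each nonzero $f_\gamma$ satisfies $|f_\gamma|\lesssim 1$ on $B_R$. The triangle inequality then gives
\[
\|f_2\|_{L^\infty(B_R)}\le \sum_{\gamma\subseteq\Gamma_2}|f_\gamma|\lesssim \lambda_2 .
\]
Hence
\[
\int_{B_R}|f_1|^2|f_2|^4\lesssim \|f_2\|_\infty^2\int_{B_R}|f_1|^2|f_2|^2\lesssim \lambda_2^2\int_{B_R}|f_1|^2|f_2|^2 .
\]

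Next we bound the bilinear $L^2$ integral. Since $\Gamma_1$ and $\Gamma_2$ are $\sim 1$ apart, we apply the bilinear restriction Lemma \ref{bilresg}. We take $T=B_R$ (or a cover by $R$-balls), with caps $\omega_i$ of length $\ge R^{-1}$; choosing $\omega_i=\gamma$ is allowed because $1/N\ge 1/R$. This gives
\[
\int_{B_R}|f_1|^2|f_2|^2\lesssim \int\Big(\sum_{\gamma\subseteq\Gamma_1}|f_\gamma|^2\Big)\Big(\sum_{\gamma\subseteq\Gamma_2}|f_\gamma|^2\Big)W_{B_R}.
\]
By the $\Cn$ normalization, the right side is $\lesssim \lambda_1\lambda_2 R^2$.

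Combining the two steps yields $\int|f_1|^2|f_2|^4\lesssim \lambda_1\lambda_2^3R^2$. To match the normalization $\lambda_1\lambda_2^2R^2$ in the definition of $D$, we use $\lambda_2\le \#\{\gamma\subseteq\Gamma_2\}\lesssim L_2N$, since $\Gamma_2$ has length $L_2$ and each $\gamma$ has length $1/N$. This gives $D(L_1,L_2)\lesssim L_2N$. For the stated specialization, $L_2\le R^{-1/2}$ immediately gives $D\le R^{-1/2}N$.

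The only point needing care is the application of Lemma \ref{bilresg} with $T$ a ball of radius $R$, where the relevant dimensions are $L_1=L_2=R$. This requires caps of length $\ge R^{-1}$, which holds for the $\gamma$'s. The transversality hypothesis is exactly $\operatorname{dist}(\Gamma_1,\Gamma_2)\sim 1$. Alternatively, one can avoid that lemma and use plain orthogonality: expand $|f_2|^2$, then bound $\int|f_1|^2|f_2|^2$ by $\|f_1\|_\infty^2\|f_2\|_2^2$. However, that route loses $\lambda_1$ versus $\lambda_1^2$, so bilinear restriction is the step to rely on.
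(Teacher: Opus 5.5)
Your proposal is correct and matches the paper's proof essentially step for step. Both bound $\|f_2\|_\infty\lesssim\lambda_2$, then use the bilinear restriction Lemma \ref{bilresg} to get $\int_{B_R}|f_1|^2|f_2|^2\lesssim\lambda_1\lambda_2R^2$, and finally use $\lambda_2\lesssim L_2N$ to absorb the extra factor of $\lambda_2$.
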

\begin{proof}
By bilinear restriction, 
    \begin{align*}
        |U_{a,b}|a^2b^4 & \leq  \int_{B_R} |f_1|^2|f_2|^4\\
        & \leq \|f_2\|^2_\infty \int_{B_R} |f_1|^2|f_2|^2\\
        & \leq  \lambda_2^2 R^2\lambda_1 \lambda_2\\
        & \leq  L_2N R^2\lambda_1 \lambda_2^2. \qedhere
    \end{align*}
\end{proof}
\begin{proof}[Proof of Theorem \ref{asymthm}]
    Suppose that $L_2 \leq R^{-1/4}$, then $L_2^2 \leq R^{-1/2}$, by lemma \ref{base} and lemma \ref{Ind}, 
    $$D(L_2^2, L_2) \lesssim_\epsilon R^{O(\epsilon)} \max(L_2N^2/R , D(L_2, L_2^2)^{1/2}(N/\sqrt{R})^{-1/2})\lesssim_\epsilon R^{O(\epsilon)} \max(L_2N^2/R, N/\sqrt{R}).$$
    Suppose that for some $L_2$, 
    $$D(L_2^4, L_2^2) \lesssim_\epsilon R^{O(\epsilon)}  \max(L_2^2N^2/R, N/\sqrt{R}),$$
    then 
    \begin{eqnarray*}
        D(L_2^2, L_2) &\lesssim_\epsilon& R^{o(\epsilon)} \max(L_2N^2/R, (\max(L_2^2N^2/R, N/\sqrt{R}))^{1/2}(N/\sqrt{R})^{1/2})\\
       &\lesssim_\epsilon& R^{o(\epsilon)} \max(L_2N^2/R, N/\sqrt{R})
    \end{eqnarray*}
    Putting these together proves Theorem \ref{asymthm}. 
\end{proof}

\section*{Acknowledgment}
The authors would like to thank Professor Larry Guth for his unwavering support and helpful suggestions throughout the preparation of this paper. This work was completed under his guidance. The authors are also grateful to Professor Gigliola Staffilani for many helpful discussions and valuable insights related to this work. In addition, [NS] would like to thank his thesis supervisor, Professor Alexandre Megretski, for his continued support, guidance, and encouragement throughout this project.
\bibliography{references.bib}

@article{Nirenberg,
     author = {Nirenberg, L.},
     title = {On elliptic partial differential equations},
     journal = {Annali della Scuola Normale Superiore di Pisa - Scienze Fisiche e Matematiche},
     pages = {115--162},
     publisher = {Scuola normale superiore},
     howpublished={\url{chrome-extension://efaidnbmnnnibpcajpcglclefindmkaj/http://archive.numdam.org/item/ASNSP_1959_3_13_2_115_0.pdf}},
     volume = {Ser. 3, 13},
     number = {2},
     year = {1959},
     zbl = {0088.07601},
     mrnumber = {109940},
     url = {http://archive.numdam.org/item/ASNSP_1959_3_13_2_115_0/}
}

@article{Bourgain_lwp,
author = {Bourgain, J.},
journal = {Geometric and functional analysis},
number = {3},
pages = {107-156},
title = {Fourier transform restriction phenomena for certain lattice subsets and applications to nonlinear evolution equations: Part I: Schrödinger equations},
volume = {3},
year = {1993},
}

@article{Bourgain_trilinear,
author = {Bourgain, J.},
journal = {Journal d’Analyse Mathematique},
pages= {125-157},
title = {A remark on normal forms and the “{I}-method” for periodic {NLS}},
volume = {94},
year = {2004},
}

@article{schippa2023_trilinear,
  author = {Schippa, Robert},
  title = {Refinements of Strichartz Estimates on Tori and Applications},
  journal = {Mathematische Annalen},
  year = {2025},
  volume = {391},
  number = {3},
  pages = {3245--3294}
}

@article{Burq_Gerard_Tzvetkov,
   title={Bilinear eigenfunction estimates and the nonlinear {S}chrödinger equation on surfaces},
   volume={159},
   url={http://dx.doi.org/10.1007/s00222-004-0388-x},
   DOI={10.1007/s00222-004-0388-x},
   number={1},
   journal={Inventiones mathematicae},
   publisher={Springer Science and Business Media LLC},
   author    = {N. Burq and P. G{\'e}rard and N. Tzvetkov},
   year={2004},
   month=jul, pages={187–223} }

@article{I-method_1,
author = {Colliander, J. and Keel, M. and Staffilani, G. and Takaoka, H. and Tao, T.},
issn = {1073-2780},
journal = {Mathematical Research Letters},
number = {5},
pages = {659-682},
publisher = {International Press of Boston},
title = {Almost Conservation Laws and Global Rough Solutions to a Nonlinear {S}chrödinger Equation},
volume = {9},
year = {2002},
}

@article{I-method_2,
author = {Colliander, J. and Keel, M. and Staffilani, G. and Takaoka, H. and Tao, T.},
address = {Philadelphia},
copyright = {[Copyright] © 2001 Society for Industrial and Applied Mathematics},
issn = {0036-1410},
journal = {SIAM journal on mathematical analysis},
number = {3},
pages = {649-669},
publisher = {Society for Industrial and Applied Mathematics},
title = {Global Well-Posedness for {S}chrödinger Equations with Derivative},
volume = {33},
year = {2001},
}

@article{KdV,
author = {Colliander, J. and Keel, M. and Staffilani, G. and Takaoka, H. and Tao, T.},
issn = {0894-0347},
journal = {Journal of the American Mathematical Society},
number = {3},
pages = {705-749},
publisher = {American Mathematical Society (AMS)},
title = {Sharp Global Well-Posedness for the {KdV} and Modified {KdV} Equations on \(\mathbb{R}\) and \(\mathbb{T}\)},
volume = {16},
year = {2003},
}

@article{Schippa_mass_critical,
author = {Schippa, Robert},
title = {Improved global well-posedness for mass-critical nonlinear {S}chr\"odinger equations on tori},
year = {2024},
volume = {412},
pages = {87-139},
journal={Journal of Differential Equations},
}

@article{Invariant_Lemma,
author = {Colliander, J. and Keel, M. and Staffilani, G. and Takaoka, H. and Tao, T.},
copyright = {2003 Elsevier Inc.},
issn = {0022-1236},
journal = {Journal of functional analysis},
number = {1},
pages = {173-218},
publisher = {Elsevier Inc},
title = {Multilinear estimates for periodic {KdV} equations, and applications},
volume = {211},
year = {2004},
}

@article{mass_critical_gwp_staffilani_tzirakis,
title = {Global well-posedness for a periodic nonlinear {S}chrödinger equation in 1D and 2D},
journal = {Discrete and Continuous Dynamical Systems},
volume = {19},
number = {1},
pages = {37-65},
year = {2007},
issn = {1078-0947},
doi = {10.3934/dcds.2007.19.37},
url = {https://www.aimsciences.org/article/id/793e8cd2-d8af-4c34-b523-2b4c3d70c05e},
author = {Daniela {De Silva} and Nataša Pavlović and Gigliola Staffilani and Nikolaos Tzirakis}
}

@article{Staffalani_mass_critical_errata,
number = {3/4},
pages = {399-400},
publisher = {Khayyam Publishing, Inc},
title = {Errata to "The {C}auchy problem for the semi-linear quintic {S}chrödinger equation in 1D", Differential Integral Equations, 18 (2005), No. 8, 947-960},
volume = {23},
year = {2010},
journal = {Differential Integral Equations},
author = {Tzirakis, Nikolaos},
copyright = {Copyright 2010 Khayyam Publishing, Inc.},
}

@article{Herr_Kwak_2024_mass_critical,
title={Strichartz estimates and global well-posedness of the cubic {NLS} on $\mathbb {T}^{2}$}, 
volume={12}, 
DOI={10.1017/fmp.2024.11}, 
journal={Forum of Mathematics, Pi},
author={Herr, Sebastian and Kwak, Beomjong}, 
year={2024}, 
pages={e14}
}

@misc{Herr_Kwak_2025_mass_critical_large_mass,
  title = {Global Well-Posedness of the Cubic Nonlinear Schrödinger Equation in the Mass-Critical Case on the Torus},
  author = {Herr, Sebastian and Kwak, Beomjong},
  year = {2025},
  note = {Preprint}
}

@article{Bourgain_Demeter_decoupling,
author = {Bourgain, Jean and Demeter, Ciprian},
copyright = {Copyright © 2015 Princeton University (Mathematics Department)},
issn = {0003-486X},
journal = {Annals of mathematics},
number = {1},
pages = {351-389},
publisher = {Department of Mathematics at Princeton University},
title = {The proof of the l2 Decoupling Conjecture},
volume = {182},
year = {2015},
}

@article{Herr_Tartaru_Tzvetkov,
author = {Herr, Sebastian and Tataru, Daniel and Tzvetkov, Nikolay},
copyright = {Copyright 2011 Duke University Press},
journal = {Duke mathematical journal},
number = {2},
pages = {329-349},
publisher = {DUKE University Press},
title = {Global well-posedness of the energy-critical nonlinear {S}chrödinger equation with small initial data in ${H}^1( \mathbb{T}^3)$},
volume = {159},
year = {2011},
}

@article{Hadac_Herr_Koch,
title = {Well-posedness and scattering for the KP-II equation in a critical space},
journal = {Annales de l'Institut Henri Poincaré C, Analyse non linéaire},
volume = {26},
number = {3},
pages = {917-941},
year = {2009},
issn = {0294-1449},
doi = {https://doi.org/10.1016/j.anihpc.2008.04.002},
url = {https://www.sciencedirect.com/science/article/pii/S0294144908000474},
author = {Martin Hadac and Sebastian Herr and Herbert Koch},
}

@article{Koch_Tataru,
author = {Koch, Herbert and Tataru, Daniel},
address = {[Durham, N.C.] :},
journal = {International mathematics research notices : IMRN.},
lccn = {95648111},
publisher = {Duke University Press,},
title = {A Priori Bounds for the 1D Cubic {NLS} in Negative Sobolev Spaces},
pages={rnm053-rnm053},
volume = {2007},
year = {2007-01-01},
}

@article{Burq_illposedness_T,
  title={An instability property of the nonlinear {S}chr{\"o}dinger equation on ${S}^{d}$},
  author={Nicolas Burq and Patrick G{\'e}rard and Nikolay Tzvetkov},
  journal={Mathematical Research Letters},
  year={2002},
  volume={9},
  pages={323-335},
  url={https://api.semanticscholar.org/CorpusID:53308123}
}

@misc{fractional_global_wellposedness_T,
      title={Global Well-posedness for the periodic fractional cubic {NLS} in 1D}, 
      author={Alexandre Megretski and Nikolaos Skouloudis},
      year={2025},
      note={Preprint},
      eprint={2508.01204},
      archivePrefix={arXiv},
      primaryClass={math.AP},
      url={https://arxiv.org/abs/2508.01204}, 
}

@article{McConnell_Mass_critical_d_1,
pages = {368-405},
title = {On lattice points, short-time estimates, and global well-posedness of the quintic {NLS} on $\mathbb{T}$},
volume = {47},
year = {2026},
issn = {1078-0947},
journal = {Discrete and continuous dynamical systems. Series A},
author = {McConnell, Ryan},
}

@article{LiWuXu_Mass_critical_d_1,
  author  = {Li, Y. and Wu, Y. and Xu, G.},
  title   = {Global well-posedness for the mass-critical nonlinear {N}onlinear {S}chr{\"o}dinger equation on $\mathbb{T}$},
  journal = {Journal of Differential Equations},
  volume  = {250},
  number  = {6},
  year    = {2011},
  pages   = {2715--2736},
}

@article{BurqGerardTzvetkov_Strichartz,
  author    = {N. Burq and P. G{\'e}rard and N. Tzvetkov},
  title     = {Strichartz inequalities and the nonlinear {S}chr\"odinger equation on compact manifolds},
  journal   = {Amer. J. Math.},
  volume    = {126},
  number    = {3},
  pages     = {569--605},
  year      = {2004}
}

@article{Kishimoto_sharp_CN,
journal = {Proceedings of the American Mathematical Society},
number = {8},
pages = {2649-2660},
publisher = {American Mathematical Society},
title = {Remark on the periodic mass critical nonlinear {S}chrödinger equation},
volume = {142},
year = {2014},
author = {Kishimoto, Nobu},
address = {Providence, Rhode Island},
copyright = {Copyright 2014 American Mathematical Society; reverts to public domain 28 years from publication},
}

@article{GuoLiYung_Strichartz,
  author    = {Shaoming Guo and Zane Kun Li and Po-Lam Yung},
  title     = {Improved discrete restriction for the parabola},
  journal   = {Mathematical Research Letters},
  volume    = {30},
  number    = {5},
  pages     = {1375--1409},
  year      = {2024},
  doi       = {10.4310/MRL.2023.v30.n5.a4},
}

@article{GuthMaldagueWang_Improved_Dec_Parabola,
journal = {Journal of the European Mathematical Society : JEMS},
number = {3},
pages = {875-917},
title = {Improved decoupling for the parabola},
volume = {26},
year = {2024},
author = {Guth, Larry and Maldague, Dominique and Wang, Hong},
issn = {1435-9855},
}

@article {FGM,
    AUTHOR = {Fu, Yuqiu and Guth, Larry and Maldague, Dominique},
     TITLE = {Sharp superlevel set estimates for small cap decouplings of
              the parabola},
   JOURNAL = {Rev. Mat. Iberoam.},
  FJOURNAL = {Revista Matem\'atica Iberoamericana},
    VOLUME = {39},
      YEAR = {2023},
    NUMBER = {3},
     PAGES = {975--1004},
      ISSN = {0213-2230,2235-0616},
   MRCLASS = {42B15 (42B20)},
  MRNUMBER = {4603641},
       DOI = {10.4171/rmi/1393},
       URL = {https://doi.org/10.4171/rmi/1393},
}

@article {DGW,
    AUTHOR = {Demeter, Ciprian and Guth, Larry and Wang, Hong},
     TITLE = {Small cap decouplings},
      NOTE = {With an appendix by D. R. Heath-Brown},
   JOURNAL = {Geom. Funct. Anal.},
  FJOURNAL = {Geometric and Functional Analysis},
    VOLUME = {30},
      YEAR = {2020},
    NUMBER = {4},
     PAGES = {989--1062},
      ISSN = {1016-443X,1420-8970},
   MRCLASS = {42B10 (11L07 53A07)},
  MRNUMBER = {4153908},
MRREVIEWER = {Ruixiang\ Zhang},
       DOI = {10.1007/s00039-020-00541-5},
       URL = {https://doi.org/10.1007/s00039-020-00541-5},
}

@misc{amplitude,
      title={Amplitude dependent wave envelope estimates for the cone in $\mathbb{R}^3$}, 
      author={Dominique Maldague and Larry Guth},
      year={2022},
      eprint={2206.01093},
      archivePrefix={arXiv},
      notes={Preprint},
      primaryClass={math.CA},
      url={https://arxiv.org/abs/2206.01093}, 
}

@article {Guth_Solomon_Wang,
    AUTHOR = {Guth, Larry and Solomon, Noam and Wang, Hong},
     TITLE = {Incidence estimates for well spaced tubes},
   JOURNAL = {Geom. Funct. Anal.},
  FJOURNAL = {Geometric and Functional Analysis},
    VOLUME = {29},
      YEAR = {2019},
    NUMBER = {6},
     PAGES = {1844--1863},
      ISSN = {1016-443X,1420-8970},
   MRCLASS = {52C10 (42B25)},
  MRNUMBER = {4034922},
MRREVIEWER = {Xiumin\ Du},
       DOI = {10.1007/s00039-019-00519-y},
       URL = {https://doi.org/10.1007/s00039-019-00519-y},
}

@article{cor77,
 ISSN = {00029327, 10806377},
 URL = {http://www.jstor.org/stable/2374006},
 author = {Antonio Córdoba},
 journal = {American Journal of Mathematics},
 number = {1},
 pages = {1--22},
 publisher = {Johns Hopkins University Press},
 title = {The {K}akeya Maximal Function and the Spherical Summation Multipliers},
 urldate = {2026-05-05},
 volume = {99},
 year = {1977}
}

@inproceedings{Dodson_JEDP_2011,
  author       = {Benjamin Dodson},
  title        = {Global well-posedness and scattering for the mass-critical {NLS}},
  booktitle    = {Journ{\'e}es {\'E}quations aux d{\'e}riv{\'e}es partielles, Biarritz, 6 juin--10 juin 2011},
  series       = {GDR 2434 (CNRS)},
  year         = {2011},
  pages        = {1--11},
  url          = {https://proceedings.centre-mersenne.org/articles/10.5802/jedp.76/},
  note         = {Lecture notes}
}

@article{CKSTT2010_energy_transfer,
  author  = {Colliander, J. and Keel, M. and Staffilani, G. and Takaoka, H. and Tao, T.},
  title   = {Transfer of energy to high frequencies in the cubic defocusing nonlinear Schr\"odinger equation},
  journal = {Inventiones Mathematicae},
  volume  = {181},
  number  = {1},
  pages   = {39--113},
  year    = {2010},
  doi     = {10.1007/s00222-010-0242-2}
}
\bibliographystyle{amsplain2.bst}

\end{document}